\documentclass[11pt]{amsart}
 \pagestyle{plain}
\usepackage[colorlinks,citecolor=blue,urlcolor=blue,bookmarks=false,hypertexnames=true]{hyperref}

\usepackage[a4paper,margin=23mm,top=30mm]{geometry}

\usepackage{amsfonts, amsmath, amssymb}
\usepackage[utf8]{inputenc} 

\usepackage{color}

\usepackage[all]{xy}

\usepackage{geometry}
 \geometry{
 a4paper,
  left=20mm,
 top=20mm,
 right=20mm,
 bottom=20mm
 }
 
\usepackage{hyperref}
\usepackage{mathtools,slashed}
\usepackage{setspace}
\setcounter{secnumdepth}{4}

\usepackage{enumerate}

\def\d{\delta}

\def\p{\partial}

\def\Ad{\mathop{\rm Ad}\nolimits}
\def\ad{\mathop{\rm ad}\nolimits}

\usepackage{enumerate}

\newcommand{\G}[1]{\mathfrak{#1}}

\newcommand{\B}[1]{\mathbb{#1}}

\numberwithin{equation}{section}

\newtheorem{theorem}{Theorem}[section]
\newtheorem{proposition}[theorem]{Proposition}

\theoremstyle{definition}

\newtheorem{remark}[theorem]{Remark}



\author{Oğul Esen}
\address{Department of Mathematics, Gebze Technical University,  41400 Gebze-Kocaeli, Turkey}
\email{oesen@gtu.edu.tr}

\author{Hasan Gümral}
\address{Department of Mathematics, Yeditepe University,  34755 Ataşehir-İstanbul, Turkey}
\email{hgumral@yeditepe.edu.tr}

\author{Serkan Sütlü}
\address{Department of Mathematics, Işık University, 34980 Şile-İstanbul, Turkey}
\email{serkan.sutlu@isikun.edu.tr}

\begin{document}

\title{Tulczyjew's triplet for Lie groups III: \\ Higher order dynamics and  reductions for iterated bundles}

\begin{abstract}
Given a Lie group $G$, we elaborate the dynamics on $T^*T^*G$ and $T^*TG$, which is given by a Hamiltonian, as well as the dynamics on the Tulczyjew symplectic space $TT^*G$, which may be defined by a Lagrangian or a Hamiltonian function. As the trivializations we adapted respect the group structures of the iterated bundles, we exploit all possible subgroup reductions (Poisson, symplectic or both) of higher order dynamics. 

\bigskip

\noindent \textbf{MSC2010:} 70H50; 70G65; 53D20; 53D17. 

\bigskip

\noindent \textbf{Key Words:} Euler-Poincaré equations; Lie-Poisson equations,
higher order dynamics on Lie groups.
\end{abstract}
\maketitle

\tableofcontents

\setlength{\parindent}{0cm}
\setlength{\parskip}{5mm}
\onehalfspace
\section{Introduction}

The tangent and the cotangent bundles of a Lie group admit global trivializations, as well as the Lie group structures, induced from the underlying Lie group itself. These structures may further be carried over the iterated bundles $T^*TG$, $TT^*G$, and $T^*T^*G$. These iterated bundles constitute the Tulczyjew's triplet, introduced for a geometric description of the Legendre transformation from the Lagrangian description on $TG$ to the Hamiltonian description on $T^*G$ for a mechanical system having $G$ as the configuration space. 
Such a system admits $G$ as kinematical symmetries, and the reduction of the Lagrangian dynamics results in the Euler-Poincr\'{e} equations on the Lie algebra $\mathfrak{g}$ of $G$. Similarly, the reduction of the Hamiltonian dynamics to $\mathfrak{g}^*$ is described by the Lie-Poisson equations. 

The present note is intended as a sequel to \cite{esen2014tulczyjew,esen2015tulczyjew}. In the first part \cite{esen2014tulczyjew}, we gave a detailed description of the possible trivializations of the iterated bundles $T^*TG$, $TT^*G$, and $T^*T^*G$, which are Lie group isomorphisms. Moreover, we described the group structures up to the second iterated bundles, as well as the canonical involutions on them. Having explicit descriptions of the cotangent and the Tulczyjew symplectic structures, we performed the Marsden-Weinstein reduction by kinematical symmetries to obtain the reduced Tulczyjew triplet for the Legendre transformation from Euler-Poincar\'{e} to Lie-Poisson equations. Then, in the second part \cite{esen2015tulczyjew}, we studied the Lagrangian and the Hamiltonian dynamical equations at each stage of the Tulczyjew construction under the trivializations respecting the Lie group structures. The dynamics we considered is defined either by a Lagrangian on $TG$, or by a Hamiltonian on $T^*G$, which, in the framework of Tulczyjew construction, corresponds to Lagrangian submanifolds of  $T^*TG$ or $T^*T^*G$, respectively. In other words, first order dynamics considered in \cite{esen2015tulczyjew} restricts to the fiber coordinates of the second iterated bundles. 

In this work, we aim to give a complete description of the higher order dynamics, and their reductions by considering the Lagrangian and/or Hamiltonian functions on the second iterated bundles, taking full advantage of the trivializations at our disposal. Obviously, releasing the condition that the dynamics on iterated bundles are described by Lagrangian submanifolds opens up the possibility to obtain higher order forms of Euler-Poincar\'{e} and Lie-Poisson equations. The underlying structure will, indeed, offer more than this generalization. 

Immediate generalizations of the results of \cite{esen2014tulczyjew}, \cite{esen2015tulczyjew}, and the present work apply to fibered spaces admitting local trivializations, or Ehresmann connections. A recent work \cite{EsKuSu20} elaborates the parallel results in the particular case of the principal $G$-bundles, and their associated vector bundles.

\subsection{Trivializations}~

One observes that, the form of equations governing dynamics on Lie groups
depends on the kind of trivializations adapted on iterated bundles \cite%
{colombo2014higher, colombo2013optimal, gay2012invariant, marsden1991symplectic}. Additional terms in these
equations may or may not appear depending on whether trivialization
preserves semidirect product and group structures or not. If one preserves
the group structures, canonical embeddings of factors involving
trivialization defines subgroups of iterated bundles and reductions of
dynamics with these subgroups become possible.

Based on exhaustive investigation of trivializations in our previous work \cite{esen2014tulczyjew}, we shall present all reductions of dynamics on iterated bundles of a Lie group with the convenient trivialization of the first kind. In trivialization of the first kind, we identify tangent $TG$ and cotangent $T^{\ast}G$ bundles with their semidirect product trivializations $G\circledS \mathfrak{g}$ and $G\circledS\mathfrak{g}^{\ast}$, respectively. Then, we trivialize the iterated bundles $T\left( G\circledS\mathfrak{g}\right) ,$ $T\left( G\circledS\mathfrak{g}^{\ast}\right) ,$ $T^{\ast }\left( G\circledS \mathfrak{g}\right) $ and $T^{\ast}\left( G\circledS \mathfrak{g} ^{\ast}\right) $ by considering them as tangent and cotangent groups again. As an example, we obtain
\begin{equation}
TT^{\ast}G\simeq T\left( G\circledS\mathfrak{g}^{\ast}\right)
\simeq\left( G\circledS\mathfrak{g}^{\ast}\right) \circledS Lie\left(
G\circledS\mathfrak{g}^{\ast}\right) \simeq\left( G\circledS\mathfrak{g}%
^{\ast}\right) \circledS\left( \mathfrak{g}\circledS\mathfrak{g}^{\ast
}\right)  \label{1st}
\end{equation}
for which, the trivialization maps preserve lifted group structures thereby making possible various reductions of dynamics. On the other hand, in trivialization of the second kind, one distributes functors $T$ and $T^{\ast} $ to $G\circledS\mathfrak{g}$ and $G\circledS\mathfrak{g}^{\ast}$, obtains products of first order bundles and then, trivializes each factor involving the products. This results in, for example, 
\begin{equation}
^{2}TT^{\ast}G\simeq T\left( G\circledS\mathfrak{g}^{\ast}\right)
\rightarrow TG\circledS T\mathfrak{g}^{\ast}\simeq\left( G\circledS
\mathfrak{g}\right) \circledS\left( \mathfrak{g}^{\ast}\times\mathfrak{g}%
^{\ast}\right)  \label{2nd}
\end{equation}
for which distributions of functors mix up orders of fibrations, and do not preserve group structures \cite{esen2014tulczyjew}. Throughout this work we shall use trivialization of the first kind unless otherwise stated. A subscript of $\mathfrak{g}$ and $\mathfrak{g}^*$ will show its position in the original trivialization of iterated bundle. 

\subsection{Content of the work}~

Here is a brief description of what we present in each section. 

Section 2. This section is intended as a reference section of the present work. Notations and conventions are fixed. Trivializations of all spaces $TG$, $T^*G$, $T^*TG$, $TT^*G$, $T^*T^*G$, and their induced group structures are defined. Subgroups are listed. Subgroups with symplectic actions are identified. The trivialized form of the symplectic two-forms, as well as the associated one-forms and the invariant vector fields on the cotangent bundles and the Tulczyjew's symplectic space $TT^*G$ are given.

Section 3. The dynamics on the first order (both tangent and cotangent) bundles are considered. The first order Lagrangian and Hamiltonian dynamics on $TG$ and $T^*G$ are described by Euler-Lagrange and Hamilton's equations 
\begin{eqnarray}\label{preeulerlagrange-}
\frac{d}{dt}\frac{\delta\bar{L}}{\delta\xi}&=&T_e^{\ast}R_{g}\frac{\delta
\bar{L}}{\delta g}-\ad_{\xi}^{\ast}\frac{\delta\bar{L}}{\delta\xi},
\\ 
\label{ULP-}
\frac{dg}{dt}&=&T_{e}R_{g}\left( \frac{\delta\bar{H}}{\delta\mu}\right) ,\qquad \frac{d\mu}{dt} = \ad_{\frac{\delta\bar{H}}{\delta\mu}%
}^{\ast}\mu-T_{e}^{\ast}R_{g}\frac{\delta\bar{H}}{\delta g},
\end{eqnarray}
respectively. Reduction of (\ref{preeulerlagrange-}) by $G$ gives the Euler-Poincar\'{e} equations. Poisson and Marsden-Weinstein reductions on $T^*G$ are performed to obtain the Lie-Poisson equations. 

Section 4. Hamiltonian dynamics on $T^*TG$ is given by the equations
\begin{equation*}
\left( \frac{d}{dt}-ad_{\frac{\delta H}{\delta\mu}}^{\ast}\right) \left(
ad_{\xi}^{\ast}\nu-\mu\right) =T_{e}^{\ast}R_{g}\frac{\delta H}{\delta g},%
\text{ \ \ }\frac{dg}{dt}=T_{e}R_{g}\frac{\delta H}{\delta\mu}
\end{equation*}
equivalent to four component Hamilton's equations. There are remarkable differences arising from the use of different trivializations. Reductions by $G$, $\mathfrak{g}$ and $G\circledS\mathfrak{g}$ are performed. Structures of the reduced spaces are studied in detail. 

Section 5. Hamiltonian dynamics on $T^*T^*G$ is generated by the vector fields with components of the form 
\begin{equation*}
\begin{split}
\frac{dg}{dt} & =T_{e}R_{g}\left( \frac{\delta H}{\delta\nu}\right) ,\qquad 
\frac{d\mu}{dt}  =\frac{\delta H}{\delta\xi}+ad_{\frac{\delta H}{\delta\nu }}^{\ast}\mu, \\
\frac{d\nu}{dt} & =ad_{\frac{\delta H}{\delta\mu}}^{\ast}\mu+ad_{\frac {
\delta H}{\delta\nu}}^{\ast}\nu-T_{e}^{\ast}R_{g}\left( \frac{\delta H}{\delta g}\right) -ad_{\xi}^{\ast}\frac{\delta H}{\delta\xi}, \qquad 
\frac{d\xi}{dt} =-\frac{\delta H}{\delta\mu}+[\xi,\frac{\delta H}{\delta\nu
}].  
\end{split}
\end{equation*}
Reductions by $G$, $\mathfrak{g}^*$ and $G\circledS\mathfrak{g}^*$ are performed. Structures of the reduced spaces are exhibited in details. The correspondence between the dynamics on $T^*T^*G$, and on $T^*TG$, is established by symplectic diffeomorphisms and Poisson maps. 

Section 6. On $TT^*G$, there are both Lagrangian and Hamiltonian formalisms. If a function $E$ on $TT^*G$ is regarded as a Hamiltonian, then the Hamilton's equations with the Tulczyjew symplectic structure are
\begin{equation*}
\begin{split}
\dot{g}&=TR_{g}\left( \frac{\delta E}{\delta\nu}\right), \qquad  
\dot {\mu
}=-\frac{\delta E}{\delta\xi}
,\qquad 
\dot{\xi}=\frac{\delta E}{\delta\mu
}
,
\\
\dot{\nu}&=ad_{\frac{\delta E}{\delta\nu}}^{\ast}\nu-T^{\ast}R_{g}\left( \frac{\delta E}{\delta g}\right).
\end{split}
\end{equation*}
Reduction by $G$ results in reduced Tulczyjew triplet considered in \cite{esen2015tulczyjew} before. Reductions of Tulczyjew structure by $\mathfrak{g}$, by a symplectic action of $\mathfrak{g}^*$ that may be connected with a symplectic diffeomorphism from $TT^*G$ to $T^*T^*G$, by  $G\circledS\mathfrak{g}$ and by $G\circledS\mathfrak{g}^*$ are studied in detail. 

If the function $E$ on $TT^*G$ is regarded as a Lagrangian density, it then gives the Euler-Lagrange dynamics
\begin{equation*}
\begin{split}
\frac{d}{dt}\left( \frac{\delta E}{\delta\xi}\right) &
=T_{e}^{\ast}R_{g}\left( \frac{\delta E}{\delta g}\right) -ad_{\frac{\delta E%
}{\delta\mu }}^{\ast}\mu+ad_{\xi}^{\ast}\left( \frac{\delta E}{\delta\xi}%
\right) -ad_{\frac{\delta E}{\delta\nu}}^{\ast}\nu \\
\frac{d}{dt}\left( \frac{\delta E}{\delta\nu}\right) & =\frac{\delta E}{%
\delta\mu}-ad_{\xi}\frac{\delta E}{\delta\nu}.
\end{split}
\end{equation*}
Reductions of these equations by $G$, $\mathfrak{g}^*$ and $G\circledS\mathfrak{g}^*$ are described. The latter gives the Euler-Poincar\'{e} equations on $\mathfrak{g} \circledS \mathfrak{g}^*$.

\section{Geometry of iterated bundles}

Let $G$ be a Lie group, $\mathfrak{g}=Lie\left( G\right) \simeq T_{e}G$ be
its Lie algebra, and $\mathfrak{g}%
^{\ast}=Lie^{\ast }\left( G\right) $ be the dual of $\mathfrak{g}$. We shall adapt
the letters%
\begin{equation}
g,h\in G,\text{ \ \ }\xi,\eta,\zeta\in\mathfrak{g},\text{ \ \ }\mu,\nu
,\lambda\in\mathfrak{g}^{\ast}  \label{G}
\end{equation}
as elements of the spaces shown. For a tensor field which is either right or
left invariant, we shall use $V_{g}\in T_{g}G$, $\alpha_{g}\in T_{g}^{\ast}G$%
, etc... We shall denote left and right multiplications on $G$ by $L_{g}$
and $R_{g}$, respectively. The right inner automorphism $
I_{g}=L_{g^{-1}}\circ R_{g} $
is a right representation of $G$ on $G$ satisfying $I_{g}\circ I_{h}=I_{hg}.$
The right adjoint action $Ad_{g}=T_{e}I_{g}$ of $G$ on $\mathfrak{g}$ is
defined as the tangent map of $I_{g}$ at the identity $e\in G$. The
infinitesimal right adjoint representation $ad_{\xi}\eta$ is $\left[ \xi
,\eta\right] $ and is defined as derivative of $Ad_{g}$ over the identity. A
right invariant vector field $X_{\xi}^{G}$ generated by $\xi\in\mathfrak{g}$
is of the form $
X_{\xi}^{G}\left( g\right) =T_{e}R_{g}\xi$.  
The identity $
\left[ \xi,\eta\right] = [ X_{\xi}^{G},X_{\eta}^{G} ] _{JL}$ 
defines the isomorphism between $\mathfrak{g}$ and the space $\mathfrak{X}
^{R} ( G ) $ of right invariant vector fields endowed with the
Jacobi-Lie bracket. The coadjoint action $Ad_{g}^{\ast}$ of $G$ on the dual $%
\mathfrak{g}^{\ast}$ of the Lie algebra $\mathfrak{g}$ is a right
representation and is the linear algebraic dual of $Ad_{g^{-1}}$, namely,
\begin{equation}
\left\langle Ad_{g}^{\ast}\mu,\xi\right\rangle =\left\langle
\mu,Ad_{g^{-1}}\xi\right\rangle  \label{dist*}
\end{equation}
holds for all $\xi\in\mathfrak{g}$ and $\mu\in\mathfrak{g}^{\ast}$. The
inverse element $g^{-1}$ appears in the definition \eqref{dist*} in order to
make $Ad_{g}^{\ast}$ a right action. The infinitesimal coadjoint action $%
ad_{\xi}^{\ast}$ of $\mathfrak{g}$ on $\mathfrak{g}^{\ast}$ is the linear
algebraic dual of $ad_{\xi}$. Note that, the infinitesimal generator of the
coadjoint action $Ad_{g}^{\ast}$ is minus the infinitesimal coadjoint action
$ad_{\xi}^{\ast}$, that is, if $g^{t}\subset G$ is a curve passing through
the identity in the direction of $\xi\in\mathfrak{g},$ then
\begin{equation}
\left. \frac{d}{dt}\right\vert
_{t=0}Ad_{g^{t}}^{\ast}\mu=-ad_{\xi}^{\ast}\mu.  \label{Adtoad}
\end{equation}

In the diagrams of this work, EL and EP will abbreviate Euler-Lagrange and
Euler-Poincaré equations, respectively, and PR, SR, LR, EPR and EPR
will denote Poisson, symplectic, Lagrangian, and Euler-Poincaré reductions, respectively.

\subsection{The first order tangent group $TG$}\label{subsect-tangent-group}~

The trivialization 
\begin{equation}\label{trTG}
tr_{TG}:TG\rightarrow G\circledS \G{g}_1, \qquad V_{g}\mapsto (g,T_{g}R_{g^{-1}}V_{g})=:(g,\xi),   
\end{equation}
enables us to endow $TG$ with the semi-direct product group structure on  $G\circledS\G{g}_1$ given by
\begin{equation}\label{tgtri}
( g,\xi^{(1)}) ( \tilde{g},\tilde{\xi}^{(1)}) =(
g\tilde{g},\xi^{(1)}+\Ad_{g}\tilde{\xi}^{(1)}),
\end{equation}
for any $\xi^{(1)}, \tilde{\xi}^{(1)} \in \G{g}_1 = \G{g}$. Accordingly, the Lie algebra of $TG\cong G\circledS \G{g}$ is the semi-direct sum Lie algebra $\G{g}_2\circledS \G{g}_3:=\G{g}\circledS \G{g}$ with the Lie bracket
\begin{equation}
[(\xi^{(2)},\xi^{(3)}),(\tilde{\xi}^{(2)},\tilde{\xi}^{(3)})] = ([\xi^{(2)},\tilde{\xi}^{(2)}], \ad_{\xi^{(2)}}\tilde{\xi}^{(3)}-\ad_{\tilde\xi^{(2)}} {\xi}^{(3)})
\end{equation}
for any $\xi^{(2)}, \tilde{\xi}^{(2)} \in \G{g}_2 = \G{g}$, and any $\xi^{(3)}, \tilde{\xi}^{(3)} \in \G{g}_3 = \G{g}$. Here, indices on the Lie algebra $\G{g}$ serve to distinguish the copies of $\G{g}$. For further details on tangent group see  \cite{hindeleh2006tangent, KolaMichSlov-book, marsden1991symplectic, michor2008topics, Rati80}.
\subsection{The first order cotangent group $T^*G$} ~

The cotangent bundle   $T^\ast G$ can also be endowed with a group structure borrowed from the semi-direct product group $G \circledS \G{g}^\ast$ via the right trivialization
\begin{equation}
tr_{T^{\ast}G}:T^{\ast}G\rightarrow G \circledS\G{g}^{\ast}, \qquad \alpha_{g}\mapsto \left(g,\,T_{e}^{\ast}R_{g}\alpha_{g} \right) .
\label{trT*G}
\end{equation}
The group operation on  $G \circledS \G{g}^\ast$ is  
\begin{equation}\label{rgc}
(g_1,\mu_1) ( g_2,\mu_2) :=\Big(g_1g_2,\,\mu_1+\Ad_{g_1}^\ast\mu_2\Big).  
\end{equation}
On the other hand, we can use to pull back the canonical 1-form $\theta_{T^{\ast}G}$ and the symplectic
2-form $\Omega_{T^{\ast}G}$ on the cotangent bundle $T^{\ast}G$ thereby decorating $G\circledS\G{g}^\ast $ with the structure of an exact symplectic manifold symplectic 2-form $\Omega_{G\circledS\G{g}^\ast }$ and a potential 1-form $\theta^{(\lambda,\eta)}_{G\circledS\G{g}^\ast }$.
 
A right invariant vector field $X_{( \xi,\nu) }^{G\circledS\G{g}^\ast }$ on $G\circledS\G{g}^\ast $ corresponding to $(\xi,\nu) \in \G{g}\circledS\G{g}^\ast $ at a point $(g,\mu)$ is given by \cite[App.B. (B.9)]{EsSu16}
\[
X_{(\xi, \nu) }^{G\circledS\G{g}^\ast }(g,\mu) =\Big( T_eR_{g}\xi,\nu+\ad^\ast_{\xi}\mu\Big).
\]
Accordingly, the values of the canonical 1-form $\theta^{(\lambda,\eta)}_{G\circledS\G{g}^\ast}$
and the symplectic 2-form $\Omega_{G\circledS\G{g}^\ast}$ on a right invariant vector
field are \cite{abraham1978foundations,AlekGrabMarmMich94,EsSu16,manga2015geometry}
\begin{align}
& \left\langle \theta^{(\lambda,\eta)}_{G\circledS\G{g}^\ast}, X_{( \xi,\nu) }^{G\circledS\G{g}^\ast}\right\rangle (g,\mu) 
=\left\langle \lambda,\xi\right\rangle+\left\langle \nu,\eta\right\rangle, \label{OhmT*G} \\
& \left\langle \Omega_{G\circledS\G{g}^\ast};\left( X_{( \xi_1,\nu_1) }^{G\circledS\G{g}^\ast},X_{(\xi_2,\nu_2) }^{G\circledS\G{g}^\ast}\right) \right\rangle \left(g,\mu\right) 
=\langle \nu_1,\xi_2\rangle - \langle \nu_2,\xi_1\rangle -\langle \mu,[ \xi_1,\xi_2] \rangle. \label{Ohm2T*G}
\end{align}

\begin{remark}
The symplectic 2-form $\Omega_{G\circledS\G{g}^\ast}$ is not
conserved under the group operation \eqref{rgc}, as such, $G\circledS\G{g}^\ast$ is not a symplectic Lie group as defined in \cite{lichnerowicz1988lie}.
\end{remark}

\subsection{The cotangent group of tangent group $T^*TG$}

\subsubsection{Trivialization}

The global trivialization of $%
T^{\ast}TG\simeq T^{\ast}\left( G\circledS\mathfrak{g}\right) $ can be
achieved by trivializing $T^{\ast}\left( G\circledS\mathfrak{g}\right) $
into the semidirect product group $G\circledS\mathfrak{g}_{1}$ and the dual $%
\mathfrak{g}_{2}^{\ast}\times\mathfrak{g}_{3}^{\ast}$ of its Lie algebra $%
\mathfrak{g}_{2}\circledS\mathfrak{g}_{3}$
\begin{equation} \label{trT*TG}
\begin{split}
tr_{T^{\ast}\left( G\circledS\mathfrak{g}\right) }& :T^{\ast}\left(
G\circledS\mathfrak{g}_{1}\right) \rightarrow\left( G\circledS \mathfrak{g}%
_{1}\right) \circledS\left( \mathfrak{g}_{2}^{\ast}\times\mathfrak{g}%
_{3}^{\ast}\right)  \\
& :\left( \alpha_{g},\alpha_{\xi}\right) \rightarrow\left( g,\xi
,T_{e}^{\ast}R_{g}\left( \alpha_{g}\right) +\ad_{\xi}^{\ast}\alpha_{\xi
},\alpha_{\xi}\right)  
\end{split} 
\end{equation}
which preserves the group multiplication rule
\begin{equation}\label{GrT*TG}
\begin{split}
& \left( g,\xi,\mu_{1},\mu_{2}\right) \left( h,\eta,\nu_{1},\nu_{2}\right)
\\
&\qquad \qquad  =\left( gh,\xi+\Ad_{g}\eta,\mu_{1}+\Ad_{g}^{\ast}\left( \nu
_{1}+\ad_{\Ad_{g^{-1}}\xi}^{\ast}\nu_{2}\right) ,\mu_{2}+\Ad_{g}^{\ast}\nu
_{2}\right)   \\
&\qquad \qquad =\left( gh,\xi+\Ad_{g}\eta,\mu_{1}+ \Ad_{g}^{\ast}\nu_{1}+\ad_{\xi}^{\ast}\Ad_{g}^{\ast}\nu_{2} ,\mu_{2}+\Ad_{g}^{\ast}\nu
_{2}\right) 
\end{split} 
\end{equation}
on $T^{\ast}TG$ and results in the following subgroups.

\begin{proposition}
The canonical immersions of the following submanifolds
\begin{equation} \label{immg}
\begin{split}
&G, \quad \mathfrak{g}_{1}, \quad\mathfrak{g}_{2}^{\ast}, \quad\mathfrak{%
g}_{3}^{\ast}, \quad G\circledS\mathfrak{g}_{1}, \quad G\circledS\mathfrak{%
g}_{2}^{\ast}, \quad G\circledS\mathfrak{g}_{3}^{\ast }, \quad\mathfrak{g}%
_{2}^{\ast}\times\mathfrak{g}_{3}^{\ast},  \\
&\mathfrak{g}_{1}\circledS
(\mathfrak{g}_{2}^{\ast} \times\mathfrak{g}_{3}^{\ast}),\quad (G\circledS  \mathfrak{g}_{1})\circledS%
\mathfrak{g}_{2}^{\ast} , \quad G\circledS\left( \mathfrak{g}%
_{2}^{\ast}\times\mathfrak{g}_{3}^{\ast}\right) 
\end{split} 
\end{equation}
define subgroups of $ T^{\ast}TG$ and hence they act on $ T^{\ast}TG$
by actions induced from the multiplication in Eq.(\ref{GrT*TG}).
\end{proposition}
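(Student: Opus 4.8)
The plan is to realize each of the eleven submanifolds in \eqref{immg} as the image of a canonical immersion into the trivialized group $\left(G\circledS\mathfrak{g}_{1}\right)\circledS\left(\mathfrak{g}_{2}^{\ast}\times\mathfrak{g}_{3}^{\ast}\right)$ obtained by setting the complementary coordinate blocks to $0$ (or to the identity $e$ in the $G$-slot), and then to verify directly, from the explicit product \eqref{GrT*TG}, that each such image contains the identity $\left(e,0,0,0\right)$, is closed under multiplication, and is closed under inversion. Concretely, I would first record the inverse in the ambient group, which a short computation from \eqref{GrT*TG} yields as
\begin{equation*}
\left(g,\xi,\mu_{1},\mu_{2}\right)^{-1}=\Big(g^{-1},\,-\Ad_{g^{-1}}\xi,\,\Ad_{g^{-1}}^{\ast}\big(\ad_{\xi}^{\ast}\mu_{2}-\mu_{1}\big),\,-\Ad_{g^{-1}}^{\ast}\mu_{2}\Big),
\end{equation*}
so that the subgroup conditions reduce to purely algebraic closure statements on the relevant coordinate blocks.

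Second, I would organize the whole verification around the single nonlinear feature of \eqref{GrT*TG}: the cross term $\ad_{\xi}^{\ast}\Ad_{g}^{\ast}\nu_{2}$ appearing in the $\mathfrak{g}_{2}^{\ast}$-slot. Every other contribution to the product is either the group multiplication in the $G$-slot or an additive (vector-space) operation twisted only by the actions $\Ad_{g}$ and $\Ad_{g}^{\ast}$, which preserve each of $\mathfrak{g}_{1}$, $\mathfrak{g}_{2}^{\ast}$ and $\mathfrak{g}_{3}^{\ast}$ separately. Consequently, the only mechanism by which closure could fail for a coordinate submanifold is this cross term escaping the submanifold. The guiding principle, which I would prove holds for every entry of \eqref{immg}, is that the cross term always stays inside: either the $\mathfrak{g}_{1}$-coordinate $\xi$ or the $\mathfrak{g}_{3}^{\ast}$-coordinate $\nu_{2}$ is constrained to vanish on the submanifold, so the term is identically zero, or else the $\mathfrak{g}_{2}^{\ast}$-coordinate into which it lands is itself a free direction of the submanifold, so the term is reabsorbed.

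With this principle in hand the individual checks become mechanical. For the one-block immersions $G$, $\mathfrak{g}_{1}$, $\mathfrak{g}_{2}^{\ast}$, $\mathfrak{g}_{3}^{\ast}$ closure is immediate, the first three being abelian and $\mathfrak{g}_{3}^{\ast}$ closing because $\ad_{0}^{\ast}=0$; the blocks $G\circledS\mathfrak{g}_{1}$, $G\circledS\mathfrak{g}_{2}^{\ast}$, $G\circledS\mathfrak{g}_{3}^{\ast}$ and $\mathfrak{g}_{2}^{\ast}\times\mathfrak{g}_{3}^{\ast}$ recover, respectively, the tangent-group and cotangent-group laws \eqref{tgtri} and \eqref{rgc} together with the abelian dual Lie algebra; and $\left(G\circledS\mathfrak{g}_{1}\right)\circledS\mathfrak{g}_{2}^{\ast}$ and $G\circledS\left(\mathfrak{g}_{2}^{\ast}\times\mathfrak{g}_{3}^{\ast}\right)$ close because they omit $\mathfrak{g}_{3}^{\ast}$ and $\mathfrak{g}_{1}$, respectively. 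Equivalently, each immersion can be presented as an injective homomorphism out of an abstractly defined group, which renders the subgroup property automatic and dispenses with a separate inverse check.

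The main obstacle is essentially bookkeeping: keeping the cross term correctly tracked through \eqref{GrT*TG} and its inverse in every case. The one genuinely noncommutative, and hence most delicate, instance is $\mathfrak{g}_{1}\circledS\left(\mathfrak{g}_{2}^{\ast}\times\mathfrak{g}_{3}^{\ast}\right)$, where $\xi$ and $\nu_{2}$ are both free so that $\ad_{\xi}^{\ast}\nu_{2}$ does not vanish; here closure holds precisely because this nonzero term lands in the free $\mathfrak{g}_{2}^{\ast}$-slot and is absorbed, the resulting law being exactly the semidirect product of $\mathfrak{g}_{1}$ acting coadjointly on $\mathfrak{g}_{2}^{\ast}\times\mathfrak{g}_{3}^{\ast}$. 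Verifying that this absorption is consistent with the inverse formula above is the step I would treat most carefully.
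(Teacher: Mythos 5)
Your proposal is correct and takes essentially the same route the paper leaves implicit: block-by-block verification of closure under the trivialized multiplication \eqref{GrT*TG} (together with the inverse formula you compute, which is right), recovering exactly the induced group laws \eqref{tgtri}, \eqref{rgc}, \eqref{GrP}, \eqref{GrGg*g*} and \eqref{Grgg*g*} that the paper records immediately after the proposition. Your organizing principle --- that the only possible obstruction is the cross term $\ad_{\xi}^{\ast}\Ad_{g}^{\ast}\nu_{2}$, which on every listed submanifold either vanishes (because $\xi=0$ or $\nu_{2}=0$ there) or lands in a free $\mathfrak{g}_{2}^{\ast}$-slot and is absorbed --- is sound and even explains why, e.g., $\mathfrak{g}_{1}\circledS\mathfrak{g}_{3}^{\ast}$ is absent from the list; the only slip is the passing remark that $G$ is abelian, which it need not be, though closure of the $(g,0,0,0)$ block is immediate regardless.
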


Here, the group structure on $G\circledS\mathfrak{g}_{1}$ is the one given
in \eqref{tgtri} whereas the group structure on are in the form of Eq.(\ref{rgc}) and, we obtain the
multiplications%
\begin{align}
\left( g,\xi,\mu\right) \left( h,\eta,\nu\right) & =(
gh,\xi+\Ad_{g}\eta,\mu+\Ad_{g}^{\ast}\nu)  \label{GrP} \\
\left( g,\mu_{1},\mu_{2}\right) \left( h,\nu_{1},\nu_{2}\right) & =(
gh,\mu_{1}+\Ad_{g}^{\ast}\nu_{1},\mu_{2}+\Ad_{g}^{\ast}\nu _{2})
\label{GrGg*g*} \\
\left( \xi,\mu_{1},\mu_{2}\right) \left( \eta,\nu_{1},\nu_{2}\right) & =(
\xi+\eta,\mu_{1}+\nu_{1}+\ad_{\xi}^{\ast}\nu_{2},\mu_{2}+\nu_{2})
\label{Grgg*g*}
\end{align}
defining the group structures on $(G\circledS \mathfrak{g}_{1})\circledS%
\mathfrak{g}_{2}^{\ast} $, $G\circledS\left( \mathfrak{g}%
_{2}^{\ast}\times\mathfrak{g}_{3}^{\ast}\right) $ and $ \mathfrak{g}%
_{1}\circledS( \mathfrak{g}_{2}^{\ast} \times \mathfrak{g}_{3}^{\ast})$,
respectively.

\subsubsection{Symplectic Structure}

By requiring the trivialization $tr_{T^{\ast}\left( G\circledS\mathfrak{g}%
\right) } $ be a symplectic map, we define a canonical one-form $%
\theta_{ T^{\ast}TG}$\ and a symplectic two-form $\Omega_{ T^{\ast
}TG} $ on the trivialized cotangent bundle$\ T^{\ast}TG$. To this end,
we recall that a right invariant vector field $X_{\left( \eta_1,\eta_2,\nu
_{1},\nu_{2}\right) }^{\text{ } T^{\ast}TG}$ on $ T^{\ast}TG$ is
generated by an element $\left( \eta_1,\eta_2,\nu_{1},\nu_{2}\right) $
in the Lie algebra $\left( \mathfrak{g}\circledS\mathfrak{g}\right)
\circledS\left( \mathfrak{g}^{\ast}\times\mathfrak{g}^{\ast}\right) $ of $ T^{\ast}TG$ by means of the tangent lift of right translation on $ T^{\ast}TG$. At a point $\left( g,\xi,\mu_1,\mu_2\right) $ in $ T^{\ast
}TG$, the value of such a right invariant vector field reads
\begin{equation}
X_{\left( \eta_1,\eta_2,\nu_{1},\nu_{2}\right) }^{ T^{\ast
}TG}\left( g,\xi,\mu_1,\mu_2\right) =\left( T_{e}R_{g}\eta_1,\eta_2+\ad_{\eta_1}\xi,\nu_{1}+\ad_{\eta_1}^{\ast}\mu_1+\ad_{\eta_2}^{\ast}\mu_2,\nu_{2}+\ad_{\eta_1}^{\ast}\mu_2\right)  \label{rivfT*TG}
\end{equation}
and is an element of the fiber $T_{\left( g,\xi,\mu_1,\mu_2\right) }\left(
 T^{\ast}TG\right) $. The values of canonical forms $\theta_{
 T^{\ast}TG}$ and $\Omega_{T^{\ast}TG}$ on right invariant vector
fields can then be computed as
\begin{align}
\langle\theta^{(\nu_1,\nu_2,\eta_1,\eta_2)}_{ T^{\ast}TG};X_{\left( \xi_1,\xi_2,\mu_{1},\mu_{2}\right) }^{ T^{\ast}TG}\rangle & =\left\langle \nu_1,\xi_1\right\rangle +\left\langle \nu_2,\xi_2\right\rangle +\left\langle \mu_1,\eta_1\right\rangle+\left\langle \mu_2,\eta_2\right\rangle  \label{thet1T*TG} \\
\left\langle \Omega_{ T^{\ast}TG};\left( X_{\left( \xi_1,\xi_2
,\mu_{1},\mu_{2}\right) }^{ T^{\ast}TG},X_{\left( \eta_1,%
\eta_2,\nu_{1},\nu_{2}\right) }^{\
 T^{\ast}TG}\right) \right\rangle \left(g,\xi,\lambda_1,\lambda_2\right) & =\left\langle \mu_{1} + \ad^\ast_{\xi_1}\lambda_1 + \ad^\ast_{\xi_2}\lambda_2,\eta_1\right\rangle -\left\langle \nu_{1} ,\xi_1\right\rangle  +\notag \\
&  \left\langle \mu_2+\ad^\ast_{\xi_1}\lambda_2,\eta_2\right\rangle -\left\langle \nu_{2} ,\xi_2\right\rangle.  \notag
\end{align}
The musical isomorphism $\Omega_{T^{\ast}TG}^{\flat}$, induced from
the symplectic two-form $\Omega_{ T^{\ast}TG},$ maps the tangent bundle
$T( T^{\ast}TG)$ to the cotangent bundle $T^{\ast}( T^{\ast}TG)$. It
takes the right invariant vector field in Eq.(\ref{rivfT*TG}) to an element
of the cotangent bundle $T_{\left( g,\xi,\mu_1,\mu_2\right) }^{\ast}( T^{\ast
}TG)$ with coordinates
\begin{equation} \label{bemTT*G}
\begin{split}
\Omega_{ T^{\ast}TG}^{\flat}\left( X_{\left( \eta_1,\eta_2,\lambda
_{1},\lambda_{2}\right) }^{T^{\ast}TG}\left( g,\xi,\mu_1
,\mu_2\right) \right) & = T_{(g,\xi)}^\ast R_{(g,\xi)^{-1}}(\lambda_1,\lambda_2),-(\eta_1,\eta_2)  \\
& =\left( T_{g}^{\ast}R_{g^{-1}} \lambda
_{1}-\ad_{\Ad_{g^{-1}}\xi}^{\ast}\lambda_{2} ,\lambda_{2},-\eta_1,-\eta_2\right) .
\end{split} 
\end{equation}

\begin{remark}
The actions of the subgroups $\mathfrak{g}_{2}^{\ast}$ and $\mathfrak{g}%
_{3}^{\ast}$ are not symplectic, nor are any subgroup in the list of Eq.(\ref{immg}%
) containing $\mathfrak{g}_{2}^{\ast}$ and $\mathfrak{g}_{3}^{\ast}$. There
remains only the action of the group $G\circledS\mathfrak{g}$ to perform symplectic reduction on $T^*TG$. 
\end{remark}

\subsection{The cotangent group of cotangent group $T^*T^*G$}

The
global trivialization of the iterated cotangent bundle can be achieved by
semidirect product of the group $G\circledS\mathfrak{g}_{1}^{\ast}$ and
the dual $\mathfrak{g}_{2}^{\ast}\times\mathfrak{g}_{3}$ of its Lie algebra
\cite{esen2014tulczyjew}. The trivialization map 
\begin{equation}  \label{trT*T*G}
\begin{split}
tr_{T^{\ast}T^{\ast}G}  & :T^{\ast}\left( G\circledS\mathfrak{g}^{\ast
}\right) \rightarrow\left( G\circledS\mathfrak{g} ^{\ast}\right)
\circledS\left( \mathfrak{g} ^{\ast}\times\mathfrak{g} \right)  \\
& :\left( \alpha_{g},\alpha_{\mu}\right) \rightarrow\left( g,\mu
,T_{e}^{\ast}R_{g}\left( \alpha_{g}\right)
-\ad_{\alpha_{\mu}}^{\ast}\mu,\alpha_{\mu}\right)   
\end{split} 
\end{equation}
implies on $ T^{\ast}T^{\ast}G$, the group multiplication rule
\begin{equation}\label{Gr1T*T*G}
\left( g,\mu_1,\mu_{2},\xi\right) \left( h,\nu_1,\nu_{2},\eta\right)   =\left(
gh,\mu_1+\Ad_{g}^{\ast}\nu_1,\mu_{2}+\Ad_{g}^{\ast}%
\nu_{2}-\ad_{\Ad_{g}\eta}^{\ast}\mu_1,\xi+\Ad_{g}\eta
\right) .
\end{equation}

\begin{proposition}
Embeddings of following subspaces
\begin{equation}  \label{immg2}
\begin{split}
& G,\text{ }\mathfrak{g}_{1}^{\ast}, \quad\mathfrak{g}_{2}^{\ast}, \quad%
\mathfrak{g}_{3}, \quad G\circledS\mathfrak{g}_{1}^{\ast}, \quad G\circledS%
\mathfrak{g}_{2}^{\ast}, \quad G\circledS\mathfrak{g}_{3}, \quad\mathfrak{%
g}_{1}^{\ast}\circledS\mathfrak{g}_{2}^{\ast}, \quad\mathfrak{g}%
_{2}^{\ast}\times\mathfrak{g}_{3}, \\
& (G\circledS\mathfrak{g}_{1}^{\ast})\circledS\mathfrak{g}_{2}^{\ast}, \quad%
G\circledS\left( \mathfrak{g}_{2}^{\ast}\times\mathfrak{g}_{3}\right) , \quad \mathfrak{g}_{1}^{\ast}\circledS\left( \mathfrak{g}_{2}^{\ast}\times\mathfrak{g}_{3}\right)  
\end{split} 
\end{equation}
define subgroups of $ T^{\ast}T^{\ast}G$ and hence they act on $ T^{\ast}T^{\ast}G$ by actions induced from the multiplication in Eq.(\ref%
{Gr1T*T*G}).
\end{proposition}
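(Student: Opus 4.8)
The plan is to verify the three subgroup axioms directly against the multiplication \eqref{Gr1T*T*G}, reading each entry of \eqref{immg2} as the subset of $T^{\ast}T^{\ast}G$ obtained by fixing the complementary coordinates at the neutral element. First I would fix that neutral element: since the $G$-factor contributes $e$ while $\mathfrak{g}_1^{\ast},\mathfrak{g}_2^{\ast},\mathfrak{g}_3$ are vector spaces, the identity of $T^{\ast}T^{\ast}G$ is $(e,0,0,0)$, and it lies in every subset of the list. Next I would record the inverse by solving $(g,\mu_1,\mu_2,\xi)(h,\nu_1,\nu_2,\eta)=(e,0,0,0)$ in \eqref{Gr1T*T*G}; using that $\Ad^{\ast}$ is a right action, so that $(\Ad_g^{\ast})^{-1}=\Ad_{g^{-1}}^{\ast}$ and $\Ad_g\eta=-\xi$, this gives $(g,\mu_1,\mu_2,\xi)^{-1}=\bigl(g^{-1},-\Ad_{g^{-1}}^{\ast}\mu_1,-\Ad_{g^{-1}}^{\ast}(\mu_2+\ad_{\xi}^{\ast}\mu_1),-\Ad_{g^{-1}}\xi\bigr)$. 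With the identity and the inverse in hand, everything reduces to closure under \eqref{Gr1T*T*G}.

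The key structural observation is that the only term in \eqref{Gr1T*T*G} coupling distinct coordinate blocks is $-\ad_{\Ad_g\eta}^{\ast}\mu_1$, which sits in the $\mathfrak{g}_2^{\ast}$-slot; it vanishes as soon as either the $\mathfrak{g}_1^{\ast}$-coordinate $\mu_1$ of the left factor or the $\mathfrak{g}_3$-coordinate $\eta$ of the right factor is zero. Scanning \eqref{immg2}, the only entry carrying \emph{both} a nontrivial $\mathfrak{g}_1^{\ast}$- and a nontrivial $\mathfrak{g}_3$-coordinate is $\mathfrak{g}_1^{\ast}\circledS(\mathfrak{g}_2^{\ast}\times\mathfrak{g}_3)$; for every other entry the coupling term drops out identically, so \eqref{Gr1T*T*G} restricts to the evident product assembled from \eqref{rgc} on the cotangent factors $G\circledS\mathfrak{g}_i^{\ast}$ and from \eqref{tgtri} on the tangent factor $G\circledS\mathfrak{g}_3$ (degenerating to a direct sum wherever the $G$-factor is trivial). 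Closure under multiplication and under the inverse above is then immediate, and along the way I would read off the induced operations, for instance $(g,\mu_1,\mu_2)(h,\nu_1,\nu_2)=(gh,\mu_1+\Ad_g^{\ast}\nu_1,\mu_2+\Ad_g^{\ast}\nu_2)$ on $(G\circledS\mathfrak{g}_1^{\ast})\circledS\mathfrak{g}_2^{\ast}$, and $(g,\mu_2,\xi)(h,\nu_2,\eta)=(gh,\mu_2+\Ad_g^{\ast}\nu_2,\xi+\Ad_g\eta)$ on $G\circledS(\mathfrak{g}_2^{\ast}\times\mathfrak{g}_3)$.

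It then remains to treat $\mathfrak{g}_1^{\ast}\circledS(\mathfrak{g}_2^{\ast}\times\mathfrak{g}_3)$, where $g=e$ forces $\Ad_g=\Id$ yet the coupling survives: \eqref{Gr1T*T*G} specialises to $(\mu_1,\mu_2,\xi)(\nu_1,\nu_2,\eta)=(\mu_1+\nu_1,\mu_2+\nu_2-\ad_{\eta}^{\ast}\mu_1,\xi+\eta)$. Since the surviving term $-\ad_{\eta}^{\ast}\mu_1$ again lands in the $\mathfrak{g}_2^{\ast}$-slot, which belongs to this subset, the product stays inside, and the specialised inverse $(-\mu_1,-\mu_2-\ad_{\xi}^{\ast}\mu_1,-\xi)$ does too; conjugating $(0,\mu_2,\xi)$ by $(\mu_1,0,0)$ then exhibits this as a genuine nonabelian semidirect product rather than a direct product, with the $\ad^{\ast}$-coupling realising the action of $\mathfrak{g}_1^{\ast}$ on the normal factor $\mathfrak{g}_2^{\ast}\times\mathfrak{g}_3$. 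I expect the work to be bookkeeping rather than conceptual: the main points of care are keeping the right-action convention for $\Ad^{\ast}$ consistent (it governs both the inverse formula and the sign in the coupling term), and confirming that the list \eqref{immg2} is arranged precisely so that $\mathfrak{g}_2^{\ast}$ is always present to absorb the coupling whenever $\mathfrak{g}_1^{\ast}$ and $\mathfrak{g}_3$ occur together — which is exactly what makes every entry, and in particular this last one, closed.
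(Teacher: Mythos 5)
Your proposal is correct and takes essentially the same route as the paper, which states this proposition without a separate proof: the justification is exactly closure of each listed subset under the multiplication in Eq.(\ref{Gr1T*T*G}), with the induced operations identified (up to reordering) with Eqs.(\ref{rgc}), (\ref{tgtri}), (\ref{GrP}), (\ref{GrGg*g*}) and (\ref{Grgg*g*}). Your explicit inverse formula and the key observation that the single cross-coupling term $-\ad_{\Ad_{g}\eta}^{\ast}\mu_{1}$ either vanishes (when $\mu_{1}=0$ or $\eta=0$) or lands in a $\mathfrak{g}_{2}^{\ast}$-slot that the subset contains is precisely the bookkeeping the paper leaves implicit.
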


The group structures on $G\circledS\left( \mathfrak{g}_{2}^{\ast}\times%
\mathfrak{g}_{3}\right) $, $G\circledS\mathfrak{g}_{1}^{\ast}\circledS%
\mathfrak{g}_{2}^{\ast}$, $\mathfrak{g}_{1}^{\ast} \circledS \left( \mathfrak{g}_{2}^{\ast}\times\mathfrak{g}%
_{3}\right)$ are (up to some reordering)
given by Eqs.(\ref{GrP}), (\ref{GrGg*g*}) and (\ref{Grgg*g*}), respectively.

\subsubsection{Symplectic Structure on $T^*T^*G$}

The canonical one-form and the symplectic two form on $T^{\ast}T^{\ast}G$
can be mapped by $tr_{T^{\ast}T^{\ast}G} $ to $ T^{\ast}T^{\ast}G$
based on the fact that the trivialization map is a symplectic
diffeomorphism. Consider a right invariant vector
field $X_{\left( \eta_1,\nu_{1},\nu_{2},\eta_2\right) }^{ T^{\ast}T^{\ast}G}$ generated by an element $\left(
\eta_1,\nu_{1},\nu_{2},\eta_2\right) $ in the Lie algebra $\left(
\mathfrak{g}\circledS\mathfrak{g}^{\ast}\right) \circledS\left( \mathfrak{g}%
^{\ast}\times\mathfrak{g}\right) $ of $ T^{\ast}T^{\ast}G$. At the point $%
\left( g,\mu_1,\mu_2,\xi\right) ,$ the right invariant vector
\begin{equation}
X_{\left( \eta_1,\nu_{1},\nu_{2},\eta_2\right) }^{T^{\ast
}T^{\ast}G}\left( g,\mu_1,\mu_2,\xi\right)=\left( TR_{g}\eta_1,\nu_{1}+\ad_{\eta_1}^{\ast}\mu_1,\nu_{2}+\ad_{\eta_1}^{\ast}\mu_2-\ad_{\xi}^{\ast}\nu_{1},\eta_2+\ad_{\eta_1}\xi\right)
\label{RiT*T*G}
\end{equation}
is an element of $T_{\left( g,\mu_1,\mu_2,\xi\right) }\left(  T^{\ast
}T^{\ast}G\right) .$ The values of canonical forms $\theta_{ T^{\ast
}T^{\ast}G}$ and $\Omega_{ T^{\ast}T^{\ast}G}$ at right invariant
vector fields can now be evaluated to be
\begin{align}
\langle\theta^{(\nu_1,\eta_1,\eta_2,\nu_2)}_{ T^{\ast}T^{\ast}G},X_{\left(
\xi_1,\mu_{1},\mu_{2},\xi_2\right) }^{T^{\ast}T^{\ast}G}\rangle
& =\left\langle \nu_1,\xi_1\right\rangle +\left\langle
\mu_{1},\eta_1\right\rangle  +\left\langle
\mu_{2},\eta_2\right\rangle  +  \left\langle \nu_2,\xi_2\right\rangle\label{thet1T*T*G} \\
\langle\Omega_{ T^{\ast}T^{\ast}G};\left( X_{\left( \xi_1
,\mu_{1},\mu_{2},\xi_2\right) }^{ T^{\ast}T^{\ast}G},X_{\left(
\eta_1,\nu_{1},\nu_{2},\eta_2\right) }^{ T^{\ast}T^{\ast}G}\right) \rangle (g,\lambda_1,\lambda_2,\zeta) & =\left\langle \mu_2 +\ad^\ast_{\xi_1}\lambda_2 - \ad^\ast_{\zeta}\mu_1,\eta_{1}\right\rangle - \left\langle \mu_1,\eta_1\right\rangle + \notag
\\
&\left\langle -\nu_2+\ad^\ast_{\zeta}\nu_1,\xi_1 \right\rangle +\left\langle \nu_1,\xi_{2}\right\rangle.
\label{OhmT*T*G}
\end{align}
The musical isomorphism $\Omega_{ T^{\ast}T^{\ast}G}^{\flat}$,
induced from the symplectic two-form $\Omega_{ T^{\ast}T^{\ast}G}$
in Eq.(\ref{OhmT*T*G}), maps $T\left(  T^{\ast}T^{\ast}G\right) $ to $%
T^{\ast}\left(  T^{\ast}T^{\ast}G\right) $. At the point $\left(
g,\mu_1,\mu_2,\xi\right) $, $\Omega_{ T^{\ast}T^{\ast}G}^{\flat}$
takes the vector in Eq.(\ref{RiT*T*G}) to the element%
\begin{equation}
\begin{split}
\Omega_{T^{\ast}T^{\ast}G}^{\flat}\left( X_{\left( \eta_1
,\nu_{1},\nu_{2},\eta_2\right) }^{T^{\ast}T^{\ast}G}%
\right) &=\left( T_{(g,\mu_1)}^{\ast}R_{(g,\mu_1)^{-1}}\left(
\nu_{2},\eta_2\right) ,-\left(\eta_1,\nu_1\right)\right) \\
& = \Big(T^\ast R_{g^{-1}}(\nu_2)-\ad^\ast_{\eta_2}\Ad^\ast_{g^{-1}}\mu_1, \eta_2, -\eta_1, -\nu_1\Big)
\end{split} 
\end{equation}
in $T_{\left( g,\mu_1,\mu_2,\xi\right) }^{\ast}\left(  T^{\ast}T^{\ast
}G\right) .$

\begin{remark}
Actions of subgroups $\mathfrak{g}_{2}^{\ast}$ and $\mathfrak{g}_{3}$, and
hence any subgroup in the list (\ref{immg2}) containing $\mathfrak{g}%
_{2}^{\ast}$ and $\mathfrak{g}_{3}$, are not symplectic. Thus, there remains
only the action of the group $G\circledS\mathfrak{g}_{1}^{\ast}$ to perform symplectic reduction on $T^*T^*G$. 
\end{remark}

\subsection{The tangent group of cotangent group $TT^*G$}
~

$TT^{\ast}G\simeq T\left( G\circledS\mathfrak{g}^{\ast}\right) $ can be
trivialized as semidirect product of the group $G\circledS\mathfrak{g}%
^{\ast}$ and its Lie algebra $\mathfrak{g}\circledS\mathfrak{g}^{\ast}$ by
\begin{equation}\label{trTT*G}
\begin{split}
tr_{TT^{\ast}G}  & :T\left( G\circledS\mathfrak{g}^{\ast}\right)
\rightarrow\left( G\circledS\mathfrak{g}_{1}^{\ast}\right) \circledS\left(
\mathfrak{g}_{2}\circledS\mathfrak{g}_{3}^{\ast}\right)  
\\
& :\left( V_{g},V_{\mu}\right) \rightarrow\left(
g,\mu,TR_{g^{-1}}V_{g},V_{\mu}-\ad_{TR_{g^{-1}}V_{g}}^{\ast}\mu\right) ,
\end{split} 
\end{equation}
where $\left( V_{g},V_{\mu}\right) \in T_{\left( g,\mu\right) }\left(
G\circledS\mathfrak{g}^{\ast}\right) $ \cite{esen2014tulczyjew}. The group multiplication on $TT^{\ast}G$ is
\begin{equation}\label{GrTT*G}
\begin{split}
& \left( g,\mu_1,\xi,\mu_{2}\right) \left( h,\nu_1,\eta,\nu_{2}\right)  \\
& =\left(
gh,\mu_1+\Ad_{g}^{\ast}\nu_1,\xi+\Ad_{g}\eta,%
\mu_{2}+\Ad_{g}^{\ast}\nu_{2}-\ad_{\Ad_{g}\eta}^{\ast}\mu_1\right)
\end{split} 
\end{equation}
and embedded subgroups of $TT^{\ast}G$ follow.

\begin{proposition}
The embeddings of the subspaces
\begin{equation}
\begin{split}
& G,\quad\mathfrak{g}_{1}^{\ast},\quad\mathfrak{g}_{2},\quad\mathfrak{g}_{3}^{\ast}\text{, }%
G\circledS\mathfrak{g}_{1}^{\ast},\quad G\circledS\mathfrak{g}_{2},\quad G\circledS
\mathfrak{g}_{3}^{\ast},\quad\mathfrak{g}_{1}^{\ast}\circledS\mathfrak{g}%
_{3}^{\ast}, \quad \mathfrak{g}_{2}\circledS\mathfrak{g}_{3}^{\ast},   \\
& \left(G\circledS \mathfrak{g}_{1}^{\ast}\right)\circledS \mathfrak{g}_{3}^{\ast
}, \quad G\circledS\left( \mathfrak{g}_{2}\circledS\mathfrak{g}%
_{3}^{\ast}\right) ,\quad \mathfrak{g}_{1}^{\ast}\circledS\left( \mathfrak{g}%
_{2} \circledS\mathfrak{g}_{3}^{\ast}\right)   
\end{split} 
\end{equation}
of $ TT^{\ast}G$ define its subgroups. The group structures on $%
G\circledS\mathfrak{g,}$ $G\circledS\mathfrak{g}^{\ast}$ are defined by Eqs.(%
\ref{tgtri}) and (\ref{rgc}), respectively. The group structures on $%
\left( G\circledS\mathfrak{g}_{1}^{\ast}\right) \circledS\mathfrak{g}_{3}^{\ast
}$, $G\circledS\left( \mathfrak{g}_{2}\circledS\mathfrak{g}_{3}^{\ast
}\right) $ and $\mathfrak{g}_{1}^{\ast}\circledS\left( \mathfrak{g}_{2}
\circledS\mathfrak{g}_{3}^{\ast}\right)$ are defined (up to some reordering) by
Eqs.(\ref{GrGg*g*}),(\ref{GrP}) and (\ref{Grgg*g*}), respectively. The group
multiplications on $\mathfrak{g}_{1}^{\ast},$ $\mathfrak{g}_{2},$ $\mathfrak{%
g}_{3}^{\ast}$, $\mathfrak{g}_{1}^{\ast}\times\mathfrak{g}_{3}^{\ast}$ and $%
\mathfrak{g}_{2}\times\mathfrak{g}_{3}^{\ast}$ are vector additions.
\end{proposition}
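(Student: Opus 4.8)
The plan is to verify the subgroup claim by direct substitution into the multiplication law \eqref{GrTT*G}, reading each listed subspace as the coordinate slice of $TT^\ast G$ obtained by freezing the absent coordinates at their identity values ($e\in G$ and $0$ in the linear factors $\mathfrak{g}_1^\ast$, $\mathfrak{g}_2$, $\mathfrak{g}_3^\ast$). Since the identity $(e,0,0,0)$ lies in every such slice, it is enough to check closure under the product \eqref{GrTT*G} and under inversion, and then to read off the induced law on each slice.

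The first step is to isolate the sole obstruction to closure. In the product
\begin{equation*}
(g,\mu_1,\xi,\mu_2)(h,\nu_1,\eta,\nu_2)=\bigl(gh,\ \mu_1+\Ad_g^\ast\nu_1,\ \xi+\Ad_g\eta,\ \mu_2+\Ad_g^\ast\nu_2-\ad_{\Ad_g\eta}^\ast\mu_1\bigr)
\end{equation*}
the first three output coordinates $gh$, $\mu_1+\Ad_g^\ast\nu_1$ and $\xi+\Ad_g\eta$ each depend only on the input data of the corresponding type, so these slots close automatically; the only genuinely coupling contribution is the cross-term $-\ad_{\Ad_g\eta}^\ast\mu_1$ in the $\mathfrak{g}_3^\ast$-slot, which mixes the $\mathfrak{g}_1^\ast$-coordinate $\mu_1$ of the left factor with the $\mathfrak{g}_2$-coordinate $\eta$ of the right factor. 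This term vanishes precisely when $\mu_1=0$ or $\eta=0$. Running through the list, every entry has at least one of these constrained to zero, except the two slices $(G\circledS\mathfrak{g}_1^\ast)\circledS\mathfrak{g}_3^\ast$ and $\mathfrak{g}_1^\ast\circledS(\mathfrak{g}_2\circledS\mathfrak{g}_3^\ast)$; in the former $\eta=0$ annihilates the term, while in the latter the $\mathfrak{g}_3^\ast$-coordinate is free and simply absorbs the surviving $-\ad_\eta^\ast\mu_1$. Conversely I would point out that the naive slice $\{(e,\mu_1,\xi,0)\}$ fails closure exactly because this cross-term creates a nonzero $\mathfrak{g}_3^\ast$-component; this is why no entry of the form $\mathfrak{g}_1^\ast\circledS\mathfrak{g}_2$ appears, and it pins down the list as precisely the closed slices.

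The second step is to read off the restricted laws. Specializing the displayed product to each slice recovers, directly, the tangent-group law \eqref{tgtri} on $G\circledS\mathfrak{g}_2$ (where $\Ad_g\eta$ survives), the cotangent law \eqref{rgc} on each of $G\circledS\mathfrak{g}_1^\ast$ and $G\circledS\mathfrak{g}_3^\ast$, the double semidirect law \eqref{GrGg*g*} on $(G\circledS\mathfrak{g}_1^\ast)\circledS\mathfrak{g}_3^\ast$, and the law \eqref{GrP} on $G\circledS(\mathfrak{g}_2\circledS\mathfrak{g}_3^\ast)$. On the purely linear slices one has $g=h=e$, so $\Ad_e=\Id$ and every $\ad^\ast$-term drops out, leaving plain vector addition; in particular $\mathfrak{g}_1^\ast\circledS\mathfrak{g}_3^\ast$ and $\mathfrak{g}_2\circledS\mathfrak{g}_3^\ast$ are in fact abelian inside $TT^\ast G$, which is why the statement records them as $\mathfrak{g}_1^\ast\times\mathfrak{g}_3^\ast$ and $\mathfrak{g}_2\times\mathfrak{g}_3^\ast$. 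Finally, closure under inversion follows from solving \eqref{GrTT*G} for the inverse, namely $(g,\mu_1,\xi,\mu_2)^{-1}=\bigl(g^{-1},-\Ad_{g^{-1}}^\ast\mu_1,-\Ad_{g^{-1}}\xi,-\Ad_{g^{-1}}^\ast(\mu_2+\ad_\xi^\ast\mu_1)\bigr)$, from which one checks that each frozen coordinate remains frozen on every listed slice.

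I expect the entire argument to be routine computation; the one delicate point is the bookkeeping of the cross-term $-\ad_{\Ad_g\eta}^\ast\mu_1$. It controls both why the list contains exactly these slices and the identification of the nonabelian slice $\mathfrak{g}_1^\ast\circledS(\mathfrak{g}_2\circledS\mathfrak{g}_3^\ast)$ with \eqref{Grgg*g*}: there the surviving coupling $-\ad_\eta^\ast\mu_1$ pairs the right factor's $\mathfrak{g}_2$-coordinate with the left factor's $\mathfrak{g}_1^\ast$-coordinate, whereas in \eqref{Grgg*g*} the roles of the two factors are interchanged, so matching them requires not merely reordering the three factors but also the standard isomorphism of a group with its opposite via inversion. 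Making this relabeling explicit is the only part that demands care.
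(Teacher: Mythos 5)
Your proposal is correct and takes essentially the same route the paper implicitly relies on: direct verification, slice by slice, that each frozen coordinate set is closed under the multiplication law \eqref{GrTT*G} (the only nontrivial point being the cross-term $-\ad_{\Ad_g\eta}^{\ast}\mu_1$) and under inversion, followed by reading off the induced group laws. Your final caveat is a genuine sharpening of the paper's wording: on $\mathfrak{g}_1^{\ast}\circledS(\mathfrak{g}_2\circledS\mathfrak{g}_3^{\ast})$ the surviving coupling $-\ad_{\eta}^{\ast}\mu_1$ has the right factor acting on the left one, opposite to \eqref{Grgg*g*}, so the identification requires composing a (signed) reordering with the inversion anti-automorphism rather than a pure reordering, as you say.
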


\subsubsection{Tulczyjew symplectic strcuture on $TT^*G$.}

$TT^*G$ is central in Tulczyjew's triplet and carries a two-sided symplectic two-form. 
An element $\left( \eta_{1},\nu_{1},\eta_{2},\nu_{2}\right) $ in the
semidirect product Lie algebra $\left( \mathfrak{g}\circledS\mathfrak{g}%
^{\ast}\right) \circledS\left( \mathfrak{g}\circledS\mathfrak{g}^{\ast
}\right) $ defines a right invariant vector field on $\  TT^{\ast}G$ by
the tangent lift of right translation in $ TT^{\ast}G$. At a point $%
\left( g,\mu_1,\xi,\mu_2\right) $, a right invariant vector is given by%
\begin{equation}
X_{\left( \eta_{1},\nu_{1},\eta_{2},\nu_{2}\right) }^{ TT^{\ast}G}\left( g,\mu_1,\xi,\mu_2\right)=\left(
TR_{g}\eta_{1},\nu_{1}+\ad_{\eta_{1}}^{\ast}\mu_1,\eta_{2}+\ad_{\eta_1} \xi,\nu_{2}+\ad_{\eta_{1}}^{\ast}\mu_2-\ad_{\xi
}^{\ast}\nu_{1}\right) .  \label{RITT*G}
\end{equation}
The bundle $T\left( G\circledS\mathfrak{g}^{\ast}\right) $ carries
Tulczyjew's symplectic two-form $\Omega_{T\left( G\circledS\mathfrak{g}%
^{\ast}\right) }$ with two potential one-forms. The one-forms $\theta_{1}$
and $\theta_{2}$ are obtained by taking derivations of the symplectic
two-form $\Omega_{G\circledS\mathfrak{g}^{\ast}}$ and the canonical one-form
$\theta_{G\circledS\mathfrak{g}^{\ast}}$ respectively in Eq.(\ref{OhmT*G})
\cite{esen2014tulczyjew}. By requiring the trivialization $tr_{TT^{\ast}G}$ in Eq.(%
\ref{trTT*G}) be a symplectic mapping, we obtain an exact symplectic
structure $\Omega_{ TT^{\ast}G}$ with two potential one-forms $\theta_{1}$
and $\theta_{2}$ taking the values%
\begin{align}
& \left\langle \Omega_{\  TT^{\ast}G};\left( X_{\left( \xi_{2},\nu
_{2},\xi_{3},\nu_{3}\right) }^{ TT^{\ast}G},X_{\left( \bar{\xi}_{2},%
\bar{\nu}_{2},\bar{\xi}_{3},\bar{\nu}_{3}\right) }^{ TT^{\ast}G}\right)
\right\rangle \left( g,\mu,\xi,\nu\right) =\left\langle \nu_{3},\bar{\xi}_{2}\right\rangle -\left\langle
\nu_{2},\bar{\xi}_{3}\right\rangle +\left\langle \bar{\nu}%
_{2},\xi_{3}\right\rangle  \notag \\
&\hspace{5cm} -\left\langle \bar{\nu}_{3},\xi_{2}\right\rangle -\left\langle \nu,\left[
\xi_{2},\bar{\xi}_{2}\right] \right\rangle +\left\langle \xi,\ad^\ast_{\bar{\xi}_2}\nu_2 - \ad^\ast_{\xi_2}\bar{\nu}_2 \right\rangle ,  \label{SymTT*G} \\
& \left\langle \theta_{1}^{(\lambda_2,\eta_2,\lambda_3,\eta_3)},X_{\left( \xi_{2},\nu_{2},\xi_{3},\nu_{3}\right)
}^{ TT^{\ast}G}\right\rangle =\left\langle \lambda_2,\xi_{2}\right\rangle
+\left\langle \nu_{2},\eta_2\right\rangle +\left\langle \lambda_3,\xi _{3} \right\rangle +\left\langle \nu_3,\eta _{3} \right\rangle ,  \label{1} \\
& {\left\langle \theta_{2}^{(\lambda_2,\eta_2,\lambda_3,\eta_3)},X_{\left( \xi_{2},\nu_{2},\xi_{3},\nu_{3}\right)
}^{ TT^{\ast}G}\right\rangle =\left\langle \mu,\xi_{3}\right\rangle
+\left\langle \nu,\xi_{2}\right\rangle +\left\langle \mu,\left[ \xi,\xi _{2}%
\right] \right\rangle} ,  \label{2}
\end{align}
on right invariant vector fields of the form of Eq.(\ref{RITT*G}). At a point $%
\left( g,\mu,\xi,\nu\right) \in TT^{\ast}G$, the musical isomorphism $%
\Omega_{ TT^{\ast}G}^{\flat}$, induced from $\Omega_{ TT^{\ast}G}$%
, maps the image of a right invariant vector field $X_{\left( \xi_{2},\nu
_{2},\xi_{3},\nu_{3}\right) }^{ TT^{\ast}G}$ to an element
\begin{equation}
\Omega_{ TT^{\ast}G}^{\flat}( X_{\left( \xi_{2},\nu_{2},\xi_{3},\nu
_{3}\right) }^{ TT^{\ast}G}) =\Big( T_{g}^{\ast}R_{g^{-1}} \nu
_{3}-\ad_{\xi_3}^{\ast}\Ad^\ast_{g^{-1}}\mu ,\xi_{3}
,-\nu_{2},-\xi_{2}\Big)
\end{equation}
of $T_{\left( g,\mu,\xi,\nu\right) }^{\ast}\left(  TT^{\ast}G\right) .$

\section{Dynamics on the First Order Bundles}

\subsection{Lagrangian dynamics on tangent group $TG$}~

Given a Lagrangian
function $L:TG\to \B{R}$, let $\bar{L}:G\circledS \G{g}\to \B{R}$ be the corresponding function determined by $\bar{L}\circ tr_{TG} =L.$ The variation of the action integral of the latter is computed as
\begin{equation}\label{act}
\delta \int_{a}^{b}\bar{L}\left( \xi,g \right) dt=\int_{a}^{b} \, \left(\left\langle \frac{\delta \bar{L}}{\delta \xi },\delta \xi
\right\rangle _{e}+ 
\left\langle \frac{\delta \bar{L}}{\delta g},\delta g\right\rangle
_{g}\right)\,dt,  
\end{equation}%
applying the Hamilton's principle to the variations of the group (base) component, and the reduced variational principle
\begin{equation}\label{rvp}
\delta \xi =\dot{\eta}+\left[ \xi ,\eta \right]  
\end{equation}%
to the variations of the Lie algebra (fiber) component. For the reduced
variational principle we refer to \cite{CendMarsPekaRati03, esen2015tulczyjew, holm2008geometric,
Ho09,MarsdenRatiu-book} and for the Lagrangian dynamics on semidirect products to
\cite{BobeSuri99-II,cendra1998lagrangian, holm1998euler,ratiu1982euler,ratiu1982lagrange,marsden2000reduction,marsden1984semidirect}.
For the following result see \cite{BouMars09, colombo2013higher, colombo2013optimal, engo2003partitioned,
esen2015tulczyjew}.

\begin{proposition}
The trivialized Euler-Lagrange dynamics generated by a Lagrangian density $\bar{L}:G\circledS \G{g} \to \B{R}$ is given by 
\begin{equation}\label{preeulerlagrange}
\frac{d}{dt}\frac{\delta\bar{L}}{\delta\xi}=T_e^{\ast}R_{g}\frac{\delta
\bar{L}}{\delta g}-\ad_{\xi}^{\ast}\frac{\delta\bar{L}}{\delta\xi}.
\end{equation}
\end{proposition}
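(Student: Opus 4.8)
The plan is to read off \eqref{preeulerlagrange} from Hamilton's principle applied to the action $\int_a^b \bar L(g,\xi)\,dt$, starting from the first variation \eqref{act} together with the reduced variational principle \eqref{rvp}. I would introduce the right-trivialized variation of the base point, $\eta := T_gR_{g^{-1}}\delta g \in \G{g}$, so that $\delta g = T_eR_g\eta$ and $\eta$ is an arbitrary curve in $\G{g}$ subject only to $\eta(a)=\eta(b)=0$. The base pairing in \eqref{act} then rewrites, by the very definition of the transpose map, as $\langle \delta\bar L/\delta g,\delta g\rangle_g = \langle T_e^\ast R_g(\delta\bar L/\delta g),\eta\rangle$, which is precisely the force term appearing in \eqref{preeulerlagrange}.

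For the fiber pairing I would substitute \eqref{rvp} and split $\langle \delta\bar L/\delta\xi,\delta\xi\rangle = \langle \delta\bar L/\delta\xi,\dot\eta\rangle + \langle \delta\bar L/\delta\xi,\ad_\xi\eta\rangle$, using the identification $[\xi,\eta]=\ad_\xi\eta$. Integrating the first summand by parts over $[a,b]$ and discarding the boundary contribution (which vanishes because $\eta$ is fixed at the endpoints) yields $-\langle \tfrac{d}{dt}(\delta\bar L/\delta\xi),\eta\rangle$ under the integral, i.e.\ the $\tfrac{d}{dt}(\delta\bar L/\delta\xi)$ term. The second summand I would transpose onto $\eta$ by the defining identity of the infinitesimal coadjoint action, $\langle \delta\bar L/\delta\xi,\ad_\xi\eta\rangle = \langle \ad_\xi^\ast(\delta\bar L/\delta\xi),\eta\rangle$, producing the $\ad_\xi^\ast(\delta\bar L/\delta\xi)$ term. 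Collecting the three contributions writes the first variation as $\int_a^b\langle \Phi(t),\eta(t)\rangle\,dt$ for a single $\G{g}^\ast$-valued expression $\Phi$; since $\eta$ is an arbitrary curve with fixed endpoints, Hamilton's principle forces $\Phi\equiv 0$, which is exactly \eqref{preeulerlagrange}.

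The single delicate point is the sign carried by the $\ad_\xi^\ast$ term. Unlike the familiar left-invariant template, the entire computation rests on the right trivialization and on the right-adjoint conventions of Section~2 (with $\Ad_g=T_eI_g$, $I_g=L_{g^{-1}}\circ R_g$, and the Lie bracket identified with that of right-invariant vector fields through $[\xi,\eta]=[X_\xi^G,X_\eta^G]_{JL}$). The sign surviving in \eqref{preeulerlagrange} is therefore fixed jointly by the sign convention in \eqref{rvp} and by the coadjoint duality, and this is the one place where the bookkeeping must be tracked consistently rather than imported by analogy with the left-invariant case; the force term and the integration by parts are routine, so all the real work is in keeping the right-trivialized brackets and their duals coherent.
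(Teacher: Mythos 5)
Your strategy is the one the paper itself intends: Hamilton's principle applied to \eqref{act}, the constrained variations \eqref{rvp} on the fiber, one integration by parts, and the coadjoint duality to transpose everything onto $\eta$. (The paper only sketches this for the present proposition, citing references, but it writes out the identical computation in its proof of the Euler--Lagrange equations \eqref{UnEPTT*G} on $TT^\ast G$, whose first half re-derives the $TG$ case.) So there is no methodological divergence; the problem is that your argument does not close.

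The gap sits exactly at the point you flagged and then deferred. Carry out your steps literally: with \eqref{rvp} in the form $\delta\xi=\dot\eta+[\xi,\eta]=\dot\eta+\ad_\xi\eta$ and the duality $\langle\mu,\ad_\xi\eta\rangle=\langle\ad^\ast_\xi\mu,\eta\rangle$, the fiber contribution becomes, after integration by parts,
\[
-\Big\langle \frac{d}{dt}\frac{\delta\bar L}{\delta\xi},\eta\Big\rangle
+\Big\langle \ad^\ast_\xi\frac{\delta\bar L}{\delta\xi},\eta\Big\rangle ,
\]
so your $\Phi$ is $T^\ast_e R_g\frac{\delta\bar L}{\delta g}-\frac{d}{dt}\frac{\delta\bar L}{\delta\xi}+\ad^\ast_\xi\frac{\delta\bar L}{\delta\xi}$, and $\Phi\equiv 0$ reads
\[
\frac{d}{dt}\frac{\delta\bar L}{\delta\xi}
=T^\ast_e R_g\frac{\delta\bar L}{\delta g}
+\ad^\ast_\xi\frac{\delta\bar L}{\delta\xi},
\]
with a \emph{plus} sign on the coadjoint term --- not \eqref{preeulerlagrange}. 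Hence your concluding claim that collecting the three contributions gives ``exactly \eqref{preeulerlagrange}'' is false for the derivation you describe. To land on \eqref{preeulerlagrange} the constrained variation must carry the bracket the other way, $\delta\xi=\dot\eta+[\eta,\xi]=\dot\eta-\ad_\xi\eta$; this is precisely the form the paper uses in its proof of \eqref{UnEPTT*G}, where the variations are written $(\delta g,\delta\xi)=(TR_g\eta,\dot\eta+[\eta,\xi])$, and with it the $\ad^\ast_\xi$ term emerges with the minus sign of \eqref{preeulerlagrange}. The underlying tension --- \eqref{rvp} as printed and \eqref{preeulerlagrange} cannot both hold under the paper's stated conventions $\ad_\xi\eta=[\xi,\eta]$ and $\ad^\ast_\xi$ the literal dual --- is an inconsistency of the paper itself, but a proof cannot dispose of it by asserting that the sign ``is fixed jointly'' by the conventions. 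You must commit to one coherent convention, derive the variation formula within it (e.g.\ from $\xi=T_gR_{g^{-1}}\dot g$ and $\eta=T_gR_{g^{-1}}\delta g$), and exhibit the resulting sign; as written, your argument proves the equation with the opposite sign to the one claimed.
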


If, in addition, the Lagrangian density $\bar{L}:G\circledS \G{g} \to \B{R}$ is right invariant (namely it is independent of the group variable, that is $\bar{L}(g,\xi) =\ell( \xi)$), then Eq.\eqref{preeulerlagrange} reduces to the Euler-Poincaré equations on $\mathfrak{g}=(G\circledS \G{g})/G$
\begin{equation}\label{EPEq}
\frac{d}{dt}\frac{\delta l}{\delta\xi}=-\ad_{\xi}^{\ast}\frac{\delta l}{%
\delta\xi}  .
\end{equation}

Along the motion, for any Lagrangian $\bar{L}=\bar{L}(g,\xi)$, we compute that
\begin{equation}\label{calcc}
\begin{split}
\frac{dL}{dt}
&=\big\langle \frac{\delta \bar{L}}{\delta g},\dot{g} \big\rangle
+
\big\langle \frac{\delta \bar{L}}{\delta \xi},\dot{\xi} \big\rangle
=\big\langle \frac{\delta \bar{L}}{\delta g},T_eR_g \xi \big\rangle
+\big\langle \frac{\delta \bar{L}}{\delta \xi},\dot{\xi} \big\rangle
\\&=
\big\langle T_e^*R_g\frac{\delta \bar{L}}{\delta g},\xi \big\rangle
+\big\langle \frac{\delta \bar{L}}{\delta \xi},\dot{\xi} \big\rangle
\\
&=\big\langle\frac{d}{dt}\frac{\delta\bar{L}}{\delta\xi}+\ad_{\xi}^{\ast}\frac{\delta\bar{L}}{\delta\xi},\xi \big\rangle
+\big\langle \frac{\delta \bar{L}}{\delta \xi},\dot{\xi} \big\rangle
\\
&=\big\langle\frac{d}{dt}\frac{\delta\bar{L}}{\delta\xi},\xi \big\rangle
+\big\langle \frac{\delta \bar{L}}{\delta \xi},\dot{\xi} \big\rangle
=\frac{d}{dt}\big\langle \frac{\delta\bar{L}}{\delta\xi},\xi \big\rangle
\end{split}
\end{equation}
where, according to the trivialization \eqref{trTG},  we have employed the identification $\dot{g}=T_eR_g \xi$ in the first line whereas, we substitute the Euler-Lagrange equations (\ref{preeulerlagrange}) in the third line. The calculation \eqref{calcc} reads that the quantity $
\big\langle {\delta\bar{L}}/{\delta\xi},\xi \big\rangle-L
$
is a constant of the motion.

\subsection{Hamiltonian dynamics on cotangent group $T^*G$}~

Given $\bar{H}:G\circledS \G{g}^*\to \B{R}$, one obtains the Hamilton's equations 
\begin{equation}\label{ULP}
\frac{dg}{dt}=T_{e}R_{g}\left( \frac{\delta\bar{H}}{\delta\mu}\right) ,\qquad \frac{d\mu}{dt} = \ad_{\frac{\delta\bar{H}}{\delta\mu}%
}^{\ast}\mu-T_{e}^{\ast}R_{g}\frac{\delta\bar{H}}{\delta g}
\end{equation}
on the semidirect product $G\circledS \G{g}^*$ from the very definition
\begin{equation}\label{HamEq}
i_{X_{\bar{H}}}^{G\circledS \G{g}^*}\Omega _{G\circledS \G{g}^*}=-d\bar{H},
\end{equation}
where the right invariant vector field
\begin{equation}\label{HamVF}
X_{\bar{H}}^{G\circledS \G{g}^*} (g,\mu ) :=\left(T_{e}R_{g}\frac{\delta \bar{H}}{\delta \mu },
 \ad_{\frac{\delta \bar{H}}{\delta \mu }}^{\ast }\mu- T_{e}^{\ast }R_{g}\frac{\delta \bar{H}}{\delta g} \right)  
\end{equation}
is the  Hamiltonian vector field  associated to
\begin{equation}
\left(\frac{\delta \bar{H}}{\delta \mu},-T_{e}^{\ast }R_{g} \frac{\delta \bar{H}}{\delta g} \right)\in   \G{g}\circledS \G{g}^\ast. 
\end{equation}
For further details of Hamiltonian dynamics on semi-direct products we refer the reader to \cite{BobeSuri99-II,colombo2014higher, cendra1998maxwell,esen2015tulczyjew, holm1986hamiltonian, MaMiOrPeRa07, marsden1984semidirect,
marsden1984reduction--, marsden1991symplectic,marsden2000reduction,marsden1998symplectic, Rati80}. 

The canonical Poisson bracket of two functionals 
$\bar{F},\bar{K}: G \circledS \G{g}^{\ast} \to \B{R}$. at a point $(g,\mu) \in G \circledS \G{g}^{\ast}$ is given by
\begin{equation}\label{PoissonGg*}
\left\{ \bar{F},\bar{K}\right\} _{G \circledS \G{g}^{\ast} } (g,\mu) =\left\langle T_{e}^{\ast}R_{g}\frac{\delta\bar{F}}{\delta g},%
\frac{\delta\bar{K}}{\delta\mu}\right\rangle -\left\langle T_{e}^{\ast }R_{g}%
\frac{\delta\bar{K}}{\delta g},\frac{\delta\bar{F}}{\delta\mu }\right\rangle
+\left\langle \mu,\left[ \frac{\delta\bar{F}}{\delta\mu},\frac{\delta\bar{K}%
}{\delta\mu}\right] \right\rangle.  
\end{equation}

\subsubsection{Reduction of $T^*G$ by $G$}

The right action of $G$ on $G\circledS\mathfrak{g}^{\ast}$ is
\begin{equation}
\left( G\circledS\mathfrak{g}^{\ast}\right) \times G \rightarrow G\circledS%
\mathfrak{g}^{\ast}:\left( \left( g,\mu\right); h \right) \rightarrow\left(
gh,\mu\right)  \label{GonGxg*}
\end{equation}
with the infinitesimal generator $X_{\left( \xi,0\right) }^{G\circledS
\mathfrak{g}^{\ast}}$. If $\bar{H},$ defined on $G\circledS\mathfrak{g}%
^{\ast }$, is independent of $g$, it becomes right invariant under $G$. In this
case, dropping the terms involving $\delta\bar{H}/\delta g$ in Poisson
bracket (\ref{PoissonGg*}) is the Poisson reduction $G\circledS\mathfrak{g}%
^{\ast }\rightarrow \left( G\circledS\mathfrak{g}^{\ast}\right)/G
\simeq\mathfrak{g}^{\ast}$. When $\bar{F}$ and $\bar{K}$ are independent of
the group variable $g\in G$, that is, when $\bar{F}=f\left( \mu\right) $ and
$\bar{K}=k\left( \mu\right) $, we have the Lie-Poisson bracket%
\begin{equation}
\left\{ f,k\right\} _{\mathfrak{g}^{\ast}}\left( \mu\right) =\left\langle
\mu,\left[ \frac{\delta f}{\delta\mu},\frac{\delta k}{\delta\mu}\right]\right\rangle  \label{LPbracket}
\end{equation}
from which the Lie-Poisson equations
\begin{equation}
\dot{\mu}=ad_{\frac{\delta h}{\delta\mu}}^{\ast}\mu.  \label{LP}
\end{equation}
on the dual space $\mathfrak{g}^{\ast}$ follows. The Lie-Poisson bracket given in
Eq.(\ref{LPbracket}) can also be obtained by pulling back the non-degenerate
Poisson bracket in Eq.(\ref{PoissonGg*}) with the embedding $\mathfrak{g}%
^{\ast}\rightarrow G\circledS\mathfrak{g}^{\ast}$.

For the symplectic leaves of this Poisson structure \cite{We83}, we apply
Marsden-Weinstein symplectic reduction theorem \cite{marsden1974reduction} to $G\circledS
\mathfrak{g}^{\ast}$ with the action of $G$. The action (\ref{GonGxg*}) is
symplectic and it induces the momentum mapping%
\begin{equation}
\mathbf{J}_{G\circledS\mathfrak{g}^{\ast}}:G\circledS\mathfrak{g}^{\ast
}\longrightarrow\mathfrak{g}^{\ast}:\left( g,\mu\right) \rightarrow\mu
\end{equation}
which is also Poisson, and hence it projects trivialized
Hamiltonian dynamics in \eqref{ULP} to the Lie-Poisson dynamics in \eqref{LP}%
.

The inverse image $\mathbf{J}_{G\circledS \mathfrak{g}^{\ast }}^{-1}\left(
\mu \right) \subset G\circledS \mathfrak{g}^{\ast }$ of a regular value $\mu
\in \mathfrak{g}^{\ast }$ consists of two-tuples $\left( g,\mu \right) $ for
$g\in G$ and fixed $\mu \in \mathfrak{g}^{\ast }$. We may identify $\mathbf{J%
}_{G\circledS \mathfrak{g}^{\ast }}^{-1}\left( \mu \right) $ with the group $%
G$. Let $G_\mu$ be the isotropy group of the coadjoint action $Ad^{\ast }$,
defined in \eqref{dist*}, preserving the momenta $\mu $. Then,
we have the isomorphism
\begin{equation}
\left. \mathbf{J}_{G\circledS \mathfrak{g}^{\ast }}^{-1}\left( \mu \right)
\right/ G_{\mu }\simeq \left. G\right/ G_{\mu }\simeq \mathcal{O}_{\mu }
\label{coadorb}
\end{equation}
identifying the equivalence class $[g]$ of $g$ in $G/G_\mu$ with the coadjoint orbit 
\begin{equation}
\mathcal{O}_{\mu }=\left\{ Ad_{g}^{\ast }\mu :g\in G\right\} .
\end{equation}%
through the point $\mu $ in $\mathfrak{g}^{\ast }$ \cite{marsden1983hamiltonian2}. We denote the
reduced symplectic two-form on $\mathcal{O}_{\mu }$ by $\Omega _{G\circledS
\mathfrak{g}^{\ast }}^{/G}\left( \mu \right) $ which is the
Kostant-Kirillov-Souriau two-form \cite{Ho09,marsden1983coadjoint, marsden1983hamiltonian2}.
The value of $\Omega _{G\circledS _{R}\mathfrak{g}^{\ast }}^{/G
}\left( \mu \right) $ on two vector fields $ad_{\xi }^{\ast }\mu $, $%
ad_{\eta }^{\ast }\mu $ in $T_{\mu }\mathcal{O}_{\mu }$\ is
\begin{equation}
\left\langle \Omega _{G\circledS \mathfrak{g}^{\ast }}^{/G};\left(
ad_{\xi }^{\ast }\mu ,ad_{\eta }^{\ast }\mu \right) \right\rangle
=-\left\langle \mu ,\left[ \xi ,\eta \right] _{\mathfrak{g}%
}\right\rangle .  \label{KKS}
\end{equation}

\subsubsection{Reduction of $T^*G$  by $G_{\protect\mu}$}

The isotropy subgroup $G_{\mu}$ acts on $G\circledS\mathfrak{g}^{\ast}$ as
described by Eq.(\ref{GonGxg*}). Then, a Poisson and a symplectic reductions
of dynamics are possible. The Poisson reduction of the
symplectic manifold $G\circledS\mathfrak{g}^{\ast}$ under the action of the
isotropy group $G_{\mu}$ results in
\begin{equation*}
\left( G\circledS\mathfrak{g}^{\ast}\right)/G_{\mu} \simeq \mathcal{%
O}_{\mu}\times\mathfrak{g}^{\ast},
\end{equation*}
with Poisson
bracket
\begin{equation}
\left\{ H,K\right\} _{\mathcal{O}_{\mu}\times\mathfrak{g}^{\ast}}\left(
\mu,\nu\right) =\left\langle \mu,\left[ \frac{\delta H}{\delta\mu},\frac{%
\delta K}{\delta\mu}\right] \right\rangle +\left\langle \nu,\left[ \frac{%
\delta H}{\delta\mu},\frac{\delta K}{\delta\nu}\right] -\left[ \frac{\delta K%
}{\delta\mu},\frac{\delta H}{\delta\nu}\right] \right\rangle 
\end{equation}
which is not the direct product of Lie-Poisson structures on $%
\mathcal{O}_{\mu}$ and $\mathfrak{g}^{\ast}$.

The coadjoint action of $G\circledS\mathfrak{g}$ on the dual $\mathfrak{g}%
^{\ast}\times\mathfrak{g}^{\ast}$ of its Lie algebra is
\begin{equation}
\Ad_{\left( g,\xi\right) }^{\ast}:\mathfrak{g}^{\ast}\times\mathfrak{g}%
^{\ast}\rightarrow\mathfrak{g}^{\ast}\times\mathfrak{g}^{\ast}:\left( \mu
,\nu\right) \mapsto\left( \Ad_{g}^{\ast}\mu
+\ad_{\xi}^{\ast}\Ad_g^\ast\nu ,\Ad_{g}^{\ast}\nu\right) .  \label{coad}
\end{equation}
The symplectic reduction of $G\circledS\mathfrak{g}^{\ast}$ under the action
of the isotropy subgroup $G_{\mu}$ results in the coadjoint orbit $\mathcal{O%
}_{\left( \mu,\nu\right) }$ in $\mathfrak{g}^{\ast}\times\mathfrak{g}^{\ast}$
through the point $\left( \mu,\nu\right) $ under the action in Eq.(\ref{coad}%
). The reduced symplectic two-form $\Omega _{\mathcal{O}_{\left(
\mu,\nu\right) }}$ takes the value%
\begin{equation}
\left\langle \Omega_{\mathcal{O}_{\left( \mu,\nu\right) }};\left(
\eta,\zeta\right) ,\left( \bar{\eta},\bar{\zeta}\right) \right\rangle \left(
\mu,\nu\right) =\left\langle \mu,\left[ \bar{\eta},\eta\right] \right\rangle
+\left\langle \nu,\left[ \bar{\eta},\zeta\right] -\left[ \eta,\bar{\zeta}%
\right] \right\rangle
\end{equation}
on two vectors $\left( \eta,\zeta\right) $ and $\left( \bar{\eta},\bar{\zeta}%
\right) $ in $T_{\left( \mu,\nu\right) }\mathcal{O}_{\left( \mu,\nu\right) }$%
.

We summarize reductions of the symplectic space $G\circledS\mathfrak{g}%
^{\ast }$ in the following diagram.%
\begin{equation}
\xymatrix{\mathfrak{g}^{\ast } \ar[dd]_{\txt{Poisson \\ embedding}} &&&&
\mathcal{O}_{\mu } \ar@{_{(}->}[llll]_{\txt{symplectic leaf}}
\ar[dd]^{\txt{symplectic \\ embedding}} \\ && G\circledS\mathfrak{g}^{\ast }
\ar[ull]|-{\text{PR by G}} \ar[urr]|-{\text{SR by G}}
\ar[dll]|-{\text{PR by } G_{\mu}} \ar[drr]|-{\text{SR by }G_{\mu}} \\
\mathcal{O}_{\mu }\circledS\mathfrak{g}^{\ast }&&&&\mathcal{O}_{(\mu,\nu)}
\ar@{^{(}->}[llll]^{\txt{symplectic leaf}}
\\
&&\text{\small Reductions of $T^*G=G\circledS \G{g}^*$}
}  \label{DiagramGg*}
\end{equation}

\subsection{The Legendre Transformation}~

For a (hyper)regular Lagrangian $%
\bar{L}$ on $TG=G\circledS\mathfrak{g}$, the Legendre transformation is 
\begin{equation*}
G\circledS\mathfrak{g}\rightarrow G\circledS\mathfrak{g}^{\ast}:\left(
g,\xi\right) \rightarrow \big( g,\frac{\delta\bar{L}}{\delta\xi}=\mu\big)
\end{equation*}
which identifies $\delta\bar{L}/\delta\xi$ with the
fiber variable $\mu$ of $G\circledS\mathfrak{g}^{\ast}$. Define
a Hamiltonian function
\begin{equation}
H\left( g,\mu\right) =\left\langle \mu,\xi\right\rangle -L\left(
g,\xi\right)  \label{HLeg}
\end{equation}
for which the Hamiltonian dynamics in Eq.(\ref{ULP}) gives Euler-Lagrange equations (\ref{preeulerlagrange}%
). When $\bar{L}$ is independent of the group variable we have Euler-Poincare equations (\ref%
{EPEq}) and the Legendre transformation $\mu=\delta l/\delta\xi$ maps these
to Lie-Poisson equations (\ref{LP}) with the Hamiltonian function%
\begin{equation*}
h\left( \mu\right) =\left\langle \mu,\xi\right\rangle -l\left( \xi\right).
\end{equation*}
When the Lagrangian density is degenerate, the fiber derivative is not
invertible hence a direct passage from the Lagrangian dynamics to
Hamiltonian one is not possible. One possible way to define a general
Legendre trnasformation, including the degenerate cases, is possible in
Tulczyjew's approach \cite{Tu77}. We refer \cite{esen2014tulczyjew,esen2015tulczyjew,grabowska2016tulczyjew} where the
Tulczyjew's triplet is constructed for Lie groups.

\section{Hamiltonian Dynamics on $T^*TG$}

For a Hamiltonian function(al) $H$ on the symplectic manifold $\left( T^{\ast }TG,\Omega _{ T^{\ast }TG}\right) ,$ the Hamilton's
equations read%
\begin{equation}
i_{X_{H}^{ T^{\ast }TG}}\Omega _{ T^{\ast }TG}=-dH,
\label{HamEqT*TG}
\end{equation}%
where the right invariant Hamiltonian vector field $X_{H}^{ T^{\ast }TG}$ is generated by \cite{abrunheiro2011cubic}
\begin{equation}
\left( \frac{\delta H}{\delta \mu },\frac{\delta H}{\delta \nu }%
,-T_{e}^{\ast }R_{g}\left( \frac{\delta H}{\delta g}\right) -ad_{\xi }^{\ast
}\left( \frac{\delta H}{\delta \xi }\right) ,-\frac{\delta H}{\delta \xi }%
\right)
\in
\left( \mathfrak{g}\circledS \mathfrak{g}\right)
\circledS \left( \mathfrak{g}^{\ast }\times \mathfrak{g}^{\ast }\right).
\end{equation}%

\begin{proposition}
Components of $X_{H}^{ T^{\ast }TG}$ are trivialized Hamilton's equations on $\left(T^{\ast}TG,\Omega _{T^{\ast}TG}\right) $ 
\begin{align}
\frac{dg}{dt} & =T_{e}R_{g}\frac{\delta H}{\delta\mu},\text{ \ \ }
\label{HamT*TG1} \\
\frac{d\xi}{dt} & =\frac{\delta H}{\delta\nu}-\ad_{\xi}\frac{\delta H}{%
\delta\mu},  \label{HamT*TG2} \\
\frac{d\mu}{dt} & =-T_{e}^{\ast}R_{g}\frac{\delta H}{\delta g}-\ad_{\xi
}^{\ast}\frac{\delta H}{\delta\xi}+\ad_{\frac{\delta H}{\delta\mu}%
}^{\ast}\mu+\ad_{\frac{\delta H}{\delta\nu}}^{\ast}\nu,  \label{HamT*TG3} \\
\frac{d\nu}{dt} & =-\frac{\delta H}{\delta\xi}+\ad_{\frac{\delta H}{\delta \mu%
}}^{\ast}\nu.  \label{HamT*TG4}
\end{align}
\end{proposition}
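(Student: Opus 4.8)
The plan is to determine the Hamiltonian vector field $X_H^{T^*TG}$ from its defining equation \eqref{HamEqT*TG}, $i_{X_H^{T^*TG}}\Omega_{T^*TG}=-dH$, and then to read its four components off the right-invariant field formula \eqref{rivfT*TG}. Since the right-invariant vector fields trivialize $T(T^*TG)$, at a point $(g,\xi,\mu,\nu)$ I would write $X_H^{T^*TG}$ as the value there of a right-invariant field with an unknown generator $(\eta_1,\eta_2,\beta_1,\beta_2)\in(\mathfrak g\circledS\mathfrak g)\circledS(\mathfrak g^*\times\mathfrak g^*)$, allowed to depend on the base point through the functional derivatives of $H$, and I would pair the defining equation with an arbitrary right-invariant test field $X_{(\xi_1,\xi_2,\mu_1,\mu_2)}^{T^*TG}$.

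For the left-hand side I would feed both generators into the closed expression for $\Omega_{T^*TG}$ on a pair of right-invariant fields (the display following \eqref{thet1T*TG}). For the right-hand side I would expand $\langle dH, X_{(\xi_1,\xi_2,\mu_1,\mu_2)}^{T^*TG}\rangle$ as $\langle\frac{\delta H}{\delta g},\dot g\rangle+\langle\frac{\delta H}{\delta\xi},\dot\xi\rangle+\langle\frac{\delta H}{\delta\mu},\dot\mu\rangle+\langle\frac{\delta H}{\delta\nu},\dot\nu\rangle$, where $(\dot g,\dot\xi,\dot\mu,\dot\nu)$ are the four components of the test field supplied by \eqref{rivfT*TG}. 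The base term is transported to the algebra by $\langle\frac{\delta H}{\delta g},T_eR_g\xi_1\rangle=\langle T_e^*R_g\frac{\delta H}{\delta g},\xi_1\rangle$, and the adjoint and coadjoint pieces are moved onto the free generators using $\langle\zeta,\ad_{\xi_1}\xi\rangle=-\langle\ad_\xi^*\zeta,\xi_1\rangle$ and $\langle\zeta,\ad_{\xi_1}^*\lambda\rangle=-\langle\ad_\zeta^*\lambda,\xi_1\rangle$, so that both sides become linear functionals of the four independent blocks $\xi_1,\xi_2,\mu_1,\mu_2$.

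Equating the two sides block by block then gives four algebraic equations for the generator. The $\mu_1$ and $\mu_2$ blocks fix $\eta_1=\frac{\delta H}{\delta\mu}$ and $\eta_2=\frac{\delta H}{\delta\nu}$ at once, after which the $\xi_1$ and $\xi_2$ blocks determine $\beta_1$ and $\beta_2$ as the two $\mathfrak g^*$-entries of the generator recorded just before the statement. Substituting this generator back into \eqref{rivfT*TG} at $(g,\xi,\mu,\nu)$ returns the four components of $X_H^{T^*TG}$: the base component $T_eR_g\frac{\delta H}{\delta\mu}$ is \eqref{HamT*TG1}; the second component $\eta_2+\ad_{\eta_1}\xi=\frac{\delta H}{\delta\nu}-\ad_\xi\frac{\delta H}{\delta\mu}$, after $\ad_{\frac{\delta H}{\delta\mu}}\xi=-\ad_\xi\frac{\delta H}{\delta\mu}$, is \eqref{HamT*TG2}; and the two momentum components $\beta_1+\ad_{\eta_1}^*\mu+\ad_{\eta_2}^*\nu$ and $\beta_2+\ad_{\eta_1}^*\nu$ are \eqref{HamT*TG3} and \eqref{HamT*TG4}. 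If one is willing to take the generator displayed before the proposition as given, the statement reduces to precisely this last substitution.

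I expect the crux to be the sign and coadjoint bookkeeping in the block-matching step, rather than anything structural. Because $\Ad$ and $\ad$ are used here as \emph{right} representations, so that the infinitesimal generator of the coadjoint action carries the extra sign recorded in \eqref{Adtoad}, the coadjoint corrections $\ad_{\xi_1}^*\mu+\ad_{\xi_2}^*\nu$ that are built into $\Omega_{T^*TG}$ have to cancel exactly against the coadjoint terms produced by the momentum-dependent components of the test field on the $dH$ side; only after that cancellation do the surviving pieces reassemble into $\ad_{\frac{\delta H}{\delta\mu}}^*\mu+\ad_{\frac{\delta H}{\delta\nu}}^*\nu$ together with $-\ad_\xi^*\frac{\delta H}{\delta\xi}$ with the signs displayed in \eqref{HamT*TG3}. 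Keeping the three copies of $\mathfrak g$ and the two copies of $\mathfrak g^*$, with their positional indices, distinct throughout this cancellation is the delicate bookkeeping; the rest is routine.
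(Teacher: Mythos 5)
Your strategy is exactly the one the paper leaves implicit: the paper gives no derivation beyond displaying the generator $\left(\tfrac{\delta H}{\delta\mu},\tfrac{\delta H}{\delta\nu},-T_{e}^{\ast}R_{g}\tfrac{\delta H}{\delta g}-\ad_{\xi}^{\ast}\tfrac{\delta H}{\delta\xi},-\tfrac{\delta H}{\delta\xi}\right)$ (with a citation) and reading the four components off Eq.~\eqref{rivfT*TG}, which is precisely your final substitution step; deriving that generator by pairing $i_{X_{H}}\Omega_{T^{\ast}TG}=-dH$ against arbitrary right-invariant test fields and matching blocks is the natural way to fill in what the paper omits, and the duality identities you quote are the correct ones.

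The gap is that the step you call routine is asserted rather than performed, and performing it with the paper's printed ingredients does not return the printed generator. Fix the point $(g,\xi,\mu,\nu)$ and a test generator $(\xi_{1},\xi_{2},m_{1},m_{2})$. The $m_{1},m_{2}$ blocks do give $\eta_1=\tfrac{\delta H}{\delta\mu}$, $\eta_2=\tfrac{\delta H}{\delta\nu}$, and the $\xi_{2}$ block gives $\beta_2=-\tfrac{\delta H}{\delta\xi}$, as you say. But in the $\xi_{1}$ block the only occurrence of $\tfrac{\delta H}{\delta\xi}$ on the $-dH$ side comes from pairing $\tfrac{\delta H}{\delta\xi}$ with the $\ad_{\xi_{1}}\xi$ part of the test field's second component, and by your own identity $\langle\zeta,\ad_{\xi_{1}}\xi\rangle=-\langle\ad_{\xi}^{\ast}\zeta,\xi_{1}\rangle$ this contributes $+\langle\ad_{\xi}^{\ast}\tfrac{\delta H}{\delta\xi},\xi_{1}\rangle$ to $-\langle dH,\cdot\rangle$. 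The coadjoint corrections $\ad_{\delta H/\delta\mu}^{\ast}\mu+\ad_{\delta H/\delta\nu}^{\ast}\nu$ indeed appear on both sides of this block and cancel, but that cancellation is sign-neutral for the term in question; what survives is $\beta_1=-T_{e}^{\ast}R_{g}\tfrac{\delta H}{\delta g}\,{+}\,\ad_{\xi}^{\ast}\tfrac{\delta H}{\delta\xi}$, so the third component of $X_{H}^{T^{\ast}TG}$ comes out as
\begin{equation*}
\frac{d\mu}{dt}=-T_{e}^{\ast}R_{g}\frac{\delta H}{\delta g}+\ad_{\xi}^{\ast}\frac{\delta H}{\delta\xi}+\ad_{\frac{\delta H}{\delta\mu}}^{\ast}\mu+\ad_{\frac{\delta H}{\delta\nu}}^{\ast}\nu,
\end{equation*}
i.e.\ Eq.~\eqref{HamT*TG3} with the opposite sign on $\ad_{\xi}^{\ast}\tfrac{\delta H}{\delta\xi}$ (putting $X_{H}$ in the other slot of $\Omega$ is no remedy: it flips the sign in Eq.~\eqref{HamT*TG1}). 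So your claim that the block matching reproduces ``the generator recorded just before the statement'' is exactly the point that fails. In fact the plus sign is corroborated independently by applying the paper's own first-order equations \eqref{ULP} to the Lie group $TG=G\circledS\mathfrak{g}_{1}$, since $T_{e}^{\ast}R_{(g,\xi)}\left(\tfrac{\delta H}{\delta g},\tfrac{\delta H}{\delta\xi}\right)=\left(T_{e}^{\ast}R_{g}\tfrac{\delta H}{\delta g}-\ad_{\xi}^{\ast}\tfrac{\delta H}{\delta\xi},\,\tfrac{\delta H}{\delta\xi}\right)$; so an honest execution of your plan proves Eqs.~\eqref{HamT*TG1}, \eqref{HamT*TG2}, \eqref{HamT*TG4} as printed but exposes a sign inconsistency between the paper's $\Omega_{T^{\ast}TG}$, Eq.~\eqref{rivfT*TG}, and Eq.~\eqref{HamT*TG3}, rather than establishing the proposition verbatim. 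You need either to carry the bookkeeping out and flag this, or to identify a convention that produces the printed sign; as written, the proposal does neither.
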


From the equations (\ref{HamT*TG2}) and (\ref{HamT*TG4}), we single out $%
\delta H/\delta\nu$ and $\delta H/\delta\xi$, respectively. By substituting
these into Eq.(\ref{HamT*TG3}), we obtain the system%
\begin{equation}
\left( \frac{d}{dt}-\ad_{\frac{\delta H}{\delta\mu}}^{\ast}\right) \left(
\ad_{\xi}^{\ast}\nu-\mu\right) =T_{e}^{\ast}R_{g}\frac{\delta H}{\delta g},%
\qquad \frac{dg}{dt}=T_{e}R_{g}\frac{\delta H}{\delta\mu}
\end{equation}
equivalent to Eq.(\ref{HamT*TG1})-(\ref{HamT*TG4}).

\begin{remark}
The Hamilton's equations (\ref{HamT*TG1})-(\ref{HamT*TG4}) have extra terms,
compared to ones, for example,  in \cite{colombo2014higher, gay2012invariant}, coming from the choice of trivialization preserving group structure.  The trivialization of \cite{colombo2014higher} is of the second
kind given by Eq.(\ref{2nd}) whereas Eq.(\ref{HamT*TG1})-(\ref{HamT*TG4})
results from trivializations of the first kind. Reference \cite{burnett2011geometric} studies geometric
integrators of this Hamiltonian dynamics.
\end{remark}

\subsection{Reduction of $T^*TG$ by $G$}

We shall first perform Poisson reduction of Hamiltonian system on $T^{\ast}TG$ under the action of $G$ given by%
\begin{equation}
\left( \left( h,\eta,\nu_{1},\nu_{2}\right);g \right) \rightarrow\left(
hg,\eta,\nu_{1},\nu_{2}%
\right) \notag
\end{equation}
for a right invariant  Hamiltonian $H=H\left(
\xi,\mu,\nu\right) $.

\begin{proposition} \label{Prop5-3}
The Poisson reduced manifold $\mathfrak{g}_{1}\circledS\left( \mathfrak{g}%
_{2}^{\ast}\times\mathfrak{g}_{3}^{\ast}\right) $ carries the Poisson
bracket
\begin{equation}\label{Poigxg*xg*}
\begin{split}
& \left\{ H,K\right\} _{\mathfrak{g}_{1}\circledS\left( \mathfrak{g}%
_{2}^{\ast}\times\mathfrak{g}_{3}^{\ast}\right) }\left( \xi,\mu,\nu\right)
= \left\langle \frac{\delta H}{\delta\xi},\frac{\delta K}{%
\delta\nu}\right\rangle -\left\langle \frac{\delta K}{\delta\xi},\frac{\delta H}{\delta\nu }%
\right\rangle+\left\langle \mu,\left[ \frac{\delta H}{\delta\mu },%
\frac{\delta K}{\delta\mu}\right] \right\rangle    \\
& -\left\langle \ad_{\xi}^{\ast}\frac{\delta K}{\delta\xi},\frac{\delta H}{%
\delta\mu}\right\rangle +\left\langle \ad_{\xi}^{\ast}\frac{\delta H}{%
\delta\xi},\frac{\delta K}{\delta\mu}\right\rangle +\left\langle \nu,\left[
\frac{\delta H}{\delta\mu},\frac{\delta K}{\delta\nu}\right] -\left[ \frac{%
\delta K}{\delta\mu},\frac{\delta H}{\delta\nu}\right] \right\rangle ,
\end{split}
\end{equation}
for two right invariant functionals $H$ and $K$ on $T^*TG$.
\end{proposition}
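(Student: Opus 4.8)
The plan is to obtain \eqref{Poigxg*xg*} as the Poisson reduction by $G$ of the canonical structure on $T^{\ast}TG$. Since the trivialization $tr_{T^{\ast}\left( G\circledS\G{g}\right)}$ is a symplectomorphism, $T^{\ast}TG$ carries the canonical cotangent Poisson bracket of the Lie group $Q:=G\circledS\G{g}_1$, whose Lie algebra is $\G{q}:=\G{g}_2\circledS\G{g}_3$ with dual $\G{g}_2^{\ast}\times\G{g}_3^{\ast}$. In direct analogy with the first--order formula \eqref{PoissonGg*}, this bracket reads
\begin{equation*}
\{F,K\}_{T^{\ast}TG}=\big\langle T_e^{\ast}R_Q\tfrac{\delta F}{\delta Q},\tfrac{\delta K}{\delta p}\big\rangle-\big\langle T_e^{\ast}R_Q\tfrac{\delta K}{\delta Q},\tfrac{\delta F}{\delta p}\big\rangle+\big\langle p,\big[\tfrac{\delta F}{\delta p},\tfrac{\delta K}{\delta p}\big]_{\G{q}}\big\rangle,
\end{equation*}
where $p=(\mu,\nu)$ and $\delta F/\delta p=(\delta F/\delta\mu,\delta F/\delta\nu)$. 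The first substantive step is to evaluate the right--trivialized base differential $T_e^{\ast}R_Q\,\delta F/\delta Q\in\G{g}_2^{\ast}\times\G{g}_3^{\ast}$ by pairing $\delta F/\delta Q$ against the right--invariant vector field on $Q=G\circledS\G{g}_1$ generated by $(\phi_2,\phi_3)$, which is $(T_eR_g\phi_2,\ \phi_3+\ad_{\phi_2}\xi)$ --- the first two slots of \eqref{rivfT*TG}, equivalently read off from the group law \eqref{tgtri}. The cross term $\langle\delta F/\delta\xi,\ad_{\phi_2}\xi\rangle$ feeds back an $\ad_{\xi}^{\ast}$ contribution into the $\G{g}_2^{\ast}$--slot, so that
\begin{equation*}
T_e^{\ast}R_Q\tfrac{\delta F}{\delta Q}=\Big(T_e^{\ast}R_g\tfrac{\delta F}{\delta g}-\ad_{\xi}^{\ast}\tfrac{\delta F}{\delta\xi},\ \tfrac{\delta F}{\delta\xi}\Big).
\end{equation*}

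The reduction is then formal. The right $G$--action $((h,\eta,\nu_1,\nu_2);g)\mapsto(hg,\eta,\nu_1,\nu_2)$ leaves all fibre data fixed, so a functional is $G$--invariant precisely when $\delta F/\delta g=0$; by the same reasoning used for $T^{\ast}G$ in passing from \eqref{PoissonGg*} to \eqref{LPbracket}, the bracket of two $G$--invariant functionals is again $G$--invariant and hence descends to $(T^{\ast}TG)/G\simeq\G{g}_1\circledS(\G{g}_2^{\ast}\times\G{g}_3^{\ast})$. Setting $\delta F/\delta g=\delta K/\delta g=0$ in the bracket above kills the two $T_e^{\ast}R_g(\delta/\delta g)$ terms and leaves the canonical pairings $\langle\delta H/\delta\xi,\delta K/\delta\nu\rangle-\langle\delta K/\delta\xi,\delta H/\delta\nu\rangle$ together with the two $\ad_{\xi}^{\ast}$ cross terms of \eqref{Poigxg*xg*}.

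It remains to expand the Lie--Poisson term. Using the semidirect bracket of $\G{q}=\G{g}_2\circledS\G{g}_3$ recorded in \S\ref{subsect-tangent-group},
\begin{equation*}
\big[(\tfrac{\delta H}{\delta\mu},\tfrac{\delta H}{\delta\nu}),(\tfrac{\delta K}{\delta\mu},\tfrac{\delta K}{\delta\nu})\big]_{\G{q}}=\Big([\tfrac{\delta H}{\delta\mu},\tfrac{\delta K}{\delta\mu}],\ \ad_{\frac{\delta H}{\delta\mu}}\tfrac{\delta K}{\delta\nu}-\ad_{\frac{\delta K}{\delta\mu}}\tfrac{\delta H}{\delta\nu}\Big),
\end{equation*}
and pairing against $(\mu,\nu)$ reproduces the remaining summands $\langle\mu,[\delta H/\delta\mu,\delta K/\delta\mu]\rangle$ and $\langle\nu,[\delta H/\delta\mu,\delta K/\delta\nu]-[\delta K/\delta\mu,\delta H/\delta\nu]\rangle$; collecting the three blocks gives \eqref{Poigxg*xg*}. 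As an independent check one may instead compute $\{H,K\}=\langle dK,X_H^{T^{\ast}TG}\rangle$ directly from Hamilton's equations \eqref{HamT*TG1}--\eqref{HamT*TG4} with $\delta H/\delta g=\delta K/\delta g=0$. I expect the only genuinely delicate point to be the orientation and sign bookkeeping of the $\ad_{\xi}$ and $\ad_{\xi}^{\ast}$ terms produced by the semidirect factor $\G{g}_1$ inside the base $Q$ --- precisely the ``extra terms'' flagged in the Remark after \eqref{HamT*TG4} --- whereas the vanishing of the base--variable terms and the expansion of the fibre bracket are routine.
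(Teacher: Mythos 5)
Your strategy---reading $T^{\ast}TG$ as the right-trivialized cotangent bundle of $Q=G\circledS\G{g}_1$, writing the canonical bracket in the form of \eqref{PoissonGg*}, computing the trivialized base differential, and then restricting to $G$-invariant functionals---is the natural one, and since the paper states this proposition without an explicit proof it is presumably the intended route as well. Your intermediate steps are sound: the right-invariant vector field on $Q$ is indeed $(T_eR_g\phi_2,\ \phi_3+\ad_{\phi_2}\xi)$ by \eqref{tgtri} and \eqref{rivfT*TG}, and dualizing it gives
\begin{equation*}
T_e^{\ast}R_Q\frac{\delta F}{\delta Q}
=\Big(T_e^{\ast}R_g\frac{\delta F}{\delta g}-\ad_{\xi}^{\ast}\frac{\delta F}{\delta\xi},\ \frac{\delta F}{\delta\xi}\Big),
\qquad\text{since}\qquad
\Big\langle \frac{\delta F}{\delta\xi},\ad_{\phi_2}\xi\Big\rangle
=-\Big\langle \ad_{\xi}^{\ast}\frac{\delta F}{\delta\xi},\phi_2\Big\rangle .
\end{equation*}
The gap is in the final assembly, which you assert rather than carry out. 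Substituting this formula into the two base terms of your bracket and setting $\delta H/\delta g=\delta K/\delta g=0$ produces the cross terms
\begin{equation*}
-\Big\langle \ad_{\xi}^{\ast}\frac{\delta H}{\delta\xi},\frac{\delta K}{\delta\mu}\Big\rangle
+\Big\langle \ad_{\xi}^{\ast}\frac{\delta K}{\delta\xi},\frac{\delta H}{\delta\mu}\Big\rangle ,
\end{equation*}
which is the \emph{negative} of the pair appearing in \eqref{Poigxg*xg*}. The canonical pairings and the Lie--Poisson block are the routine part of the statement; the sign and placement of these $\ad_{\xi}^{\ast}$ terms are precisely its nontrivial content (cf. Remark \ref{rem4-4}, where it is exactly this line that distinguishes the present trivialization from those of the cited references), so as written your argument establishes a different bracket, not \eqref{Poigxg*xg*}.

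The mismatch originates at the point you yourself flagged as delicate and then skipped. Your appeal to the ``direct analogy with \eqref{PoissonGg*}'' presupposes that the fiber coordinates $(\mu,\nu)$ are the genuine right-trivialized momenta, $(T_{(e,0)}R_{(g,\xi)})^{\ast}(\alpha_g,\alpha_\xi)=\big(T_e^{\ast}R_g\alpha_g-\ad_{\xi}^{\ast}\alpha_\xi,\ \alpha_\xi\big)$; but the paper's trivialization \eqref{trT*TG} carries the opposite sign, $\mu_1=T_e^{\ast}R_g\alpha_g+\ad_{\xi}^{\ast}\alpha_\xi$, so the canonical bracket in the paper's coordinates is not given verbatim by the formula you start from, and the discrepancy lands exactly in the cross terms. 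Nor can your proposed fallback---checking against Hamilton's equations \eqref{HamT*TG1}--\eqref{HamT*TG4}---settle the matter: for $g$-independent $H,K$, computing $\dot K$ from (\ref{HamT*TG2})--(\ref{HamT*TG4}) yields the $\ad_{\xi}^{\ast}$ contributions $-\langle \ad_{\xi}^{\ast}\,\delta K/\delta\xi,\delta H/\delta\mu\rangle-\langle \ad_{\xi}^{\ast}\,\delta H/\delta\xi,\delta K/\delta\mu\rangle$, which are \emph{symmetric} under $H\leftrightarrow K$ and therefore cannot reproduce any antisymmetric bracket, the paper's or yours; that check exposes a sign tension inside Section 4 rather than adjudicating it. To close the gap you must either redo the computation in the paper's actual coordinates \eqref{trT*TG}, tracking the change of fiber variable through the canonical bracket, or fix one consistent set of conventions for $\ad$, $\ad^{\ast}$ and the trivialization and verify that it yields \eqref{Poigxg*xg*} with the stated signs.
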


\begin{remark}\label{rem4-4}
$T^{\ast}\mathfrak{g}_{1}=\mathfrak{g}_{1}\times\mathfrak{g}_{3}^{\ast}$
carries a canonical Poisson bracket, and $\mathfrak{g}_{2}^{\ast}$ carries
Lie-Poisson bracket. The immersions $\mathfrak{g}_{1}\times\mathfrak{g}%
_{3}^{\ast}\rightarrow\mathfrak{g}_{1}\circledS\left( \mathfrak{g}_{2}^{\ast
}\times\mathfrak{g}_{3}^{\ast}\right) $ and $\mathfrak{g}_{2}^{\ast
}\rightarrow\mathfrak{g}_{1}\circledS\left( \mathfrak{g}_{2}^{\ast}\times%
\mathfrak{g}_{3}^{\ast}\right) $ are Poisson maps. However, the Poisson
structure described by Eq.(\ref{Poigxg*xg*}) on $\mathfrak{g}%
_{1}\circledS\left( \mathfrak{g}_{2}^{\ast}\times\mathfrak{g}%
_{3}^{\ast}\right) $ is not a direct product of these. In fact, direct product structure on $\mathfrak{g}%
_{1}\times\left( \mathfrak{g}_{2}^{\ast}\times\mathfrak{g}_{3}^{\ast}\right)
$ arises from the trivialization of for examples was done in \cite{colombo2014higher} and \cite%
{gay2012invariant}. In these cases the second line of (\ref{Poigxg*xg*}) disappears. 

\end{remark}

\begin{proposition}
The Marsden-Weinstein symplectic reduction by the action of $G$ on $\
 T^{\ast}TG$ with the momentum mapping
\begin{equation*}
\mathbf{J}_{ T^{\ast}TG}^{G}: T^{\ast}TG\rightarrow
\mathfrak{g} ^{\ast}:\left( g,\xi,\mu,\nu\right) \rightarrow\mu
\end{equation*}
results in the reduced symplectic two-form $\Omega_{ T^{\ast}TG}^{/G}$ on the reduced space $\mathcal{O}_{\mu}\times \mathfrak{g}_{1}\times\mathfrak{g}_{3}^{\ast}$. The value of $%
\Omega _{ T^{\ast}TG}^{/G}$ on two vectors $%
\left( \eta_{\mathfrak{g}^{\ast}}\left( \mu\right) ,\zeta,\lambda\right) $
and $\left( \bar{\eta}_{\mathfrak{g}^{\ast}}\left( \mu\right) ,\bar{\zeta},%
\bar{\lambda}\right) $ is%
\begin{equation}
\Omega_{ T^{\ast}TG}^{/G}\left( \left( \eta_{%
\mathfrak{g}^{\ast}}\left( \mu\right) ,\zeta,\lambda\right) ,\left( \bar{\eta%
}_{\mathfrak{g}^{\ast}}\left( \mu\right) ,\bar{\zeta},\bar{\lambda }\right)
\right) =\left\langle \lambda,\bar{\zeta}\right\rangle -\left\langle \bar{%
\lambda},\zeta\right\rangle -\left\langle \mu,[\eta ,\bar{\eta}]\right\rangle
\label{RedOhmT*TG}
\end{equation}
and the reduced Hamilton's equations for a right invariant Hamiltonian $H$
are

\begin{equation*}
\frac{d\zeta}{dt}=\frac{\delta H}{\delta\lambda},\text{ \ \ }\frac{d\lambda
}{dt}=-\frac{\delta H}{\delta\zeta},\text{\ \ \ }\frac{d\mu}{dt}=ad_{\frac{%
\delta H}{\delta\mu}}^{\ast}\mu.
\end{equation*}
\end{proposition}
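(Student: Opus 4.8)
The plan is to run the Marsden--Weinstein construction for $\left(T^{\ast}TG,\Omega_{T^{\ast}TG}\right)$ in exact parallel with the reduction of $T^{\ast}G$ summarised in \eqref{DiagramGg*}. The $G$-action in question is right multiplication by the image of $G$ under $g\mapsto(g,0,0,0)$, which by \eqref{GrT*TG} is precisely $\left((h,\eta,\nu_1,\nu_2);g\right)\mapsto(hg,\eta,\nu_1,\nu_2)$, and it is symplectic since the full $G\circledS\mathfrak{g}$-action is. First I would confirm the momentum map: contracting $\Omega_{T^{\ast}TG}$ with the infinitesimal generator of this action (equivalently, invoking the cotangent-lift momentum-map formula exactly as for $T^{\ast}G$) singles out the $\mathfrak{g}_{2}^{\ast}$-slot and yields $\mathbf{J}_{T^{\ast}TG}^{G}(g,\xi,\mu,\nu)=\mu$, whose $\Ad^{\ast}$-equivariance follows directly from the group law \eqref{GrT*TG}.

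Next I would identify the reduced space. The level set $\mathbf{J}^{-1}(\mu)$ consists of the points $(g,\xi,\mu,\nu)$ with $\mu$ held fixed, hence is diffeomorphic to $G\times\mathfrak{g}_{1}\times\mathfrak{g}_{3}^{\ast}$, and the residual isotropy group $G_{\mu}$ of the coadjoint action \eqref{dist*} acts by right multiplication on the $G$-factor alone, leaving $\xi$ and $\nu$ untouched. Therefore, just as in \eqref{coadorb}, the Marsden--Weinstein quotient is $\mathbf{J}^{-1}(\mu)/G_{\mu}\simeq(G/G_{\mu})\times\mathfrak{g}_{1}\times\mathfrak{g}_{3}^{\ast}\simeq\mathcal{O}_{\mu}\times\mathfrak{g}_{1}\times\mathfrak{g}_{3}^{\ast}$, with the $G$-orbit directions passing to coadjoint-orbit tangent vectors $\ad^{\ast}_{\eta}\mu\in T_{\mu}\mathcal{O}_{\mu}$.

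The heart of the argument is the reduced two-form. I would lift a quotient tangent vector $\left(\ad^{\ast}_{\eta}\mu,\zeta,\lambda\right)$ to a right-invariant vector field $X_{(\eta,\zeta,\ast,\lambda)}^{T^{\ast}TG}$ of \eqref{rivfT*TG} along $\mathbf{J}^{-1}(\mu)$, the orbit direction coming from the $G$-translation parameter $\eta$, the remaining two from $\zeta\in\mathfrak{g}_{1}$ and $\lambda\in\mathfrak{g}_{3}^{\ast}$, and the $\mathfrak{g}_{2}^{\ast}$-slot $\ast$ fixed by the level-set constraint. Evaluating $\Omega_{T^{\ast}TG}$ on two such lifts, the term $\langle\ad^{\ast}_{\xi_1}\lambda_1,\eta_1\rangle$ (with base value $\lambda_1=\mu$) collapses into the Kostant--Kirillov--Souriau expression $-\langle\mu,[\eta,\bar\eta]\rangle$ of \eqref{KKS}, while the surviving conjugate $\mathfrak{g}_{1}$- and $\mathfrak{g}_{3}^{\ast}$-contributions assemble into the canonical pairing $\langle\lambda,\bar\zeta\rangle-\langle\bar\lambda,\zeta\rangle$. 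Together these give \eqref{RedOhmT*TG} and exhibit the reduced space as the symplectic product of the cotangent bundle $T^{\ast}\mathfrak{g}_{1}=\mathfrak{g}_{1}\times\mathfrak{g}_{3}^{\ast}$ with the coadjoint orbit $\mathcal{O}_{\mu}$.

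With this product structure in place the reduced Hamilton's equations are immediate: the canonical factor $T^{\ast}\mathfrak{g}_{1}$ yields $\dot\zeta=\delta H/\delta\lambda$ and $\dot\lambda=-\delta H/\delta\zeta$, and the coadjoint factor yields the Lie--Poisson flow $\dot\mu=\ad^{\ast}_{\delta H/\delta\mu}\mu$ of \eqref{LP}. I expect the third step to be the main obstacle: because right multiplication is generated by a base field that is $g$-dependent in the right-invariant frame (its base part being $T_{e}L_{g}\xi=T_{e}R_{g}\Ad_{g}\xi$), one must track the level-set constraint carefully while eliminating the $\mathfrak{g}_{2}^{\ast}$-momentum parameters --- so that the several cross terms of $\Omega_{T^{\ast}TG}$ involving $\ad^{\ast}_{\xi_1}\lambda_2$ and $\ad^{\ast}_{\xi_2}\lambda_2$ cancel --- and verify that the resulting bracket term is independent of the chosen representative $g$ with $\Ad_{g}^{\ast}\mu$ on the orbit, i.e. that it genuinely descends to the Kostant--Kirillov--Souriau form on $\mathcal{O}_{\mu}$ rather than depending on the lift.
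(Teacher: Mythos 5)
Your proposal is correct and is essentially the argument the paper itself relies on (and spells out for the parallel reductions of $T^{\ast}G$ by $G$ and of $TT^{\ast}G$ by $G$): symplecticity of the action inherited from $G\circledS\mathfrak{g}$, the momentum map $\mu$, the identification $\mathbf{J}^{-1}(\mu)/G_{\mu}\simeq (G/G_{\mu})\times\mathfrak{g}_{1}\times\mathfrak{g}_{3}^{\ast}\simeq\mathcal{O}_{\mu}\times\mathfrak{g}_{1}\times\mathfrak{g}_{3}^{\ast}$, and evaluation of $\Omega_{T^{\ast}TG}$ on right-invariant lifts of the form \eqref{rivfT*TG} whose $\mathfrak{g}_{2}^{\ast}$-label is fixed by the level-set constraint, after which the cross terms in $\lambda_{2}$ cancel and Eq.~(\ref{RedOhmT*TG}) together with the split (canonical plus Lie--Poisson) Hamilton's equations follow. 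The only bookkeeping slip is harmless: once the constraint is imposed, the entire first group $\left\langle \mu_{1}+\ad^{\ast}_{\xi_{1}}\lambda_{1}+\ad^{\ast}_{\xi_{2}}\lambda_{2},\eta_{1}\right\rangle$ of the trivialized symplectic form vanishes identically, and the Kostant--Kirillov--Souriau term $-\left\langle \mu,[\eta,\bar{\eta}]\right\rangle$ actually emerges from the conjugate term $-\left\langle \nu_{1},\xi_{1}\right\rangle$ through the constrained slot $\nu_{1}=-\ad^{\ast}_{\bar{\eta}}\mu-\ad^{\ast}_{\bar{\zeta}}\lambda_{2}$, not from $\left\langle \ad^{\ast}_{\xi_{1}}\lambda_{1},\eta_{1}\right\rangle$ as you wrote --- same mechanism, different slot.
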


\begin{remark}
The reduced space $\mathcal{O}_{\mu}\times\mathfrak{g}_{1}\times \mathfrak{g}%
_{3}^{\ast}$ is a symplectic leaf \cite{We83} for the Poisson manifold $%
\mathfrak{g}_{1}\circledS\left( \mathfrak{g}_{2}^{\ast}\times\mathfrak{g}%
_{3}^{\ast}\right) $ of Proposition \ref{Prop5-3} as well as for $\mathfrak{g}_{1}\times\mathfrak{g}%
_{2}^{\ast}\times\mathfrak{g}_{3}^{\ast}$ with direct product Poisson
structure described in Remark \ref{rem4-4} above.
\end{remark}

The symplectic two-form $\Omega_{T^{\ast}TG}^{/G}$ given in Eq.(\ref{RedOhmT*TG})\ on $\mathcal{O}_{\mu}\times\mathfrak{g}%
_{1}\times\mathfrak{g}_{3}^{\ast}$ is in a direct product form. Hence a
reduction is possible by the additive action of $\mathfrak{g}_1$ to the second factor in
$\mathcal{O}_{\mu}\times\mathfrak{g}_{1}\times\mathfrak{g}_{3}^{\ast}$.

\begin{proposition}
The momentum map of additive action of $\mathfrak{g}_1$ on the symplectic
manifold $(\mathcal{O}_{\mu}\times\mathfrak{g}_{1}\times\mathfrak{g}%
_{3}^{\ast},\Omega_{ T^{\ast}TG}^{/G})$ is
\begin{equation*}
\mathbf{J}_{\mathcal{O}_{\mu}\times\mathfrak{g}_{1}\times\mathfrak{g}%
_{3}^{\ast}}^{\mathfrak{g}}:\mathcal{O}_{\mu}\times\mathfrak{g}_{1}\times%
\mathfrak{g}_{3}^{\ast}\rightarrow\mathfrak{g}_{3}^{\ast}:\left(
\mu,\xi,\nu\right) \rightarrow\nu
\end{equation*}
and the symplectic reduction results in the orbit $\mathcal{O}_{\mu}$
with Kostant-Kirillov-Souriau two-form (\ref{KKS}).
\end{proposition}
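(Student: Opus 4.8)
The plan is to treat the two tensor factors of the reduced space separately, exploiting the fact that $\Omega_{T^{\ast}TG}^{/G}$ in \eqref{RedOhmT*TG} is already in direct product form. The factor $\mathcal{O}_{\mu}$ carries the Kostant--Kirillov--Souriau form $-\langle\mu,[\eta,\bar{\eta}]\rangle$ and is left untouched by the action, whereas $\mathfrak{g}_{1}\times\mathfrak{g}_{3}^{\ast}$ is the cotangent bundle $T^{\ast}\mathfrak{g}_{1}$ (as already noted in Remark \ref{rem4-4}) equipped with its canonical form $\langle\lambda,\bar\zeta\rangle-\langle\bar\lambda,\zeta\rangle$, and the additive action of $a\in\mathfrak{g}_{1}$ is the cotangent lift of the translation $\xi\mapsto\xi+a$ on the base $\mathfrak{g}_{1}$.

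First I would write down the infinitesimal generator of the action at a point $(\mu,\xi,\nu)$: since only the base coordinate is translated, it is the constant vector field $(0,a,0)$ in the coordinates $(\eta_{\mathfrak{g}^{\ast}}(\mu),\zeta,\lambda)$ used in \eqref{RedOhmT*TG}. Contracting this with $\Omega_{T^{\ast}TG}^{/G}$ against an arbitrary tangent vector $(\bar{\eta}_{\mathfrak{g}^{\ast}}(\mu),\bar{\zeta},\bar{\lambda})$ annihilates the coadjoint term (because the orbit component of the generator vanishes) and leaves $-\langle\bar{\lambda},a\rangle$. Matching this against the sign convention $i_{X_{f}}\Omega=-df$ fixed in \eqref{HamEq}, one checks that the function $\langle\mathbf{J},a\rangle=\langle\nu,a\rangle$ satisfies $i_{(0,a,0)}\Omega_{T^{\ast}TG}^{/G}=-d\langle\mathbf{J},a\rangle$, so $\mathbf{J}(\mu,\xi,\nu)=\nu$, the standard conjugate momentum of a cotangent-lifted translation. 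This is exactly the claimed momentum map.

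For the reduction I would fix a regular value $\nu_{0}\in\mathfrak{g}_{3}^{\ast}$ and note $\mathbf{J}^{-1}(\nu_{0})=\mathcal{O}_{\mu}\times\mathfrak{g}_{1}\times\{\nu_{0}\}\simeq\mathcal{O}_{\mu}\times\mathfrak{g}_{1}$. Since $\mathfrak{g}_{3}^{\ast}$ is abelian, the coadjoint isotropy of $\nu_{0}$ is the whole group $\mathfrak{g}_{1}$, so the Marsden--Weinstein quotient is $(\mathcal{O}_{\mu}\times\mathfrak{g}_{1})/\mathfrak{g}_{1}\simeq\mathcal{O}_{\mu}$, the $\mathfrak{g}_{1}$-factor being entirely gauged away. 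It then remains to identify the reduced form: restricting \eqref{RedOhmT*TG} to $\mathbf{J}^{-1}(\nu_{0})$ forces $\bar{\lambda}=0$ on tangent vectors, which kills both canonical terms $\langle\lambda,\bar\zeta\rangle$ and $\langle\bar\lambda,\zeta\rangle$ and leaves $-\langle\mu,[\eta,\bar{\eta}]\rangle$; being independent of $\zeta$, this descends to $\mathcal{O}_{\mu}$ and is precisely the two-form \eqref{KKS}.

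There is no serious obstacle here: the direct-product structure of $\Omega_{T^{\ast}TG}^{/G}$ does most of the work. The only points requiring care are keeping the sign convention of \eqref{HamEq} consistent when reading off $\mathbf{J}$, and observing that the abelian nature of the translation group makes the coadjoint isotropy full, so that the reduced space is globally $\mathcal{O}_{\mu}$ rather than a proper further quotient of it. One should also confirm that every $\nu_{0}$ is a regular value of the linear projection $\mathbf{J}$, which is immediate.
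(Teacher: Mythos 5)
Your proposal is correct and follows essentially the same route as the paper, which gives no detailed proof but precedes the proposition with exactly your key observation: $\Omega_{T^{\ast}TG}^{/G}$ in Eq.~(\ref{RedOhmT*TG}) splits as the KKS form on $\mathcal{O}_{\mu}$ plus the canonical form on $T^{\ast}\mathfrak{g}_{1}\simeq\mathfrak{g}_{1}\times\mathfrak{g}_{3}^{\ast}$, so the cotangent-lifted translation has momentum map $\nu$ and reducing collapses that factor, leaving $\mathcal{O}_{\mu}$ with the form (\ref{KKS}). (Only a cosmetic slip: the coadjoint isotropy is full because the acting group $\mathfrak{g}_{1}$ is abelian, not because $\mathfrak{g}_{3}^{\ast}$ is; the conclusion is unaffected.)
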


\subsection{Reduction of $T^*TG$ by $\mathfrak{g}$}

The vector space structure of $\mathfrak{g}$ makes it an Abelian group, and
according to the immersion in Eq.(\ref{immg}), $\mathfrak{g}$ is an Abelian
subgroup of $ T^{\ast}TG$. It acts on the total space $ T^{\ast}TG$ by
\begin{equation}
\left( \left( h,\eta,\mu,\nu\right);\xi \right) \rightarrow\left(
h,\eta+\Ad_h\xi,\mu,\nu\right) .  \label{gonT*TG}
\end{equation}
Since the action of $G\circledS\mathfrak{g}$ on its cotangent bundle $T^{\ast}TG$ is symplectic, the subgroup $\mathfrak{g}$ of $G\circledS
\mathfrak{g}$ also acts on $ T^{\ast}TG$ symplectically. Following
results describes Poisson and symplectic reductions of $ T^{\ast}TG$ by $%
\mathfrak{g}$ assuming that functions $K=K\left( g,\mu,\nu\right)$ and $H=H\left( g,\mu,\nu\right) $
defined on $G\circledS\left( \mathfrak{g}_{2}^{\ast}\times\mathfrak{g}%
_{3}^{\ast}\right) $ are right invariant under the above action of $%
\mathfrak{g}$.

\begin{proposition}
Poisson reduction of $ T^{\ast}TG$ by Abelian subgroup $\mathfrak{g}$
gives the Poisson manifold $G\circledS\left( \mathfrak{g}_{2}^{\ast}\times
\mathfrak{g}_{3}^{\ast}\right) $ endowed with the Poisson bracket
\begin{equation}\label{PoGxg*xg*}
\begin{split}
\left\{ H,K\right\} _{G\circledS\left( \mathfrak{g}_{2}^{\ast}\times%
\mathfrak{g}_{3}^{\ast}\right) } & \left( g,\mu,\nu\right) = \left\langle T_{e}^{\ast}R_{g}\frac{\delta H}{\delta g},\frac{%
\delta K}{\delta\mu}\right\rangle -\left\langle
T_{e}^{\ast}R_{g}\frac{\delta K}{\delta g},\frac{\delta H}{\delta\mu}%
\right\rangle  \\
& + \left\langle \mu,\left[ \frac{\delta H}{\delta\mu},\frac{\delta K}{%
\delta\mu}\right] \right\rangle +\left\langle \nu,\left[ \frac{\delta H}{%
\delta\mu},\frac{\delta K}{\delta\nu}\right] -\left[ \frac{\delta K}{%
\delta\mu},\frac{\delta H}{\delta\nu}\right] \right\rangle .
\end{split}
\end{equation}
\end{proposition}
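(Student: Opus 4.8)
The strategy I would follow is the standard one for Poisson reduction of a cotangent bundle by a lifted action. The action (\ref{gonT*TG}) is the cotangent lift of the translation of the $\mathfrak{g}_{1}$-factor of the base $G\circledS\mathfrak{g}$, hence symplectic (as already recorded above, $\mathfrak{g}$ being a subgroup of $G\circledS\mathfrak{g}$, whose cotangent action on $T^{\ast}TG$ is symplectic). Consequently the $\mathfrak{g}$-invariant functions form a Poisson subalgebra of $C^{\infty}(T^{\ast}TG)$ which is canonically $C^{\infty}$ of the quotient $T^{\ast}TG/\mathfrak{g}\simeq G\circledS(\mathfrak{g}_{2}^{\ast}\times\mathfrak{g}_{3}^{\ast})$; the reduced bracket of $H$ and $K$ is then obtained by evaluating the ambient canonical bracket on invariant representatives and restricting. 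So the plan has two parts: write the ambient bracket on $T^{\ast}TG$, then impose invariance.

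For the ambient bracket I would reuse the Hamilton's equations (\ref{HamT*TG1})--(\ref{HamT*TG4}) already in hand, computing $\{H,K\}_{T^{\ast}TG}$ by pairing $dK$ with the Hamiltonian vector field $X_{H}^{T^{\ast}TG}$, i.e.
\begin{equation*}
\{H,K\}_{T^{\ast}TG}=\left\langle \frac{\delta K}{\delta g},\dot{g}\right\rangle +\left\langle \frac{\delta K}{\delta\xi},\dot{\xi}\right\rangle +\left\langle \dot{\mu},\frac{\delta K}{\delta\mu}\right\rangle +\left\langle \dot{\nu},\frac{\delta K}{\delta\nu}\right\rangle,
\end{equation*}
and substituting the right-hand sides of (\ref{HamT*TG1})--(\ref{HamT*TG4}) for the dotted quantities. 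Transposing through $\dot{g}=T_{e}R_{g}\,\delta H/\delta\mu$ produces the $T_{e}^{\ast}R_{g}$ terms, while each coadjoint contribution is turned into a bracket pairing via $\langle \ad_{\zeta}^{\ast}\mu,\eta\rangle=\langle\mu,[\zeta,\eta]\rangle$. The $(g,\mu)$-block of the resulting expression is exactly the first-order cotangent bracket (\ref{PoissonGg*}) with $\mu$ as momentum; alongside it appear a $\nu$-block and several cross terms carrying $\delta H/\delta\xi$ or $\delta K/\delta\xi$, namely those produced by (\ref{HamT*TG2}), (\ref{HamT*TG4}) and by the $\ad_{\xi}^{\ast}(\delta H/\delta\xi)$ summand of (\ref{HamT*TG3}).

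The action (\ref{gonT*TG}) moves the $\mathfrak{g}_{1}$-fibre coordinate (written $\xi$ in (\ref{HamT*TG1})--(\ref{HamT*TG4})) by an arbitrary $\Ad_{h}\xi\in\mathfrak{g}_{1}$, so $\mathfrak{g}$-invariance is exactly independence of this coordinate, that is $\delta H/\delta\xi=\delta K/\delta\xi=0$. Setting these to zero annihilates precisely the cross terms just isolated and leaves the four terms of (\ref{PoGxg*xg*}). I expect the only real obstacle to be the bookkeeping in this last step: one must verify that every $\xi$-coupled term cancels, and that the two separate $\nu$-contributions---one descending from $\dot{\mu}$ in (\ref{HamT*TG3}), the other from $\dot{\nu}$ in (\ref{HamT*TG4})---combine into the single skew cocycle $\langle\nu,[\delta H/\delta\mu,\delta K/\delta\nu]-[\delta K/\delta\mu,\delta H/\delta\nu]\rangle$, with all signs aligned to the convention fixed by (\ref{PoissonGg*}). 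Well-definedness of the descended bracket needs no separate argument, being guaranteed by the symplectic, hence Poisson, character of the $\mathfrak{g}$-action.
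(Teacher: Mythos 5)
Your reduction framework is exactly the one the paper intends: the action \eqref{gonT*TG} is symplectic, its invariants are precisely the functions with $\delta H/\delta\xi=0$ (since $\Ad_h$ is invertible, translating the $\mathfrak{g}_1$-slot sweeps out all of $\mathfrak{g}$), these are identified with functions on the quotient $G\circledS(\mathfrak{g}_2^{\ast}\times\mathfrak{g}_3^{\ast})$, and the reduced bracket is the restriction of the ambient canonical bracket. The genuine gap is in the step you defer as ``bookkeeping'': the ambient bracket cannot be recovered from \eqref{HamT*TG1}--\eqref{HamT*TG4} by the recipe you propose. Carrying out $\{H,K\}=\langle dK,X_{H}\rangle$ with those equations and then setting $\delta H/\delta\xi=\delta K/\delta\xi=0$ yields
\begin{equation*}
\Big\langle T_{e}^{\ast}R_{g}\frac{\delta K}{\delta g},\frac{\delta H}{\delta\mu}\Big\rangle-\Big\langle T_{e}^{\ast}R_{g}\frac{\delta H}{\delta g},\frac{\delta K}{\delta\mu}\Big\rangle+\Big\langle \mu,\Big[\frac{\delta H}{\delta\mu},\frac{\delta K}{\delta\mu}\Big]\Big\rangle+\Big\langle \nu,\Big[\frac{\delta H}{\delta\mu},\frac{\delta K}{\delta\nu}\Big]-\Big[\frac{\delta K}{\delta\mu},\frac{\delta H}{\delta\nu}\Big]\Big\rangle ,
\end{equation*}
whose group block is the \emph{negative} of the one in \eqref{PoGxg*xg*} (and in \eqref{PoissonGg*}), while the $\mu$- and $\nu$-blocks agree; the opposite convention $\{H,K\}=\langle dH,X_{K}\rangle$ repairs the group block but flips the other two. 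So under no choice of sign convention does your recipe terminate in the stated formula: your claim that the $(g,\mu)$-block ``is exactly'' \eqref{PoissonGg*} fails. A further symptom that \eqref{HamT*TG1}--\eqref{HamT*TG4} and the paper's brackets do not follow one consistent convention: for functions still depending on $\xi$, the two $\ad_{\xi}^{\ast}$-contributions entering $\langle dK,X_{H}\rangle$ through \eqref{HamT*TG2} and \eqref{HamT*TG3} carry the \emph{same} sign, so the resulting expression is not even antisymmetric, whereas \eqref{Poigxg*xg*} is.

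The derivation that actually produces \eqref{PoGxg*xg*} bypasses the Hamilton's equations and goes through the group structure: regard $T^{\ast}TG$ as the cotangent group of $G\circledS\mathfrak{g}_{1}$ and write the canonical bracket \eqref{PoissonGg*} \emph{for this group}, using the cotangent lift of right translation encoded in the trivialization \eqref{trT*TG}, namely $(\alpha,\beta)\mapsto\big(T_{e}^{\ast}R_{g}\alpha+\ad_{\xi}^{\ast}\beta,\beta\big)$, together with the Lie bracket of $\mathfrak{g}_{2}\circledS\mathfrak{g}_{3}$. This gives the ambient bracket
\begin{align*}
\{H,K\}_{T^{\ast}TG}={}&\Big\langle T_{e}^{\ast}R_{g}\frac{\delta H}{\delta g}+\ad^{\ast}_{\xi}\frac{\delta H}{\delta\xi},\frac{\delta K}{\delta\mu}\Big\rangle+\Big\langle\frac{\delta H}{\delta\xi},\frac{\delta K}{\delta\nu}\Big\rangle-\Big\langle T_{e}^{\ast}R_{g}\frac{\delta K}{\delta g}+\ad^{\ast}_{\xi}\frac{\delta K}{\delta\xi},\frac{\delta H}{\delta\mu}\Big\rangle-\Big\langle\frac{\delta K}{\delta\xi},\frac{\delta H}{\delta\nu}\Big\rangle\\
&+\Big\langle\mu,\Big[\frac{\delta H}{\delta\mu},\frac{\delta K}{\delta\mu}\Big]\Big\rangle+\Big\langle\nu,\Big[\frac{\delta H}{\delta\mu},\frac{\delta K}{\delta\nu}\Big]-\Big[\frac{\delta K}{\delta\mu},\frac{\delta H}{\delta\nu}\Big]\Big\rangle ,
\end{align*}
whose restriction to $\xi$-independent functions is exactly \eqref{PoGxg*xg*}, and whose restriction to $g$-independent functions is exactly \eqref{Poigxg*xg*} --- a cross-check your route cannot reproduce, confirming that this is the bracket the paper's two reduction propositions are both restrictions of. With this replacement for your second step, your invariance and quotient arguments go through unchanged.
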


\begin{remark}
Recall that $G\times\mathfrak{g}_{2}^{\ast}$ is canonically symplectic with
the Poisson bracket in Eq(\ref{PoissonGg*}) and the immersion $G\times
\mathfrak{g}_{2}^{\ast}\rightarrow G\circledS\left( \mathfrak{g}_{2}^{\ast
}\times\mathfrak{g}_{3}^{\ast}\right) $ is a Poisson map. On the other hand,
$\mathfrak{g}_{3}^{\ast}$ is naturally Lie-Poisson and $\mathfrak{g}%
_{3}^{\ast}\rightarrow\mathfrak{g}_{1}\circledS\left( \mathfrak{g}_{2}^{\ast
}\times\mathfrak{g}_{3}^{\ast}\right) $ is also a Poisson map. The Poisson
bracket in Eq(\ref{PoGxg*xg*}) is, however, not a direct product of these
structures.
\end{remark}

\begin{proposition}
The Marsden-Weinstein symplectic reduction by the action of $\mathfrak{g}$
on $ T^{\ast}TG$ with the momentum mapping
\begin{equation*}
\mathbf{J}_{ T^{\ast}TG}^{\mathfrak{g}}: T^{\ast
}TG\rightarrow\mathfrak{g}_{3}^{\ast}:\left( g,\xi,\mu,\nu\right)
\rightarrow\nu
\end{equation*}
results in the reduced symplectic space $\left( \mathbf{J}_{T^{\ast}TG}^{\mathfrak{g}}\right) ^{-1}/\mathfrak{g}$ isomorphic to $%
G\circledS\mathfrak{g}_{2}^{\ast}$ and with the canonical symplectic
two-from $\Omega_{G\circledS\mathfrak{g}_{2}^{\ast}}$ in Eq(\ref{Ohm2T*G}).
\end{proposition}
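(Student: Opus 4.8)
The plan is to realize this as a textbook Marsden--Weinstein reduction of the cotangent bundle $T^{\ast}TG\cong T^{\ast}(G\circledS\mathfrak{g})$ by the cotangent lift of a free and proper action on the base, and then to match the reduced two-form against $\Omega_{G\circledS\mathfrak{g}^{\ast}}$ by a direct computation in the trivialized coordinates.

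First I would record that the $\mathfrak{g}$-action (\ref{gonT*TG}) is the cotangent lift of the base action $(g,\xi)\mapsto(g,\xi+\Ad_g\zeta)$ of the subgroup $\mathfrak{g}_1\subset G\circledS\mathfrak{g}$ on $Q:=G\circledS\mathfrak{g}$. Since this base action is free and proper with orbit space $Q/\mathfrak{g}\cong G$ (the projection forgetting $\xi$), its cotangent lift is symplectic, in agreement with the preceding remark that $\mathfrak{g}\subset G\circledS\mathfrak{g}$ acts symplectically. The associated momentum map is the cotangent-lift pairing $\langle\mathbf{J},\zeta\rangle(g,\xi,\mu,\nu)=\langle\nu,\Ad_g\zeta\rangle=\langle\Ad_g^{\ast}\nu,\zeta\rangle$, so its level foliation is governed by the $\mathfrak{g}_3^{\ast}$-slot recorded as $\nu$ in the statement; I would verify $i_{X}\Omega_{T^{\ast}TG}=d\langle\mathbf{J},\zeta\rangle$ directly by writing the infinitesimal generator, via (\ref{rivfT*TG}), as $X=X_{(0,\Ad_g\zeta,-\ad_{\Ad_g\zeta}^{\ast}\nu,0)}^{T^{\ast}TG}$ and contracting it into $\Omega_{T^{\ast}TG}$ of (\ref{thet1T*TG}). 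Because $\mathfrak{g}$ is abelian its coadjoint action is trivial, hence the isotropy of any value is all of $\mathfrak{g}$ and every value is regular; the reduced space is $\mathbf{J}^{-1}(\cdot)/\mathfrak{g}$.

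Next I would identify the reduced space and its form. On the level set the $\mathfrak{g}$-action translates the $\mathfrak{g}_1$-fiber $\xi$ freely and transitively, so the projection $(g,\xi,\mu,\nu)\mapsto(g,\mu)$ descends to a diffeomorphism $\mathbf{J}^{-1}(\cdot)/\mathfrak{g}\xrightarrow{\sim}G\circledS\mathfrak{g}_2^{\ast}$. It then remains to compute the reduced form from $\pi^{\ast}\Omega^{red}=\iota^{\ast}\Omega_{T^{\ast}TG}$: I would restrict $\Omega_{T^{\ast}TG}$ to pairs of vectors satisfying the tangency condition to the level set, check that the resulting expression is independent of the $\mathfrak{g}_1$-directions and hence basic, and read it off on the quotient generators $(\sigma,\tau)\in\mathfrak{g}\circledS\mathfrak{g}^{\ast}$, where $X_{(\sigma,\tau)}^{G\circledS\mathfrak{g}^{\ast}}(g,\mu)=(T_eR_g\sigma,\tau+\ad_\sigma^{\ast}\mu)$. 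The expectation, which I would confirm, is that the $\nu$- and $\xi$-dependent $\ad^{\ast}$-twist terms cancel pairwise, leaving exactly $\langle\tau_1,\sigma_2\rangle-\langle\tau_2,\sigma_1\rangle-\langle\mu,[\sigma_1,\sigma_2]\rangle$, i.e. $\Omega_{G\circledS\mathfrak{g}^{\ast}}$ of (\ref{Ohm2T*G}).

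The main obstacle is precisely this cancellation of the twist terms, equivalently the vanishing of the magnetic contribution that a nonzero reduction value ordinarily produces. The clean conceptual reason, which doubles as a cross-check, is that the principal $\mathfrak{g}$-bundle $Q=G\circledS\mathfrak{g}\to G$ is trivial: the frame $(g,\xi)\mapsto(g,\Ad_{g^{-1}}\xi)$ converts the twisted translation $\xi\mapsto\xi+\Ad_g\zeta$ into ordinary translation, so the bundle carries a flat connection $\mathcal{A}$ and the magnetic two-form $d\langle c,\mathcal{A}\rangle$ on $G$ vanishes. Hence cotangent-bundle reduction yields the plain canonical structure $T^{\ast}(Q/\mathfrak{g})=T^{\ast}G\cong(G\circledS\mathfrak{g}_2^{\ast},\Omega_{G\circledS\mathfrak{g}^{\ast}})$ with no residual term. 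The only genuine care needed along the explicit route is bookkeeping the level-set constraint so that the $\mathfrak{g}$-orbit directions are exactly the null directions of $\iota^{\ast}\Omega_{T^{\ast}TG}$; once that is in place, the identification with (\ref{Ohm2T*G}) is a short antisymmetrized calculation.
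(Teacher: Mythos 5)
Your identification of the action \eqref{gonT*TG} as the cotangent lift of right translation by the subgroup $\mathfrak{g}_1\subset G\circledS\mathfrak{g}_1$ is correct, and it is exactly the justification the paper itself relies on (the paper states this proposition without further proof); your expression $X^{T^*TG}_{(0,\Ad_g\zeta,-\ad^*_{\Ad_g\zeta}\nu,0)}$ for the infinitesimal generator agrees with Eq.~(\ref{rivfT*TG}), the plain projection $(g,\xi,\mu,\nu)\mapsto(g,\mu)$ does descend to the orbit space of your level set, and the cancellation you anticipate is real: on your level set the difference between the restriction of the canonical one-form of $T^*TG$ and the pullback of the canonical one-form of $G\circledS\mathfrak{g}_2^*$ is the exact one-form $d\langle \nu_0,\Ad_{g^{-1}}\xi\rangle$, which is precisely $\langle\nu_0,\mathcal{A}\rangle$ for your flat connection, so the reduced two-form is indeed $\Omega_{G\circledS\mathfrak{g}_2^*}$ of Eq.~(\ref{Ohm2T*G}).

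The gap is the momentum map. The map you derive, $\langle\mathbf{J},\zeta\rangle=\langle\nu,\Ad_g\zeta\rangle$, is the correct momentum map for the action \eqref{gonT*TG}, but it is \emph{not} the map $(g,\xi,\mu,\nu)\mapsto\nu$ asserted in the proposition, and your bridging remark that its ``level foliation is governed by the $\nu$-slot recorded in the statement'' is false: your level sets are graphs on which $\nu$ is a $g$-dependent coadjoint translate of the reduction value, whereas the statement's level sets are the slices $\nu=\mathrm{const}$, and the two foliations agree only at coadjoint-fixed values such as $\nu_0=0$. In fact the plain projection $\nu$ is the momentum map of a \emph{different} symplectic $\mathfrak{g}$-action, namely the lifted left translations $(g,\xi,\mu,\nu)\mapsto(g,\xi+\zeta,\mu+\ad^*_\zeta\nu,\nu)$; this is visible from the paper's own Hamilton equations (\ref{HamT*TG1})--(\ref{HamT*TG4}), which give $X_{\langle\nu,\zeta\rangle}=(0,\zeta,\ad^*_\zeta\nu,0)$ rather than the generator $(0,\Ad_g\zeta,0,0)$ of \eqref{gonT*TG}. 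So as written your argument proves a repaired variant of the proposition (the action \eqref{gonT*TG} reduced at your twisted momentum map), not the proposition verbatim; a complete treatment must either carry out the reduction of the left-translation action at the stated momentum value $\nu_0$ (which also yields $G\circledS\mathfrak{g}_2^*$ with the canonical form, but through the shifted quotient map $(g,\xi,\mu,\nu_0)\mapsto(g,\mu-\ad^*_\xi\nu_0)$, since that action moves $\mu$), or state explicitly that the momentum map printed in the proposition is inconsistent with the action \eqref{gonT*TG} and that you are proving the corrected statement. Eliding this mismatch, as your proposal does, leaves the proof not matching the claim it is supposed to establish.
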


It follows that the immersion $G\circledS\mathfrak{g}_{2}^{\ast}\rightarrow
G\circledS\left( \mathfrak{g}_{2}^{\ast}\times\mathfrak{g}_{3}^{\ast}\right)
$ defines symplectic leaves of the Poisson manifold $G\circledS\left(
\mathfrak{g}_{2}^{\ast}\times\mathfrak{g}_{3}^{\ast}\right) $. 
The symplectic reduction of $G\circledS\mathfrak{g}_{2}^{\ast}$ under the
action of $G$ results in the total space $\mathcal{O}_{\mu}$ with
Kostant-Kirillov-Souriau two-form (\ref{KKS}). We arrive at the following
proposition.

\begin{proposition}
Reductions by actions of $\mathfrak{g}$\ and $G$ make the following diagram
commutative
\begin{equation}
\xymatrix{& (G\circledS\mathfrak{g}_{1})\circledS(\mathfrak{g}_{2}^{\ast
}\times\mathfrak{g}_{3}^{\ast }) \ar[dl]|-{\text{SR by } \mathfrak{g}_{1}}
\ar[dr]|-{\text{SR by } G \text{ at } \mu} \\
G\circledS\mathfrak{g}_{2}^{\ast} \ar[dr]|-{\text{SR by } G \text{ at }
\mu} && \mathcal{O}_{\mu}\times\mathfrak{g}_{1}\times\mathfrak{g}_{3}^{\ast
} \ar[dl]|-{\text{SR by } \mathfrak{g}_{1}} \\ &\mathcal{O}_{\mu} 
\\
&\text{\small  Symplectic Reductions of $T^*TG$}
}
\end{equation}
\end{proposition}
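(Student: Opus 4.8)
The plan is to recognize this proposition as a \emph{commuting-reductions} statement for the single symplectic action of the semidirect product $G\circledS\mathfrak{g}$ on its cotangent bundle $T^{\ast}TG=T^{\ast}(G\circledS\mathfrak{g})$, and then to deduce commutativity of the square from the two preceding propositions, which already compute each leg separately. First I would record that the lifted cotangent action of $G\circledS\mathfrak{g}$ on $T^{\ast}TG$ is Hamiltonian with an $\Ad^{\ast}$-equivariant momentum map $\mathbf{J}^{G\circledS\mathfrak{g}}:T^{\ast}TG\to\mathfrak{g}_{2}^{\ast}\times\mathfrak{g}_{3}^{\ast}$, $(g,\xi,\mu,\nu)\mapsto(\mu,\nu)$, whose two components are precisely the momentum maps $\mathbf{J}^{G}_{T^{\ast}TG}$ and $\mathbf{J}^{\mathfrak{g}}_{T^{\ast}TG}$ appearing in the two previous propositions. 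The top vertex of the square is thus the total space carrying this action, and the two downward arrows are reduction by the subgroups $\mathfrak{g}_{1}$ and $G$, respectively.

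Next I would identify the left leg with semidirect-product reduction by stages in its canonical order. Since $\mathfrak{g}$ is the Abelian, normal fiber subgroup of $G\circledS\mathfrak{g}$ and $G$ is the quotient, reducing first by $\mathfrak{g}$ yields $G\circledS\mathfrak{g}_{2}^{\ast}$ with its canonical symplectic form $\Omega_{G\circledS\mathfrak{g}_{2}^{\ast}}$ of Eq.(\ref{Ohm2T*G}), after which the residual $G$-action reduces at $\mu$ to the coadjoint orbit $\mathcal{O}_{\mu}$ with the Kostant--Kirillov--Souriau form (\ref{KKS}). For the right leg I would reduce first by $G$ at $\mu$, obtaining $\mathcal{O}_{\mu}\times\mathfrak{g}_{1}\times\mathfrak{g}_{3}^{\ast}$ with the product form (\ref{RedOhmT*TG}); here the points to check are that the $\mathfrak{g}_{1}$-action descends to this quotient, that it remains symplectic (which follows since $\mathfrak{g}_{1}\subset G\circledS\mathfrak{g}$ already acts symplectically on the total space), and that its momentum map is $(\mu,\xi,\nu)\mapsto\nu$. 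The second reduction then collapses the cotangent-type factor $\mathfrak{g}_{1}\times\mathfrak{g}_{3}^{\ast}$ and again returns $\mathcal{O}_{\mu}$ with the form (\ref{KKS}).

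With both legs in hand, commutativity reduces to matching the two natural identifications of the corner $\mathcal{O}_{\mu}$. I would argue that each composite coincides with the total Marsden--Weinstein reduction of the $G\circledS\mathfrak{g}$-action at the level $(\mu,\nu)\in\mathfrak{g}_{2}^{\ast}\times\mathfrak{g}_{3}^{\ast}$: the left leg realizes it as reduce-normal-then-quotient, while the right leg realizes it in the opposite order. Because both partial reductions are taken with respect to the momentum map $\mathbf{J}^{G\circledS\mathfrak{g}}$ of one and the same action, the commuting-reductions theorem forces the two orders to produce the same reduced symplectic manifold, and the explicit formulas exhibit this common manifold as $\mathcal{O}_{\mu}$ with (\ref{KKS}) on the nose.

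The main obstacle I anticipate is the right leg, where the \emph{first} reduction is by the non-normal subgroup $G$: one must verify that the remaining $\mathfrak{g}_{1}$-symmetry genuinely survives to the $G$-reduced space with the stated momentum map, rather than being obstructed by the semidirect twist $\eta\mapsto\eta+\Ad_{h}\xi$ in the action (\ref{gonT*TG}). This, however, is where the structure is benign: the $G$-action of Eq.(\ref{gonT*TG}'s companion) fixes $\nu=\mathbf{J}^{\mathfrak{g}}_{T^{\ast}TG}$, while the $\mathfrak{g}$-action (\ref{gonT*TG}) fixes $\mu=\mathbf{J}^{G}_{T^{\ast}TG}$, so each subgroup preserves the other's momentum map. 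Once this mutual invariance is noted, the descent of $\mathfrak{g}_{1}$ to $\mathbf{J}^{G,-1}(\mu)/G$ is automatic, the two orders of reduction commute, and the diagram follows.
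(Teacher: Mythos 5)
The genuine gap is in your third paragraph, the step that is supposed to deliver commutativity. You claim that each composite leg coincides with the one-shot Marsden--Weinstein reduction of the $G\circledS\mathfrak{g}_{1}$-action at the level $(\mu,\nu)$, and you invoke a commuting-reductions theorem to identify the two orders. This is false, and it contradicts a remark the paper makes immediately after this very proposition: ``the symplectic reduction of $T^{\ast}TG$ by the total action of the group $G\circledS\mathfrak{g}_{1}$ does not result in $\mathcal{O}_{\mu}$ as reduced space.'' The one-shot reduction at $(\mu,\nu)$ is computed in the following subsection and yields the coadjoint orbit $\mathcal{O}_{(\mu,\nu)}$ of the semidirect product group inside $\mathfrak{g}_{2}^{\ast}\times\mathfrak{g}_{3}^{\ast}$, Eq.(\ref{Omunu}), with the two-form (\ref{Symp/Gxg}); this is generically strictly larger than $\mathcal{O}_{\mu}$ (for $G=\mathrm{SO}(3)$ a generic $\mathcal{O}_{(\mu,\nu)}$ is four-dimensional while $\mathcal{O}_{\mu}$ is a two-sphere). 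The discrepancy is exactly what the Hamiltonian reduction by stages theorem governs: in a staged reduction of the full $G\circledS\mathfrak{g}_{1}$-action, the second stage must be performed by an \emph{isotropy} subgroup (cf.\ $G_{\mu}$ in diagram (\ref{HRBS})), whereas both legs of the present diagram reduce at the second stage by a \emph{full} group ($G$, respectively $\mathfrak{g}_{1}$), which is larger and therefore cuts the space down further, to $\mathcal{O}_{\mu}$. The same confusion already shows in your labelling of the left leg as ``semidirect-product reduction by stages in its canonical order.''

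The commuting-reductions theorem is inapplicable for a structural reason: it requires the two actions to commute, i.e.\ to assemble into an action of the direct product, together with mutual invariance of the momentum maps. Mutual invariance indeed holds here ($G$ fixes $\nu$, $\mathfrak{g}_{1}$ fixes $\mu$), but commutativity fails: for the actions induced by Eq.(\ref{GrT*TG}), acting by $g\in G$ and then by $\xi\in\mathfrak{g}_{1}$ equals acting by $\Ad_{g}\xi$ and then by $g$, so the two actions commute only when $\Ad_{g}\xi=\xi$; they generate the semidirect, not the direct, product. The same twist breaks your claim that descent of the $\mathfrak{g}_{1}$-action to $\left(\mathbf{J}_{T^{\ast}TG}^{G}\right)^{-1}(\mu)/G_{\mu}$ is ``automatic'': for $g\in G_{\mu}$ the elements $\xi$ and $\Ad_{g}\xi$ act differently on a class, so individual elements of $\mathfrak{g}_{1}$ do not act on the quotient. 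The paper never descends the original action; it observes instead that the reduced two-form (\ref{RedOhmT*TG}) is in direct product form and then reduces by the \emph{additive} translation action of $\mathfrak{g}_{1}$ on the middle factor of $\mathcal{O}_{\mu}\times\mathfrak{g}_{1}\times\mathfrak{g}_{3}^{\ast}$, with momentum map $(\mu,\xi,\nu)\mapsto\nu$. With that substitution, your second paragraph --- compute the left leg as the $\mathfrak{g}_{1}$-reduction to $\left(G\circledS\mathfrak{g}_{2}^{\ast},\Omega_{G\circledS\mathfrak{g}_{2}^{\ast}}\right)$ followed by the standard reduction of $T^{\ast}G$ by $G$ at $\mu$, and the right leg as the $G$-reduction followed by the additive $\mathfrak{g}_{1}$-reduction, each landing on $\mathcal{O}_{\mu}$ with the Kostant--Kirillov--Souriau form (\ref{KKS}) --- is precisely the paper's argument, and commutativity is then read off from this coincidence rather than from any appeal to reduction by the full semidirect product.
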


Note that, the symplectic reduction of $ T^{\ast}TG$ by the total action
of the group $G\circledS\mathfrak{g}_{1}$ does not result in $\mathcal{O}%
_{\mu}$ as  reduced space. This is a matter of Hamiltonian reduction by
stages theorem \cite{MaMiOrPeRa07}. In the following subsection, we will
discuss the reduction of $ T^{\ast}TG$ under the action of $G\circledS%
\mathfrak{g}_{1}$ as well as the implications of the Hamiltonian reduction
by stages theorem for this case.

\subsection{Reduction of $T^*TG$ by $G\circledS\mathfrak{g}$}

The Lie algebra of $G\circledS\mathfrak{g}\ $is the space $\mathfrak{g}%
\circledS\mathfrak{g}$ endowed with the semidirect product Lie algebra
bracket
\begin{equation}
\left[ \left( \xi_{1},\xi_{2}\right) ,\left( \eta_{1},\eta_{2}\right) \right]
_{\mathfrak{g}\circledS\mathfrak{g}}=\left( \left[ \xi_{1},\eta _{1}\right] ,%
\left[ \xi_{1},\eta_{2}\right] -\left[ \eta_{1},\xi _{2}\right] \right)
\end{equation}
for $\left( \xi_{1},\xi_{2}\right) $ and $\left( \eta_{1},\eta_{2}\right) $
in $\mathfrak{g}\circledS\mathfrak{g}$. Accordingly, the dual space $%
\mathfrak{g}_{2}^{\ast}\times\mathfrak{g}_{3}^{\ast}$ has the Lie-Poisson
bracket
\begin{align}
\left\{ F,E\right\} _{\mathfrak{g}_{2}^{\ast}\times\mathfrak{g}%
_{3}^{\ast}}\left( \mu,\nu\right) =\left\langle \mu,\left[ \frac{\delta F}{\delta\mu},\frac{\delta E}{%
\delta\mu}\right] \right\rangle +\left\langle \nu,\left[ \frac{\delta F}{%
\delta\mu},\frac{\delta E}{\delta\nu}\right] -\left[ \frac{\delta E}{%
\delta\mu},\frac{\delta F}{\delta\nu}\right] \right\rangle  \label{LPBg*xg*}
\end{align}
for two functionals $F$ and $E$ on $\mathfrak{g}_{2}^{\ast}\times \mathfrak{g%
}_{3}^{\ast}$.

\begin{proposition}
The Lie-Poisson structure on $\mathfrak{g}_{2}^{\ast}\times\mathfrak{g}%
_{3}^{\ast}$ is given by the bracket in Eq.(\ref{LPBg*xg*}) and the
Lie-Poisson equations for a function $H\left( \mu,\nu\right) $ on $\mathfrak{%
g}_{2}^{\ast}\times\mathfrak{g}_{3}^{\ast}$ read%
\begin{equation}
\frac{d\mu}{dt}=\ad_{\frac{\delta H}{\delta\mu}}^{\ast}\mu+\ad_{\frac{\delta H%
}{\delta\nu}}^{\ast}\nu,\text{ \ \ }\frac{d\nu}{dt}=\ad_{\frac{\delta H}{%
\delta\mu}}^{\ast}\nu.  \label{LPg*g*}
\end{equation}
\end{proposition}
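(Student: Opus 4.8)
The plan is to read $\mathfrak{g}_{2}^{\ast}\times\mathfrak{g}_{3}^{\ast}$ as the dual of the semidirect product Lie algebra $\mathfrak{g}\circledS\mathfrak{g}$, the Lie algebra of $G\circledS\mathfrak{g}$, and to obtain both the bracket \eqref{LPBg*xg*} and the equations \eqref{LPg*g*} from the Lie--Poisson formalism applied to this algebra. First I would establish \eqref{LPBg*xg*}. Using the pairing $\langle(\mu,\nu),(\xi_{1},\xi_{2})\rangle=\langle\mu,\xi_{1}\rangle+\langle\nu,\xi_{2}\rangle$ between $\mathfrak{g}_{2}^{\ast}\times\mathfrak{g}_{3}^{\ast}$ and $\mathfrak{g}\circledS\mathfrak{g}$, and inserting the semidirect bracket recalled just above the statement into the ($+$)Lie--Poisson expression $\{F,E\}(\mu,\nu)=\langle(\mu,\nu),[(\delta F/\delta\mu,\delta F/\delta\nu),(\delta E/\delta\mu,\delta E/\delta\nu)]_{\mathfrak{g}\circledS\mathfrak{g}}\rangle$, the first slot $[\delta F/\delta\mu,\delta E/\delta\mu]$ pairs with $\mu$ and the second slot $[\delta F/\delta\mu,\delta E/\delta\nu]-[\delta E/\delta\mu,\delta F/\delta\nu]$ pairs with $\nu$, reproducing exactly the two lines of \eqref{LPBg*xg*}. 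This step is a direct substitution, the overall sign being fixed to the ($+$) convention by consistency with the first-order bracket \eqref{LPbracket}.

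For the equations of motion I would use the fact, established for the first-order case in \eqref{LP} and fixed by the conventions of Section~2, that Lie--Poisson dynamics on the dual of a Lie algebra is the frozen coadjoint flow $\dot{\chi}=\ad^{\ast}_{\delta H/\delta\chi}\chi$. The whole computation then reduces to determining the infinitesimal coadjoint action of $\mathfrak{g}\circledS\mathfrak{g}$ on $\mathfrak{g}_{2}^{\ast}\times\mathfrak{g}_{3}^{\ast}$. Dualizing the adjoint action through $\langle\ad^{\ast}_{\xi}\alpha,\zeta\rangle=\langle\alpha,[\xi,\zeta]\rangle$ and using antisymmetry of the bracket to isolate the two test directions, I would obtain, slot by slot,
\begin{equation*}
\ad^{\ast}_{(\xi_{1},\xi_{2})}(\mu,\nu)=\big(\ad^{\ast}_{\xi_{1}}\mu+\ad^{\ast}_{\xi_{2}}\nu,\ \ad^{\ast}_{\xi_{1}}\nu\big),
\end{equation*}
where the cross term $\ad^{\ast}_{\xi_{2}}\nu$ in the first component comes precisely from the $-[\eta_{1},\xi_{2}]$ piece of the semidirect bracket. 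Substituting $\xi_{1}=\delta H/\delta\mu$ and $\xi_{2}=\delta H/\delta\nu$ yields \eqref{LPg*g*} immediately.

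As an independent check I would recover \eqref{LPg*g*} straight from \eqref{LPBg*xg*} by writing Hamilton's equation for an arbitrary test functional $F$ as $\dot{F}=\langle\dot{\mu},\delta F/\delta\mu\rangle+\langle\dot{\nu},\delta F/\delta\nu\rangle$, equating it with the Poisson bracket of $H$ and $F$ in the sign convention of Section~2, transporting each Lie bracket onto $\mu$ or $\nu$ via $\ad^{\ast}$, and matching the coefficients of the independent variations $\delta F/\delta\mu$ and $\delta F/\delta\nu$. The only genuinely delicate point in either route is the sign and index bookkeeping: one must carry the minus sign of \eqref{Adtoad} relating the coadjoint generator to $\ad^{\ast}$, and keep the two slots of the semidirect bracket straight, since it is exactly the asymmetric second slot that couples $\nu$ into $\dot{\mu}$ while leaving $\dot{\nu}=\ad^{\ast}_{\delta H/\delta\mu}\nu$ uncoupled from $\mu$.
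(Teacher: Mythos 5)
Your proposal is correct and takes essentially the same route as the paper: Eq.(\ref{LPBg*xg*}) is the standard Lie--Poisson bracket on the dual of the semidirect product algebra $\mathfrak{g}\circledS\mathfrak{g}$, and Eq.(\ref{LPg*g*}) follows from it via the coadjoint formula $\ad^{\ast}_{(\xi_{1},\xi_{2})}(\mu,\nu)=(\ad^{\ast}_{\xi_{1}}\mu+\ad^{\ast}_{\xi_{2}}\nu,\,\ad^{\ast}_{\xi_{1}}\nu)$, which you compute correctly and consistently with the paper's convention $\dot{F}=\{H,F\}$ fixed by Eqs.(\ref{LPbracket}) and (\ref{LP}). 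The paper merely leaves these substitutions implicit and records, as an alternative you did not need, that Eq.(\ref{LPg*g*}) can also be obtained by Poisson reduction of $T^{\ast}TG$ under the action of $G\circledS\mathfrak{g}$, i.e.\ by restricting the Hamiltonian in Eqs.(\ref{HamT*TG1})--(\ref{HamT*TG4}) to $H=H(\mu,\nu)$.
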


Alternatively, the Lie-Poisson equations (\ref{LPg*g*}) can be obtained by
Poisson reduction of $T^{\ast }TG$ with the action of $G\circledS
\mathfrak{g}$ given by%
\begin{equation}
\left( \left( h,\eta ,\mu ,\nu \right); \left( g,\xi \right) \right)
\mapsto \left(\big( hg,\eta +\Ad_{h }\xi , \mu ,\nu \right)
\label{GxgonT*TG}
\end{equation}%
and restricting the Hamiltonian function $H$ to the fiber variables $\left(
\mu ,\nu \right) $. In this case, the Lie-Poisson dynamics of $g$ and $\xi $
remains the same but the dynamics governing $\mu $ and $\nu $ have the
reduced form given by Eq.(\ref{LPg*g*}). This is a manifestation of the fact
that the projections to the last two factors in the trivialization (\ref%
{trT*TG})\ is a momentum map under the left Hamiltonian action of the group $%
G\circledS \mathfrak{g}_{1}$ to its trivialized cotangent bundle $T^{\ast }TG$. Yet another way is to reduce the bracket (\ref{Poigxg*xg*}%
) on $\mathfrak{g}_{1}\circledS \left( \mathfrak{g}_{2}^{\ast }\times
\mathfrak{g}_{3}^{\ast }\right) $ by assuming that functionals depend on
elements of the dual spaces. That is, to consider the Abelian group action
of $\mathfrak{g}_{1}$ on $\mathfrak{g}_{1}\circledS \left( \mathfrak{g}%
_{2}^{\ast }\times \mathfrak{g}_{3}^{\ast }\right) $ given by%
\begin{equation}
\left( \left( \eta ,\mu ,\nu \right);\xi \right) \mapsto \left( \eta+\xi
 ,\mu  ,\nu \right)  \notag
\end{equation}%
and then, apply Poisson reduction. Note finally that, the immersion $%
\mathfrak{g}_{2}^{\ast }\times \mathfrak{g}_{3}^{\ast }\rightarrow \mathfrak{%
g}_{1}\circledS \left( \mathfrak{g}_{2}^{\ast }\times \mathfrak{g}_{3}^{\ast
}\right) $ is a Poisson map. 

Application of the Marsden-Weinstein reduction to the symplectic manifold $T^{\ast}TG$ results in the symplectic leaves of Poisson structure on $%
\mathfrak{g}_{2}^{\ast}\times\mathfrak{g}_{3}^{\ast}$. The action in Eq.(\ref%
{GxgonT*TG}) has the momentum mapping
\begin{equation*}
\mathbf{J}_{ T^{\ast}TG}^{G\circledS\mathfrak{g}}:T^{\ast}TG\rightarrow\mathfrak{g}_{2}^{\ast}\times\mathfrak{g}_{3}^{\ast
}:\left( g,\xi,\mu,\nu\right) \rightarrow\left( \mu,\nu\right) .
\end{equation*}
The pre-image $\left( \mathbf{J}_{ T^{\ast}TG}^{G\circledS
\mathfrak{g}}\right) ^{-1}\left( \mu,\nu\right) $ of an element $\left(
\mu,\nu\right) \in\mathfrak{g}_{2}^{\ast}\times\mathfrak{g}_{3}^{\ast}$ is
diffeomorphic to $G\circledS\mathfrak{g}$. The isotropy group $\left(
G\circledS\mathfrak{g}\right) _{\left( \mu,\nu\right) }$ of $\left(
\mu,\nu\right) $ consists of pairs $\left( g,\xi\right) $ in $G\circledS
\mathfrak{g}$ satisfying
\begin{equation}
\Ad_{\left( g,\xi\right) }^{\ast}\left( \mu,\nu\right) =\left(
\Ad_{g}^{\ast}\left( \mu+\ad_{\xi}^{\ast}\nu\right) ,\Ad_{g}^{\ast}\nu\right)
=\left( \mu,\nu\right)  \label{coad2}
\end{equation}
which means that $g\in G_{\nu}\cap G_{\mu}$ and the representation of $%
Ad_{g}\xi$ on $\nu$ is null, that is $ad_{Ad_{g}\xi}^{\ast}\nu=0.$ From the
general theory, we deduce that the quotient space
\begin{equation*}
\left. \left( \mathbf{J}_{ T^{\ast}TG}^{G\circledS\mathfrak{g}%
}\right) ^{-1}\left( \mu,\nu\right) \right/ \left( G\circledS \mathfrak{g}%
\right) _{\left( \mu,\nu\right) }\simeq\mathcal{O}_{\left( \mu,\nu\right) }
\end{equation*}
is diffeomorphic to the coadjoint orbit $\mathcal{O}_{\left( \mu,\nu\right)
} $ in $\mathfrak{g}_{2}^{\ast}\times\mathfrak{g}_{3}^{\ast}$ through the
point $\left( \mu,\nu\right) $ under the action $Ad_{\left( g,\xi\right)
}^{\ast}$ in Eq.(\ref{coad2}), that is,
\begin{equation}
\mathcal{O}_{\left( \mu,\nu\right) }=\left\{ \left( \mu,\nu\right) \in%
\mathfrak{g}_{2}^{\ast}\times\mathfrak{g}_{3}^{\ast}:Ad_{\left( g,\xi\right)
}^{\ast}\left( \mu,\nu\right) =\left( \mu,\nu\right) \right\} .
\label{Omunu}
\end{equation}

\begin{proposition}
The symplectic reduction of $ T^{\ast}TG$ results in the coadjoint orbit $
\mathcal{O}_{\left( \mu,\nu\right) }$ in $\mathfrak{g}_{2}^{\ast}\times%
\mathfrak{g}_{3}^{\ast}$ through the point $\left( \mu,\nu\right) $. The
reduced symplectic two-form $\Omega_{ T^{\ast}TG}^{G\circledS \mathfrak{%
g}\backslash}$ (denoted simply by $\Omega_{\mathcal{O}_{\left(
\mu,\nu\right) }}$) takes the value%
\begin{equation}
\left\langle \Omega_{\mathcal{O}_{\left( \mu,\nu\right) }};\left(
\eta,\zeta\right) ,\left( \bar{\eta},\bar{\zeta}\right) \right\rangle \left(
\mu,\nu\right) =\left\langle \mu,\left[ \bar{\eta},\eta\right] \right\rangle
+\left\langle \nu,\left[ \bar{\eta},\zeta\right] -\left[ \eta,\bar{\zeta}%
\right] \right\rangle  \label{Symp/Gxg}
\end{equation}
on two vectors $\left( \eta,\zeta\right) $ and $\left( \bar{\eta},\bar{\zeta}%
\right) $ in $T_{\left( \mu,\nu\right) }\mathcal{O}_{\left( \mu,\nu\right) }$%
.
\end{proposition}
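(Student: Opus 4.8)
The plan is to read the statement as the Marsden--Weinstein reduction of the cotangent bundle of a Lie group, carried out for $K:=G\circledS\mathfrak{g}_{1}$. Under the trivialization $tr_{T^{\ast}(G\circledS\mathfrak{g})}$ of Eq.~\eqref{trT*TG} the space $T^{\ast}TG$ is the cotangent bundle $T^{\ast}K$ with its canonical symplectic form, and the action \eqref{GxgonT*TG} restricts on the base to the group multiplication \eqref{tgtri}, hence it is exactly the cotangent lift of right translation of $K$ on itself. Its momentum map is the right-trivialization projection $\mathbf{J}^{G\circledS\mathfrak{g}}_{T^{\ast}TG}\colon(g,\xi,\mu,\nu)\mapsto(\mu,\nu)$ onto the dual $\mathfrak{k}^{\ast}=\mathfrak{g}_{2}^{\ast}\times\mathfrak{g}_{3}^{\ast}$, which is $\Ad^{\ast}$-equivariant for the coadjoint action \eqref{coad2}. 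This is precisely the situation already treated for $T^{\ast}G$ with $G$ in the passage culminating in Eq.~\eqref{KKS}, now with $G$ replaced by $K$.

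With this identification I would invoke the cotangent-bundle reduction theorem in the same form used earlier: for the cotangent lift of right translation the Marsden--Weinstein quotient $\big(\mathbf{J}^{G\circledS\mathfrak{g}}_{T^{\ast}TG}\big)^{-1}(\mu,\nu)/(G\circledS\mathfrak{g})_{(\mu,\nu)}$ is symplectomorphic to the coadjoint orbit $\mathcal{O}_{(\mu,\nu)}$ equipped with the Kostant--Kirillov--Souriau form. The pre-image is diffeomorphic to $K=G\circledS\mathfrak{g}$ and the isotropy data recorded in \eqref{coad2}--\eqref{Omunu} identify the quotient with $K/(G\circledS\mathfrak{g})_{(\mu,\nu)}\simeq\mathcal{O}_{(\mu,\nu)}$; so only the evaluation of the KKS form in the present coordinates remains to be done.

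For that last step I would represent a tangent vector to $\mathcal{O}_{(\mu,\nu)}$ by the infinitesimal coadjoint generator $\ad^{\ast}_{(\eta,\zeta)}(\mu,\nu)$ attached to $(\eta,\zeta)\in\mathfrak{g}\circledS\mathfrak{g}$, obtained by differentiating \eqref{coad2} via Eq.~\eqref{Adtoad}. With the sign fixed exactly as in \eqref{KKS}, the KKS form reads
\begin{equation*}
\big\langle\Omega_{\mathcal{O}_{(\mu,\nu)}};(\eta,\zeta),(\bar{\eta},\bar{\zeta})\big\rangle=-\big\langle(\mu,\nu),\big[(\eta,\zeta),(\bar{\eta},\bar{\zeta})\big]_{\mathfrak{g}\circledS\mathfrak{g}}\big\rangle .
\end{equation*}
Substituting the semidirect-sum bracket $[(\eta,\zeta),(\bar{\eta},\bar{\zeta})]=([\eta,\bar{\eta}],[\eta,\bar{\zeta}]-[\bar{\eta},\zeta])$ and distributing the pairing over the two $\mathfrak{g}^{\ast}$-factors immediately gives $\langle\mu,[\bar{\eta},\eta]\rangle+\langle\nu,[\bar{\eta},\zeta]-[\eta,\bar{\zeta}]\rangle$, the asserted expression \eqref{Symp/Gxg}. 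One sees in passing that this two-form is identical in shape to the one found for the $G_{\mu}$-reduction of $T^{\ast}G$, both being KKS forms on coadjoint orbits of $G\circledS\mathfrak{g}$.

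The only real obstacle is conventional rather than conceptual: one must pin down the left/right convention and the overall sign so that the generator of the right coadjoint action \eqref{coad2} pairs against the bracket with the sign dictated by \eqref{Adtoad}, and one must expand the bracket consistently with the orderings declared in Eqs.~\eqref{GrP}--\eqref{Grgg*g*}. Once these are fixed as in the established $T^{\ast}G$ computation, the verification collapses to the single-line expansion above.
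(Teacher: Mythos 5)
Your proposal is correct and follows essentially the same route as the paper: the text preceding the proposition identifies $T^{\ast}TG$ as the cotangent bundle of $K=G\circledS\mathfrak{g}_{1}$ with momentum map $(g,\xi,\mu,\nu)\mapsto(\mu,\nu)$, identifies the level set with $G\circledS\mathfrak{g}$ and the quotient by the isotropy group \eqref{coad2} with the coadjoint orbit $\mathcal{O}_{(\mu,\nu)}$, and the reduced form is the Kostant--Kirillov--Souriau form of $G\circledS\mathfrak{g}$, exactly as you argue. Your explicit one-line expansion of the KKS form against the semidirect-sum bracket (with the sign convention of \eqref{KKS}) correctly reproduces \eqref{Symp/Gxg}, which the paper obtains by the same evaluation already carried out for the $G_{\mu}$-reduction of $T^{\ast}G$.
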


This reduction can also be achieved by stages as described in \cite%
{holm1998euler, MaMiOrPeRa07}. That is, first trivialize dynamics by the action
of Lie algebra $\mathfrak{g}$ on $ T^{\ast}TG$ which results in the
Poisson structure on the product $G\circledS\left( \mathfrak{g}%
_{2}^{\ast}\times\mathfrak{g}_{3}^{\ast}\right) $ given by Eq.(\ref%
{PoGxg*xg*}). Symplectic leaves of this Poisson structure are spaces
diffeomorphic to $G\circledS\mathfrak{g}_{2}^{\ast}$ with symplectic
two-form given in Eq.(\ref{Ohm2T*G}). The isotropy group $G_{\mu}$ of an
element $\mu\in\mathfrak{g}^{\ast}$ acts on $G\circledS\mathfrak{g}%
_{2}^{\ast}$ by the same way as assigned in Eq.(\ref{GonGxg*}), that is,
\begin{equation}
\left( G\circledS\mathfrak{g}_{2}^{\ast}\right) \times G_{\mu} \rightarrow
G\circledS\mathfrak{g}_{2}^{\ast}:\left(\left( h,\lambda\right);g \right)
\rightarrow\left( hg,\lambda\right) .  \label{Gmu}
\end{equation}
Then, the Hamiltonian reduction by stages theorem states that, the
symplectic reduction of $G\circledS\mathfrak{g}_{2}^{\ast}$ under the action
of $G_{\mu}$ will result in $\mathcal{O}_{\left( \mu,\nu\right) }$ as the
reduced space endowed with the symplectic two-form $\Omega_{\mathcal{O}%
_{\left( \mu ,\nu\right) }}$ in Eq.(\ref{Symp/Gxg}). Following diagram
summarizes the Hamiltonian reduction by stages theorem for the case of $%
 T^{\ast}TG$ under consideration
\begin{equation}
\xymatrix{&& (G\circledS\mathfrak{g}_{1})\circledS(\mathfrak{g}_{2}^{\ast
}\times\mathfrak{g}_{3}^{\ast }) \ar[dll]|-{\text{SR by } \mathfrak{g}_{1}
\text{ at }\mu } \ar[dd]|-{\text{SR by } G\times\mathfrak{g}_{1} \text{ at
}(\mu,\nu)} \\ G\circledS\mathfrak{g}_{2}^{\ast} \ar[drr]|-{\text{SR by }
G_{\mu} \text{ at } \nu} \\ &&\mathcal{O}_{(\mu,\nu)} 
\\&&
\text{\small Hamiltonian reduction by stages for $T^*TG$}
}  \label{HRBS}
\end{equation}
There exists a momentum mapping $\mathbf{J}_{G\circledS\mathfrak{g}%
_{2}^{\ast }}^{G_{\mu}}$ from $G\circledS\mathfrak{g}_{2}^{\ast}$ to the
dual space $\mathfrak{g}_{\mu}^{\ast}$ of the isotropy subalgebra $\mathfrak{%
g}_{\mu}$ of $G_{\mu}$. Isotropy subgroup $G_{\mu,\nu}$ of the coadjoint
action is
\begin{equation*}
G_{\mu,\nu}=\left\{ g\in G_{\mu}:\Ad_{g^{-1}}^{\ast}\nu=\nu\right\} .
\end{equation*}
The quotient symplectic space
\begin{equation*}
\left. \left( \mathbf{J}_{G\circledS\mathfrak{g}_{2}^{\ast}}^{G_{\mu}}%
\right) ^{-1}\left( \nu\right) \right/ G_{\mu,\nu}\simeq\mathcal{O}_{\left(
\mu,\nu\right) }
\end{equation*}
is diffeomorphic to the coadjoint orbit $\mathcal{O}_{\left( \mu,\nu\right)
} $ defined in (\ref{Omunu}).

It is also possible to establish the Poisson reduction of the symplectic
manifold $G\circledS\mathfrak{g}_{2}^{\ast}$ under the action of the
isotropy group $G_{\mu}$. This results in
\begin{equation*}
G_{\mu}\backslash\left( G\circledS\mathfrak{g}_{2}^{\ast}\right) \simeq%
\mathcal{O}_{\mu}\times\mathfrak{g}_{2}^{\ast}
\end{equation*}
with the Poisson bracket
\begin{equation}
\left\{ H,K\right\} _{\mathcal{O}_{\mu}\times\mathfrak{g}_{2}^{\ast}}\left(
\mu,\nu\right) =\left\langle \mu,\left[ \frac{\delta H}{\delta\mu},\frac{%
\delta K}{\delta\mu}\right] \right\rangle +\left\langle \nu, \left[\frac{\delta H}{\delta\mu}, \frac{\delta K}{\delta\nu}\right]-\left[ \frac{%
\delta K}{\delta\mu}, \frac{\delta H}{\delta\nu}\right] \right\rangle .
\end{equation}
We note again, that, the Poisson structure on $\mathcal{O}_{\mu}\times%
\mathfrak{g}^{\ast}$ is not a direct product of the Lie-Poisson structures
on $\mathcal{O}_{\mu}$ and $\mathfrak{g}_{2}^{\ast}$. Following diagram
illustrates various reductions of $T^{\ast}TG$ under the actions of $G$,
$\mathfrak{g}$ and $G\circledS\mathfrak{g}$. Diagram (\ref{DiagramGg*}),
describing reductions of $G\circledS\mathfrak{g}^{\ast}$, can be attached to
the lower right corner of this to have a complete picture of reductions.
\begin{equation}
\xymatrix{ \mathfrak{g}_{1}\circledS(\mathfrak{g}_{2}^{\ast
}\times\mathfrak{g}_{3}^{\ast }) &&&& \mathcal{O}_{\mu }\times
\mathfrak{g}_{1}\times\mathfrak{g}_{3}^{\ast }
\ar@{_{(}->}[llll]_{\txt{symplectic leaf}} \\\\ \mathfrak{g}_{2}^{\ast
}\times\mathfrak{g}_{3}^{\ast } \ar[dd]_{\txt{Poisson \\ embedding}}
\ar[uu]^{\txt{Poisson \\ embedding}} &&(
G\circledS\mathfrak{g}_{1})\circledS(\mathfrak{g}_{2}^{\ast
}\times\mathfrak{g}_{3}^{\ast }) \ar[uull]|-{\text{PR by G}}
\ar[uurr]|-{\text{SR by G}} \ar[ddll]|-{\text{PR by } \mathfrak{g}}
\ar[ddrr]|-{\text{SR by }\mathfrak{g}} \ar[rr]|-{\text{SR by
}G\circledS\mathfrak{g}} \ar[ll]|-{\text{PR by }G\circledS\mathfrak{g}} &&
\mathcal{O}_{(\mu,\nu)} \ar[dd]^{\txt{symplectic \\ embedding}}
\ar[uu]_{\txt{symplectic \\ embedding}}
\ar@/_{5pc}/@{.>}[llll]^{\txt{symplectic\\leaf}} \\\\
G\circledS(\mathfrak{g}_{2}^{\ast }\times\mathfrak{g}_{3}^{\ast
})&&&&G\circledS\mathfrak{g}_{2}^{\ast } \ar@{^{(}->}[llll]^{\txt{symplectic
leaf}} 
\\
&&\text{Reductions of $T^*TG$}
}  \label{T*TG}
\end{equation}
\section{Hamiltonian Dynamics on $T^*T^*G$}

\begin{proposition}
A Hamiltonian function $H$ on$\  T^{\ast}T^{\ast}G$ determines the
Hamilton's equations
\begin{equation*}
i_{X_{H}^{ T^{\ast}T^{\ast}G}}\Omega_{ T^{\ast}T^{\ast}G}=-dH
\end{equation*}
by uniquely defining Hamiltonian vector field $X_{H}^{T^{\ast}T^{\ast}G}$%
. The Hamiltonian vector field is a right invariant vector field generated
by a 4-tuple Lie algebra element
\begin{equation*}
\left( \frac{\delta H}{\delta\nu},\frac{\delta H}{\delta\xi},ad_{\frac{%
\delta H}{\delta\mu}}^{\ast}\mu-T_{e}^{\ast}R_{g}\left( \frac{\delta H}{%
\delta g}\right) ,-\frac{\delta H}{\delta\mu}\right)
\end{equation*}
in $\left( \mathfrak{g}\circledS\mathfrak{g}^{\ast}\right) \circledS\left(
\mathfrak{g}^{\ast}\times\mathfrak{g}\right) $. At the point $\left(
g,\mu,\nu,\xi\right) ,$ the Hamilton's equations are%
\begin{align}
\frac{dg}{dt} & =T_{e}R_{g}\left( \frac{\delta H}{\delta\nu}\right) ,\text{
\ \ }  \label{HamT*T*G1} \\
\frac{d\mu}{dt} & =\frac{\delta H}{\delta\xi}+\ad_{\frac{\delta H}{\delta\nu }%
}^{\ast}\mu \\
\frac{d\nu}{dt} & =\ad_{\frac{\delta H}{\delta\mu}}^{\ast}\mu+\ad_{\frac {%
\delta H}{\delta\nu}}^{\ast}\nu-T_{e}^{\ast}R_{g}\left( \frac{\delta H}{%
\delta g}\right) -\ad_{\xi}^{\ast}\frac{\delta H}{\delta\xi} \\
\frac{d\xi}{dt} & =-\frac{\delta H}{\delta\mu}+[\frac{\delta H}{\delta\nu%
},\xi].  \label{HamT*T*G}
\end{align}
\end{proposition}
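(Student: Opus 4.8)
The plan is to extract $X_H^{T^\ast T^\ast G}$ directly from its defining equation $i_{X_H^{T^\ast T^\ast G}}\Omega_{T^\ast T^\ast G}=-dH$. Since $\Omega_{T^\ast T^\ast G}$ is symplectic, its musical isomorphism $\Omega^\flat_{T^\ast T^\ast G}$ is invertible, so $X_H$ exists and is unique; and since the trivialization $tr_{T^\ast T^\ast G}$, and hence $\Omega_{T^\ast T^\ast G}$, are built right-invariantly, $X_H$ is right-invariant and is generated by some $(a_1,b_1,b_2,a_2)\in(\mathfrak{g}\circledS\mathfrak{g}^\ast)\circledS(\mathfrak{g}^\ast\times\mathfrak{g})$. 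The whole task is to pin down this generator and then substitute it into the right-invariant field \eqref{RiT*T*G}. To find it I would test the identity $i_{X_H}\Omega_{T^\ast T^\ast G}=-dH$ against an arbitrary right-invariant field $Y=X^{T^\ast T^\ast G}_{(\eta_1,\nu_1,\nu_2,\eta_2)}$, since such fields span each tangent space.

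First I would compute the right-hand side. Using the explicit components of $Y$ at $(g,\mu,\nu,\xi)$ from \eqref{RiT*T*G} together with the right-trivialized derivatives and the convention $\langle d_gH,T_eR_g\eta_1\rangle=\langle T_e^\ast R_g\tfrac{\delta H}{\delta g},\eta_1\rangle$, the pairing $-\langle dH,Y\rangle$ expands into a sum in which the cross terms $\ad^\ast_{\eta_1}\mu$, $\ad^\ast_{\eta_1}\nu-\ad^\ast_\xi\nu_1$ and $\ad_{\eta_1}\xi$ of \eqref{RiT*T*G} occur. Transposing every coadjoint term via $\langle\ad^\ast_\zeta\alpha,\beta\rangle=\langle\alpha,[\zeta,\beta]\rangle$, I would collect the coefficients of the four independent generators: the $\eta_2$-coefficient is $-\tfrac{\delta H}{\delta\xi}$, the $\nu_2$-coefficient is $-\tfrac{\delta H}{\delta\nu}$, the $\nu_1$-coefficient is $-\tfrac{\delta H}{\delta\mu}+\ad_\xi\tfrac{\delta H}{\delta\nu}$, and the $\eta_1$-coefficient is $-T_e^\ast R_g\tfrac{\delta H}{\delta g}+\ad^\ast_{\delta H/\delta\mu}\mu+\ad^\ast_{\delta H/\delta\nu}\nu+\ad^\ast_\xi\tfrac{\delta H}{\delta\xi}$.

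Next I would compute the left-hand side $\langle\Omega_{T^\ast T^\ast G};(X_{(a_1,b_1,b_2,a_2)},Y)\rangle$ from \eqref{OhmT*T*G} (equivalently by pairing $\Omega^\flat_{T^\ast T^\ast G}(X_{(a_1,b_1,b_2,a_2)})$ with $Y$), and read off its coefficients in $\eta_1,\nu_1,\nu_2,\eta_2$. Matching the four pairs then forces the generator: the $\nu_2$- and $\eta_2$-slots give $a_1=\tfrac{\delta H}{\delta\nu}$ and $b_1=\tfrac{\delta H}{\delta\xi}$; the $\nu_1$-slot gives $a_2+\ad_\xi a_1=-\tfrac{\delta H}{\delta\mu}+\ad_\xi\tfrac{\delta H}{\delta\nu}$, hence $a_2=-\tfrac{\delta H}{\delta\mu}$; and the $\eta_1$-slot gives $b_2=\ad^\ast_{\delta H/\delta\mu}\mu-T_e^\ast R_g\tfrac{\delta H}{\delta g}$. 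This is exactly the $4$-tuple in the statement. Substituting $(a_1,b_1,b_2,a_2)$ back into \eqref{RiT*T*G} at $(g,\mu,\nu,\xi)$ then yields \eqref{HamT*T*G1}--\eqref{HamT*T*G} componentwise; in particular the cross term $\ad_{a_1}\xi$ in the last slot of \eqref{RiT*T*G} produces the bracket $[\tfrac{\delta H}{\delta\nu},\xi]$, while $\ad^\ast_{a_1}\nu-\ad^\ast_\xi b_1$ in the third slot supplies the remaining $\ad^\ast$ terms of $\dot\nu$.

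I expect the main obstacle to be the $G$-slot of the matching, i.e. the $\eta_1$-coefficient: there the right-invariant trivialization contributes the $T^\ast R_{g^{-1}}$ and $\Ad^\ast_{g^{-1}}$ corrections visible in the first component of $\Omega^\flat_{T^\ast T^\ast G}$, and one must transport these back to the identity, keeping the sign dictated by \eqref{Adtoad}, in order to see the $\ad^\ast_{\delta H/\delta\mu}\mu$ term emerge cleanly and to separate $b_2$ from the $\ad^\ast_{\delta H/\delta\nu}\nu$ and $\ad^\ast_\xi\tfrac{\delta H}{\delta\xi}$ pieces (which instead get absorbed by the cross terms of \eqref{RiT*T*G}). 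The remaining slots are routine semidirect-product bookkeeping, and the final substitution into \eqref{RiT*T*G} is mechanical. A clean independent check is to verify the stated generator directly: feed it into $\Omega^\flat_{T^\ast T^\ast G}$ and confirm the output equals $-dH$.
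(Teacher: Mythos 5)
Your strategy is the right one, and it is essentially the derivation the paper leaves implicit (the proposition is stated there without proof): pair $i_{X_H}\Omega_{T^{\ast}T^{\ast}G}=-dH$ against the spanning family of right-invariant fields, match coefficients slot by slot, and substitute the resulting generator into \eqref{RiT*T*G}. Two of your three computational claims check out exactly: the coefficients you list for $-\langle dH,Y\rangle$ are correct, and substituting the stated $4$-tuple into \eqref{RiT*T*G} at $(g,\mu,\nu,\xi)$ does reproduce \eqref{HamT*T*G1}--\eqref{HamT*T*G}, including the bracket $[\tfrac{\delta H}{\delta\nu},\xi]$ in $\dot\xi$ and the $\ad^{\ast}$ terms of $\dot\nu$. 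One caution on language: $X_H$ is not a right-invariant vector field for general $H$ (only for invariant $H$); what is true, and all your argument needs, is that at each point $X_H$ coincides with the value of the right-invariant field generated by the displayed point-dependent $4$-tuple, and pointwise matching against right-invariant fields is legitimate because they span each tangent space.

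The genuine snag is the middle step, where you propose to read the left-hand side off Eq.~\eqref{OhmT*T*G}. That formula, as printed, cannot be used: it is not even antisymmetric (the term $-\langle\mu_1,\eta_1\rangle$ would have to read $-\langle\mu_1,\eta_2\rangle$ for $\Omega(X,Y)=-\Omega(Y,X)$), and, more seriously, its $\eta_1$-coefficient $\mu_2+\ad^{\ast}_{\xi_1}\lambda_2-\ad^{\ast}_{\zeta}\mu_1$ carries the same relative sign pattern as the third slot $\nu_2+\ad^{\ast}_{\eta_1}\mu_2-\ad^{\ast}_{\xi}\nu_1$ of \eqref{RiT*T*G}. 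With that sign, your $\eta_1$-matching yields $b_2=\ad^{\ast}_{\delta H/\delta\mu}\mu-T_e^{\ast}R_g\left(\tfrac{\delta H}{\delta g}\right)+2\ad^{\ast}_{\xi}\left(\tfrac{\delta H}{\delta\xi}\right)$, i.e.\ the $\ad^{\ast}_{\xi}\tfrac{\delta H}{\delta\xi}$ piece is not absorbed but doubled, and the resulting $\dot\nu$ carries $+\ad^{\ast}_{\xi}\tfrac{\delta H}{\delta\xi}$ instead of the $-\ad^{\ast}_{\xi}\tfrac{\delta H}{\delta\xi}$ appearing in the proposition. Your asserted slot outcomes therefore presuppose the opposite sign on the $\ad^{\ast}_{\zeta}\mu_1$-term, i.e.\ a corrected two-form. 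To make the proof close, you cannot simply quote \eqref{OhmT*T*G}: you must rederive $\Omega_{T^{\ast}T^{\ast}G}$ (equivalently $\Omega^{\flat}_{T^{\ast}T^{\ast}G}$) directly from the trivialization \eqref{trT*T*G} and the canonical structure on $T^{\ast}(G\circledS\mathfrak{g}^{\ast})$, with one fixed convention for $\ad^{\ast}$, $\Ad^{\ast}$ and the Jacobi--Lie bracket, and only then run the matching. Your own proposed final check (feeding the stated generator into $\Omega^{\flat}$ and comparing with $-dH$) is precisely the step that exposes this issue, so it should be carried out first rather than last.
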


\subsection{Reduction of $T^*T^*G$ by $G$}

It follows from Eq.(\ref{Gr1T*T*G}) that the right action of $G$ on $T^{\ast}T^{\ast}G$ is%
\begin{equation}
\left(\left( h,\nu,\lambda_{2},\xi_{2}\right) ; g\right) \rightarrow \left(
hg,\nu,\lambda_{2},\xi_{2}%
\right)  \label{GonT*T*G}
\end{equation}
with the infinitesimal generator $X_{\left( \eta,0,0,0\right) }^{ T^{\ast}T^{\ast}G}$ being a right invariant vector field as in Eq.(\ref%
{RiT*T*G}) generated by $\left( \eta,0,0,0\right) $ for $\eta \in\mathfrak{g}
$.

\begin{proposition}
Poisson reduction of $ T^{\ast}T^{\ast}G$ under the action of $G$ results
in $\mathfrak{g}_{1}^{\ast}\circledS\left( \mathfrak{g}_{2}^{\ast}\times%
\mathfrak{g}_{3}\right) $ endowed with the Poisson bracket%
\begin{equation}\label{Poig*g*g}
\begin{split}
& \left\{ H,K\right\} _{\mathfrak{g}_{1}^{\ast}\circledS\left( \mathfrak{g}%
_{2}^{\ast}\times\mathfrak{g}_{3}\right) }\left( \mu,\nu ,\xi\right)
=\left\langle \frac{\delta H}{\delta\mu},\frac{\delta K}{%
\delta\xi}\right\rangle -\left\langle \frac{\delta K}{\delta\mu},\frac{\delta H}{\delta\xi}%
\right\rangle +\left\langle \nu,\left[ \frac{\delta H}{\delta \nu},%
\frac{\delta K}{\delta\nu}\right] \right\rangle   \\
& +\left\langle \xi,\ad_{\frac{\delta H}{\delta\nu}}^{\ast}\frac{\delta K}{%
\delta\xi}-\ad_{\frac{\delta K}{\delta\nu}}^{\ast}\frac{\delta H}{\delta\xi }%
\right\rangle +\left\langle \mu,\left[ \frac{\delta H}{\delta\mu},\frac{%
\delta K}{\delta\nu}\right] -\left[ \frac{\delta K}{\delta\mu},\frac{\delta H%
}{\delta\nu}\right] \right\rangle ,  
\end{split}
\end{equation}
and symplectic reduction gives $\mathcal{O}_{\mu}\times\mathfrak{g}\times%
\mathfrak{g}^{\ast}$ with the symplectic two-form defined by
\begin{equation}
\Omega_{ T^{\ast}T^{\ast}G}^{/G }\left( \left(
\eta_{\mathfrak{g}^{\ast}}\left( \mu\right) ,\lambda,\zeta\right) ,\left(
\bar{\eta}_{\mathfrak{g}^{\ast}}\left( \mu\right) ,\bar{\lambda},\bar{\zeta }%
\right) \right) =\left\langle \zeta,\bar{\lambda}\right\rangle -\left\langle
\bar{\zeta},\lambda\right\rangle -\left\langle \mu,[\eta ,\bar{\eta}%
]\right\rangle  \label{RedOhmT*T*G}
\end{equation}
on two elements $\left( \eta_{\mathfrak{g}^{\ast}}\left( \mu\right)
,\lambda,\zeta\right) $ and $\left( \bar{\eta}_{\mathfrak{g}^{\ast}}\left(
\mu\right) ,\bar{\lambda},\bar{\zeta}\right) $ of $T_{\mu}\mathcal{O}_{\mu
}\times\mathfrak{g}\times\mathfrak{g}^{\ast}$.
\end{proposition}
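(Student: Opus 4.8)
The plan is to read both statements off the single fact that $T^{\ast}T^{\ast}G\simeq T^{\ast}K$ is the cotangent bundle of the Lie group $K:=G\circledS\mathfrak{g}^{\ast}$, so that $\Omega_{T^{\ast}T^{\ast}G}$ of \eqref{OhmT*T*G} is its canonical symplectic form, and the $G$-action \eqref{GonT*T*G} is the cotangent lift of right translation by the subgroup $G\hookrightarrow K$. This is exactly the scheme already used for $T^{\ast}TG$ in Proposition~\ref{Prop5-3}, now with the roles of $\mathfrak{g}$ and $\mathfrak{g}^{\ast}$ interchanged inside $K$.

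For the Poisson reduction I would first write the canonical Poisson bracket of $T^{\ast}K$ in the trivialized coordinates $(g,\mu_{1},\mu_{2},\xi)$ of \eqref{trT*T*G}, with base point $(g,\mu_{1})\in K$ and fiber $(\mu_{2},\xi)\in\mathfrak{k}^{\ast}=\mathfrak{g}_{2}^{\ast}\times\mathfrak{g}_{3}$. This is the direct analogue of \eqref{PoissonGg*} for the group $K$: a canonical pairing of the group derivatives $T_{e}^{\ast}R_{(g,\mu_{1})}(\delta/\delta g,\delta/\delta\mu_{1})$ against the fiber derivatives $(\delta/\delta\mu_{2},\delta/\delta\xi)$, plus the Lie--Poisson term $\langle(\mu_{2},\xi),[\,\cdot\,,\,\cdot\,]_{\mathfrak{k}}\rangle$ built from the semidirect bracket of $\mathfrak{k}=\mathfrak{g}\circledS\mathfrak{g}^{\ast}$. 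Right $G$-invariant functions are precisely those with $\delta/\delta g=0$, so Poisson reduction consists of deleting the $\delta/\delta g$ contributions; the quotient is $(K/G)\times\mathfrak{k}^{\ast}\simeq\mathfrak{g}_{1}^{\ast}\circledS(\mathfrak{g}_{2}^{\ast}\times\mathfrak{g}_{3})$. Matching against \eqref{Poig*g*g} term by term, the surviving $\mu_{1}$-direction of the base is canonically conjugate to $\xi\in\mathfrak{g}_{3}$ and produces $\langle\delta H/\delta\mu,\delta K/\delta\xi\rangle-\langle\delta K/\delta\mu,\delta H/\delta\xi\rangle$, while $\langle(\mu_{2},\xi),[\,\cdot\,,\,\cdot\,]_{\mathfrak{k}}\rangle$ expands into the $\langle\nu,[\cdot,\cdot]\rangle$ and $\langle\xi,\ad^{\ast}_{(\cdot)}(\cdot)\rangle$ terms. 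The remaining coupling $\langle\mu,[\delta H/\delta\mu,\delta K/\delta\nu]-[\delta K/\delta\mu,\delta H/\delta\nu]\rangle$ is the one I would track with care: it is generated by the $\ad^{\ast}_{\zeta}\mu_{1}$ cross-terms hidden in the right translation $T_{e}^{\ast}R_{(g,\mu_{1})}$ on the semidirect group $K$ (governed by \eqref{rgc}), which do not vanish on the residual $\mu_{1}$-slot even after $\delta/\delta g$ is dropped.

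For the symplectic reduction, the action \eqref{GonT*T*G} is a cotangent lift and hence symplectic; its momentum map is obtained by pairing the canonical one-form \eqref{thet1T*T*G} against the generators $X_{(\eta,0,0,0)}^{T^{\ast}T^{\ast}G}$ of \eqref{RiT*T*G}, which I expect to give the restriction of the fiber momentum to the subalgebra $\mathfrak{g}$, namely $\mathbf{J}(g,\mu_{1},\mu_{2},\xi)=\mu_{2}\in\mathfrak{g}_{2}^{\ast}$, the coordinate conjugate to $g$ (exactly as $\mathbf{J}=\mu$ in the $T^{\ast}TG$ case). Writing $\mu$ for this value, I would then invoke the Marsden--Weinstein theorem at a regular $\mu$: the level set $\mathbf{J}^{-1}(\mu)$ is diffeomorphic to $G\times\mathfrak{g}_{1}^{\ast}\times\mathfrak{g}_{3}$, and quotienting by the coadjoint isotropy $G_{\mu}$ collapses the $G/G_{\mu}$-direction to the coadjoint orbit $\mathcal{O}_{\mu}$ exactly as in \eqref{coadorb}, leaving the untouched factors $\mathfrak{g}_{1}^{\ast}\times\mathfrak{g}_{3}\simeq\mathfrak{g}^{\ast}\times\mathfrak{g}$. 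Restricting $\Omega_{T^{\ast}T^{\ast}G}$ of \eqref{OhmT*T*G} to $\mathbf{J}^{-1}(\mu)$ and dropping to the quotient should then split as a direct product: the orbit directions contribute the Kostant--Kirillov--Souriau term $-\langle\mu,[\eta,\bar{\eta}]\rangle$ of \eqref{KKS}, and the conjugate pair $(\mu_{1},\xi)$ contributes the canonical pairing $\langle\zeta,\bar{\lambda}\rangle-\langle\bar{\zeta},\lambda\rangle$, yielding \eqref{RedOhmT*T*G}.

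The main obstacle is not conceptual but the semidirect bookkeeping in each half. In the Poisson reduction it is verifying that $T_{e}^{\ast}R_{(g,\mu_{1})}$ on $K$, once $\delta/\delta g$ is discarded, leaves behind exactly the $\langle\mu,[\cdot,\cdot]-[\cdot,\cdot]\rangle$ coupling of \eqref{Poig*g*g} and nothing spurious — this is precisely the term distinguishing the genuine semidirect structure from the direct product. In the symplectic reduction it is checking that the restriction of \eqref{OhmT*T*G} to the level set carries no residual cross-terms between the orbit directions and the $(\mu_{1},\xi)$-fiber once $\mu_{2}$ is frozen, so that the reduced form is honestly the asserted product of a canonical piece and the orbit symplectic form; this relies on the explicit shape of the right-invariant vectors \eqref{RiT*T*G} and on the vanishing of the $\ad^{\ast}_{\zeta}$-terms along $\mathbf{J}^{-1}(\mu)$.
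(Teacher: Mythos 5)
Your proposal is correct and is essentially the route the paper itself takes (implicitly, by mirroring its Section~4 treatment of $T^{\ast}TG$): regard $T^{\ast}T^{\ast}G$ as the trivialized cotangent bundle of $K=G\circledS\mathfrak{g}^{\ast}$, obtain the Poisson reduction by writing the canonical bracket of $T^{\ast}K$ in the coordinates of \eqref{trT*T*G} and discarding the $\delta/\delta g$ terms on $G$-invariant functions, and obtain the symplectic reduction by Marsden--Weinstein with momentum map $(g,\mu_{1},\mu_{2},\xi)\mapsto\mu_{2}$ together with the orbit identification $\mathbf{J}^{-1}(\mu)/G_{\mu}\simeq\mathcal{O}_{\mu}$ of \eqref{coadorb}. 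The cross-terms you single out — the $\ad^{\ast}_{\alpha_{\mu}}\mu$ contribution hidden in $T_{e}^{\ast}R_{(g,\mu_{1})}$, visible in \eqref{trT*T*G} — are indeed precisely the source of the $\langle\mu,[\cdot,\cdot]-[\cdot,\cdot]\rangle$ coupling that makes \eqref{Poig*g*g} differ from a direct-product bracket, so your bookkeeping plan targets the right verification.
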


Recall that, in previous section, the Poisson and symplectic reductions of $ T^{\ast}TG$ result in reduced spaces $\mathfrak{g}\circledS\left(
\mathfrak{g}^{\ast}\times\mathfrak{g}^{\ast}\right) $ and $\mathcal{O}_{\mu
}\times\mathfrak{g}\times\mathfrak{g}^{\ast}$, respectively. The reduced
Poisson bracket on $%
\mathfrak{g}\circledS\left( \mathfrak{g}^{\ast}\times\mathfrak{g}^{\ast
}\right) $ is given by Eq.(\ref{Poigxg*xg*}) and the reduced symplectic
two-form $\Omega_{ T^{\ast}TG}^{/G}$ on $%
\mathcal{O}_{\mu}\times\mathfrak{g}\times\mathfrak{g}$ is in Eq.(\ref%
{RedOhmT*TG}). We have the following proposition from \cite{esen2014tulczyjew} relating the
reductions of cotangent bundles $ T^{\ast}T^{\ast}G$ and $T^{\ast }TG$. We refer to \cite{kupershmidt1983canonical} for a detailed study on the canonical maps between
semidirect products.

\subsection{Reduction of $T^*T^*G$ by $\mathfrak{g}^{\ast}$}

The action of $\mathfrak{g}^{\ast}$ on $ T^{\ast}T^{\ast}G$, given by
\begin{equation}
\left( \left( g,\mu,\nu,\xi\right);\lambda \right) \rightarrow\left(
g,\mu+\Ad^\ast_g\lambda,\nu,\xi\right)  \label{g*onT*T*G}
\end{equation}
generated by $X_{\left( 0,\lambda ,0,0\right) }^{T^{\ast}T^{\ast}G}=\left( 0,\lambda ,-\ad_{\xi}^{\ast
}\lambda ,0\right) $. As the action of $G\circledS
\mathfrak{g}^{\ast}$ on its cotangent bundle $ T^{\ast}T^{\ast}G$ is
symplectic, and $\mathfrak{g}^{\ast}$ is a subgroup. The action in Eq\eqref{g*onT*T*G} is symplectic hence we can perform a
Poisson and a symplectic reductions of $T^{\ast}T^{\ast}G$.

\begin{proposition}
The Poisson reduction of $ T^{\ast}T^{\ast}G$ with the action of $%
\mathfrak{g}^{\ast}$ results in $G\circledS\left( \mathfrak{g}_{2}^{\ast
}\times\mathfrak{g}_{3}\right) $ endowed with the bracket
\begin{equation}\label{PoGgg*}
\begin{split}
& \left\{ H,K\right\} _{G\circledS\left( \mathfrak{g}_{2}^{\ast}\times%
\mathfrak{g}_{3}\right) }\left( g,\nu,\xi\right) =\left\langle T_{e}^{\ast}R_{g}\frac{\delta H}{\delta g},\frac{%
\delta K}{\delta\nu}\right\rangle -\left\langle
T_{e}^{\ast}R_{g}\frac{\delta K}{\delta g},\frac{\delta H}{\delta\nu }%
\right\rangle +  \\
& \qquad \qquad \left\langle \xi ,\ad^\ast_{\frac{\delta H}{\delta\nu}} \frac{\delta K}{\delta\xi} - \ad^\ast_{\frac{\delta K}{\delta\nu}}\frac{\delta H}{\delta\xi}\right\rangle +\left\langle \nu,\left[
\frac{\delta H}{\delta\nu},\frac{\delta K}{\delta\nu}\right] \right\rangle .
\end{split}
\end{equation}
The application of Marsden-Weinstein symplectic reduction with the action of
$\mathfrak{g}^{\ast}$ on $ T^{\ast}T^{\ast}G$ having the momentum mapping
\begin{equation*}
\mathbf{J}_{ T^{\ast}T^{\ast}G}^{\mathfrak{g}^{\ast}}: T^{\ast}T^{\ast}G\rightarrow\mathfrak{g}_{2}:\left( g,\mu,\nu,\xi\right)
\rightarrow\xi
\end{equation*}
results in the reduced symplectic space $\left( \mathbf{J}_{ T^{\ast}T^{\ast}G}^{\mathfrak{g}^{\ast}}\right) ^{-1}\left( \xi\right) /%
\mathfrak{g}^{\ast}$ isomorphic to $G\circledS\mathfrak{g}_{3}^{\ast}$ with
the canonical symplectic two-form $\Omega_{G\circledS\mathfrak{g}%
_{2}^{\ast}} $ in Eq.(\ref{Ohm2T*G}).
\end{proposition}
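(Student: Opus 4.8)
The plan is to read both statements off the general cotangent-bundle constructions for the base group $Q:=G\circledS\mathfrak{g}^{\ast}$, whose Lie algebra $\mathfrak{q}=\mathfrak{g}\circledS\mathfrak{g}^{\ast}$ has dual $\mathfrak{q}^{\ast}=\mathfrak{g}_{2}^{\ast}\times\mathfrak{g}_{3}$. Under the trivialization \eqref{trT*T*G} a point $(g,\mu,\nu,\xi)$ splits into a base part $q=(g,\mu)\in Q$ and a fibre part $p=(\nu,\xi)\in\mathfrak{q}^{\ast}$, and the action \eqref{g*onT*T*G} is exactly right multiplication by the embedded abelian normal subgroup $\mathfrak{g}_{1}^{\ast}=\{e\}\times\mathfrak{g}^{\ast}\subset Q$, lifted to $T^{\ast}Q$. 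Since $\Ad_{g}^{\ast}$ is a linear isomorphism, the orbit through $(g,\mu,\nu,\xi)$ is $\{g\}\times\mathfrak{g}^{\ast}\times\{\nu\}\times\{\xi\}$; hence the action is free and proper and the quotient is realised by the projection $(g,\mu,\nu,\xi)\mapsto(g,\nu,\xi)$ onto $G\circledS(\mathfrak{g}_{2}^{\ast}\times\mathfrak{g}_{3})$.

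For the Poisson reduction I would begin from the canonical bracket on $T^{\ast}Q$ in the standard cotangent-group form, the analogue of \eqref{PoissonGg*},
\begin{equation*}
\{F,K\}(q,p)=\left\langle T_{e}^{\ast}R_{q}\frac{\delta F}{\delta q},\frac{\delta K}{\delta p}\right\rangle-\left\langle T_{e}^{\ast}R_{q}\frac{\delta K}{\delta q},\frac{\delta F}{\delta p}\right\rangle+\left\langle p,\left[\frac{\delta F}{\delta p},\frac{\delta K}{\delta p}\right]_{\mathfrak{q}}\right\rangle .
\end{equation*}
The $\mathfrak{g}^{\ast}$-invariant functions are precisely those independent of $\mu$, i.e. $\delta F/\delta\mu=0$, so the Poisson reduction theorem produces the reduced bracket by discarding the $\mu$-derivative contributions. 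Expanding $T_{e}^{\ast}R_{q}$ over $Q=G\circledS\mathfrak{g}^{\ast}$ leaves the two $g$-derivative terms $\langle T_{e}^{\ast}R_{g}\delta H/\delta g,\delta K/\delta\nu\rangle$, while substituting the semidirect-sum bracket $[(\zeta_{1},\rho_{1}),(\zeta_{2},\rho_{2})]_{\mathfrak{q}}=([\zeta_{1},\zeta_{2}],\ad_{\zeta_{1}}^{\ast}\rho_{2}-\ad_{\zeta_{2}}^{\ast}\rho_{1})$ into the last term yields $\langle\nu,[\delta H/\delta\nu,\delta K/\delta\nu]\rangle+\langle\xi,\ad_{\delta H/\delta\nu}^{\ast}\delta K/\delta\xi-\ad_{\delta K/\delta\nu}^{\ast}\delta H/\delta\xi\rangle$. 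Collecting the surviving terms reproduces \eqref{PoGgg*} verbatim.

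For the symplectic reduction I would first note that the action is symplectic, being the restriction to $\mathfrak{g}_{1}^{\ast}$ of the cotangent-lifted right translation of $Q$, and compute its momentum map by contracting the canonical one-form \eqref{thet1T*T*G} with the generator $X_{(0,\lambda,0,0)}=(0,\lambda,-\ad_{\xi}^{\ast}\lambda,0)$; since the canonical one-form pairs the fibre coordinate $(\nu,\xi)$ with the projected base velocity, this returns $\langle\mathbf{J},\lambda\rangle=\langle\xi,\lambda\rangle$, whence $\mathbf{J}=\xi$. As $\mathfrak{g}^{\ast}$ is abelian its coadjoint action is trivial, so every $\xi$ is regular with full isotropy $\mathfrak{g}^{\ast}_{\xi}=\mathfrak{g}^{\ast}$, and the level set $\mathbf{J}^{-1}(\xi)\cong G\times\mathfrak{g}_{1}^{\ast}\times\mathfrak{g}_{2}^{\ast}$ has quotient $G\circledS\mathfrak{g}_{2}^{\ast}$ once the free $\mu$-shift is removed. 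It then remains to identify the reduced two-form determined by $\pi^{\ast}\Omega_{\mathrm{red}}=i^{\ast}\Omega_{T^{\ast}T^{\ast}G}$: I would restrict \eqref{OhmT*T*G} to generators tangent to $\mathbf{J}^{-1}(\xi)$, characterised by the constraint $\xi_{2}=-[\xi_{1},\xi]$ coming from the vanishing of the $\xi$-velocity in \eqref{RiT*T*G}, verify that the result is independent of the action directions and hence basic, and read off that it collapses to $\langle\nu_{1},\xi_{2}\rangle-\langle\nu_{2},\xi_{1}\rangle-\langle\mu,[\xi_{1},\xi_{2}]\rangle$, that is, the canonical form $\Omega_{G\circledS\mathfrak{g}_{2}^{\ast}}$ of \eqref{Ohm2T*G}.

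The hard part will be this last identification: one must check that the $\ad_{\xi}^{\ast}$-contributions carried by the fixed momentum and the base point cancel along the level set, so that no residual magnetic term survives and the reduced structure is genuinely the canonical $\Omega_{G\circledS\mathfrak{g}_{2}^{\ast}}$ rather than a canonical-plus-magnetic form. This bookkeeping of the coadjoint terms against the tangency constraint $\xi_{2}=-[\xi_{1},\xi]$ is the only delicate step; it is the exact analogue of the reduction of $T^{\ast}TG$ by $\mathfrak{g}$ established above, to which it also corresponds under the symplectic identification of $T^{\ast}T^{\ast}G$ with $T^{\ast}TG$, and this parallel furnishes a useful consistency check.
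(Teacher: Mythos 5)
Your proposal is correct and is essentially the paper's own (implicit) argument: the proposition is read off from the right-trivialized cotangent-bundle structure of $T^{\ast}T^{\ast}G=T^{\ast}(G\circledS\mathfrak{g}^{\ast})$, with the bracket \eqref{PoGgg*} obtained by restricting the canonical cotangent-group bracket to $\mu$-independent functions, the momentum map $\mathbf{J}=\xi$ obtained by pairing the canonical one-form with the generator $X^{T^{\ast}T^{\ast}G}_{(0,\lambda,0,0)}$ exactly as the paper does for its other momentum maps, and the quotient $\mathbf{J}^{-1}(\xi)/\mathfrak{g}^{\ast}\simeq G\circledS\mathfrak{g}_{2}^{\ast}$ carrying the canonical form \eqref{Ohm2T*G}. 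The cancellation you defer as the hard part does hold, and for a structural reason worth recording: the semidirect splitting of $G\circledS\mathfrak{g}^{\ast}$ gives a group-homomorphism section $G\to G\circledS\mathfrak{g}^{\ast}$, hence a flat principal connection for the abelian bundle $G\circledS\mathfrak{g}^{\ast}\to G$, so cotangent-bundle reduction at any momentum value $\xi$ produces no magnetic term and the reduced form is genuinely canonical, as you (and the paper) assert.
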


\subsection{Reduction of $T^*T^*G$ by $G\circledS\mathfrak{g}^{\ast}$}

The Lie algebra of the group $G\circledS\mathfrak{g}_{1}^{\ast}$ is the space $%
\mathfrak{g}\circledS\mathfrak{g}^{\ast}$ carrying the bracket%
\begin{equation}
\left[ \left( \xi,\mu\right) ,\left( \eta,\nu\right) \right] _{\mathfrak{g}%
\circledS\mathfrak{g}^{\ast}}=\left( \left[ \xi,\eta\right]
,\ad_{\xi}^{\ast}\nu - \ad_{\eta}^{\ast}\mu\right) .  \label{LBgg*}
\end{equation}
The dual space $\mathfrak{g}_{2}^{\ast}\times\mathfrak{g}_{3}$ carries the
Lie-Poisson bracket
\begin{equation}\label{LPE}
\left\{ F,E\right\} _{\mathfrak{g}_{2}^{\ast}\times\mathfrak{g}_{3}}\left(
\nu,\xi\right)  =\left\langle \nu,\left[ \frac{\delta F}{\delta\nu},\frac{\delta E}{%
\delta\nu}\right] \right\rangle +\left\langle \xi ,\ad^\ast_{\frac{\delta F}{\delta\nu}} \frac{\delta E}{\delta\xi} - \ad^\ast_{\frac{\delta F}{\delta\nu}}\frac{\delta E}{\delta\xi}\right\rangle ,  
\end{equation}
that follows from the Lie algebra bracket in Eq.(\ref{LBgg*}).

\begin{proposition}
The Lie-Poisson bracket, in Eq.(\ref{LPE}), on $\mathfrak{g}_{2}^{\ast}\times%
\mathfrak{g}_{3}$ defines the Hamiltonian vector field $X_{E}^{\mathfrak{g}%
^{\ast}\times\mathfrak{g}}$ by
\begin{equation*}
\left\{ F,E\right\} _{\mathfrak{g}_{2}^{\ast}\times\mathfrak{g}%
_{3}}=-\left\langle dF,X_{E}^{\mathfrak{g}_{2}^{\ast}\times\mathfrak{g}%
_{3}}\right\rangle
\end{equation*}
whose components are the Lie-Poisson equations
\begin{equation}
\frac{d\nu}{dt}=ad_{\frac{\delta H}{\delta\nu}}^{\ast}\nu-ad_{\xi}^{\ast}%
\frac{\delta H}{\delta\xi},\ \ \frac{d\xi}{dt}=[\frac{\delta H}{%
\delta\nu },\xi].  \label{LPg*g}
\end{equation}
\end{proposition}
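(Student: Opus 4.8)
The plan is to treat $\mathfrak{g}_{2}^{\ast}\times\mathfrak{g}_{3}$ as the dual of the Lie algebra $\mathfrak{g}\circledS\mathfrak{g}^{\ast}$ whose bracket is recorded in Eq.(\ref{LBgg*}), so that Eq.(\ref{LPE}) is exactly its (right) Lie--Poisson bracket, and then to extract the Hamiltonian vector field $X_{E}^{\mathfrak{g}_{2}^{\ast}\times\mathfrak{g}_{3}}$ directly from the defining identity $\{F,E\}_{\mathfrak{g}_{2}^{\ast}\times\mathfrak{g}_{3}}=-\langle dF,X_{E}^{\mathfrak{g}_{2}^{\ast}\times\mathfrak{g}_{3}}\rangle$. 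Since this identity must hold for \emph{every} test functional $F$, it suffices to rewrite the right-hand side of Eq.(\ref{LPE}) so that the functional derivatives $\delta F/\delta\nu\in\mathfrak{g}$ and $\delta F/\delta\xi\in\mathfrak{g}^{\ast}$ appear undifferentiated and isolated; the covector and vector multiplying them are then, respectively, $d\nu/dt$ and $d\xi/dt$.

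Concretely, I would proceed term by term. Writing $X_{E}^{\mathfrak{g}_{2}^{\ast}\times\mathfrak{g}_{3}}=(\dot\nu,\dot\xi)$ with $\dot\nu\in\mathfrak{g}^{\ast}$ and $\dot\xi\in\mathfrak{g}$, the target pairing expands as $\langle dF,X_{E}\rangle=\langle\dot\nu,\tfrac{\delta F}{\delta\nu}\rangle+\langle\tfrac{\delta F}{\delta\xi},\dot\xi\rangle$. On the bracket side, the first summand $\langle\nu,[\tfrac{\delta F}{\delta\nu},\tfrac{\delta E}{\delta\nu}]\rangle$ is reorganised by antisymmetry of $[\,\cdot\,,\cdot\,]$ together with the defining relation $\langle\ad_{\zeta}^{\ast}\alpha,\chi\rangle=\langle\alpha,[\zeta,\chi]\rangle$ into a term proportional to $\langle\ad^{\ast}_{\delta E/\delta\nu}\nu,\tfrac{\delta F}{\delta\nu}\rangle$. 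The two coupling terms $\langle\xi,\ad^{\ast}_{\delta F/\delta\nu}\tfrac{\delta E}{\delta\xi}\rangle$ and $\langle\xi,\ad^{\ast}_{\delta E/\delta\nu}\tfrac{\delta F}{\delta\xi}\rangle$ are handled by the same two moves: the former, after using $[\delta F/\delta\nu,\xi]=-\ad_{\xi}(\delta F/\delta\nu)$ and the $\ad^{\ast}$ duality, contributes a term built from $\ad^{\ast}_{\xi}(\delta E/\delta\xi)$ to the coefficient of $\delta F/\delta\nu$, while the latter directly produces the coefficient $[\delta E/\delta\nu,\xi]$ of $\delta F/\delta\xi$. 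Collecting the coefficient of $\delta F/\delta\nu$ yields the $\dot\nu$-equation and the coefficient of $\delta F/\delta\xi$ yields the $\dot\xi$-equation of Eq.(\ref{LPg*g}).

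As an independent verification of both the identification and the signs, I would restrict the full Hamilton's equations (\ref{HamT*T*G1})--(\ref{HamT*T*G}) on $T^{\ast}T^{\ast}G$ to a Hamiltonian depending only on the fibre variables $(\nu,\xi)$, i.e. impose $\delta H/\delta g=0$ and $\delta H/\delta\mu=0$, which is precisely the content of the Poisson reduction by $G\circledS\mathfrak{g}^{\ast}$. The $\nu$- and $\xi$-components of that system then collapse to exactly Eq.(\ref{LPg*g}), confirming that the Lie--Poisson dynamics on $\mathfrak{g}_{2}^{\ast}\times\mathfrak{g}_{3}$ is the reduction of the ambient Hamiltonian dynamics, and that the Hamiltonian vector field obtained from the bracket agrees with the restriction of $X_{H}^{T^{\ast}T^{\ast}G}$.

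I expect the only genuine obstacle to be the sign bookkeeping in the semidirect coupling term: one must keep careful track of which argument of $\ad^{\ast}$ carries $\delta F/\delta\nu$ versus $\delta E/\delta\nu$ and apply antisymmetry of the bracket in the correct slot, since it is exactly here that the $\mathfrak{g}$-in-$\mathfrak{g}^{\ast}$ duality (through $\mathfrak{g}^{\ast\ast}\cong\mathfrak{g}$) and the right-action sign convention of Eq.(\ref{Adtoad}) interact. The cross-check against the reduced Hamilton's equations pins this sign down unambiguously, so I would carry out the bracket computation first and then use the reduction as the consistency control rather than the other way around.
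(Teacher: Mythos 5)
Your proposal is correct and follows essentially the same route as the paper: the proposition itself is the (implicit) Lie--Poisson computation from Eq.(\ref{LPE}) via the identity $\{F,E\}=-\langle dF,X_E\rangle$, and the paragraph immediately following it in the paper is precisely your cross-check, namely Poisson reduction of the Hamilton's equations (\ref{HamT*T*G1})--(\ref{HamT*T*G}) by the action (\ref{T*GonT*T*G}) of $G\circledS\mathfrak{g}^{\ast}$, i.e.\ taking $H=H(\nu,\xi)$ so that the $g$- and $\mu$-derivative terms drop and Eq.(\ref{LPg*g}) remains. Your attention to the sign bookkeeping in the coupling term is well placed, since Eq.(\ref{LPE}) as printed contains a typographical slip (its second summand is a difference of two identical expressions), and your use of the reduced Hamilton's equations to pin the signs is exactly the consistency the paper relies on.
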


Although, these equations result from Eq.(\ref{LPE}), it is
possible to obtain them starting from
the Hamilton's equations (\ref{HamT*T*G1})-(\ref{HamT*T*G}) on $ T^{\ast
}T^{\ast}G$ and applying a Poisson reduction with the action
of $G\circledS\mathfrak{g}^{\ast}$ given by
\begin{align}
 T^{\ast}T^{\ast}G \times \left( G\circledS\mathfrak{g}^{\ast}\right)\rightarrow T^{\ast}T^{\ast}G & :((h,\nu,\lambda,\xi );(g,\mu))  \notag \\
& \mapsto(hg,\nu+\Ad_{h}^{\ast}\mu,\lambda,\xi ) .
\label{T*GonT*T*G}
\end{align}
In other words, choosing the Hamiltonian function $H$ in Eqs.(\ref%
{HamT*T*G1})-(\ref{HamT*T*G}) depending on fiber variables only, that is $%
H=H\left( \nu,\xi\right) $, Eq.(\ref{LPg*g}) follows.

To reduce the Hamilton's equations (\ref{HamT*T*G1})-(\ref{HamT*T*G}) on $ T^{\ast}T^{\ast}G$ symplectically, we first compute the momentum mapping
\begin{equation*}
\mathbf{J}_{G\circledS\mathfrak{g}_{3}^{\ast}}^{G_{\xi}}:T^{\ast
}T^{\ast}G\rightarrow\mathfrak{g}^{\ast}\times\mathfrak{g}:\left( g,\mu
,\nu,\xi\right) \rightarrow\left( \nu,\xi\right) ,
\end{equation*}
associated with the action of $G\circledS\mathfrak{g}^{\ast}$ in Eq.(\ref%
{T*GonT*T*G}) and the quotient space%
\begin{equation}
\left. \left( \mathbf{J}_{G\circledS\mathfrak{g}_{3}^{\ast}}^{G_{\xi}}%
\right) ^{-1}\left( \nu,\xi\right) \right/ G_{\left( \nu,\xi\right) }\simeq%
\mathcal{O}_{\left( \nu,\xi\right) }.  \label{Onuxi}
\end{equation}
Here, $G_{\left( \nu,\xi\right) }$ is the isotropy subgroup of $G\circledS%
\mathfrak{g}^{\ast}$ consisting of elements preserved under the coadjoint
action $G\circledS\mathfrak{g}^{\ast}$\ on the dual space $\mathfrak{g}%
^{\ast}\times\mathfrak{g}$ of its Lie algebra
\begin{align}
\Ad^{\ast} & :\left( G\circledS\mathfrak{g}^{\ast}\right) \times\left(
\mathfrak{g}^{\ast}\times\mathfrak{g}\right) \rightarrow\mathfrak{g}^{\ast
}\times\mathfrak{g}  \notag \\
& :\left( \left( g,\mu\right) ,\left( \nu,\xi\right) \right)
\rightarrow\left( \Ad_{g}^{\ast}\nu-\ad_{\Ad_g\xi}^{\ast}\mu
,\Ad_{g}\xi\right)  \label{Coad}
\end{align}
and, the space $\mathcal{O}_{\left( \nu,\xi\right) }$ is the coadjoint orbit
passing through the point $\left( \nu,\xi\right) $ under this coadjoint
action. 

\begin{proposition}
The symplectic reduction of $ T^{\ast}T^{\ast}G$ results in the coadjoint
orbit $\mathcal{O}_{\left( \nu,\xi\right) }$ in $\mathfrak{g}_{2}^{\ast
}\times\mathfrak{g}$ through the point $\left( \nu,\xi\right) $. The
reduced symplectic two-form $\Omega_{ T^{\ast}T^{\ast}G}^{/(G\circledS
\mathfrak{g}^{\ast})}$ (denoted simply by $\Omega_{\mathcal{O}%
_{\left( \nu,\xi\right) }}$) takes the value%
\begin{equation}
\left\langle \Omega_{\mathcal{O}_{\left( \nu,\xi\right) }};\left(
\lambda,\eta\right) ,\left( \bar{\lambda},\bar{\eta}\right) \right\rangle
\left( \nu,\xi\right) =\left\langle \nu,[\bar{\eta},\eta]\right\rangle
+\left\langle \xi,ad_{\eta}^{\ast}\bar{\lambda}-ad_{\bar{\eta}%
}^{\ast}\lambda]\right\rangle  \label{SymOr}
\end{equation}
on two vectors $\left( \lambda,\eta\right) $ and $\left( \bar{\lambda},\bar{%
\eta}\right) $ in $T_{\left( \nu,\xi\right) }\mathcal{O}_{\left(
\nu,\xi\right) }$.
\end{proposition}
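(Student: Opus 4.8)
The plan is to recognize this reduction as an instance of the standard cotangent-bundle reduction of a Lie group, applied to $K:=G\circledS\mathfrak{g}^\ast$. Under the symplectic trivialization $tr_{T^\ast T^\ast G}$ of Eq.(\ref{trT*T*G}), the iterated bundle $T^\ast T^\ast G$ is identified, as a symplectic manifold, with the cotangent group $T^\ast K=T^\ast(G\circledS\mathfrak{g}^\ast)$, whose fibre coordinates $(\nu,\xi)$ live in $\mathfrak{k}^\ast=\mathfrak{g}_2^\ast\times\mathfrak{g}_3$. A direct inspection of Eq.(\ref{T*GonT*T*G}) shows that the action of $K$ there is precisely the cotangent lift of the right translation of $K$ on itself: the base point $(g,\mu)$ is translated by (\ref{rgc}) while the body momentum $(\nu,\xi)$ is left fixed. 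Consequently the whole argument runs in complete parallel with the Marsden-Weinstein reduction of $T^\ast G$ by $G$ carried out in Section 3, Eqs.(\ref{coadorb})--(\ref{KKS}), and with the reduction of $T^\ast TG$ by $G\circledS\mathfrak{g}$ leading to (\ref{Symp/Gxg}), now with $G\circledS\mathfrak{g}^\ast$ as the structure group.

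First I would check that the momentum map $(g,\mu,\nu,\xi)\mapsto(\nu,\xi)$ is equivariant for the $K$-action and the coadjoint action (\ref{Coad}) on $\mathfrak{k}^\ast=\mathfrak{g}_2^\ast\times\mathfrak{g}_3$; this is automatic for the cotangent lift of translation, but I would verify it against the explicit multiplication (\ref{Gr1T*T*G}). For a regular value $(\nu,\xi)$ the level set $\mathbf{J}^{-1}(\nu,\xi)$ is diffeomorphic to $K$ itself, on which the isotropy group $G_{(\nu,\xi)}$ acts by translation; the orbit-stabiliser theorem then yields the identification (\ref{Onuxi}) of the reduced space with the coadjoint orbit $\mathcal{O}_{(\nu,\xi)}$ of (\ref{Coad}). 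This is the content of the first assertion, and invokes only the Marsden-Weinstein theorem \cite{marsden1974reduction} together with the general fact (used already for $\mathcal{O}_\mu$) that $T^\ast K$ reduces to coadjoint orbits.

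It remains to evaluate the reduced two-form, which is the Kostant--Kirillov--Souriau form of $\mathcal{O}_{(\nu,\xi)}$. I would parametrise $T_{(\nu,\xi)}\mathcal{O}_{(\nu,\xi)}$ by the infinitesimal generators of (\ref{Coad}) associated with Lie-algebra elements $(\eta,\lambda)$ and $(\bar\eta,\bar\lambda)$ of $\mathfrak{g}\circledS\mathfrak{g}^\ast$, using (\ref{Adtoad}) to relate these generators to $\ad^\ast$. By the same Marsden-Weinstein computation that produced (\ref{KKS}) and (\ref{Symp/Gxg}), the reduced form evaluates to $-\langle(\nu,\xi),[(\eta,\lambda),(\bar\eta,\bar\lambda)]_{\mathfrak{g}\circledS\mathfrak{g}^\ast}\rangle$; inserting the semidirect-product bracket (\ref{LBgg*}) and the pairing $\langle(\nu,\xi),(\zeta,\sigma)\rangle=\langle\nu,\zeta\rangle+\langle\sigma,\xi\rangle$ then splits this into a term $\langle\nu,[\bar\eta,\eta]\rangle$ coming from the $\mathfrak{g}$-component and a term built from $\ad^\ast_\eta\bar\lambda-\ad^\ast_{\bar\eta}\lambda$ paired against $\xi$, which is exactly the shape of (\ref{SymOr}).

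The conceptual content is therefore routine; the main obstacle is sign bookkeeping. Because the paper works throughout with right actions and a right coadjoint representation (Eqs.(\ref{dist*})--(\ref{Adtoad})), the identification of a tangent vector to $\mathcal{O}_{(\nu,\xi)}$ with an infinitesimal generator carries a sign, and for the semidirect algebra $\mathfrak{g}\circledS\mathfrak{g}^\ast$ the $\mathfrak{g}$- and $\mathfrak{g}^\ast$-directions need not transform with a single common sign, so the two summands of (\ref{SymOr}) must be tracked independently to land on the stated signs (and on the correct ordering of the brackets and of $\ad^\ast_\eta\bar\lambda-\ad^\ast_{\bar\eta}\lambda$). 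A convenient consistency check is to restrict to the subgroup $G$ by setting the $\mathfrak{g}^\ast$-directions $\lambda,\bar\lambda$ to zero: the second summand then drops out and the first must reduce to the Kostant--Kirillov--Souriau form (\ref{KKS}) already obtained on $\mathcal{O}_\mu$, which pins down the overall sign convention.
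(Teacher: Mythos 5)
Your proposal is correct and takes essentially the same route as the paper: the paper likewise computes the momentum map $(g,\mu,\nu,\xi)\mapsto(\nu,\xi)$ of the cotangent-lifted right translation action in Eq.(\ref{T*GonT*T*G}), identifies the quotient of its level set by the isotropy group $G_{(\nu,\xi)}$ with the coadjoint orbit of Eq.(\ref{Coad}), and evaluates the reduced two-form as the Kostant--Kirillov--Souriau form of $\mathfrak{g}\circledS\mathfrak{g}^{\ast}$, in parallel with Eqs.(\ref{KKS}) and (\ref{Symp/Gxg}), mentioning the two-stage reduction only as an alternative. Your caveat on sign bookkeeping is warranted: evaluating $-\langle(\nu,\xi),[(\eta,\lambda),(\bar{\eta},\bar{\lambda})]_{\mathfrak{g}\circledS\mathfrak{g}^{\ast}}\rangle$ with the bracket of Eq.(\ref{LBgg*}) yields the second summand of Eq.(\ref{SymOr}) with the opposite sign to the printed one, so the consistency check you propose against Eq.(\ref{KKS}) is exactly what is needed to pin down the convention.
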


Alternatively, this reduction can be performed 
in two steps by applying the Hamiltonian reduction by stages theorem \cite%
{MaMiOrPeRa07}. The first step consists of the symplectic reduction of $%
 T^{\ast}T^{\ast}G$ with the action of $\mathfrak{g}^{\ast}$ which has already been established in previous subsection and
resulted in the reduced symplectic space $\left( \mathbf{J}_{\text{ }%
 T^{\ast}T^{\ast}G}^{\mathfrak{g}^{\ast}}\right) ^{-1}\left( \xi\right) /%
\mathfrak{g}^{\ast}$, isomorphic to $G\circledS\mathfrak{g}_{3}^{\ast}$,
with the canonical symplectic two-from $\Omega_{G\circledS\mathfrak{g}%
_{3}^{\ast}}$ in Eq.(\ref{Ohm2T*G}). For the second step, we recall the
adjoint group action $\Ad_{g^{-1}}$ of $G$ on $\mathfrak{g}$ and define the
isotropy subgroup
\begin{equation}
G_{\xi}=\left\{ g\in G:\Ad_{g^{-1}}\xi=\xi\right\}  \label{Gxi}
\end{equation}
for an element$\ \xi\in\mathfrak{g}$ under the adjoint action. The Lie algebra $%
\mathfrak{g}_{\xi}$ of $G_{\xi}$ consists of vectors $\eta\in\mathfrak{g}$
satisfying $\left[ \eta,\xi\right] =0$. The isotropy subgroup $G_{\xi}$ acts
on $G\circledS\mathfrak{g}_{3}^{\ast}$ by the same way as described in Eq.(%
\ref{GonGxg*}). This action is Hamiltonian and has the momentum mapping
\begin{equation*}
\mathbf{J}_{G\circledS\mathfrak{g}_{3}^{\ast}}^{G_{\xi}}:G\circledS
\mathfrak{g}_{3}^{\ast}\rightarrow\mathfrak{g}_{\xi}^{\ast},
\end{equation*}
where $\mathfrak{g}_{\xi}^{\ast}$ is the dual space of $\mathfrak{g}_{\xi} $%
. The quotient space%
\begin{equation*}
\left. \left( \mathbf{J}_{G\circledS\mathfrak{g}_{3}^{\ast}}^{G_{\xi}}%
\right) ^{-1}\left( \nu\right) \right/ G_{\xi,\nu}\simeq\mathcal{O}_{\left(
\nu,\xi\right) }
\end{equation*}
is diffeomorphic to the coadjoint orbit $\mathcal{O}_{\left( \nu,\xi\right)
} $ in Eq.(\ref{Onuxi}). 
\begin{equation} \label{T*T*G}
\xymatrix{\mathfrak{g}_{1}^{\ast }\circledS(\mathfrak{g}_{2}^{\ast
}\times\mathfrak{g}_{3}) &&&& \mathcal{O}_{\mu }\times
\mathfrak{g}_{1}^{\ast }\times\mathfrak{g}_{3}
\ar@{_{(}->}[llll]_{\txt{\small symplectic leaf}} \\\\ \mathfrak{g}_{2}^{\ast
}\times\mathfrak{g}_{3} \ar[dd]_{\txt{\small Poisson\\ \small embedding}}
\ar[uu]^{\txt{\small Poisson\\ \small embedding}} && (G\circledS\mathfrak{g}_{1}^{\ast
})\circledS(\mathfrak{g}_{2}^{\ast }\times\mathfrak{g}_{3})
\ar[uull]|-{\text{PR by G}} \ar[uurr]|-{\text{SR by G}}
\ar[ddll]|-{\text{PR by } \mathfrak{g}^{\ast }_{1}} \ar[ddrr]|-{\text{SR
by }\mathfrak{g}_{1}^{\ast }} \ar[rr]|-{\text{SR by
}G\circledS\mathfrak{g}^{\ast }} \ar[ll]|-{\text{PR by
}G\circledS\mathfrak{g}_{1}^{\ast }} && \mathcal{O}_{(\mu,\xi)}
\ar[dd]^{\txt{\small symplectic\\\small embedding}} \ar[uu]_{\txt{\small symplectic\\ \small embedding}}
\ar@/_{5pc}/@{.>}[llll]^{\txt{symplectic\\leaf}} \\\\
G\circledS(\mathfrak{g}_{2}^{\ast }\times\mathfrak{g}_{3}) &&&&
G\circledS\mathfrak{g}_{2}^{\ast } \ar@{^{(}->}[llll]^{\txt{\small symplectic
leaf}} 
\\
&&\txt {\small
Reduction of \\ \small $T^*T^*G=(G\circledS\mathfrak{g}_{1}^{\ast
})\circledS(\mathfrak{g}_{2}^{\ast }\times\mathfrak{g}_{3})$}
} 
\end{equation}

\section{Hamiltonian and Lagrangian Dynamics on Tulczyjew Symplectic Space $TT^*G$}

\subsection{Hamiltonian Dynamics on $TT^*G$}

\begin{proposition}
Given a Hamiltonian function $E$ on $ TT^{\ast}G$, the Hamilton's equation
\begin{equation*}
i_{X_{E}^{ TT^{\ast}G}}\Omega_{ TT^{\ast}G}=-dE
\end{equation*}
defines a Hamiltonian right invariant vector field $X_{E}^{ TT^{\ast}G}$ generated by the element
\begin{equation*}
\left( \frac{\delta E}{\delta\nu},-\left( \frac{\delta E}{\delta\xi }+\ad_{%
\frac{\delta E}{\delta\nu}}^{\ast}\mu\right) ,-\frac{\delta E}{\delta \mu}%
+\ad_{\xi}\frac{\delta E}{\delta\nu},-\left( T^{\ast}R_{g}\frac{\delta E}{%
\delta g}+\ad_{\xi}^{\ast}\frac{\delta E}{\delta\xi}+\ad_{\xi}^{\ast }\ad_{%
\frac{\delta E}{\delta\nu}}^{\ast}\mu\right) \right)
\end{equation*}
of the Lie algebra $\left( \mathfrak{g}\circledS\mathfrak{g}^{\ast}\right)
\circledS\left( \mathfrak{g}\circledS\mathfrak{g}^{\ast}\right) .$
Components of $X_{E}^{ TT^{\ast}G}$ define the Hamilton's equations
\begin{equation}
\dot{g}=TR_{g}\left( \frac{\delta E}{\delta\nu}\right) ,\text{ \ \ }\dot {\mu%
}=-\frac{\delta E}{\delta\xi},\text{ \ \ }\dot{\xi}=\frac{\delta E}{\delta\mu%
},\text{ \ \ }\dot{\nu}=\ad_{\frac{\delta E}{\delta\nu}}^{\ast}\nu-T^{%
\ast}R_{g}\left( \frac{\delta E}{\delta g}\right)  \label{HamTT*G}
\end{equation}
\end{proposition}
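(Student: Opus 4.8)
The plan is to pin down the Hamiltonian vector field from its defining relation $i_{X_E^{TT^\ast G}}\Omega_{TT^\ast G}=-dE$ by testing it against right invariant vector fields, which span each tangent space of $TT^\ast G$. First I would posit that, at the base point $(g,\mu,\xi,\nu)$, the field $X_E^{TT^\ast G}$ agrees with a right invariant field $X_{(\eta_1,\lambda_1,\eta_2,\lambda_2)}^{TT^\ast G}$ whose generator $(\eta_1,\lambda_1,\eta_2,\lambda_2)\in(\mathfrak{g}\circledS\mathfrak{g}^\ast)\circledS(\mathfrak{g}\circledS\mathfrak{g}^\ast)$ is allowed to depend on the point and is to be determined. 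Pairing the defining relation with an arbitrary right invariant field $X_{(\xi_2,\nu_2,\xi_3,\nu_3)}^{TT^\ast G}$ converts the identity of one-forms into a single scalar identity depending on the four free parameters $\xi_2,\nu_2,\xi_3,\nu_3$.

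For the left-hand side I would substitute both fields into the explicit bilinear expression \eqref{SymTT*G} for $\Omega_{TT^\ast G}$ on right invariant vector fields. For the right-hand side I would expand $\langle dE,X_{(\xi_2,\nu_2,\xi_3,\nu_3)}^{TT^\ast G}\rangle$ by the chain rule: the four components of the test field at $(g,\mu,\xi,\nu)$ are read off from \eqref{RITT*G} and each is paired with the matching functional derivative $\delta E/\delta g,\ \delta E/\delta\mu,\ \delta E/\delta\xi,\ \delta E/\delta\nu$, using $\langle \delta E/\delta g,\,TR_g\xi_2\rangle=\langle T^\ast R_g\,\delta E/\delta g,\,\xi_2\rangle$ to push the base contribution into $\mathfrak{g}^\ast$. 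This is exactly the pattern behind the first-order computation that produced \eqref{HamVF}, now carried out with four slots instead of two.

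The core step is to recover the generator by non-degeneracy. Since $\xi_2,\nu_2,\xi_3,\nu_3$ are independent, I would collect and equate the coefficients of each, obtaining four linear relations for the four entries of the generator. Three of these are immediate; for instance the $\nu_3$-slot, appearing only linearly, gives $\eta_1=\delta E/\delta\nu$ and hence $\dot g=TR_g(\delta E/\delta\nu)$. The remaining relation, coming from the $\xi_2$-slot, collects every base-point coadjoint correction and fixes the last momentum entry $\lambda_2$; carrying it out requires repeatedly transferring $\ad$'s off the base variables onto the test slots via the duality $\langle\mu,[\zeta,\xi_2]\rangle=\langle\ad^\ast_\zeta\mu,\xi_2\rangle$. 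Once the generator is identified, I would substitute it back into \eqref{RITT*G} to produce the four component equations, whereupon the base-point corrections cancel in pairs — the $\ad^\ast_{\delta E/\delta\nu}\mu$ terms drop out of $\dot\mu$, and the $\ad_{\eta_1}\xi$ bracket cancels against the bracket carried inside $\eta_2$ in $\dot\xi$ — leaving the compact system \eqref{HamTT*G}.

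I expect the main obstacle to be precisely this bookkeeping of the base-point-dependent coadjoint terms. The two-sided Tulczyjew form \eqref{SymTT*G} carries contributions from $\mu$, $\nu$ and $\xi$ simultaneously, and the very same quantities reappear inside the components \eqref{RITT*G} of the test field, so a single $\ad^\ast$-correction can enter the scalar identity through several channels at once. Keeping the signs and the left/right placement of $\Ad^\ast$, $T^\ast R_g$ consistent across these channels is the delicate part; a good internal check is that the clean cancellations yielding $\dot\mu=-\delta E/\delta\xi$ and $\dot\nu=\ad^\ast_{\delta E/\delta\nu}\nu-T^\ast R_g(\delta E/\delta g)$ come out as expected, mirroring the $T^\ast G$ calculation.
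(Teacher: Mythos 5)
Your proposal is correct and follows essentially the same route the paper intends: the proposition is stated without a written proof, but every ingredient is prepared for exactly the computation you describe --- pairing $i_{X_E^{TT^{\ast}G}}\Omega_{TT^{\ast}G}=-dE$ with arbitrary right invariant fields, evaluating the two sides via \eqref{SymTT*G} and \eqref{RITT*G}, extracting the point-dependent generator by non-degeneracy of the pairing, and substituting back into \eqref{RITT*G}; this is the same inversion that the closed-form expression for $\Omega^{\flat}_{TT^{\ast}G}$ recorded at the end of Section 2 packages directly. One concrete warning for the internal check you propose: substituting the asserted generator back into \eqref{RITT*G}, the cancellation you predict in the third slot is real but lands on $\dot{\xi}=-\delta E/\delta\mu$, since $\ad_{\delta E/\delta\nu}\xi=-\ad_{\xi}\left(\delta E/\delta\nu\right)$ annihilates the $\ad_{\xi}\left(\delta E/\delta\nu\right)$ term carried inside the generator's third entry, and this contradicts the sign of $\dot{\xi}$ claimed in \eqref{HamTT*G}, whereas $\dot{g}$, $\dot{\mu}$ and $\dot{\nu}$ come out exactly as stated; so the printed proposition harbours a sign typo (either in the generator's third entry or in the $\dot{\xi}$ equation) which your method will surface --- a defect of the statement as printed, not of your argument.
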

in the adapted trivialization of $TT^*G$. 

\subsubsection{Reduction of $TT^*G$ by $G$}

\begin{proposition}
The Poisson reduction of $ TT^{\ast}G$ under the action of $G$ results in the total space $\mathfrak{g}_{1}^{\ast}\circledS\left( 
\mathfrak{g}_{2} \circledS\mathfrak{g}_{3}^{\ast}\right)$ endowed with the
Poisson bracket%
\begin{equation}
\left\{ E,F\right\} _{\mathfrak{g}_{1}^{\ast}\circledS\left( 
\mathfrak{g}_{2} \circledS\mathfrak{g}_{3}^{\ast}\right)}\left( \mu,\xi,\nu\right)
=\left\langle \frac{\delta F}{\delta\xi},\frac{\delta E}{\delta\mu }%
\right\rangle -\left\langle \frac{\delta E}{\delta\xi},\frac{\delta F}{%
\delta\mu}\right\rangle +\left\langle \nu,\left[ \frac{\delta E}{\delta\nu },%
\frac{\delta F}{\delta\nu}\right] \right\rangle .  \label{Poig*gg*}
\end{equation}
\end{proposition}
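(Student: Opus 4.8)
The plan is to realize the stated reduction as an instance of Poisson reduction by a free and proper group action, reading off the reduced bracket by evaluating the canonical bracket on $G$-invariant functions. First I would pin down the $G$-action. Regarding $G$ as the subgroup $\{(h,0,0,0)\}$ of $TT^{\ast}G\simeq(G\circledS\mathfrak{g}_{1}^{\ast})\circledS(\mathfrak{g}_{2}\circledS\mathfrak{g}_{3}^{\ast})$ and using the multiplication \eqref{GrTT*G}, right translation by $h\in G$ gives $(g,\mu,\xi,\nu)\cdot h=(gh,\mu,\xi,\nu)$, so the action merely shifts the group coordinate $g$ and is free and proper. Consequently a function on $TT^{\ast}G$ is $G$-invariant precisely when it is independent of $g$, and such functions are identified with functions $E=E(\mu,\xi,\nu)$ on the quotient $TT^{\ast}G/G\simeq\mathfrak{g}_{1}^{\ast}\circledS(\mathfrak{g}_{2}\circledS\mathfrak{g}_{3}^{\ast})$.

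Next I would write the full canonical Poisson bracket on $TT^{\ast}G$ directly from the Hamilton's equations \eqref{HamTT*G}. With the sign convention fixed by \eqref{HamTT*G} (under which $\{E,F\}=\langle dF,X_{E}^{TT^{\ast}G}\rangle$ is the derivative of $F$ along the Hamiltonian flow of $E$), I contract $dF$ with the Hamiltonian vector field of $E$, whose components are $\big(TR_{g}\tfrac{\delta E}{\delta\nu},\,-\tfrac{\delta E}{\delta\xi},\,\tfrac{\delta E}{\delta\mu},\,\ad_{\frac{\delta E}{\delta\nu}}^{\ast}\nu-T^{\ast}R_{g}\tfrac{\delta E}{\delta g}\big)$. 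Using the musical pairings together with the defining relation $\langle\ad_{\zeta}^{\ast}\nu,\chi\rangle=\langle\nu,[\zeta,\chi]\rangle$, this yields
\begin{equation*}
\{E,F\}_{TT^{\ast}G}=\Big\langle T_{e}^{\ast}R_{g}\tfrac{\delta F}{\delta g},\tfrac{\delta E}{\delta\nu}\Big\rangle-\Big\langle T_{e}^{\ast}R_{g}\tfrac{\delta E}{\delta g},\tfrac{\delta F}{\delta\nu}\Big\rangle+\Big\langle\tfrac{\delta F}{\delta\xi},\tfrac{\delta E}{\delta\mu}\Big\rangle-\Big\langle\tfrac{\delta E}{\delta\xi},\tfrac{\delta F}{\delta\mu}\Big\rangle+\Big\langle\nu,\big[\tfrac{\delta E}{\delta\nu},\tfrac{\delta F}{\delta\nu}\big]\Big\rangle,
\end{equation*}
where the coadjoint term on $\nu$ has been rewritten as the Lie--Poisson pairing.

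Finally, the Poisson reduction theorem guarantees that the quotient inherits a well-defined bracket computed by evaluating the expression above on $G$-invariant representatives. Imposing $\delta E/\delta g=\delta F/\delta g=0$ annihilates the first two terms and leaves exactly the bracket \eqref{Poig*gg*}. The Jacobi identity needs no separate check, since the starting bracket is the canonical one attached to the symplectic form $\Omega_{TT^{\ast}G}$ and $G$ acts by symplectomorphisms. The only delicate point is the bookkeeping of the musical pairings—tracking which variational derivative lies in $\mathfrak{g}$ and which in $\mathfrak{g}^{\ast}$, together with the signs coming from antisymmetry of the bracket and from the convention in \eqref{HamTT*G}—which is where a sign slip would most readily occur.
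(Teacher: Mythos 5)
Your proposal is correct and takes essentially the same route as the paper: the paper (as it does explicitly for $T^*G$ and $T^*TG$) obtains the reduced bracket by restricting the canonical bracket determined by $\Omega_{TT^{\ast}G}$, equivalently by the Hamilton's equations \eqref{HamTT*G}, to right-$G$-invariant (i.e.\ $g$-independent) functions, which annihilates the two $T_{e}^{\ast}R_{g}\,\delta/\delta g$ terms and leaves exactly \eqref{Poig*gg*}. Your sign convention $\{E,F\}=\langle dF,X_{E}^{TT^{\ast}G}\rangle$ agrees with the paper's convention $\{F,E\}=-\langle dF,X_{E}\rangle$, so the bookkeeping comes out identically.
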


\begin{remark}
Here, the Poisson bracket on $\mathfrak{g}_{1}^{\ast}\circledS\left( 
\mathfrak{g}_{2} \circledS\mathfrak{g}_{3}^{\ast}\right)$ is the direct product of
canonical Poisson bracket on $\mathfrak{g}_{1}^{\ast}\times\mathfrak{g}_{2}$
and Lie-Poisson bracket on $\mathfrak{g}_{3}^{\ast}$ whereas in Eq.(\ref%
{Poigxg*xg*}) we obtained a Poisson bracket, on the isomorphic space $%
\mathfrak{g}\circledS\left( \mathfrak{g}^{\ast}\times\mathfrak{g}%
^{\ast}\right) $, which is not in the form of a direct product.
\end{remark}

The action of $G$ is Hamiltonian with the momentum mapping
\begin{equation}
\mathbf{J}_{TT^{\ast}G}^{G}:TT^{\ast}G\rightarrow
\mathfrak{g}^{\ast}:\left( g,\mu,\xi,\nu\right) \rightarrow\nu+ad_{\xi
}^{\ast}\mu.  \label{MGonTT*G}
\end{equation}
The quotient space of the preimage $\mathbf{J}_{TT^{\ast}G}^{-1}%
\left( \lambda\right) $ of an element $\lambda\in\mathfrak{g}^{\ast}$ under
the action of isotropy subgroup $G_{\lambda}$ is
\begin{equation*}
\left. \mathbf{J}_{TT^{\ast}G}^{-1}\left( \lambda\right) \right/
G_{\lambda}\simeq\mathcal{O}_{\lambda}\times\mathfrak{g}^{\ast}\times%
\mathfrak{g}\text{. }
\end{equation*}
Pushing forward a right invariant vector field $X_{\left( \eta,\upsilon
,\zeta,\tilde{\upsilon}\right) }^{TT^{\ast}G}$ in the form of Eq.(\ref%
{RITT*G}) by the symplectic projection $ TT^{\ast}G\rightarrow \mathcal{O}%
_{\lambda}\times\mathfrak{g}^{\ast}\times\mathfrak{g}$, we obtain the vector
field
\begin{equation}
X_{\left( \eta,\upsilon,\zeta\right) }^{\mathcal{O}_{\lambda}\times
\mathfrak{g}^{\ast}\times\mathfrak{g}}\left( Ad_{g^{-1}}^{\ast}\lambda
,\mu,\xi\right) =\left( ad_{\eta}^{\ast}\circ
Ad_{g^{-1}}^{\ast}\lambda,\upsilon+ad_{\eta}^{\ast}\mu,\zeta+\left[ \xi,\eta%
\right] \right)  \label{VfO}
\end{equation}
on the quotient space $\mathcal{O}_{\lambda}\times\mathfrak{g}^{\ast}\times%
\mathfrak{g}$. We refer to \cite{esen2015tulczyjew} for the proof of the following
proposition.

\begin{proposition}
The reduced Tulczyjew's space $\mathcal{O}_{\lambda}\times\mathfrak{g}^{\ast
}\times\mathfrak{g}$ has an exact symplectic two-form $\Omega_{\mathcal{O}%
_{\lambda}\times\mathfrak{g}^{\ast}\times\mathfrak{g}}$ with two potential
one-forms $\chi_{1}$ and $\chi_{2}$ whose values on vector fields of the
form of Eq.(\ref{RITT*G}) at the point $\left( Ad_{g^{-1}}^{\ast}\lambda,\mu
,\xi\right) $ are
\begin{align}
\left\langle \Omega_{\mathcal{O}_{\lambda}\times\mathfrak{g}^{\ast}\times%
\mathfrak{g}},\left( X_{\left( \eta,\upsilon,\zeta\right) }^{\mathcal{O}%
_{\lambda}\times\mathfrak{g}^{\ast}\times\mathfrak{g}},X_{\left( \bar{\eta},%
\bar{\upsilon},\bar{\zeta}\right) }^{\mathcal{O}_{\lambda}\times\mathfrak{g}%
^{\ast}\times\mathfrak{g}}\right) \right\rangle & =\left\langle \upsilon,%
\bar{\zeta}\right\rangle -\left\langle \bar {\upsilon},\zeta\right\rangle
-\left\langle \lambda,[\eta,\bar{\eta }]\right\rangle , \\
\left\langle \chi_{1},X_{\left( \eta,\upsilon,\zeta\right) }^{\mathcal{O}%
_{\lambda}\times\mathfrak{g}^{\ast}\times\mathfrak{g}}\right\rangle \left(
Ad_{g^{-1}}^{\ast}\lambda,\mu,\xi\right) & =\left\langle \lambda
,\eta\right\rangle -\left\langle \upsilon,\xi\right\rangle , \\
\left\langle \chi_{2},X_{\left( \eta,\upsilon,\zeta\right) }^{\mathcal{O}%
_{\lambda}\times\mathfrak{g}^{\ast}\times\mathfrak{g}}\right\rangle \left(
Ad_{g^{-1}}^{\ast}\lambda,\mu,\xi\right) & =\left\langle \lambda
,\eta\right\rangle +\left\langle \mu,\zeta\right\rangle .
\end{align}
\end{proposition}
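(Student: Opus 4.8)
The plan is to realize $(\mathcal{O}_{\lambda}\times\mathfrak{g}^{\ast}\times\mathfrak{g},\ \Omega_{\mathcal{O}_{\lambda}\times\mathfrak{g}^{\ast}\times\mathfrak{g}})$ as the Marsden--Weinstein quotient of the exact symplectic manifold $(TT^{\ast}G,\Omega_{TT^{\ast}G})$ along the momentum map $\mathbf{J}_{TT^{\ast}G}^{G}$ of Eq.\eqref{MGonTT*G}. The essential point is that the entire Tulczyjew package on $TT^{\ast}G$---the symplectic two-form together with \emph{both} potential one-forms $\theta_{1},\theta_{2}$ of Eqs.\eqref{1}--\eqref{2}---is invariant under the lifted $G$-action. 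Since the action preserves the primitives and not merely their differential, each $\theta_{j}$ descends to a one-form $\chi_{j}$ on the quotient, which is why the reduced space again carries \emph{two} potential one-forms.

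Concretely, writing $i:\mathbf{J}^{-1}(\lambda)\hookrightarrow TT^{\ast}G$ for the inclusion of the level set and $\pi:\mathbf{J}^{-1}(\lambda)\to\mathcal{O}_{\lambda}\times\mathfrak{g}^{\ast}\times\mathfrak{g}$ for the orbit projection, the reduced data are uniquely characterised by the pullback relations
\begin{equation*}
\pi^{\ast}\Omega_{\mathcal{O}_{\lambda}\times\mathfrak{g}^{\ast}\times\mathfrak{g}}=i^{\ast}\Omega_{TT^{\ast}G},\qquad \pi^{\ast}\chi_{j}=i^{\ast}\theta_{j}\quad(j=1,2).
\end{equation*}
I would then pick right invariant vector fields $X_{(\eta,\upsilon,\zeta,\tilde{\upsilon})}^{TT^{\ast}G}$ of the form \eqref{RITT*G} that are tangent to the level set $\mathbf{J}^{-1}(\lambda)=\{\nu+\ad_{\xi}^{\ast}\mu=\lambda\}$---the tangency condition fixes the fourth generator $\tilde{\upsilon}$ in terms of $(\eta,\upsilon,\zeta)$---so that their $\pi$-pushforwards are exactly the reduced fields $X_{(\eta,\upsilon,\zeta)}^{\mathcal{O}_{\lambda}\times\mathfrak{g}^{\ast}\times\mathfrak{g}}$ of Eq.\eqref{VfO}. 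Substituting pairs of such fields into the explicit pairings \eqref{SymTT*G} and \eqref{1}--\eqref{2}, and imposing $\nu=\lambda-\ad_{\xi}^{\ast}\mu$, then yields the three asserted formulas by matching Lie-algebra entries.

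Several terms appear at once: in $\Omega_{TT^{\ast}G}$ the four fibre pairings $\langle\nu_{3},\bar{\xi}_{2}\rangle-\langle\nu_{2},\bar{\xi}_{3}\rangle+\langle\bar{\nu}_{2},\xi_{3}\rangle-\langle\bar{\nu}_{3},\xi_{2}\rangle$ of \eqref{SymTT*G} collapse, after the tangency substitution for $\tilde{\upsilon}$, to the single antisymmetric pairing $\langle\upsilon,\bar{\zeta}\rangle-\langle\bar{\upsilon},\zeta\rangle$, while the one-form entries $\langle\lambda,\eta\rangle$, $-\langle\upsilon,\xi\rangle$, $\langle\mu,\zeta\rangle$ read off directly from \eqref{VfO} paired against \eqref{1}, \eqref{2}. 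The delicate step, which I expect to be the main obstacle, is the treatment of the two base-point-dependent contributions $-\langle\nu,[\xi_{2},\bar{\xi}_{2}]\rangle$ and $\langle\xi,\ad^{\ast}_{\bar{\xi}_{2}}\nu_{2}-\ad^{\ast}_{\xi_{2}}\bar{\nu}_{2}\rangle$ in \eqref{SymTT*G}: on the constraint surface the former splits, via $\nu=\lambda-\ad_{\xi}^{\ast}\mu$, into an orbit piece $-\langle\lambda,[\xi_{2},\bar{\xi}_{2}]\rangle$ plus a $\mu$-dependent remainder, and one must verify that this remainder is exactly cancelled by the $\xi$-term together with the $\ad_{\eta}^{\ast}\mu$ contributions carried by the reduced fields \eqref{VfO}. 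What survives is precisely the Kostant--Kirillov--Souriau pairing $-\langle\lambda,[\eta,\bar{\eta}]\rangle$ of Eq.\eqref{KKS} on $T_{\mu}\mathcal{O}_{\lambda}$. That this repackaging closes---so that in particular $\chi_{2}$ descends to the quotient---reflects the group invariance of $\theta_{2}$, which the first-kind trivialization was chosen to guarantee.
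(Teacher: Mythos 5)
Your overall strategy is exactly the one the paper sets up around this proposition: Marsden--Weinstein reduction of $\bigl(TT^{\ast}G,\Omega_{TT^{\ast}G}\bigr)$ at the level $\lambda$ of the momentum map in Eq.(\ref{MGonTT*G}), lifting the reduced fields of Eq.(\ref{VfO}) to right invariant fields of the form of Eq.(\ref{RITT*G}) tangent to the constraint set $\nu+\ad_{\xi}^{\ast}\mu=\lambda$, and evaluating Eqs.(\ref{SymTT*G}), (\ref{1}), (\ref{2}) there. Note that the paper does not itself carry out this verification: it refers to \cite{esen2015tulczyjew} for the proof, and the identities $\langle\theta_{j},X\rangle=\langle\chi_{j},X^{\mathcal{O}_{\lambda}\times\mathfrak{g}^{\ast}\times\mathfrak{g}}\rangle$ displayed right after the proposition are precisely the relations your computation is designed to produce. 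So in spirit your proposal fills in the omitted calculation along the intended lines.

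There is, however, one genuine gap: your claim that $G$-invariance of $\theta_{1},\theta_{2}$ alone makes them descend, encoded in the characterization $\pi^{\ast}\chi_{j}=i^{\ast}\theta_{j}$. Invariance is not sufficient; a one-form passes to the quotient $\mathbf{J}^{-1}(\lambda)\to\mathbf{J}^{-1}(\lambda)/G_{\lambda}$ only if it is also \emph{horizontal}, and here horizontality fails. The paper's own momentum-map computations show this: contracting either potential with the generator $X_{(\kappa,0,0,0)}^{TT^{\ast}G}$ of the $G$-action gives $\langle\theta_{j},X_{(\kappa,0,0,0)}^{TT^{\ast}G}\rangle=\langle\nu+\ad_{\xi}^{\ast}\mu,\kappa\rangle$, which on the level set equals $\langle\lambda,\kappa\rangle$ and need not vanish for $\kappa\in\mathfrak{g}_{\lambda}$ (for abelian $G$ it essentially never does). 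Equivalently, if $\pi^{\ast}\chi_{j}=i^{\ast}\theta_{j}$ held with genuine one-forms $\chi_{j}$ on the quotient, the reduced symplectic form would be exact, contradicting the non-exactness of the Kostant--Kirillov--Souriau factor \eqref{KKS} on compact coadjoint orbits; and indeed the term $\langle\lambda,\eta\rangle$ in $\chi_{1},\chi_{2}$ is not a function of the reduced tangent vector alone --- for abelian $G$ the orbit is a point and $X_{(\eta,0,0)}^{\mathcal{O}_{\lambda}\times\mathfrak{g}^{\ast}\times\mathfrak{g}}$ vanishes, yet the formula assigns it the value $\langle\lambda,\eta\rangle$. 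The correct (weaker) formulation is the one the proposition and the paper actually use: the $\chi_{j}$ are defined by their values on the distinguished right invariant representatives $(\eta,\upsilon,\zeta)$, i.e.\ relative to the group parametrization, and the identities $\langle\theta_{j},X\rangle=\langle\chi_{j},X^{\mathcal{O}_{\lambda}\times\mathfrak{g}^{\ast}\times\mathfrak{g}}\rangle$ are asserted only for those representatives. The rest of your plan is sound --- the pullback characterization $\pi^{\ast}\Omega_{\mathcal{O}_{\lambda}\times\mathfrak{g}^{\ast}\times\mathfrak{g}}=i^{\ast}\Omega_{TT^{\ast}G}$ is the genuine content of Marsden--Weinstein reduction, and your account of the tangency condition fixing $\tilde{\upsilon}$, the substitution $\nu=\lambda-\ad_{\xi}^{\ast}\mu$, and the emergence of the KKS term is the right mechanism --- but the descent of the two potentials must be reformulated as above, not derived from invariance.
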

The potential one-forms $\theta_{1}$ and $\theta_{2}$ of Eq.(\ref{1}) and Eq.(\ref{2}) for Tulczyjew symplectic structure on $TT^*G$ and the one-forms $\chi_{1}$ and $\chi_{2} $ of reduced Tulczyjew space are related by the equations
\begin{align*}
\left\langle \theta_{1},X_{\left( \eta,\upsilon,\zeta,\tilde{\upsilon }%
\right) }^{ TT^{\ast}G}\right\rangle \left( g,\mu,\xi,\nu\right) &
=\left\langle \chi_{1},X_{\left( \eta,\upsilon,\zeta\right) }^{\mathcal{O}%
_{\lambda}\times\mathfrak{g}^{\ast}\times\mathfrak{g}}\right\rangle \left(
Ad_{g^{-1}}^{\ast}\lambda,\mu,\xi\right) , \\
\left\langle \theta_{2},X_{\left( \eta,\upsilon,\zeta,\tilde{\upsilon }%
\right) }^{ TT^{\ast}G}\right\rangle \left( g,\mu,\xi,\nu\right) &
=\left\langle \chi_{2},X_{\left( \eta,\upsilon,\zeta\right) }^{\mathcal{O}%
_{\lambda}\times\mathfrak{g}^{\ast}\times\mathfrak{g}}\right\rangle \left(
Ad_{g^{-1}}^{\ast}\lambda,\mu,\xi\right).
\end{align*}

\subsubsection{Reduction of $TT^*G$ by $\mathfrak{g}$}

\begin{proposition}
The action of $\mathfrak{g}_{2}$ on $ TT^{\ast}G$ is given, for $\eta\in\mathfrak{g}_{2}$, by
\begin{equation}
\varphi_{\eta}:\  TT^{\ast}G\rightarrow\  TT^{\ast}G:\left(
\left( g,\mu,\xi,\nu\right);\eta \right) \rightarrow\left( g,\mu,\xi
+\eta,\nu\right)   \label{gonTT*G}
\end{equation}
and it is symplectic.
\end{proposition}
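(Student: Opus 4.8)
The statement bundles two assertions: that \eqref{gonTT*G} really is the action of the subgroup $\mathfrak{g}_2$ inherited from the group law on $TT^{\ast}G$, and that this action preserves the Tulczyjew form $\Omega_{TT^{\ast}G}$. I would settle the first by direct inspection of \eqref{GrTT*G}. Embedding $\mathfrak{g}_2$ as $\{(e,0,\eta,0)\}$ and translating on the left, the identity group component $e$ kills every $\Ad_g$-- and $\ad^{\ast}$--correction in \eqref{GrTT*G}, so $(e,0,\eta,0)(g,\mu,\xi,\nu)=(g,\mu,\eta+\xi,\nu)$; since $(e,0,\eta_1,0)(e,0,\eta_2,0)=(e,0,\eta_1+\eta_2,0)$, the assignment $\eta\mapsto\varphi_\eta$ is a homomorphism out of the additive group $\mathfrak{g}_2$, so \eqref{gonTT*G} is a genuine abelian action.

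For symplecticity the plan is to read $\varphi_\eta$ through the tangent--bundle structure $TT^{\ast}G=T(G\circledS\mathfrak{g}^{\ast})$ rather than to attack \eqref{SymTT*G} directly. By the trivialization \eqref{trTT*G}, fixing $(g,\mu,\nu)$ and replacing $\xi$ by $\xi+\eta$ shifts the underlying velocity $(V_g,V_\mu)$ by $(T_eR_g\eta,\ad_\eta^{\ast}\mu)$, which is exactly the value at $(g,\mu)$ of the right-invariant vector field $X_{(\eta,0)}^{G\circledS\mathfrak{g}^{\ast}}$ recorded in the $T^{\ast}G$ subsection. Hence $\varphi_\eta$ is the fibrewise translation of $T(G\circledS\mathfrak{g}^{\ast})$ along $Z:=X_{(\eta,0)}^{G\circledS\mathfrak{g}^{\ast}}$, i.e. the time-one map of the vertical lift $Z^{v}$. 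Because $\Omega_{TT^{\ast}G}$ is the tangent (complete) lift of $\Omega_{G\circledS\mathfrak{g}^{\ast}}$, the Yano--Ishihara relation $\mathcal{L}_{Z^{v}}\Omega_{TT^{\ast}G}=(\mathcal{L}_{Z}\Omega_{G\circledS\mathfrak{g}^{\ast}})^{v}$ reduces the whole claim to showing that $Z$ is a symplectic vector field on $(G\circledS\mathfrak{g}^{\ast},\Omega_{G\circledS\mathfrak{g}^{\ast}})$, with $\Omega_{G\circledS\mathfrak{g}^{\ast}}$ as in \eqref{Ohm2T*G}.

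This last point is the crux. The flow of $Z=X_{(\eta,0)}^{G\circledS\mathfrak{g}^{\ast}}$ is $(g,\mu)\mapsto(\exp(t\eta)g,\Ad_{\exp t\eta}^{\ast}\mu)$, which is precisely the right-trivialized cotangent lift of the left translation $L_{\exp t\eta}$ on $G$; cotangent lifts preserve the canonical form, so $\mathcal{L}_Z\Omega_{G\circledS\mathfrak{g}^{\ast}}=0$ and therefore $\varphi_\eta^{\ast}\Omega_{TT^{\ast}G}=\Omega_{TT^{\ast}G}$. It is worth noting that the companion translations by $(e,\lambda,0,0)$ are fibre translations on $T^{\ast}G$ along non-closed right-invariant one-forms and are \emph{not} symplectic, consistent with $G\circledS\mathfrak{g}^{\ast}$ failing to be a symplectic Lie group; this explains why only the $\mathfrak{g}_2$--factor yields a symplectic action. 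One may of course also verify $\varphi_\eta^{\ast}\Omega_{TT^{\ast}G}=\Omega_{TT^{\ast}G}$ by brute force from \eqref{SymTT*G}: as $T\varphi_\eta$ is the identity in the coordinates $(W_g,W_\mu,W_\xi,W_\nu)$, one solves \eqref{RITT*G} for the generating Lie algebra elements at $(g,\mu,\xi,\nu)$ and at $(g,\mu,\xi+\eta,\nu)$ -- they differ only by $\xi_3\mapsto\xi_3-\ad_{\xi_2}\eta$ and $\nu_3\mapsto\nu_3+\ad_\eta^{\ast}\nu_2$ -- and checks that the induced bracket terms cancel via the Jacobi identity. I expect the organization of these cancellations, together with keeping the two potential one-forms and the right-action sign conventions straight, to be the real obstacle, which is exactly what the tangent-lift route is designed to circumvent.
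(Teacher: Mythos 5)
Your proof is correct, and it takes a genuinely different route from the paper's. The paper stays entirely inside its trivialized formulas: it computes the pushforward of a right-invariant vector field \eqref{RITT*G} under $\varphi_{\eta}$, observes that the result is again right invariant with a shifted generator, and then checks by direct calculation against \eqref{SymTT*G} that the pullback of $\Omega_{TT^{\ast}G}$ is unchanged. You instead read $\varphi_{\eta}$ through the tangent-bundle structure: under \eqref{trTT*G} it is the fibre translation of $T(G\circledS\G{g}^{\ast})$ by $Z=X^{G\circledS\G{g}^{\ast}}_{(\eta,0)}$, hence the time-one flow of the vertical lift $Z^{v}$; since the Tulczyjew form is by construction the complete (tangent) lift of $\Omega_{G\circledS\G{g}^{\ast}}$, the lift identity $\mathcal{L}_{Z^{v}}\Omega^{c}=\pi^{\ast}\bigl(\mathcal{L}_{Z}\Omega_{G\circledS\G{g}^{\ast}}\bigr)$ yields $\varphi_{\eta}^{\ast}\Omega_{TT^{\ast}G}=\Omega_{TT^{\ast}G}+\pi^{\ast}\bigl(\mathcal{L}_{Z}\Omega_{G\circledS\G{g}^{\ast}}\bigr)$, reducing everything to $Z$ being a symplectic vector field on the base. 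That holds: $Z$ is the Hamiltonian vector field \eqref{HamVF} of $\bar{H}(g,\mu)=\langle\mu,\eta\rangle$, equivalently the generator of right-trivialized cotangent lifts of left translations. Your route buys three things: it is insensitive to the sign and ordering conventions that pervade these formulas; it explains the asymmetry the paper merely records, namely that the $\xi$-translation is symplectic because $i_{Z}\Omega_{G\circledS\G{g}^{\ast}}$ is exact, whereas the $\nu$-translation $\phi_{\lambda}$ of \eqref{phi} translates along a base field whose contraction with $\Omega_{G\circledS\G{g}^{\ast}}$ is a non-closed right-invariant one-form; and it is the infinitesimal counterpart of the symplectic diffeomorphism $\bar{\sigma}_{G}$ of \eqref{sig} that the paper itself invokes right after this reduction. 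What the paper's computation buys is self-containedness: no lift calculus is imported.

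Three small repairs, none affecting the argument. (i) With the paper's conventions \eqref{dist*} and \eqref{Adtoad}, where $\Ad_{g}^{\ast}$ is the dual of $\Ad_{g^{-1}}$, the flow of $Z$ is $(g,\mu)\mapsto\bigl(\exp(t\eta)g,\Ad^{\ast}_{\exp(-t\eta)}\mu\bigr)$; your $\Ad^{\ast}_{\exp t\eta}$ is the other standard convention. (ii) In your closing aside, the non-symplectic companion sits in the $\G{g}_{3}^{\ast}$ slot, i.e.\ translation by $(e,0,0,\lambda)$, which is $\phi_{\lambda}$ of \eqref{phi}; the $\G{g}_{1}^{\ast}$-slot translation $(e,\lambda,0,0)$ that you wrote corresponds to $\psi_{\lambda}$ of \eqref{psi}, which \emph{is} symplectic. (iii) Concerning your brute-force alternative: your pushforward signs, $\xi_{3}\mapsto\xi_{3}-\ad_{\xi_{2}}\eta$ and $\nu_{3}\mapsto\nu_{3}+\ad^{\ast}_{\eta}\nu_{2}$, are the internally consistent ones (the paper's displayed pushforward differs from yours in the sign of the bracket term, and cannot be made consistent in both entries under either pushforward convention). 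But be warned that a literal term-by-term check against \eqref{SymTT*G} and \eqref{RITT*G} as printed leaves a residue of the form $\langle\nu_{2},[\bar{\xi}_{2},\eta]\rangle-\langle\bar{\nu}_{2},[\xi_{2},\eta]\rangle$: those two displays, like \eqref{Ohm2T*G} compared with \eqref{HamVF}, do not share mutually compatible sign conventions. This is exactly the trap your lift-theoretic argument circumvents, and it is a good reason to prefer it.
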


\begin{proof}
Push forward of a vector field $X_{\left( \xi_{2},\nu_{2},\xi_{3},\nu
_{3}\right) }^{TT^{\ast}G}$ in the form of Eq.(\ref{RITT*G}) by the
transformation $\varphi_{\eta}$ is also a right invariant vector field
\begin{equation*}
\left( \varphi_{\eta}\right) _{\ast}X_{\left(
\xi_{2},\nu_{2},\xi_{3},\nu_{3}\right) }^{TT^{\ast}G}=X_{\left(
\xi_{2},\nu_{2},\xi_{3}-[\eta,\xi_{2}],\nu_{3}+ad_{\eta}^{\ast}\nu_{2}%
\right) }^{TT^{\ast}G}.
\end{equation*}
By direct calculation, one establishes the identity
\begin{equation}
\varphi_{\eta}^{\ast}\Omega_{ TT^{\ast}G}\left( X,Y\right) \left(
g,\mu,\xi,\nu\right) =\Omega_{ TT^{\ast}G}\left( \left( \varphi_{\eta
}\right) _{\ast}X,\left( \varphi_{\eta}\right) _{\ast}Y\right) \left(
g,\mu,\xi+\eta,\nu\right)  \label{gonTT*Gsym}
\end{equation}
which gives the desired result. In Eq.(\ref{gonTT*Gsym}) $X$ and $Y$ are
right invariant vector fields as in Eq.(\ref{RITT*G}) and $\Omega_{\
 TT^{\ast}G}$ is the symplectic two-form given in Eq.(\ref{SymTT*G}).
\end{proof}

\begin{proposition}
The Poisson reduction of $ TT^{\ast}G$ under the action in Eq.(\ref{gonTT*G}) of $\mathfrak{g}%
_{2}$  results in $\left( G\circledS\mathfrak{g}%
_{1}^{\ast}\right)\circledS\mathfrak{g}_{3}^{\ast} $ endowed with the bracket
\begin{equation*}
\left\{ E,F\right\} _{\left( G\circledS\mathfrak{g}_{1}^{\ast}\right) \circledS%
\mathfrak{g}_{3}^{\ast}}\left( g,\mu,\nu\right) =\left\langle T_{e}^{\ast}R_{g}\frac{\delta E}{\delta g},\frac{%
\delta F}{\delta\nu}\right\rangle -\left\langle
T_{e}^{\ast}R_{g}\frac{\delta F}{\delta g},\frac{\delta E}{\delta\nu }%
\right\rangle +\left\langle \nu,\left[ \frac{\delta E}{%
\delta\nu},\frac{\delta F}{\delta\nu}\right] \right\rangle .
\end{equation*}
\end{proposition}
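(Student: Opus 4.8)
The plan is to realize this as Poisson reduction of the symplectic manifold $\left(TT^{\ast}G,\Omega_{TT^{\ast}G}\right)$ by the free, proper, symplectic action of the Abelian group $\mathfrak{g}_2$, and then to pin down the reduced bracket by exploiting the special form the Hamilton's equations \eqref{HamTT*G} take on invariant Hamiltonians. First I would record the structural input: the action $\varphi_\eta$ of \eqref{gonTT*G} is free and proper, being translation $\xi\mapsto\xi+\eta$ along the vector-space fibre $\mathfrak{g}_2$, and it is symplectic by the preceding Proposition. Hence $\pi:TT^{\ast}G\rightarrow TT^{\ast}G/\mathfrak{g}_2$ carries the reduced Poisson structure for which $\pi$ is a Poisson map and the $\mathfrak{g}_2$-invariant functions form a Poisson subalgebra. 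Since $\varphi_\eta$ only shifts $\xi$, a function $E$ is $\mathfrak{g}_2$-invariant exactly when $\delta E/\delta\xi=0$, i.e. $E=E(g,\mu,\nu)$; the projection forgetting $\xi$ then identifies the quotient with $\left(G\circledS\mathfrak{g}_1^{\ast}\right)\circledS\mathfrak{g}_3^{\ast}$, whose group law is the one already listed among the subgroups of $TT^{\ast}G$.

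The heart of the argument is to identify the reduced bracket, and the quickest route is through the Hamiltonian vector fields. For a $\xi$-independent $E$ the equations \eqref{HamTT*G} decouple: one has $\dot\mu=-\delta E/\delta\xi=0$, while
\begin{equation*}
\dot g=T_eR_g\frac{\delta E}{\delta\nu},\qquad \dot\nu=\ad_{\frac{\delta E}{\delta\nu}}^{\ast}\nu-T_e^{\ast}R_g\frac{\delta E}{\delta g}
\end{equation*}
are exactly the $T^{\ast}G$-equations \eqref{ULP} on $G\circledS\mathfrak{g}^{\ast}$ written in $(g,\nu)$, with $\mu$ held fixed. Thus, on the quotient, the reduced Hamiltonian vector field of $E$ coincides with the $G\circledS\mathfrak{g}^{\ast}$ Hamiltonian vector field of \eqref{ULP} in the variables $(g,\nu)$, the coordinate $\mu\in\mathfrak{g}_1^{\ast}$ being merely a parameter. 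Because a Poisson structure is completely determined by the assignment $E\mapsto X_E$, the reduced bracket must coincide with the canonical bracket \eqref{PoissonGg*} of $G\circledS\mathfrak{g}^{\ast}$ read in $(g,\nu)$, that is
\begin{equation*}
\left\langle T_e^{\ast}R_g\frac{\delta E}{\delta g},\frac{\delta F}{\delta\nu}\right\rangle-\left\langle T_e^{\ast}R_g\frac{\delta F}{\delta g},\frac{\delta E}{\delta\nu}\right\rangle+\left\langle\nu,\left[\frac{\delta E}{\delta\nu},\frac{\delta F}{\delta\nu}\right]\right\rangle,
\end{equation*}
with $\mu$ entering only as a Casimir. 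This is the asserted bracket, and since it contains neither $\xi$ nor any $\delta/\delta\xi$ it descends through $\pi$.

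The step I expect to demand the most care is the descent itself, namely checking that $\{E,F\}_{TT^{\ast}G}$ is again $\mathfrak{g}_2$-invariant when $E$ and $F$ are, so that the formula above genuinely defines a bracket on $\left(G\circledS\mathfrak{g}_1^{\ast}\right)\circledS\mathfrak{g}_3^{\ast}$. Via the decoupled Hamilton's equations this is transparent, because the only surviving contributions involve $g$, $\nu$ and the derivatives $\delta/\delta g$, $\delta/\delta\nu$; the coordinate $\xi$ has been frozen out by $\dot\mu=0$. If instead one computes $\{E,F\}_{TT^{\ast}G}=\langle\Omega_{TT^{\ast}G};(X_E,X_F)\rangle$ straight from \eqref{SymTT*G}, the explicit term $\langle\xi,\ad_{\bar\xi_2}^{\ast}\nu_2-\ad_{\xi_2}^{\ast}\bar\nu_2\rangle$ together with the $\ad_\xi$- and $\ad_\xi^{\ast}$-contributions hidden in the generator of $X_E$ produce several $\xi$-dependent summands, and the real work becomes showing that these assemble, through the identity $\ad_a^{\ast}\ad_b^{\ast}-\ad_b^{\ast}\ad_a^{\ast}=-\ad_{[a,b]}^{\ast}$ and the Jacobi identity, into a total that cancels. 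The Hamiltonian-vector-field route circumvents this bookkeeping entirely, which is why I would present it that way; structurally, the outcome is simply the canonical $G\circledS\mathfrak{g}^{\ast}$ bracket on the $(g,\nu)=G\circledS\mathfrak{g}_3^{\ast}$ factor, with $\mathfrak{g}_1^{\ast}=\mu$ a Casimir, each of the three displayed terms matching \eqref{PoissonGg*} and its Lie–Poisson summand \eqref{LPbracket}.
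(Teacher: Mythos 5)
Your proof is correct. A caveat on the comparison: the paper states this proposition without proof, so there is no argument to measure yours against line by line; the route its framework implicitly suggests is to evaluate the bracket induced by $\Omega_{TT^{\ast}G}$ of Eq.~(\ref{SymTT*G}) on $\mathfrak{g}_{2}$-invariant functions, which is exactly the bookkeeping you chose to bypass. Your alternative is sound and arguably the cleanest one available inside the paper's own machinery: invariance under Eq.~(\ref{gonTT*G}) means $\delta E/\delta\xi=0$; the Hamilton's equations (\ref{HamTT*G}) then give $\dot{\mu}=0$ while the $(g,\nu)$-components reproduce verbatim the trivialized Hamilton's equations (\ref{ULP}) with $\mu$ as a parameter; and since Poisson reduction is characterized by $T\pi\circ X_{E\circ\pi}=X_{E}^{\mathrm{red}}\circ\pi$, two Poisson structures assigning the same Hamiltonian vector field to every function must coincide, so the reduced bracket is the canonical bracket (\ref{PoissonGg*}) read in $(g,\nu)$ with $\mu$ a Casimir, which is precisely the displayed formula. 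Two harmless caveats. First, your remark that the quotient carries ``the group law already listed among the subgroups'' is loose: the action (\ref{gonTT*G}) is a plain shift of $\xi$, not right translation by the subgroup $\mathfrak{g}_{2}$ (which is not normal in $TT^{\ast}G$), so the identification of the quotient with $\left(G\circledS\mathfrak{g}_{1}^{\ast}\right)\circledS\mathfrak{g}_{3}^{\ast}$ is as Poisson manifolds only; this does not affect the proposition. Second, if one insists on the convention $\{E,F\}=X_{E}[F]$, the Lie--Poisson term comes out with the sign opposite to the one displayed; but exactly the same discrepancy already exists in the paper between Eqs.~(\ref{ULP}) and (\ref{PoissonGg*}), so your identification of the reduced structure with the canonical one on $G\circledS\mathfrak{g}^{\ast}$ is consistent with the paper's internal conventions, and the conclusion is unaffected by whichever fixed sign convention one adopts on both sides.
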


\begin{remark}
The Poisson bracket $\left\{ E,F\right\} _{\left( G\circledS\mathfrak{g}%
_{1}^{\ast}\right)\circledS\mathfrak{g}_{3}^{\ast} }$ is independent  of functions with respect to $\mu$, that is, it does not involve
$\delta E/\delta\mu$ and $\delta F/\delta\mu$. Its structure resembles the
canonical Poisson bracket in Eq.(\ref{PoissonGg*}) on $G\circledS\mathfrak{g}%
^{\ast}$. We recall that, on $\left( G\circledS\mathfrak{g}^{\ast}\right)\circledS%
\mathfrak{g}^{\ast} $ there is another Poisson bracket given in Eq.(\ref{PoGxg*xg*}) that involves $\delta
E/\delta\mu,$ $\delta F/\delta\mu$, $\delta E/\delta\nu$ and $\delta
F/\delta\nu$. This latter comes from reduction of $T^*TG$ by $\G{g}$.
\end{remark}

The infinitesimal generator $X_{\left( 0,0,\xi_{3},0\right) }^{
 TT^{\ast}G}$ of the action in Eq.(\ref{gonTT*G}) corresponds to the
element $\xi_{3}\in\mathfrak{g}$ and is a right invariant vector field.
Since the action is Hamiltonian, and the symplectic two-form is exact, we
can derive the associated momentum map $\mathbf{J}_{TT^{\ast}G}^{%
\mathfrak{g}_{2}}$ from the equation
\begin{equation*}
\left\langle \mathbf{J}_{TT^{\ast}G}^{\mathfrak{g}_{2}}\left(
g,\mu,\xi,\nu\right) ,\xi_{3}\right\rangle =\left\langle \theta
_{2},X_{\left( 0,0,\xi_{3},0\right) }^{ TT^{\ast}G}\right\rangle
=\left\langle \mu,\xi_{3}\right\rangle ,
\end{equation*}
where $\theta_{2}$ is the potential one-form of Tulczyjew in Eq.(\ref{2}) satisfying $%
d\theta_{2}=\Omega_{ TT^{\ast}G}$. We find that
\begin{equation}
\mathbf{J}_{TT^{\ast}G}^{\mathfrak{g}_{2}}:  TT^{\ast
}G\rightarrow Lie^{\ast}\left( \mathfrak{g}_{2}\right) =\mathfrak{g}^{\ast
}:\left( g,\mu,\xi,\nu\right) \rightarrow\mu  \label{MgonTT*G}
\end{equation}
is the projection to the second entry in $ TT^{\ast}G$. The preimage of
an element $\mu\in\mathfrak{g}^{\ast}$ by $\mathbf{J}_{\text{ }%
 TT^{\ast}G}^{\mathfrak{g}_{2}}$ is the space $G\circledS\left( \mathfrak{%
g}_{2}\circledS\mathfrak{g}_{3}^{\ast}\right) $. Following proposition
describes the symplectic reduction of $ TT^{\ast}G$ with the action of $%
\mathfrak{g}_{2}$.

\begin{proposition}
The symplectic reduction of $ TT^{\ast}G$ under the action of $\mathfrak{g%
}_{2}$ given by Eq.(\ref{gonTT*G}) gives the reduced space
\begin{equation*}
\left. \left( \mathbf{J}_{TT^{\ast}G}^{\mathfrak{g}_{2}}\right)
^{-1}\left( \mu\right) \right/ \mathfrak{g}_{2}\simeq G\circledS \mathfrak{g}%
_{3}^{\ast}
\end{equation*}
with the canonical symplectic two-from $\Omega_{G\circledS\mathfrak{g}%
_{3}^{\ast}}$ as in Eq.(\ref{Ohm2T*G}).
\end{proposition}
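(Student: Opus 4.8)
The plan is to realise the statement as an instance of the Marsden--Weinstein reduction theorem, exactly as in the two companion reductions carried out earlier (the reduction of $T^*TG$ by $\G{g}$ and of $T^*T^*G$ by $\G{g}^*$, both of which returned the canonical form of Eq.(\ref{Ohm2T*G}) on a $G\circledS\G{g}^*$ factor). The ingredients are already in place: by the preceding proposition the action of Eq.(\ref{gonTT*G}) is symplectic, and its momentum map is $\mathbf{J}_{TT^*G}^{\G{g}_2}(g,\mu,\xi,\nu)=\mu$ of Eq.(\ref{MgonTT*G}). First I would fix a value $\mu\in\G{g}^*$ and identify the level set $(\mathbf{J}_{TT^*G}^{\G{g}_2})^{-1}(\mu)\simeq G\circledS(\G{g}_2\circledS\G{g}_3^*)$; since $\G{g}_2$ is abelian its coadjoint action is trivial, so the reduction is taken with respect to the full group $\G{g}_2$. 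Because the action of Eq.(\ref{gonTT*G}) simply translates the $\G{g}_2$-coordinate $\xi$ and leaves $g,\mu,\nu$ untouched, the orbits are exactly the $\xi$-lines and the quotient is parametrised by the surviving pair $(g,\nu)$, giving the asserted diffeomorphism with $G\circledS\G{g}_3^*$.

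The heart of the proof is the computation of the reduced two-form. Writing $\iota$ for the inclusion of the level set and $\pi$ for the projection onto $G\circledS\G{g}_3^*$, the reduced form $\Omega_{G\circledS\G{g}_3^*}$ is characterised by $\pi^*\Omega_{G\circledS\G{g}_3^*}=\iota^*\Omega_{TT^*G}$. To evaluate the right-hand side I would take a right-invariant vector field $X_{(\eta,\lambda)}^{G\circledS\G{g}_3^*}$ on the quotient and lift it to a right-invariant vector field $X_{(\xi_2,\nu_2,\xi_3,\nu_3)}^{TT^*G}$ in the form of Eq.(\ref{RITT*G}) that is tangent to the level set $\{\mu=\text{const}\}$; matching the base- and $\nu$-components while imposing tangency ($\dot\mu=0$) pins down $\xi_2,\nu_2,\nu_3$ in terms of $\eta,\lambda,\mu,\xi$, leaving the orbit slot $\xi_3$ free. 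Substituting two such lifts into the symplectic form of Eq.(\ref{SymTT*G}) produces, alongside the three canonical terms $\langle\lambda,\bar\eta\rangle-\langle\bar\lambda,\eta\rangle-\langle\nu,[\eta,\bar\eta]\rangle$ of Eq.(\ref{Ohm2T*G}), a batch of correction terms carrying the fixed momentum $\mu$ and the quotiented variable $\xi$.

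The step I expect to be the main obstacle is showing that these corrections vanish, so that $\iota^*\Omega_{TT^*G}$ is genuinely basic (annihilated by the orbit directions $X_{(0,0,\xi_3,0)}^{TT^*G}$, hence independent of the choice of lift) and descends to precisely $\Omega_{G\circledS\G{g}_3^*}$. This reduces to a Lie-algebraic identity: after rewriting each correction as a pairing of $\mu$ against an iterated bracket of $\eta$, $\bar\eta$ and $\xi$ through $\langle\ad_\zeta^\ast\alpha,w\rangle=\langle\alpha,[\zeta,w]\rangle$, the terms collapse in pairs by antisymmetry of the bracket (with the Jacobi identity available should a residual triple-bracket survive). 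Once the cancellation is in hand, the reduced form is the canonical $\Omega_{G\circledS\G{g}_3^*}$ of Eq.(\ref{Ohm2T*G}); I would finally remark that, being exact, it is consistent with its descent from the potential one-form $\theta_2$ that defined the momentum map in Eq.(\ref{MgonTT*G}), and that the reduced space $G\circledS\G{g}_3^*$ is thereby symplectomorphic to the canonical $(T^*G,\Omega_{T^*G})$.
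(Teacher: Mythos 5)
Your proposal is correct and is essentially the paper's own argument: the paper establishes this proposition precisely by computing the momentum map of Eq.(\ref{MgonTT*G}) from the potential one-form $\theta_{2}$, identifying the level set $\left(\mathbf{J}_{TT^{\ast}G}^{\mathfrak{g}_{2}}\right)^{-1}(\mu)\simeq G\circledS\left(\mathfrak{g}_{2}\circledS\mathfrak{g}_{3}^{\ast}\right)$, and invoking Marsden--Weinstein reduction for the quotient by the (abelian, hence full-isotropy) group $\mathfrak{g}_{2}$, exactly the route you follow, with your lift-and-substitute verification of the reduced two-form simply making explicit what the paper leaves implicit. The only simplification worth noting is that the basicness of $\iota^{\ast}\Omega_{TT^{\ast}G}$, which you single out as the main obstacle, is automatic from the momentum-map relation $i_{X_{(0,0,\zeta,0)}^{TT^{\ast}G}}\Omega_{TT^{\ast}G}=-d\big\langle \mathbf{J}_{TT^{\ast}G}^{\mathfrak{g}_{2}},\zeta\big\rangle$, whose right-hand side annihilates every vector tangent to the level set; consequently you may fix the lifts with vanishing orbit slot ($\xi_{3}=\bar{\xi}_{3}=0$), after which the $\mu$-dependent double-bracket corrections in Eq.(\ref{SymTT*G}) cancel pairwise by antisymmetry of the bracket alone, yielding Eq.(\ref{Ohm2T*G}) without any appeal to the Jacobi identity.
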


\begin{remark}
Existence of the symplectic action of $\mathfrak{g}_{2}$ on $ TT^{\ast}G$
is directly related to the existence of  symplectic diffeomorphism
\begin{equation}
 \bar{\sigma}_{G}:TT^{\ast}G\rightarrow T^{\ast
}TG:\left( g,\mu,\xi,\nu\right) \rightarrow\left( g,\xi,\nu+\ad_{\xi}^{\ast
}\mu,\mu\right)  \label{sig}
\end{equation}
in Tulczyjew triplet described in \cite{esen2014tulczyjew}.
\end{remark}

\subsubsection{Reduction of $TT^*G$ by $\mathfrak{g}^{\ast}$}

Induced from the group operation on $ TT^{\ast}G$, there are two
canonical actions of $\mathfrak{g}^{\ast}$ on $ TT^{\ast}G$
\begin{equation*}
\psi:\mathfrak{g}_{1}^{\ast}\times\  TT^{\ast}G\rightarrow\
 TT^{\ast}G, \qquad \phi:\mathfrak{g}_{3}^{\ast}\times\
 TT^{\ast}G\rightarrow   TT^{\ast}G
\end{equation*}
described by
\begin{align}
\psi_{\lambda}\left( g,\mu,\xi,\nu\right) & = \left( g,\mu+\lambda
,\xi,\nu\right) ,  \label{psi} \\
\phi_{\lambda}\left( g,\mu,\xi,\nu\right) & =\left( g,\mu,\xi,\nu
+\lambda\right) .  \label{phi}
\end{align}

\begin{proposition}
$\psi$ is a symplectic action whereas $\phi$ is not.
\end{proposition}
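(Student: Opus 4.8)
The plan is to avoid a head-on computation with the two-sided form $\Omega_{TT^\ast G}$ and instead transport the question to the ordinary cotangent bundle $T^\ast TG$, where both $\psi$ and $\phi$ become cotangent fibre translations and the answer is read off from a single closedness test. The tool is the symplectic diffeomorphism $\bar{\sigma}_G\colon TT^\ast G\to T^\ast TG$ of \eqref{sig}, for which $\bar{\sigma}_G^\ast\Omega_{T^\ast TG}=\Omega_{TT^\ast G}$. Since conjugation by $\bar{\sigma}_G$ intertwines pullbacks, it suffices to form $\tilde\psi_\lambda:=\bar{\sigma}_G\circ\psi_\lambda\circ\bar{\sigma}_G^{-1}$ and $\tilde\phi_\lambda:=\bar{\sigma}_G\circ\phi_\lambda\circ\bar{\sigma}_G^{-1}$ on $T^\ast TG$ and to test each against $\Omega_{T^\ast TG}$. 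I will use the elementary fact that a fibre translation $\beta_q\mapsto\beta_q+\omega(q)$ on a cotangent bundle $T^\ast Q$ satisfies $t_\omega^\ast\theta_{\mathrm{can}}=\theta_{\mathrm{can}}+\pi^\ast\omega$, hence $t_\omega^\ast\Omega_{\mathrm{can}}=\Omega_{\mathrm{can}}-\pi^\ast d\omega$, so that it is symplectic precisely when $d\omega=0$.

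For $\psi$, inverting \eqref{sig} gives $\bar{\sigma}_G^{-1}(g,\xi,\mu_1,\mu_2)=(g,\mu_2,\xi,\mu_1-\ad^\ast_\xi\mu_2)$, and a direct substitution shows $\tilde\psi_\lambda(g,\xi,\mu_1,\mu_2)=(g,\xi,\mu_1+\ad^\ast_\xi\lambda,\mu_2+\lambda)$. Passing to the natural cotangent coordinates through \eqref{trT*TG}, where $\mu_1=T^\ast_eR_g\alpha_g+\ad^\ast_\xi\alpha_\xi$ and $\mu_2=\alpha_\xi$, this collapses to the single translation $\alpha_\xi\mapsto\alpha_\xi+\lambda$ with $\alpha_g$ left fixed. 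That is exactly the fibre translation of $T^\ast TG=T^\ast(G\circledS\mathfrak{g})$ by the one-form $\omega=df_\lambda$ on the base $Q=TG$, where $f_\lambda(g,\xi)=\langle\lambda,\xi\rangle$ is the tautological linear function on the $\mathfrak{g}$-factor. Being exact, $\omega$ is closed, so $\tilde\psi_\lambda^\ast\Omega_{T^\ast TG}=\Omega_{T^\ast TG}$, and pulling back through the symplectic $\bar{\sigma}_G$ yields $\psi_\lambda^\ast\Omega_{TT^\ast G}=\Omega_{TT^\ast G}$ for every $\lambda$.

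For $\phi$, the same substitution gives the purely fibrewise shift $\tilde\phi_\lambda(g,\xi,\mu_1,\mu_2)=(g,\xi,\mu_1+\lambda,\mu_2)$, i.e. the $\mathfrak{g}_2^\ast$-translation on $T^\ast TG$. In natural coordinates $\mu_2=\alpha_\xi$ is unchanged while $\alpha_g\mapsto\alpha_g+T^\ast_gR_{g^{-1}}\lambda$, so $\tilde\phi_\lambda$ translates the fibre by the right-invariant one-form $\omega^\lambda$ on $G$ (pulled back to $TG$) with $\omega^\lambda(e)=\lambda$. Here the Maurer--Cartan computation on right invariant fields $X^G_\zeta,X^G_\eta$ gives $d\omega^\lambda(X^G_\zeta,X^G_\eta)=-\langle\lambda,[\zeta,\eta]\rangle$, which is nonzero as soon as $G$ is non-abelian and $\lambda$ does not annihilate $[\mathfrak{g},\mathfrak{g}]$. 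Thus $\tilde\phi_\lambda^\ast\Omega_{T^\ast TG}=\Omega_{T^\ast TG}-\pi^\ast d\omega^\lambda\neq\Omega_{T^\ast TG}$ and $\phi_\lambda$ is not symplectic; this is consistent with the earlier observation that the $\mathfrak{g}_2^\ast$-action on $T^\ast TG$ is not symplectic.

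The conceptual heart of the statement, and the step that most deserves care, is the exact-versus-closed dichotomy separating the two fibre directions: the $\mathfrak{g}_1^\ast$-shift $\psi$ translates by the differential of the linear function $\langle\lambda,\xi\rangle$, whereas the $\mathfrak{g}_3^\ast$-shift $\phi$ translates by a Maurer--Cartan form, and only the former is closed. The main obstacle is therefore bookkeeping: transporting each action correctly through $\bar{\sigma}_G$ and rewriting it in natural cotangent coordinates, where the $\ad^\ast_\xi$-term hidden in $\mu_1$ is precisely what turns the seemingly awkward shift $(\ad^\ast_\xi\lambda,\lambda)$ into a clean constant translation. As a cross-check one may argue directly inside $TT^\ast G$, pushing the right invariant field \eqref{RITT*G} forward under $\psi_\lambda$ and $\phi_\lambda$ and evaluating the identity \eqref{gonTT*Gsym} as in the proof for $\varphi_\eta$; the $\ad^\ast$-corrections should then cancel through the Jacobi identity for $\psi$ but leave a residue proportional to $\langle\lambda,[\cdot,\cdot]\rangle$ for $\phi$.
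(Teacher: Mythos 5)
Your proof is correct, and it takes a genuinely different route from the paper's. The paper works entirely inside the trivialized $TT^{\ast}G$: it pushes the right invariant fields \eqref{RITT*G} forward, obtaining
\begin{equation*}
(\psi_{\lambda})_{\ast}X^{TT^{\ast}G}_{(\xi_{2},\nu_{2},\xi_{3},\nu_{3})}=X^{TT^{\ast}G}_{(\xi_{2},\,\nu_{2}-\ad^{\ast}_{\xi_{2}}\lambda,\,\xi_{3},\,\nu_{3}-\ad^{\ast}_{\xi}\ad^{\ast}_{\xi_{2}}\lambda)},\qquad
(\phi_{\lambda})_{\ast}X^{TT^{\ast}G}_{(\xi_{2},\nu_{2},\xi_{3},\nu_{3})}=X^{TT^{\ast}G}_{(\xi_{2},\,\nu_{2},\,\xi_{3},\,\nu_{3}-\ad^{\ast}_{\xi_{2}}\lambda)},
\end{equation*}
and then tests invariance of $\Omega_{TT^{\ast}G}$ of \eqref{SymTT*G} by direct evaluation --- exactly the cross-check you sketch in your last paragraph but do not carry out. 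You instead conjugate both actions through the Tulczyjew symplectic diffeomorphism $\bar{\sigma}_{G}$ of \eqref{sig} and reduce everything to the classical criterion that a cotangent fibre translation is symplectic precisely when the translating one-form is closed; your intermediate steps (the formula for $\bar{\sigma}_{G}^{-1}$, the conjugated maps $\tilde{\psi}_{\lambda}$, $\tilde{\phi}_{\lambda}$, the passage through \eqref{trT*TG} to natural coordinates, and $d\omega^{\lambda}(X^{G}_{\zeta},X^{G}_{\eta})=-\langle\lambda,[\zeta,\eta]\rangle$) are all correct. Your route buys three things: a conceptual explanation of the asymmetry (exact versus non-closed translation forms); a sharpening of the paper's ``does not necessarily hold'' into the precise criterion that $\phi_{\lambda}$ preserves $\Omega_{TT^{\ast}G}$ if and only if $\lambda$ annihilates $[\mathfrak{g},\mathfrak{g}]$, so the failure is a purely non-abelian phenomenon; and robustness, since you never touch the component formulas \eqref{RITT*G} and \eqref{SymTT*G}, whose printed signs are delicate (indeed, taking $\bar{\xi}_{2}=0$, $\bar{\nu}_{2}=\bar{\nu}_{3}=0$ in the paper's own direct computation with the formulas as printed leaves a residue $\langle\lambda,[\xi_{2},\bar{\xi}_{3}]\rangle$ in the $\psi$-invariance identity, which points to a typo in those formulas rather than to any problem with the proposition itself). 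The cost is that your argument rests entirely on the imported fact $\bar{\sigma}_{G}^{\ast}\Omega_{T^{\ast}TG}=\Omega_{TT^{\ast}G}$; this is legitimate and not circular, since the paper records that $\bar{\sigma}_{G}$ is a symplectic diffeomorphism (citing \cite{esen2014tulczyjew}) in the remark closing the reduction-by-$\mathfrak{g}$ subsection, which precedes the present proposition, but you should state explicitly that you are invoking it, because the paper's proof is deliberately self-contained in the trivialized data. One small inaccuracy, harmless to the argument: the action of the subgroup $\mathfrak{g}_{2}^{\ast}\subset T^{\ast}TG$ induced by right multiplication in \eqref{GrT*TG} shifts $\mu_{1}$ by $\Ad^{\ast}_{g}\lambda$, not by the constant $\lambda$; your $\tilde{\phi}_{\lambda}$ is rather the left translation by $(e,0,\lambda,0)$, so the claimed consistency with the earlier remark on $\mathfrak{g}_{2}^{\ast}$ is moral rather than literal.
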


\begin{proof}
Pushing forward of a vector field $X_{\left( \xi_{2},\nu_{2},\xi_{3},\nu
_{3}\right) }^{ TT^{\ast}G}$ in the form of Eq.(\ref{RITT*G}) by
transformations $\psi_{\lambda}$ and $\phi_{\lambda}$ results in right
invariant vector fields
\begin{align*}
\left( \psi_{\lambda}\right) _{\ast}X_{\left(
\xi_{2},\nu_{2},\xi_{3},\nu_{3}\right) }^{   TT^{\ast}G} & =X_{\left(
\xi_{2},\nu_{2}-ad_{\xi_{2}}^{\ast}\lambda,\xi_{3},\nu_{3}-ad_{\xi}^{%
\ast}ad_{\xi_{2}}^{\ast }\lambda\right) }^{   TT^{\ast}G}, \\
\left( \phi_{\lambda}\right) _{\ast}X_{\left(
\xi_{2},\nu_{2},\xi_{3},\nu_{3}\right) }^{  TT^{\ast}G} & =X_{\left(
\xi_{2},\nu_{2},\xi _{3},\nu_{3}-ad_{\xi_{2}}^{\ast}\lambda\right) }^{ 
 TT^{\ast}G}.
\end{align*}
If $\Omega_{ TT^{\ast}G}$ is the symplectic two-form on $ TT^{\ast}G$
given in Eq.(\ref{SymTT*G}), direct calculations show that the identity
\begin{equation}
\psi_{\lambda}{}^{\ast}\Omega_{ TT^{\ast}G}\left( X,Y\right) \left(
g,\mu,\xi,\nu\right) =\Omega_{ TT^{\ast}G}\left( \left( \psi_{\lambda
}\right) _{\ast}X,\left( \psi_{\lambda}\right) _{\ast}Y\right) \left(
g,\mu+\lambda,\xi,\nu\right)
\end{equation}
holds for all vector fields $X$ and $Y$, and $\lambda\in\mathfrak{g}^{\ast}$
whereas
\begin{equation}
\phi_{\lambda}{}^{\ast}\Omega_{ TT^{\ast}G}\left( X,Y\right) \left(
g,\mu,\xi,\nu\right) =\Omega_{ TT^{\ast}G}\left( \left( \phi_{\lambda
}\right) _{\ast}X,\left( \phi_{\lambda}\right) _{\ast}Y\right) \left(
g,\mu,\xi,\nu+\lambda\right)
\end{equation}
does not necessarily hold. Hence, $\psi_{\lambda}$ is a symplectic action
but not $\phi_{\lambda}$.
\end{proof}

\begin{proposition}
Poisson reduction of $ TT^{\ast}G$ under the action $\psi$ of $\mathfrak{g%
}_{1}^{\ast}$ results in $G\circledS\left( \mathfrak{g}_{2}\circledS\mathfrak{g}%
_{3}^{\ast}\right) $ endowed with the bracket
\begin{equation}
\left\{ E,F\right\} _{G\circledS\left( \mathfrak{g}_{2}\times \mathfrak{g}%
_{3}^{\ast}\right) }\left( g,\xi,\nu\right) =\left\langle T_{e}^{\ast}R_{g}%
\frac{\delta F}{\delta g},\frac{\delta E}{\delta\nu }\right\rangle
-\left\langle T_{e}^{\ast}R_{g}\frac{\delta E}{\delta g},\frac{\delta F}{%
\delta\nu}\right\rangle +\left\langle \nu,\left[ \frac{\delta E}{\delta\nu},%
\frac{\delta F}{\delta\nu}\right] \right\rangle .
\end{equation}
\end{proposition}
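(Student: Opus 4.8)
The plan is to obtain this as an instance of the Poisson reduction theorem applied to the symplectic manifold $(TT^{\ast}G,\Omega_{TT^{\ast}G})$ under the free and proper symplectic action $\psi$ of $\mathfrak{g}_{1}^{\ast}$, whose symplectic nature has just been verified. Since $\psi_{\lambda}$ translates only the second slot, the orbit space is coordinatized by the remaining variables $(g,\xi,\nu)$ and, through the trivialization \eqref{trTT*G} and the subgroup list for $TT^{\ast}G$, is identified with $G\circledS(\mathfrak{g}_{2}\circledS\mathfrak{g}_{3}^{\ast})$. A smooth function on the quotient is exactly a $\psi$-invariant function on $TT^{\ast}G$, that is a function $E=E(g,\xi,\nu)$ with $\delta E/\delta\mu=0$. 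By the Poisson reduction theorem the bracket of two such functions is their ambient bracket on $TT^{\ast}G$, so the task reduces to writing down the canonical bracket on $TT^{\ast}G$ and then discarding every $\delta/\delta\mu$-contribution.

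First I would record the ambient bracket. Rather than invert the musical isomorphism $\Omega_{TT^{\ast}G}^{\flat}$, I would read it off from the Hamilton's equations \eqref{HamTT*G}: with the convention $i_{X_{E}}\Omega_{TT^{\ast}G}=-dE$ one has $\{E,F\}_{TT^{\ast}G}=dF(X_{E})$, computed by pairing $\delta F/\delta g,\,\delta F/\delta\mu,\,\delta F/\delta\xi,\,\delta F/\delta\nu$ against the four components $\dot g=T_{e}R_{g}\frac{\delta E}{\delta\nu}$, $\dot\mu=-\frac{\delta E}{\delta\xi}$, $\dot\xi=\frac{\delta E}{\delta\mu}$, $\dot\nu=\ad_{\delta E/\delta\nu}^{\ast}\nu-T_{e}^{\ast}R_{g}\frac{\delta E}{\delta g}$. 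Transposing right translations by $\langle\frac{\delta F}{\delta g},T_{e}R_{g}\frac{\delta E}{\delta\nu}\rangle=\langle T_{e}^{\ast}R_{g}\frac{\delta F}{\delta g},\frac{\delta E}{\delta\nu}\rangle$ and coadjoints by $\langle\ad_{\delta E/\delta\nu}^{\ast}\nu,\frac{\delta F}{\delta\nu}\rangle=\langle\nu,[\frac{\delta E}{\delta\nu},\frac{\delta F}{\delta\nu}]\rangle$ yields
\begin{equation*}
\begin{split}
\{E,F\}_{TT^{\ast}G}={}& \langle T_{e}^{\ast}R_{g}\tfrac{\delta F}{\delta g},\tfrac{\delta E}{\delta\nu}\rangle-\langle T_{e}^{\ast}R_{g}\tfrac{\delta E}{\delta g},\tfrac{\delta F}{\delta\nu}\rangle+\langle\nu,[\tfrac{\delta E}{\delta\nu},\tfrac{\delta F}{\delta\nu}]\rangle\\
&+\langle\tfrac{\delta F}{\delta\xi},\tfrac{\delta E}{\delta\mu}\rangle-\langle\tfrac{\delta E}{\delta\xi},\tfrac{\delta F}{\delta\mu}\rangle.
\end{split}
\end{equation*}

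It then only remains to impose invariance: setting $\delta E/\delta\mu=\delta F/\delta\mu=0$ annihilates precisely the two terms on the second line, and the first line is exactly the asserted bracket on $G\circledS(\mathfrak{g}_{2}\times\mathfrak{g}_{3}^{\ast})$. As an internal check I would restrict the same ambient bracket instead by $\delta/\delta g=0$; this deletes the first two terms and recovers the bracket \eqref{Poig*gg*} found in the reduction by $G$, confirming both the sign convention and the ambient formula.

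The only point requiring care — the rest being dual-pairing bookkeeping — is getting the ambient bracket and its signs right, together with the identification of the quotient: that $\psi$-invariant functions are precisely the $\mu$-independent ones, and that the reduced manifold carries the semidirect structure $G\circledS(\mathfrak{g}_{2}\circledS\mathfrak{g}_{3}^{\ast})$ of the statement. Both structural facts are already supplied by the trivialization \eqref{trTT*G} and the subgroup proposition for $TT^{\ast}G$, so no genuinely new argument is needed beyond the linear computation above, with the \eqref{Poig*gg*} consistency check serving as the safeguard against sign errors.
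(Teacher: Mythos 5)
Your proposal is correct and follows essentially the same route the paper takes (and leaves implicit for this proposition): throughout the paper, Poisson reduction means restricting the ambient bracket to invariant functions — here the $\psi$-invariant, i.e.\ $\mu$-independent, ones — and the ambient bracket you read off from the Hamilton's equations \eqref{HamTT*G} via $\{E,F\}=dF(X_E)$ restricts exactly to the stated bracket on $G\circledS\left(\mathfrak{g}_{2}\circledS\mathfrak{g}_{3}^{\ast}\right)$. Your cross-check against \eqref{Poig*gg*} is a worthwhile addition, since it confirms that your sign conventions for the ambient bracket agree with the ones the paper uses in its other reduced brackets.
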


\begin{remark}
The Poisson bracket $\left\{ E,F\right\} _{G\circledS\left( \mathfrak{g}%
_{2}\circledS\mathfrak{g}_{3}^{\ast}\right) }$ is independent of derivatives of
functions with respect to $\xi$ and it resembles to the canonical Poisson
bracket in Eq.(\ref{PoissonGg*}) on $G\circledS\mathfrak{g}^{\ast}$. On the other hand, the space $%
G\circledS\left( \mathfrak{g}^{\ast}\times\mathfrak{g}\right) $, which is
isomorphic to $G\circledS\left( \mathfrak{g}_{2}\circledS\mathfrak{g}_{3}^{\ast
}\right) $, has the Poisson bracket in Eq.(\ref{PoGgg*}) involving derivatives with respect to both of $\xi$ and
$\nu$. This latter is obtained from $T^*TG$ via reduction by $\G{g}^*$.
\end{remark}

The infinitesimal generator  $X_{\left( 0,\nu_{2},0,0\right) }^{
 TT^{\ast}G}$ of the action are defined by $\nu_{2}\in Lie\left(
\mathfrak{g}_{1}^{\ast}\right) $. We compute the associated momentum map
from the equation
\begin{equation*}
\left\langle \mathbf{J}_{TT^{\ast}G}^{\mathfrak{g}%
_{1}^{\ast}}\left( g,\mu,\xi,\nu\right) ,\nu_{2}\right\rangle =\left\langle
\theta _{1},X_{\left( 0,\nu_{2},0,0\right) }^{ TT^{\ast}G}\right\rangle
=-\left\langle \nu_{2},\xi\right\rangle ,
\end{equation*}
where $\theta_{1}$ is the Tulczyjew potential one-form in Eq.(\ref{1}). We find that
\begin{equation}
\mathbf{J}_{TT^{\ast}G}^{\mathfrak{g}_{1}^{\ast}}:  TT^{\ast
}G\rightarrow Lie^{\ast}\left( \mathfrak{g}_{1}^{\ast}\right) \simeq
\mathfrak{g}:\left( g,\mu,\xi,\nu\right) \rightarrow-\xi  \label{Mg*onTT*G}
\end{equation}
is minus the projection to third factor in $ TT^{\ast}G$. The preimage of
an element $\xi\in\mathfrak{g}$ is the space $G\circledS\left( \mathfrak{g}%
_{1}^{\ast}\times\mathfrak{g}_{3}^{\ast}\right) $.

\begin{proposition}
The symplectic reduction of $ TT^{\ast}G$ under the action of $\mathfrak{g%
}^{\ast}$ defined in Eq.(\ref{gonTT*G}) results in the reduced space
\begin{equation*}
\left. \left( \mathbf{J}_{TT^{\ast}G}^{\mathfrak{g}%
_{1}^{\ast}}\right) ^{-1}\left( \xi\right) \right/ \mathfrak{g}%
_{1}^{\ast}\simeq\left. G\circledS\left( \mathfrak{g}_{1}^{\ast}\times%
\mathfrak{g}_{3}^{\ast}\right) \right/ \mathfrak{g}_{1}^{\ast}\simeq
G\circledS \mathfrak{g}_{3}^{\ast}
\end{equation*}
with the canonical symplectic two-from $\Omega_{G\circledS\mathfrak{g}%
_{3}^{\ast}}$ as given in Eq.(\ref{Ohm2T*G}).
\end{proposition}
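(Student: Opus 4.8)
The plan is to read off the reduced space and the reduced form from the Marsden--Weinstein theorem applied to the symplectic action $\psi$ of the abelian group $\mathfrak{g}_{1}^{\ast}$ given in \eqref{psi}, using the momentum map already identified in \eqref{Mg*onTT*G}. Since $\mathfrak{g}_{1}^{\ast}$ is abelian, its coadjoint action on the value space is trivial, so every $\xi\in\mathfrak{g}$ is a clean value of $\mathbf{J}_{TT^{\ast}G}^{\mathfrak{g}_{1}^{\ast}}$ and the reduced space at $\xi$ is the full quotient $\big(\mathbf{J}_{TT^{\ast}G}^{\mathfrak{g}_{1}^{\ast}}\big)^{-1}(\xi)/\mathfrak{g}_{1}^{\ast}$. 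First I would record that the translation action $\psi_{\lambda}$ is free and proper, so the quotient is a smooth symplectic manifold and the theorem applies with no isotropy complications.

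Next I would identify the two quotients appearing in the statement. Because $\mathbf{J}_{TT^{\ast}G}^{\mathfrak{g}_{1}^{\ast}}(g,\mu,\xi,\nu)=-\xi$ is minus the projection onto the $\mathfrak{g}_{2}$-factor, the level set over a fixed $\xi$ consists exactly of the points $(g,\mu,\xi,\nu)$ with $g,\mu,\nu$ free, which is the embedded subgroup $G\circledS(\mathfrak{g}_{1}^{\ast}\times\mathfrak{g}_{3}^{\ast})$ whose multiplication is read off \eqref{GrTT*G}. The $\psi$-orbits move only the $\mathfrak{g}_{1}^{\ast}$-coordinate $\mu$, so passing to the quotient deletes $\mu$ and leaves $G\circledS\mathfrak{g}_{3}^{\ast}$ with coordinates $(g,\nu)$; at the level of generators \eqref{RITT*G}, the parameter $\nu_{2}$ labels the vertical (orbit) directions while $(\xi_{2},\nu_{3})$ descend to the generator of the right-invariant field on $G\circledS\mathfrak{g}_{3}^{\ast}$.

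The substance is the identification of the reduced two-form. I would take two right-invariant fields of the form \eqref{RITT*G} that are tangent to the level set, i.e. whose $\mathfrak{g}_{2}$-component $\xi_{3}+\ad_{\xi_{2}}\xi$ vanishes at the chosen point, evaluate $\Omega_{TT^{\ast}G}$ on them by means of \eqref{SymTT*G}, and then check that the answer descends to the quotient and coincides with the canonical form $\Omega_{G\circledS\mathfrak{g}_{3}^{\ast}}$ of \eqref{Ohm2T*G} after relabelling $\mu\mapsto\nu$ and $(\xi_{1},\nu_{1})\mapsto(\xi_{2},\nu_{3})$. This splits into two checks: that the vertical generators $X_{(0,\nu_{2},0,0)}^{TT^{\ast}G}$ lie in the kernel of the pulled-back form on the level set, which the momentum-map identity $i_{\xi_{M}}\Omega=d\langle\mathbf{J},\xi\rangle$ guarantees and which makes the value independent of the chosen lift, and that once the tangency relation $\xi_{3}=-\ad_{\xi_{2}}\xi$ is imposed only the terms $\langle\nu_{3},\bar{\xi}_{2}\rangle-\langle\bar{\nu}_{3},\xi_{2}\rangle-\langle\nu,[\xi_{2},\bar{\xi}_{2}]\rangle$ survive.

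The main obstacle is precisely this cancellation: the Tulczyjew form \eqref{SymTT*G} carries several $\ad^{\ast}$-correction terms coupling the base data $(\xi,\nu)$ to the fibre directions, and one must track them against the tangency constraint so that exactly the coadjoint term $-\langle\nu,[\xi_{2},\bar{\xi}_{2}]\rangle$ remains while all $\xi$- and $\mu$-dependent contributions drop out. A cleaner route that I would keep in reserve is to transport the problem through the symplectic diffeomorphism $\bar{\sigma}_{G}:TT^{\ast}G\rightarrow T^{\ast}TG$ of \eqref{sig}: under $\bar{\sigma}_{G}$ the action $\psi$ becomes a fibre-translation action on $T^{\ast}TG$, so the present reduction can be matched with the cotangent-type reductions of $T^{\ast}TG$ already established, bypassing the direct bookkeeping with \eqref{SymTT*G}.
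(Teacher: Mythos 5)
Your main line is precisely the paper's own (largely implicit) argument: the paper proves that $\psi$ of \eqref{psi} is symplectic, derives the momentum map $\mathbf{J}_{TT^{\ast}G}^{\mathfrak{g}_{1}^{\ast}}(g,\mu,\xi,\nu)=-\xi$ of \eqref{Mg*onTT*G} from the potential one-form $\theta_{1}$, identifies the level set with $G\circledS(\mathfrak{g}_{1}^{\ast}\times\mathfrak{g}_{3}^{\ast})$, and then states the proposition with no further verification — you correctly read the action as $\psi$ even though the statement points to \eqref{gonTT*G} — so your explicit descent computation for the two-form supplies exactly the step the paper leaves tacit. One bookkeeping caution there: with \eqref{SymTT*G} as printed, substituting the tangency relations $\xi_{3}=-\ad_{\xi_{2}}\xi$, $\bar{\xi}_{3}=-\ad_{\bar{\xi}_{2}}\xi$ into $-\langle\nu_{2},\bar{\xi}_{3}\rangle+\langle\bar{\nu}_{2},\xi_{3}\rangle$ \emph{reproduces} the summand $\langle\xi,\ad^{\ast}_{\bar{\xi}_{2}}\nu_{2}-\ad^{\ast}_{\xi_{2}}\bar{\nu}_{2}\rangle$ rather than cancelling it, so tangency alone does not remove the $\xi$-dependent terms; you must first use your lift-independence (vertical-kernel) observation to normalize $\nu_{2}=\bar{\nu}_{2}=0$, after which \eqref{SymTT*G} collapses to $\langle\nu_{3},\bar{\xi}_{2}\rangle-\langle\bar{\nu}_{3},\xi_{2}\rangle-\langle\nu,[\xi_{2},\bar{\xi}_{2}]\rangle$, i.e.\ to \eqref{Ohm2T*G}. (Equivalently, the doubling signals a sign discrepancy between \eqref{SymTT*G} and the claimed momentum map; once the sign is fixed so that $i_{\zeta_{M}}\Omega=d\langle\mathbf{J},\zeta\rangle$ holds literally, the cancellation works as you wrote it.) Either way your two checks, taken in the right order, close the argument.

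Your reserve route, however, goes through the wrong leg of the triplet. The symplectomorphism $\bar{\sigma}_{G}$ of \eqref{sig} is the one the paper attaches to the $\mathfrak{g}_{2}$-action \eqref{gonTT*G}, not to $\psi$: transporting $\psi_{\lambda}$ through $\bar{\sigma}_{G}$ yields the fibre map $(g,\xi,\mu_{1},\mu_{2})\mapsto(g,\xi,\mu_{1}+\ad^{\ast}_{\xi}\lambda,\mu_{2}+\lambda)$ on $T^{\ast}TG$, and the paper establishes no reduction of $T^{\ast}TG$ by any fibre-translation action — its remark on $T^{\ast}TG$ states that the fibre subgroups $\mathfrak{g}_{2}^{\ast},\mathfrak{g}_{3}^{\ast}$ act non-symplectically and that only $G\circledS\mathfrak{g}_{1}$ (and its subgroups) remain available for symplectic reduction there — so there is nothing established to match against, and your bypass would still require a fresh computation. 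The correspondence that does work, and which the paper records in the remark immediately following this very proposition, is $\Omega^{\flat}_{G\circledS\mathfrak{g}^{\ast}}:TT^{\ast}G\to T^{\ast}T^{\ast}G$, $(g,\mu,\xi,\nu)\mapsto(g,\mu,\nu+\ad^{\ast}_{\xi}\mu,-\xi)$: it intertwines $\psi$ with the $\mathfrak{g}^{\ast}$-action \eqref{g*onT*T*G} on $T^{\ast}T^{\ast}G$ (at the level of the generators $X^{T^{\ast}T^{\ast}G}_{(0,\lambda,0,0)}$), whose Marsden--Weinstein reduction to $G\circledS\mathfrak{g}_{3}^{\ast}$ with the canonical form was already carried out in Section 5. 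Replacing $\bar{\sigma}_{G}$ by $\Omega^{\flat}_{G\circledS\mathfrak{g}^{\ast}}$ turns your backup plan into a genuine second proof.
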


\begin{remark}
The existence of symplectic action of $\mathfrak{g}^{\ast}$ on $ TT^{\ast
}G$ can be traced back to existence of the symplectic diffeomorphism
\begin{equation}
 \Omega_{G\circledS\mathfrak{g}^{\ast}}^{\flat}:TT^{\ast
}G\rightarrow T^{\ast}T^{\ast}G:\left( g,\mu,\xi,\nu\right)
\rightarrow\left( g,\mu,\nu+\ad_{\xi}^{\ast}\mu,-\xi\right)
\end{equation}
described in \cite{esen2014tulczyjew}.
\end{remark}

In the following proposition, we discuss the actions $\psi$ and $\phi$ of $%
\mathfrak{g}^{\ast}$ on $ TT^{\ast}G$ in Eqs(\ref{psi}) and (\ref{phi})
from a different point of view.

\begin{proposition}
The mappings
\begin{align}
Emb_{1} & :G\circledS\mathfrak{g}^{\ast}\hookrightarrow\  TT^{\ast
}G:\left( g,\mu\right) \rightarrow\left( g,\mu,0,0\right)  \notag \\
Emb_{2} & :G\circledS\mathfrak{g}^{\ast}\hookrightarrow\  TT^{\ast
}G:\left( g,\nu\right) \rightarrow\left( g,0,0,\nu\right)  \label{Emb2}
\end{align}
define a Lagrangian and a symplectic, respectively, embeddings of $G\circledS%
\mathfrak{g}^{\ast}$ into $ TT^{\ast}G$.
\end{proposition}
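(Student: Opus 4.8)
The plan is to verify both assertions by pulling the Tulczyjew two-form $\Omega_{TT^{\ast}G}$ of \eqref{SymTT*G} back along each embedding and comparing the result with the canonical form $\Omega_{G\circledS\mathfrak{g}^{\ast}}$ of \eqref{Ohm2T*G}. Since $\Omega_{TT^{\ast}G}$ is prescribed through its values on the right invariant vector fields \eqref{RITT*G}, and these span every tangent space, the first step is to push a right invariant field $X_{(\eta_1,\nu_1)}^{G\circledS\mathfrak{g}^{\ast}}(g,\mu)=(T_eR_g\eta_1,\nu_1+\ad_{\eta_1}^{\ast}\mu)$ on $G\circledS\mathfrak{g}^{\ast}$ forward under $dEmb_1$ and $dEmb_2$ and re-express it as a right invariant field on $TT^{\ast}G$.

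For $Emb_1$ the image is the submanifold $\{\xi=0,\ \nu=0\}$. Evaluating \eqref{RITT*G} at a point $(g,\mu,0,0)$ and matching the last two (fibre) slots to zero yields $dEmb_1\big(X_{(\eta_1,\nu_1)}^{G\circledS\mathfrak{g}^{\ast}}\big)=X_{(\eta_1,\nu_1,0,0)}^{TT^{\ast}G}$. Feeding the two generators $(\eta_1,\nu_1,0,0)$ and $(\bar\eta_1,\bar\nu_1,0,0)$ into \eqref{SymTT*G} at $(g,\mu,0,0)$, every term contains either a vanishing second-level component ($\xi_3,\nu_3,\bar\xi_3,\bar\nu_3=0$) or the vanishing base data ($\xi=0,\nu=0$), so $Emb_1^{\ast}\Omega_{TT^{\ast}G}=0$ and the image is isotropic. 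Because $\dim\big(G\circledS\mathfrak{g}^{\ast}\big)=2\dim G=\tfrac12\dim\big(TT^{\ast}G\big)$, a half-dimensional isotropic submanifold is Lagrangian; together with the observation that $Emb_1$ is a closed injective immersion this establishes the first claim.

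For $Emb_2$ the image is $\{\mu=0,\ \xi=0\}$. Evaluating \eqref{RITT*G} at $(g,0,0,\nu)$ gives $dEmb_2\big(X_{(\eta_1,\tilde\nu)}^{G\circledS\mathfrak{g}^{\ast}}\big)=X_{(\eta_1,0,0,\tilde\nu)}^{TT^{\ast}G}$. Substituting the pair $(\eta_1,0,0,\tilde\nu)$ and $(\bar\eta_1,0,0,\bar{\tilde\nu})$ into \eqref{SymTT*G} at $(g,0,0,\nu)$ leaves exactly $\langle\tilde\nu,\bar\eta_1\rangle-\langle\bar{\tilde\nu},\eta_1\rangle-\langle\nu,[\eta_1,\bar\eta_1]\rangle$, which is precisely the value of $\Omega_{G\circledS\mathfrak{g}^{\ast}}$ in \eqref{Ohm2T*G} at the point $(g,\nu)$. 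Hence $Emb_2^{\ast}\Omega_{TT^{\ast}G}=\Omega_{G\circledS\mathfrak{g}^{\ast}}$, which is nondegenerate, so $Emb_2$ is a symplectic embedding.

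The step requiring care is the bookkeeping in the pushforwards: the cross terms $\ad_{\eta_1}^{\ast}\mu$, $\ad_{\eta_1}\xi$ and $\ad_{\xi}^{\ast}\nu_1$ in \eqref{RITT*G} must be evaluated \emph{at the image point} before the generating Lie algebra element is read off, and one must keep the slot convention $(\xi_2,\nu_2,\xi_3,\nu_3)$ of \eqref{SymTT*G} aligned with the generator $(\eta_1,\nu_1)$ of $G\circledS\mathfrak{g}^{\ast}$. Fortunately these $\ad$-terms either cancel against the fixed zeros of the base point or drop out of the final pairing, so no identity beyond \eqref{SymTT*G}, \eqref{RITT*G} and \eqref{Ohm2T*G} is needed to close the argument.
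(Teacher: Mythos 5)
Your proof is correct, and it splits into one half that mirrors the paper and one half that genuinely differs. For $Emb_2$ you do exactly what the paper does — pull $\Omega_{TT^{\ast}G}$ back and identify it with $\Omega_{G\circledS\mathfrak{g}^{\ast}}$ of \eqref{Ohm2T*G} — except that the paper merely asserts the equality, while you supply the actual bookkeeping (the pushforward $dEmb_2\bigl(X_{(\eta_1,\tilde\nu)}^{G\circledS\mathfrak{g}^{\ast}}\bigr)=X_{(\eta_1,0,0,\tilde\nu)}^{TT^{\ast}G}$ and the term-by-term evaluation of \eqref{SymTT*G} at $(g,0,0,\nu)$), which is a welcome addition since the cross terms $\ad^{\ast}_{\eta_1}\nu$ really do need to be seen to cancel. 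For $Emb_1$, however, your route is different: you prove isotropy by direct computation (every term of \eqref{SymTT*G} dies either on the zero fibre slots of the generators or on the zero base data $\xi=\nu=0$) and then invoke the half-dimension count, whereas the paper argues conceptually that $Emb_1$ is the zero section of the fibration $TT^{\ast}G\rightarrow G\circledS\mathfrak{g}_{1}^{\ast}$, and the zero section of a tangent bundle endowed with the lifted (Tulczyjew) symplectic structure is automatically Lagrangian. The paper's argument is shorter and explains \emph{why} the statement is true structurally; yours is more elementary and self-contained, needing nothing beyond \eqref{RITT*G}, \eqref{SymTT*G}, \eqref{Ohm2T*G} and the standard fact that a half-dimensional isotropic submanifold is Lagrangian. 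The paper's proof also records two consequences not required by the statement (that Hamilton's equations \eqref{HamTT*G} restrict on the image of $Emb_2$ to \eqref{ULP}, and the resulting Poisson property of $\mathfrak{g}_{3}^{\ast}\rightarrow TT^{\ast}G$), which your argument does not address, but these are supplementary remarks rather than part of the claim being proved.
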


\begin{proof}
The first embedding is Lagrangian because it is the zero section of the
fibration $\  TT^{\ast}G\rightarrow G\circledS\mathfrak{g}_{1}^{\ast}$.
The second one is symplectic because the pull-back of $\Omega_{\
 TT^{\ast}G}$ to $G\circledS\mathfrak{g}^{\ast}$ by $Emb_{2}$ results in
the symplectic two-form $\Omega_{G\circledS\mathfrak{g}^{\ast}}$ in Eq.(\ref%
{OhmT*G}). On the image of $Emb_{2}$, the Hamilton's equations (\ref{HamTT*G}%
) reduce to the trivialized Hamilton's equations (\ref{ULP}) on $G\circledS%
\mathfrak{g}^{\ast }$. Consequently, the embedding $\mathfrak{g}%
_{3}^{\ast}\rightarrow \  TT^{\ast}G$ is a Poisson map. When $E=h\left(
\nu\right) $ the Hamilton's equations (\ref{HamTT*G}) reduce to the
Lie-Poisson equations (\ref{LP}).
\end{proof}

\subsubsection{Reduction of $TT^*G$ by $G\circledS\mathfrak{g}$}

The action
\begin{align}
&\vartheta  : 
 TT^{\ast}G \times \left( G\circledS\mathfrak{g}_{2}\right)\rightarrow TT^{\ast}G  , \\
& \left( 
\left( g,\mu,\xi,\nu\right); \left( h,\eta\right) \right) \mapsto\vartheta_{\left(
h,\eta\right) }\left( g,\mu,\xi,\nu\right)  :=\left(
gh,\mu,\xi+\Ad_{g }\eta,\nu-\ad^\ast_{\Ad_{g}\eta}\mu\right)
\label{varpsi}
\end{align}
of $G\circledS\mathfrak{g}$ on $ TT^{\ast}G$ can be described as a
composition
\begin{equation*}
\vartheta_{\left( h,\eta\right) }=\vartheta_{\left( h,0\right)
}\circ\vartheta_{\left( e,Ad_{g}\eta\right) },
\end{equation*}
where $\vartheta_{\left( h,0\right) }$ and $\vartheta_{\left(
e,Ad_{g}\eta\right) }$ can be identified with the actions of $G$ and $%
\mathfrak{g}$ on $ TT^{\ast}G$, respectively. Since both of these are symplectic, the action $%
\vartheta$ of $G\circledS\mathfrak{g}$ on $ TT^{\ast}G$ is symplectic.

\begin{proposition}
The Poisson reduction of $ TT^{\ast}G$ under the action of $G\circledS
\mathfrak{g}_{2}$ in Eq.(\ref{varpsi})\ results in $\mathfrak{g}_{1}^{\ast
}\times\mathfrak{g}_{3}^{\ast}$ endowed with the bracket
\begin{equation}
\left\{ E,F\right\} _{\mathfrak{g}_{1}^{\ast}\times\mathfrak{g}%
_{3}^{\ast}}\left( \mu,\nu\right) =\left\langle \nu,\left[ \frac{\delta E}{%
\delta\nu },\frac{\delta F}{\delta\nu}\right] \right\rangle .
\label{Poig*g*}
\end{equation}
\end{proposition}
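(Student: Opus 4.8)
The plan is to carry out this reduction \emph{by stages}, using that $G\circledS\mathfrak{g}_2$ is a semidirect product whose Abelian factor $\mathfrak{g}_2$ is normal and whose quotient is $G$. The decomposition $\vartheta_{(h,\eta)}=\vartheta_{(h,0)}\circ\vartheta_{(e,\Ad_g\eta)}$ recorded above already exhibits $\vartheta$ as a composite of the symplectic $G$- and $\mathfrak{g}_2$-actions, so $\vartheta$ is symplectic, hence Poisson, and Poisson reduction by $G\circledS\mathfrak{g}_2$ is legitimate. I would perform it in two steps: first quotient by the normal subgroup $\mathfrak{g}_2$, then by the residual action of $G=(G\circledS\mathfrak{g}_2)/\mathfrak{g}_2$.

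For the first step I would invoke the computation underlying the preceding reduction-by-$\mathfrak{g}_2$ proposition, which produces $(G\circledS\mathfrak{g}_1^\ast)\circledS\mathfrak{g}_3^\ast$ with the bracket
\[
\{E,F\}_{(G\circledS\mathfrak{g}_1^\ast)\circledS\mathfrak{g}_3^\ast}(g,\mu,\nu)
=\left\langle T_e^\ast R_g\frac{\delta E}{\delta g},\frac{\delta F}{\delta\nu}\right\rangle
-\left\langle T_e^\ast R_g\frac{\delta F}{\delta g},\frac{\delta E}{\delta\nu}\right\rangle
+\left\langle \nu,\left[\frac{\delta E}{\delta\nu},\frac{\delta F}{\delta\nu}\right]\right\rangle.
\]
The decisive feature here is that this bracket involves neither $\mu\in\mathfrak{g}_1^\ast$ nor the derivatives $\delta/\delta\mu$; thus $\mu$ is a Casimir of the first-stage structure and will be carried untouched through the second step, ultimately furnishing the surviving $\mathfrak{g}_1^\ast$ factor.

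For the second step I would verify that the residual action of the quotient group $G$ on $(G\circledS\mathfrak{g}_1^\ast)\circledS\mathfrak{g}_3^\ast$ is the plain right translation $(g,\mu,\nu)\mapsto(gh,\mu,\nu)$ descended from $\vartheta_{(h,0)}$, which fixes $\mu$ and $\nu$ and has exactly the form of the $G$-action in Eq.(\ref{GonGxg*}). A Hamiltonian $E$ invariant under this action is independent of $g$, so $\delta E/\delta g=0$, and likewise for $F$; annihilating the $g$-derivatives removes the first two terms of the first-stage bracket and leaves precisely
\[
\{E,F\}_{\mathfrak{g}_1^\ast\times\mathfrak{g}_3^\ast}(\mu,\nu)
=\left\langle \nu,\left[\frac{\delta E}{\delta\nu},\frac{\delta F}{\delta\nu}\right]\right\rangle,
\]
so the orbit space is $\mathfrak{g}_1^\ast\times\mathfrak{g}_3^\ast$ carrying the claimed Lie--Poisson bracket, with $\mu$ as a Casimir.

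The step I expect to be the main obstacle is justifying the two-stage decomposition itself: one must confirm that $\mathfrak{g}_2$ is genuinely normal in $G\circledS\mathfrak{g}_2$ (so that the reduction-by-stages theorem of \cite{MaMiOrPeRa07} applies) and, more delicately, that the $\Ad_g$-twisting in $\vartheta_{(e,\Ad_g\eta)}$ is exactly what makes the first-stage quotient inherit a well-defined residual $G$-action equal to right translation. As an independent check one may argue directly: under $\vartheta$ the functions $\mu$ and $\nu+\ad_\xi^\ast\mu$ are invariant (the latter being the $G$-momentum of Eq.(\ref{MGonTT*G})) while $g$ and $\xi$ are swept out, so the orbit space is $\mathfrak{g}_1^\ast\times\mathfrak{g}_3^\ast$; evaluating the ambient Poisson bracket of $TT^\ast G$ on these two invariants must then reproduce the same Lie--Poisson answer.
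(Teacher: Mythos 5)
Your two ``drop the derivatives'' stages do reproduce formula \eqref{Poig*g*}, and since the paper supplies no argument for this proposition, that computation is almost certainly the intended one; your stage 2 (the residual $G$ acting by right translation, killing the $\delta/\delta g$ terms) is fine \emph{given} stage 1. The genuine gap sits exactly at the point you flag as the main obstacle, and it does not survive scrutiny: the normal subgroup $\mathfrak{g}_{2}\subset G\circledS\mathfrak{g}_{2}$ acts through \eqref{varpsi} by $\vartheta_{(e,\eta)}(g,\mu,\xi,\nu)=(g,\mu,\xi+\Ad_{g}\eta,\nu-\ad_{\Ad_{g}\eta}^{\ast}\mu)$, which is \emph{not} the action \eqref{gonTT*G} underlying the preceding reduction-by-$\mathfrak{g}_{2}$ proposition: it shifts $\nu$ as well as $\xi$. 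Feeding \eqref{gonTT*G} into stage 1 therefore makes your two-stage argument compute the quotient by the untwisted action $(g,\mu,\xi,\nu)\mapsto(gh,\mu,\xi+\Ad_{g}\eta,\nu)$ --- equivalently, the restriction of the $TT^{\ast}G$ bracket to functions of the fibre variables $(\mu,\nu)$ --- and not the quotient by \eqref{varpsi}. The reduction-by-stages theorem cannot repair this: applied to \eqref{varpsi} it requires the twisted $\mathfrak{g}_{2}$-action in the first stage, whose invariant functions depend on $(g,\mu,\nu+\ad_{\xi}^{\ast}\mu)$, not on $(g,\mu,\nu)$.

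Moreover, your proposed independent check, if actually carried out, refutes rather than confirms the claim. The $\vartheta$-invariants are generated by $\mu$ and $\lambda:=\nu+\ad_{\xi}^{\ast}\mu$ (the momentum \eqref{MGonTT*G}), and evaluating the $TT^{\ast}G$ bracket determined by \eqref{HamTT*G} on them gives, for fixed $a,b\in\mathfrak{g}$,
\begin{equation*}
\big\{\langle\lambda,a\rangle,\langle\mu,b\rangle\big\}_{TT^{\ast}G}=\langle\mu,[a,b]\rangle\neq 0,
\qquad
\big\{\langle\lambda,a\rangle,\langle\lambda,b\rangle\big\}_{TT^{\ast}G}=\langle\lambda,[a,b]\rangle ,
\end{equation*}
(up to the overall sign fixed by the paper's conventions), so $\mu$ is \emph{not} a Casimir of the honest reduced structure. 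In fact, since the level sets of the momentum map coincide with the $\vartheta$-orbits, the momentum map is itself the quotient map, and the genuine Poisson reduction of $TT^{\ast}G$ by \eqref{varpsi} is the semidirect-product Lie--Poisson bracket \eqref{LPBg*xg*} on $Lie^{\ast}(G\circledS\mathfrak{g}_{2})$ --- precisely the bracket that the remark following the proposition insists \eqref{Poig*g*} is not. In short: your argument correctly proves the statement for the untwisted action, which is what formula \eqref{Poig*g*} actually encodes, but it does not prove the proposition for the action \eqref{varpsi} as stated; the conflation of the two actions is inherited from the paper's own decomposition $\vartheta_{(h,\eta)}=\vartheta_{(h,0)}\circ\vartheta_{(e,\Ad_{g}\eta)}$, which silently discards the $\nu$-shift, and your check is exactly the computation that exposes it.
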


\begin{remark}
Although the Poisson bracket (\ref{Poig*g*}) structurally resembles the
Lie-Poisson bracket on $\mathfrak{g}_{3}^{\ast}$, it is not a Lie-Poisson
bracket on $\mathfrak{g}_{1}^{\ast}\times\mathfrak{g}_{3}^{\ast}$ considered
as dual of Lie algebra $\mathfrak{g}\circledS\mathfrak{g}$ of the group $%
G\circledS\mathfrak{g}$. We refer to the Poisson bracket in Eq.(\ref%
{LPBg*xg*}) for the Lie-Poisson structure on $Lie^{\ast}(G\circledS\mathfrak{%
g})=\mathfrak{g}^{\ast}\times\mathfrak{g}^{\ast}$.
\end{remark}

Right invariant vector field generating the action $\vartheta$ is associted to
two tuples $\left( \xi_{2},\xi_{3}\right) $ in the Lie algebra of $G\circledS%
\mathfrak{g}_{2}$, and is given by%
\begin{equation}
X_{\left( \xi_{2},0,\xi_{3},0\right) }^{ TT^{\ast}G}( g,\mu,\xi,\nu)=\left(
TR_{g}\xi_{2},ad_{\xi_{2}}^{\ast}\mu,\xi_{3}+\left[ \xi,\xi_{2}\right]
,ad_{\xi_{2}}^{\ast}\nu\right) .
\end{equation}
The momentum map  for this Hamiltonian action is defined by the equation
\begin{equation*}
\left\langle \mathbf{J}_{TT^{\ast}G}^{G\circledS\mathfrak{g}%
_{2}}\left( g,\mu,\xi,\nu\right) ,\left( \xi_{2},\xi_{3}\right)
\right\rangle =\left\langle \theta_{2},X_{\left( \xi_{2},0,\xi_{3},0\right)
}^{ TT^{\ast}G}\right\rangle =\left\langle \mu,\xi_{3}\right\rangle
+\left\langle \nu+ad_{\xi}^{\ast}\mu,\xi_{2}\right\rangle ,
\end{equation*}
where $\theta_{2}$, in Eq.(\ref{2}), is the Tulczyjew potential one-form on $ TT^{\ast}G$. We find
\begin{equation*}
\mathbf{J}_{TT^{\ast}G}^{G\circledS\mathfrak{g}_{2}}:\
 TT^{\ast}G\rightarrow Lie^{\ast}\left( G\circledS\mathfrak{g}_{2}\right)
=\mathfrak{g}^{\ast}\times\mathfrak{g}^{\ast}:\left( g,\mu ,\xi,\nu\right)
=\left( \nu+ad_{\xi}^{\ast}\mu,\mu\right) .
\end{equation*}
Note that, we have the following relation
\begin{equation*}
\mathbf{J}_{TT^{\ast}G}^{G\circledS\mathfrak{g}_{2}}\left(
g,\mu,\xi,\nu\right) =\left( \mathbf{J}_{TT^{\ast}G}^{G}\left(
g,\mu,\xi,\nu\right) ,\mathbf{J}_{TT^{\ast}G}^{\mathfrak{g}%
_{2}}\left( g,\mu,\xi,\nu\right) \right)
\end{equation*}
for momentum maps in Eqs.(\ref{MGonTT*G}) and (\ref{MgonTT*G}) for the
actions of $G$ and $\mathfrak{g}_{2}$ on $ TT^{\ast}G$. The preimage of a
fixed element $\left( \lambda,\mu\right) \in\mathfrak{g}^{\ast}\times%
\mathfrak{g}^{\ast}$ is
\begin{equation*}
\left( \mathbf{J}_{TT^{\ast}G}^{G\circledS\mathfrak{g}%
_{2}}\right) ^{-1}\left( \lambda,\mu\right) =\left\{ \left( g,\mu,\xi
,\nu\right) :\nu=\lambda-ad_{\xi}^{\ast}\mu\right\}
\end{equation*}
which we may identify with the semidirect product $G\circledS\mathfrak{g}%
_{2} $. We recall the coadjoint action $Ad_{\left( g,\xi\right) }^{\ast}$,
in Eq.(\ref{coad2}), of the group $G\circledS\mathfrak{g}_{2}$ on the dual $%
\mathfrak{g}^{\ast}\times\mathfrak{g}^{\ast}$ of its Lie algebra. The
isotropy subgroup $\left( G\circledS\mathfrak{g}_{2}\right) _{\left(
\lambda,\mu\right) }$ of this coadjoint action is
\begin{equation*}
\left( G\circledS\mathfrak{g}_{2}\right) _{\left( \lambda,\mu\right)
}=\left\{ \left( g,\xi\right) \in G\circledS\mathfrak{g}_{2}:Ad_{\left(
g,\xi\right) }^{\ast}\left( \lambda,\mu\right) =\left( \lambda,\mu\right)
\right\}
\end{equation*}
and acts on the preimage $\left( \mathbf{J}_{TT^{\ast}G}^{G%
\circledS\mathfrak{g}_{2}}\right) ^{-1}\left( \lambda,\mu\right) $. A
generic quotient space
\begin{equation*}
\left. \left( \mathbf{J}_{TT^{\ast}G}^{G\circledS\mathfrak{g}%
_{2}}\right) ^{-1}\left( \lambda,\mu\right) \right/ \left( G\circledS
\mathfrak{g}_{2}\right) _{\left( \lambda,\mu\right) }\simeq\left. G\circledS%
\mathfrak{g}_{2}\right/ \left( G\circledS\mathfrak{g}_{2}\right) _{\left(
\lambda,\mu\right) }\simeq\mathcal{O}_{\left( \lambda,\mu\right) }
\end{equation*}
is a coadjoint orbit in $\mathfrak{g}^{\ast}\times\mathfrak{g}^{\ast}$
through the point $\left( \lambda,\mu\right) $ under the coadjoint action $%
Ad_{\left( g,\xi\right) }^{\ast}$ in Eq.(\ref{coad2}).

\begin{proposition}
The symplectic reduction of $ TT^{\ast}G$ under the action of $G\circledS%
\mathfrak{g}_{2}$ given in Eq.(\ref{varpsi}) results in the coadjoint orbit $%
\mathcal{O}_{\left( \lambda,\mu\right) }$in $\mathfrak{g}^{\ast}\times%
\mathfrak{g}^{\ast}$ through the point $\left( \lambda ,\mu\right) $ under
the coadjoint action $Ad_{\left( g,\xi\right) }^{\ast}$ in Eq.(\ref{coad2})
as the total space and the symplectic two-from $\Omega_{\mathcal{O}_{\left(
\lambda,\mu\right) }}$ in Eq.(\ref{Symp/Gxg}).
\end{proposition}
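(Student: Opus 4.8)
The plan is to realize this statement as a direct instance of the Marsden--Weinstein reduction theorem \cite{marsden1974reduction}, exploiting that all the ingredients have already been assembled above. The action $\vartheta$ of $K:=G\circledS\mathfrak{g}_{2}$ in Eq.(\ref{varpsi}) is symplectic, being the composition $\vartheta_{(h,\eta)}=\vartheta_{(h,0)}\circ\vartheta_{(e,\Ad_{g}\eta)}$ of the symplectic $G$- and $\mathfrak{g}_{2}$-actions; moreover it carries the $\Ad^{\ast}$-equivariant momentum map $\mathbf{J}_{TT^{\ast}G}^{G\circledS\mathfrak{g}_{2}}(g,\mu,\xi,\nu)=(\nu+\ad^{\ast}_{\xi}\mu,\mu)$ computed just above. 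For a regular value $(\lambda,\mu)\in\mathfrak{g}^{\ast}\times\mathfrak{g}^{\ast}$, the theorem produces the symplectic quotient $\big(\mathbf{J}_{TT^{\ast}G}^{G\circledS\mathfrak{g}_{2}}\big)^{-1}(\lambda,\mu)\big/K_{(\lambda,\mu)}$, where $K_{(\lambda,\mu)}$ is the isotropy subgroup of the coadjoint action Eq.(\ref{coad2}).

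The first step is to identify this quotient with the coadjoint orbit. As recorded above, the level set is the affine slice $\{(g,\mu,\xi,\nu):\nu=\lambda-\ad^{\ast}_{\xi}\mu\}$, which is diffeomorphic to the whole group $K=G\circledS\mathfrak{g}_{2}$ through $(g,\xi)$; this places us precisely in the situation where the momentum level set is a single group orbit, paralleling the reduction of $T^{\ast}G$ by $G$ in Eqs.(\ref{coadorb})--(\ref{KKS}). Equivariance of $\mathbf{J}_{TT^{\ast}G}^{G\circledS\mathfrak{g}_{2}}$ lets $K_{(\lambda,\mu)}$ act on the slice, and under the diffeomorphism above this residual action is intertwined with translation in $K$, so that the orbit map $(g,\xi)\mapsto\Ad^{\ast}_{(g,\xi)}(\lambda,\mu)$ descends to an isomorphism $K/K_{(\lambda,\mu)}\simeq\mathcal{O}_{(\lambda,\mu)}$ onto the coadjoint orbit through $(\lambda,\mu)$ under Eq.(\ref{coad2}).

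It remains to match the reduced two-form with the Kostant--Kirillov--Souriau form Eq.(\ref{Symp/Gxg}). I would compute it from the defining relation $\pi^{\ast}\Omega_{\mathcal{O}_{(\lambda,\mu)}}=\iota^{\ast}\Omega_{TT^{\ast}G}$, with $\iota$ the inclusion of the level set and $\pi$ its projection to the orbit. Tangent vectors to $\mathcal{O}_{(\lambda,\mu)}$ are images of the infinitesimal generators of the $K$-action, namely the right invariant vector fields $X^{TT^{\ast}G}_{(\xi_{2},0,\xi_{3},0)}$ recorded just above, whose projections encode the infinitesimal coadjoint directions labelled by $(\eta,\zeta),(\bar\eta,\bar\zeta)\in\mathfrak{g}\circledS\mathfrak{g}$. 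Substituting two such generators into $\Omega_{TT^{\ast}G}$ of Eq.(\ref{SymTT*G}) and imposing the constraint $\nu=\lambda-\ad^{\ast}_{\xi}\mu$ should collapse the base and fibre contributions into exactly $\langle\lambda,[\bar\eta,\eta]\rangle+\langle\mu,[\bar\eta,\zeta]-[\eta,\bar\zeta]\rangle$, which is Eq.(\ref{Symp/Gxg}) with $(\mu,\nu)$ relabelled as $(\lambda,\mu)$.

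The main obstacle I anticipate is this last bookkeeping: one must verify that the $\ad^{\ast}_{\xi}$-terms occurring both in $\Omega_{TT^{\ast}G}$ and in the constraint $\nu=\lambda-\ad^{\ast}_{\xi}\mu$ cancel, leaving a two-form that depends only on the orbit point $(\lambda,\mu)$ and on the Lie-algebra representatives, hence is independent of the chosen lift $(g,\xi)$ and genuinely descends to the quotient. A useful cross-check, and an alternative route that sidesteps part of this computation, is the reduction-by-stages decomposition together with the relation $\mathbf{J}_{TT^{\ast}G}^{G\circledS\mathfrak{g}_{2}}=\big(\mathbf{J}_{TT^{\ast}G}^{G},\mathbf{J}_{TT^{\ast}G}^{\mathfrak{g}_{2}}\big)$: reducing first by $\mathfrak{g}_{2}$ to $G\circledS\mathfrak{g}_{3}^{\ast}$ and then by $G$ recovers $\mathcal{O}_{(\lambda,\mu)}$ with the same symplectic form, by the Hamiltonian reduction-by-stages theorem \cite{MaMiOrPeRa07}.
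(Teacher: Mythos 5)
Your proof follows the same route as the paper's own treatment: assemble the Marsden--Weinstein theorem from the ingredients computed just before the proposition (symplecticity of $\vartheta$ in Eq.(\ref{varpsi}), the momentum map $\mathbf{J}_{TT^{\ast}G}^{G\circledS\mathfrak{g}_{2}}$, the identification of its level set with $G\circledS\mathfrak{g}_{2}$, and of the quotient by the isotropy group of the coadjoint action Eq.(\ref{coad2}) with $\mathcal{O}_{(\lambda,\mu)}$), with reduction by stages as the cross-check -- this is exactly the paper's presentation, including the two-stage diagram (note only that the second stage there is reduction by the isotropy subgroup $G_{\mu}$, not by all of $G$).

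The gap is in your third paragraph, the only place where the identification of the reduced form with Eq.(\ref{Symp/Gxg}) is actually argued, and the step as described would fail. The generators $X_{(\xi_{2},0,\xi_{3},0)}^{TT^{\ast}G}$ are not tangent to the level set $\big(\mathbf{J}_{TT^{\ast}G}^{G\circledS\mathfrak{g}_{2}}\big)^{-1}(\lambda,\mu)$: by Eq.(\ref{RITT*G}) their $\mathfrak{g}_{1}^{\ast}$-component is $\ad_{\xi_{2}}^{\ast}\mu$, whereas $\mu$ is the second component of the momentum map and hence constant along the level set; equivariantly, the full action slides the momentum value along its coadjoint orbit, and only isotropy directions remain inside the level set. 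So $\iota^{\ast}\Omega_{TT^{\ast}G}$ cannot be evaluated on these fields at all. Worse, the formal substitution you propose provably cannot return Eq.(\ref{Symp/Gxg}): since both generators have vanishing $\mathfrak{g}^{\ast}$-entries, every term of Eq.(\ref{SymTT*G}) except $-\langle\nu,[\xi_{2},\bar{\xi}_{2}]\rangle$ vanishes, and imposing $\nu=\lambda-\ad_{\xi}^{\ast}\mu$ leaves $-\langle\lambda,[\xi_{2},\bar{\xi}_{2}]\rangle+\langle\mu,[\xi,[\xi_{2},\bar{\xi}_{2}]]\rangle$, in which $\xi_{3},\bar{\xi}_{3}$ do not appear; no amount of bookkeeping can then recreate the terms $\langle\mu,[\bar{\eta},\zeta]-[\eta,\bar{\zeta}]\rangle$ of Eq.(\ref{Symp/Gxg}). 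The repair is either to parametrize the level set by $(g,\xi)\in G\circledS\mathfrak{g}_{2}$ and evaluate $\Omega_{TT^{\ast}G}$ on its genuine tangent vectors $(\dot{g},0,\dot{\xi},-\ad_{\dot{\xi}}^{\ast}\mu)$ -- rewritten as right invariant fields these acquire nonzero $\mathfrak{g}^{\ast}$-entries, and precisely those entries produce the missing $\mu$-terms -- or to bypass the computation, either by transporting the reduction through the symplectic diffeomorphism $\bar{\sigma}_{G}$ of Eq.(\ref{sig}) to the reduction of $T^{\ast}TG$ by $G\circledS\mathfrak{g}$ already established in Section 4 (where Eqs.(\ref{coad2}) and (\ref{Symp/Gxg}) live), or by the two-stage argument you already mention, which is the route the paper makes explicit.
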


It is also possible to obtain the symplectic space $\mathcal{O}_{\left(
\lambda,\mu\right) }$ in two steps. Recall the
symplectic reduction of $ TT^{\ast}G$ under the action of $\mathfrak{g}%
_{2}$ at $\mu\in\mathfrak{g}^{\ast}$ which results in $G\circledS \mathfrak{g%
}_{3}^{\ast}$ with the canonical symplectic two-from $\Omega _{G\circledS%
\mathfrak{g}_{3}^{\ast}}$. Then, consider the action of  isotropy subgroup
$G_{\mu}$ on $G\circledS\mathfrak{g}_{3}^{\ast}$ and apply symplectic
reduction which results in $\left( \mathcal{O}_{\left( \lambda,\mu\right)
},\Omega_{\mathcal{O}_{\left( \lambda,\mu\right) }}\right) $. Following is
the diagram summarizing this two stage reduction of $ TT^{\ast}G$.
\begin{equation}
\xymatrix{&&
(G\circledS\mathfrak{g}_{1}^{\ast})\circledS(\mathfrak{g}_{2}\circledS%
\mathfrak{g}_{3}^{\ast } )\ar[dll]|-{\text{SR by } \mathfrak{g}_{2}\text{
at }\mu \text{ } } \ar[dd]|-{\text{SR by } G\circledS\mathfrak{g}_{2}
\text{ at }(\lambda,\mu)\text{ } } &\qquad \qquad\\ G\circledS\mathfrak{g}_{3}^{\ast}
\ar[drr]|-{\text{SR by } G_{\mu} \text{ at } \lambda\text{ } } \\
&&\mathcal{O}_{(\lambda,\mu)}
\\
&& \text{\small Reductions of $TT^*G$ by $G\circledS \mathfrak{g}$} }
\end{equation}
\subsubsection{Reduction of $TT^*G$ by $G\circledS\mathfrak{g}^{\ast}$}

The action
\begin{equation*}
\alpha: TT^{\ast}G \times \left( G\circledS\mathfrak{g}_{1}^{\ast}\right)\rightarrow TT^{\ast}G :\left( \left( g,\mu,\xi,\nu\right);\left(
h,\lambda\right) \right) \mapsto
\alpha_{\left( h,\lambda\right) }\left( g,\mu,\xi,\nu\right)
\end{equation*}
of $G\circledS\mathfrak{g}_{1}^{\ast}$ on $ TT^{\ast}G$ is given by%
\begin{equation}
\alpha_{(h,\lambda)}(g,\mu,\xi,\nu)
=(gh,\mu+\Ad_{g}^{\ast}\lambda,\xi,\nu).  \label{alpha}
\end{equation}
As in the case of the action of $G\circledS\mathfrak{g}_{2}$, it can also be
described by composition of two actions
\begin{equation*}
\alpha_{\left( h,\lambda\right) }=\alpha_{\left( h,0\right) }\circ
\alpha_{\left( e,Ad_{g}^{\ast}\lambda\right) },
\end{equation*}
where, $\alpha_{\left( h,0\right) }$ and $\alpha_{\left( e,Ad_{g}^{\ast
}\lambda\right) }$ can be identified with the actions of $G$ and $\mathfrak{g%
}_{1}^{\ast}$ on $ TT^{\ast}G$, respectively. Since both of them are symplectic, $\alpha$ is also
symplectic.

\begin{proposition}
Poisson reduction of $ TT^{\ast}G$ under the action (\ref{alpha}) of $%
G\circledS\mathfrak{g}_{1}^{\ast}$ results in $\mathfrak{g}_{2}\circledS%
\mathfrak{g}_{3}^{\ast}$ endowed with the bracket
\begin{equation}
\left\{ F,H\right\} _{\mathfrak{g}_{2}\circledS\mathfrak{g}_{3}^{\ast}}\left(
\xi,\nu\right) =\left\langle \nu,\left[ \frac{\delta E}{\delta\nu},\frac{%
\delta F}{\delta\nu}\right] \right\rangle .  \label{Poigg*}
\end{equation}
\end{proposition}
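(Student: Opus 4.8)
The plan is to carry out this reduction in two stages, exploiting the factorization $\alpha_{(h,\lambda)}=\alpha_{(h,0)}\circ\alpha_{(e,\Ad_g^{\ast}\lambda)}$ recorded just above the statement, in which $\alpha_{(h,0)}$ is the $G$-action $g\mapsto gh$ and $\alpha_{(e,\Ad_g^{\ast}\lambda)}$ is the symplectic $\mathfrak{g}_1^{\ast}$-action $\psi$ of Eq.(\ref{psi}). Since $\mathfrak{g}_1^{\ast}$ is the normal (abelian) factor of the semidirect product $G\circledS\mathfrak{g}_1^{\ast}$, I would first pass to the $\psi$-quotient and then reduce by the residual $G$-action on the reduced space.

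For the first stage I invoke the Poisson reduction of $TT^{\ast}G$ by $\mathfrak{g}_1^{\ast}$ under $\psi$ already established above, which produces the Poisson manifold $G\circledS(\mathfrak{g}_2\circledS\mathfrak{g}_3^{\ast})$ with coordinates $(g,\xi,\nu)$ and the bracket containing the two canonical terms in $T_e^{\ast}R_g\,\delta/\delta g$ together with the Lie--Poisson term $\langle\nu,[\delta E/\delta\nu,\delta F/\delta\nu]\rangle$. On this quotient the residual action $\alpha_{(h,0)}$ descends to $(g,\xi,\nu)\mapsto(gh,\xi,\nu)$, i.e. to right translation in $g$ fixing the fibre coordinates $(\xi,\nu)$, and invariance of the functions under the full group $\alpha$ is precisely independence of both $g$ and $\mu$.

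For the second stage I impose invariance under this residual $G$-action, which for functions on $G\circledS(\mathfrak{g}_2\circledS\mathfrak{g}_3^{\ast})$ amounts to $\delta E/\delta g=\delta F/\delta g=0$. Substituting into the first-stage bracket annihilates both $T_e^{\ast}R_g\,\delta/\delta g$ terms and leaves exactly $\langle\nu,[\delta E/\delta\nu,\delta F/\delta\nu]\rangle$, which is the asserted bracket (\ref{Poigg*}). The quotient space is the orbit space $(G\circledS\mathfrak{g}_1^{\ast})\backslash TT^{\ast}G$; because $\alpha$ moves only the $(g,\mu)$-factor and does so freely, this orbit space is identified with the fibre $\mathfrak{g}_2\circledS\mathfrak{g}_3^{\ast}$ carrying the coordinates $(\xi,\nu)$, as claimed.

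The only point requiring care is the legitimacy of reduction by stages, namely that the $\psi$-quotient is a genuine Poisson manifold on which the residual $G$ acts by Poisson maps; this is guaranteed because both constituent actions are symplectic, hence Poisson, and the normal factor $\mathfrak{g}_1^{\ast}$ is reduced first. A direct alternative avoids stages entirely: an $\alpha$-invariant function depends only on $(\xi,\nu)$, so in the Poisson bracket on $TT^{\ast}G$ every term carrying a $\delta/\delta g$ or $\delta/\delta\mu$ derivative vanishes and only the $\nu$ Lie--Poisson term survives. I would nonetheless favour the stagewise route, since both ingredient reductions are already in hand from the preceding subsections.
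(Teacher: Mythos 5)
Your proposal is correct. Both stages are sound: $\mathfrak{g}_1^{\ast}$ is the normal factor of $G\circledS\mathfrak{g}_1^{\ast}$, the restriction of $\alpha$ to it has the same orbits as $\psi$ (since $\Ad_g^{\ast}$ is invertible, even though the parametrization $\lambda\mapsto\Ad_g^{\ast}\lambda$ is point-dependent), the residual $G$-action descends to $(g,\xi,\nu)\mapsto(gh,\xi,\nu)$ on the $\psi$-quotient, and $\alpha$-invariance is indeed equivalent to independence of both $g$ and $\mu$; substituting $\delta E/\delta g=\delta F/\delta g=0$ into the first-stage bracket leaves exactly the term in Eq.~(\ref{Poigg*}), and freeness of $\alpha$ identifies the orbit space with $\mathfrak{g}_2\circledS\mathfrak{g}_3^{\ast}$.

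The difference from the paper is one of organization rather than substance. The paper states this proposition without proof; its implicit argument, consistent with how it handles the parallel case of $T^{\ast}TG$ reduced by $G\circledS\mathfrak{g}$ (``restricting the Hamiltonian function to the fiber variables''), is your ``direct alternative'': $\alpha$-invariant functions depend only on $(\xi,\nu)$, so every bracket term carrying $\delta/\delta g$ or $\delta/\delta\mu$ drops and only the $\nu$-term survives. Your preferred stagewise route instead composes the two single-factor Poisson reductions already established (by $\psi$ and by $G$), which buys you a proof that reuses proved propositions and mirrors the two-stage diagram the paper draws --- but the paper draws that diagram only for the Marsden--Weinstein (symplectic) reduction by $G\circledS\mathfrak{g}_1^{\ast}$, not for the Poisson reduction. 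The cost is precisely the point you flag: one must justify Poisson reduction by stages (normal subgroup first, residual action Poisson on the quotient), which is automatic here because the $G$-action permutes the $\psi$-orbits and the projection is a surjective Poisson submersion. Since you also record the direct route, your proposal subsumes the paper's implicit argument; either way the bracket obtained agrees with Eq.~(\ref{Poigg*}).
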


\begin{remark}
Regarding $\mathfrak{g}^{\ast}\times\mathfrak{g}$ as dual of the Lie algebra
$\mathfrak{g}\circledS\mathfrak{g}^{\ast}$ of $G\circledS\mathfrak{g}^{\ast}$%
, we obtained the Lie-Poisson bracket in Eq.(\ref{LPE}). Although $\mathfrak{g}%
^{\ast }\times\mathfrak{g}$ and $\mathfrak{g}_{2}\times\mathfrak{g}%
_{3}^{\ast}$ are isomorphic as vector spaces, (\ref{LPE}) is different from the Poisson
bracket in Eq.(\ref{Poigg*}) as manifestation of group structure carried by adapted trivialization.
\end{remark}

Infinitesimal generator of $\alpha$ is associated to the two tuple $\left(
\xi_{2},\nu_{2}\right) $ in the Lie algebra $\mathfrak{g}\circledS \mathfrak{%
g}^{\ast}$ of $G\circledS\mathfrak{g}_{1}^{\ast}$ and is in the form
\begin{equation}
X_{\left( \xi_{2},\nu_{2},0,0\right) }^{ TT^{\ast}G}(g,\mu,\xi,\nu)=\left(
TR_{g}\xi_{2},\nu_{2}+\ad_{\xi_{2}}^{\ast}\mu,\left[ \xi,\xi_{2}\right]
,\ad_{\xi_{2}}^{\ast}\nu-\ad_{\xi}^{\ast}\nu_{2}\right) .
\end{equation}
The momentum mapping $\mathbf{J}_{TT^{\ast}G}^{G\circledS
\mathfrak{g}_{1}^{\ast}}$ is defined by the equation
\begin{equation*}
\left\langle \mathbf{J}_{TT^{\ast}G}^{G\circledS\mathfrak{g}%
_{1}^{\ast}}\left( g,\mu,\xi,\nu\right) ,\left( \xi_{2},\nu_{2}\right)
\right\rangle =\left\langle \theta_{1},X_{\left( \xi_{2},\nu_{2},0,0\right)
}^{ TT^{\ast}G}\right\rangle =-\left\langle \nu_{2},\xi\right\rangle
+\left\langle \nu+ad_{\xi}^{\ast}\mu,\xi_{2}\right\rangle ,
\end{equation*}
where $\theta_{1}$ is the potential one-form given by Eq.(\ref{1}). We
obtain
\begin{equation*}
\mathbf{J}_{TT^{\ast}G}^{G\circledS\mathfrak{g}_{1}^{\ast}}:\
 TT^{\ast}G\rightarrow Lie^{\ast}\left( G\circledS\mathfrak{g}%
_{1}^{\ast}\right) =\mathfrak{g}^{\ast}\times\mathfrak{g}:\left( g,\mu
,\xi,\nu\right) \rightarrow\left( \nu+ad_{\xi}^{\ast}\mu,-\xi\right)
\end{equation*}
which can be decomposed as
\begin{equation*}
\mathbf{J}_{TT^{\ast}G}^{G\circledS\mathfrak{g}%
_{1}^{\ast}}\left( g,\mu,\xi,\nu\right) =\left( \mathbf{J}_{\text{ }%
 TT^{\ast}G}^{G}\left( g,\mu,\xi,\nu\right) ,\mathbf{J}_{\text{ }%
 TT^{\ast}G}^{\mathfrak{g}_{1}^{\ast}}\left( g,\mu,\xi,\nu\right) \right)
\end{equation*}
where $\mathbf{J}_{TT^{\ast}G}^{G}$ and $\mathbf{J}_{\text{ }%
 TT^{\ast}G}^{\mathfrak{g}_{1}^{\ast}}$ are momentum mappings in Eqs.(\ref%
{MGonTT*G}) and (\ref{Mg*onTT*G}) for the actions of $G$ and $\mathfrak{g}%
_{1}^{\ast}$ on $ TT^{\ast}G$, respectively. The preimage of an element $%
\left( \lambda,\xi\right) \in\mathfrak{g}^{\ast}\times \mathfrak{g}$ is
\begin{equation*}
\left( \mathbf{J}_{TT^{\ast}G}^{G\circledS\mathfrak{g}_{1}^{\ast
}}\right) ^{-1}\left( \lambda,\xi\right) =\left\{ \left( g,\mu,-\xi
,\nu\right) :\nu=\lambda+ad_{\xi}^{\ast}\mu\right\}
\end{equation*}
which can be identified with the space $G\circledS\mathfrak{g}_{1}^{\ast}$.
The isotropy subgroup of coadjoint action of $G\circledS\mathfrak{g}_{2}$ on
$\mathfrak{g}^{\ast}\times\mathfrak{g}$ is
\begin{equation*}
\left( G\circledS\mathfrak{g}_{1}^{\ast}\right) _{\left( \lambda ,\xi\right)
}=\left\{ \left( g,\mu\right) \in G\circledS\mathfrak{g}_{2}:Ad_{\left(
g,\mu\right) }^{\ast}\left( \lambda,\xi\right) =\left( \lambda,\xi\right)
\right\} ,
\end{equation*}
where the coadjoint action is given by Eq.(\ref{Coad}). Isotropy subgroup
acts on preimage of $\left( \lambda,\xi\right) $ and results in the
coadjoint orbit through the point $\left( \lambda,\xi\right) \in\mathfrak{g}%
^{\ast }\times\mathfrak{g}$
\begin{equation*}
\left. \left( \mathbf{J}_{TT^{\ast}G}^{G\circledS\mathfrak{g}%
_{1}^{\ast}}\right) ^{-1}\left( \lambda,\xi\right) \right/ \left( G\circledS%
\mathfrak{g}_{1}^{\ast}\right) _{\left( \lambda,\xi\right) }\simeq\left.
G\circledS\mathfrak{g}_{1}^{\ast}\right/ \left( G\circledS\mathfrak{g}%
_{2}\right) _{\left( \lambda,\xi\right) }\simeq\mathcal{O}_{\left(
\lambda,\xi\right) }.
\end{equation*}
\begin{proposition}
Symplectic reduction of $ TT^{\ast}G$ under the action of $G\circledS
\mathfrak{g}_{1}^{\ast}$ given by Eq.(\ref{alpha}) results in the coadjoint
orbit $\mathcal{O}_{\left( \lambda,\xi\right) }$ and the symplectic two-from
$\Omega_{\mathcal{O}_{\left( \lambda,\xi\right) }}$ in Eq.(\ref{SymOr}).
\end{proposition}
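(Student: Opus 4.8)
Most of the work needed for this statement has already been assembled in the preceding paragraphs, so the plan is to collect it and supply only the identification of the reduced two-form. The action $\alpha$ of Eq.(\ref{alpha}) is symplectic, being the composition $\alpha_{(h,\lambda)}=\alpha_{(h,0)}\circ\alpha_{(e,\Ad_{g}^{\ast}\lambda)}$ of the symplectic $G$- and $\mathfrak{g}_{1}^{\ast}$-actions; its $\Ad^{\ast}$-equivariant momentum map is $\mathbf{J}_{TT^{\ast}G}^{G\circledS\mathfrak{g}_{1}^{\ast}}(g,\mu,\xi,\nu)=(\nu+\ad_{\xi}^{\ast}\mu,-\xi)$ into $\mathfrak{g}^{\ast}\times\mathfrak{g}=Lie^{\ast}(G\circledS\mathfrak{g}^{\ast})$; the fibre over a regular value $(\lambda,\xi)$ is diffeomorphic to $G\circledS\mathfrak{g}_{1}^{\ast}$; and the quotient by the isotropy group $(G\circledS\mathfrak{g}_{1}^{\ast})_{(\lambda,\xi)}$ is the coadjoint orbit $\mathcal{O}_{(\lambda,\xi)}$ of Eq.(\ref{Coad}). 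By the Marsden--Weinstein theorem \cite{marsden1974reduction} the reduced space is therefore the symplectic manifold $\mathcal{O}_{(\lambda,\xi)}$, and the sole remaining task is to show that its reduced two-form equals $\Omega_{\mathcal{O}_{(\lambda,\xi)}}$ of Eq.(\ref{SymOr}).

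The cleanest route I would take exploits the symplectic diffeomorphism $\Omega_{G\circledS\mathfrak{g}^{\ast}}^{\flat}:TT^{\ast}G\to T^{\ast}T^{\ast}G$, $(g,\mu,\xi,\nu)\mapsto(g,\mu,\nu+\ad_{\xi}^{\ast}\mu,-\xi)$, recorded earlier. A direct check shows that $\Omega_{G\circledS\mathfrak{g}^{\ast}}^{\flat}$ carries $\mathbf{J}_{TT^{\ast}G}^{G\circledS\mathfrak{g}_{1}^{\ast}}$ to the momentum map of the cotangent-lifted $G\circledS\mathfrak{g}^{\ast}$-action on $T^{\ast}T^{\ast}G=T^{\ast}(G\circledS\mathfrak{g}^{\ast})$; in particular it maps the level set $(\mathbf{J}_{TT^{\ast}G}^{G\circledS\mathfrak{g}_{1}^{\ast}})^{-1}(\lambda,\xi)$ onto the corresponding level set in $T^{\ast}T^{\ast}G$. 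Since the reduced space is intrinsically the quotient of the level set by the characteristic foliation of the restricted symplectic form---so that only the momentum maps, and not the two group actions themselves, need correspond---and $\Omega_{G\circledS\mathfrak{g}^{\ast}}^{\flat}$ is a symplectomorphism, it descends to a symplectomorphism of reduced spaces. The reduction of $T^{\ast}T^{\ast}G$ by $G\circledS\mathfrak{g}^{\ast}$ was already shown above to yield $\mathcal{O}_{(\lambda,\xi)}$ with the two-form of Eq.(\ref{SymOr}), which is precisely the Kostant--Kirillov--Souriau form of the orbit for the bracket Eq.(\ref{LBgg*}) on $\mathfrak{g}\circledS\mathfrak{g}^{\ast}$; hence the same two-form is obtained here.

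Alternatively, a self-contained computation proceeds from the reduction lemma $\pi^{\ast}\Omega_{\mathcal{O}_{(\lambda,\xi)}}=\iota^{\ast}\Omega_{TT^{\ast}G}$, with $\iota$ the inclusion of the level set and $\pi$ the orbit projection. I would restrict the generating right-invariant fields $X_{(\xi_{2},\nu_{2},0,0)}^{TT^{\ast}G}$ to the level set $\{\nu=\lambda+\ad_{\xi}^{\ast}\mu\}$, push them forward by $\pi$ to the infinitesimal coadjoint vectors of $(\xi_{2},\nu_{2})\in\mathfrak{g}\circledS\mathfrak{g}^{\ast}$ obtained by differentiating Eq.(\ref{Coad}), and evaluate $\Omega_{TT^{\ast}G}$ of Eq.(\ref{SymTT*G}) on two such fields. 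The main obstacle is exactly this last bookkeeping: one must verify that the $\mu$-dependent contributions and the $[\xi,\xi_{2}]$-terms in Eq.(\ref{SymTT*G}) reorganize, upon restriction to the level set, into the cross-term $\langle\xi,\ad_{\eta}^{\ast}\bar\lambda-\ad_{\bar\eta}^{\ast}\lambda\rangle$ while the remainder collapses to $\langle\nu,[\bar\eta,\eta]\rangle$, reproducing Eq.(\ref{SymOr}) with the sign conventions fixed by Eq.(\ref{Adtoad}). The transparency of the first route is precisely that it sidesteps this computation by transporting it to the already-settled cotangent case.
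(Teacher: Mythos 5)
Your proposal is correct, and its preliminaries (symplecticity of $\alpha$, the momentum map, the level set $\simeq G\circledS\mathfrak{g}_{1}^{\ast}$, the isotropy quotient $\simeq\mathcal{O}_{(\lambda,\xi)}$) coincide with the paper's; where you genuinely diverge is in how the reduced two-form is identified. The paper stays intrinsic to $TT^{\ast}G$: it derives the momentum map from the Tulczyjew potential $\theta_{1}$ of Eq.(\ref{1}), identifies the quotient as the coadjoint orbit of the action (\ref{Coad}), asserts that the reduced form is the orbit (Kostant--Kirillov--Souriau type) form (\ref{SymOr}), and then corroborates this by reduction in stages --- first by $\mathfrak{g}_{1}^{\ast}$ at $\xi$, giving $G\circledS\mathfrak{g}_{3}^{\ast}$ with the canonical form of Eq.(\ref{Ohm2T*G}), then by $G_{\xi}$ at $\lambda$. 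You instead transport the whole problem across the symplectic diffeomorphism $\Omega_{G\circledS\mathfrak{g}^{\ast}}^{\flat}$ and quote the already-proved $T^{\ast}T^{\ast}G$ proposition of Section 5. Your key technical observation is exactly right and worth stressing: $\Omega_{G\circledS\mathfrak{g}^{\ast}}^{\flat}$ does \emph{not} intertwine the two group actions (acting first by $\alpha_{(a,b)}$ and then mapping shifts the third slot by the extra term $\ad_{\xi}^{\ast}\Ad_{g}^{\ast}b$ relative to mapping first and then acting via Eq.(\ref{T*GonT*T*G})), but it does intertwine the momentum maps, since $(g,\mu,\xi,\nu)\mapsto(\nu+\ad_{\xi}^{\ast}\mu,-\xi)$ corresponds under $\Omega_{G\circledS\mathfrak{g}^{\ast}}^{\flat}$ to the projection onto the last two slots of $T^{\ast}T^{\ast}G$; and because the Marsden--Weinstein quotient at a regular value is intrinsically the level set modulo the characteristic foliation of the restricted form, correspondence of momentum maps alone suffices for the reduced spaces to be symplectomorphic. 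What each route buys: the paper's argument feeds directly into its reduction-by-stages diagram and never leaves the Tulczyjew space, whereas yours makes explicit \emph{why} the same formula (\ref{SymOr}) must govern reductions on both legs of the triplet --- a point the paper leaves to analogy with the cotangent-bundle case --- at the price of using the Section 5 proposition as a black box. Your second, self-contained route (evaluating $\Omega_{TT^{\ast}G}$ of Eq.(\ref{SymTT*G}) on generators restricted to the level set) is what a from-scratch proof would require, and your diagnosis of where its bookkeeping difficulty lies is accurate.
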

Similar to the reduction of $ TT^{\ast}G$ by $G\circledS\mathfrak{g}_{2} $,
we may perform symplectic reduction of $ TT^{\ast}G$ with action of $%
G\circledS\mathfrak{g}_{1}^{\ast}$ by two stages. Recall
symplectic reduction of $ TT^{\ast}G$ with action of $\mathfrak{g}%
_{1}^{\ast}$ at $\xi\in\mathfrak{g}$ which results in $G\circledS\mathfrak{g}%
_{3}^{\ast}$ and the canonical symplectic two-from $\Omega_{G\circledS%
\mathfrak{g}_{3}^{\ast}}$. Then, consider the action of  isotropy subgroup
$G_{\xi}$, defined in Eq.(\ref{Gxi}), on $G\circledS\mathfrak{g}_{3}^{\ast}$
and apply symplectic reduction. This gives $\mathcal{O}_{\left(
\lambda,\xi\right) }$ and the symplectic two-form $\Omega_{\mathcal{O}%
_{\left( \lambda,\xi\right) }}$. Following diagram shows this two stage
reduction of $ TT^{\ast}G$ 
\begin{equation}
\xymatrix{&&
(G\circledS\mathfrak{g}_{1}^{\ast})\circledS(\mathfrak{g}_{2}\circledS%
\mathfrak{g}_{3}^{\ast }) \ar[dll]|-{\text{SR by }
\mathfrak{g}_{1}^{\ast}\text{ at }\xi } \ar[dd]|-{\text{SR by }
G\circledS\mathfrak{g}_{1}^{\ast} \text{ at }(\lambda,\xi)} &\qquad \qquad\\
G\circledS\mathfrak{g}_{2}^{\ast} \ar[drr]|-{\text{SR by } G_{\xi} \text{
at } \lambda} \\ &&\mathcal{O}_{(\lambda,\xi)} 
\\
&& \text{\small Reduction of $TT^*G$ by $G\circledS \mathfrak{g}^*_1$}
}
\end{equation}
We summarize diagrammatically all possible reductions of Hamiltonian
dynamics on the Tulczyjew symplectic space $ TT^{\ast}G$.
\begin{equation}
\xymatrix{(G\circledS\mathfrak{g}_{1}^{\ast
})\circledS\mathfrak{g}_{3}^{\ast } && G\circledS\mathfrak{g}_{3}^{\ast }
\ar@{^{(}->}[rr]^{\txt{\small symplectic\\\small leaf}}
\ar@{^{(}->}[ll]_{\txt{\small symplectic\\ \small  leaf}}
&&G\circledS(\mathfrak{g}_{2}\circledS\mathfrak{g}_{3}^{\ast })
\\\\\mathfrak{g}_{1}^{\ast }\circledS\mathfrak{g}_{3}^{\ast }
\ar[uu]^{\txt{\small  Poisson\\ \small embedding}} &&(G\circledS\mathfrak{g}_{1}^{\ast
})\circledS(\mathfrak{g}_{2}\circledS\mathfrak{g}_{3}^{\ast })
\ar[uull]^{\text{PR by }\mathfrak{g}_{2}} \ar[uurr]_{\text{PR by
}\mathfrak{g}_{1}^{\ast }} \ar@/_/[dd]_{\text{SR by }\mathfrak{g}_{2}}
\ar@/^/[dd]^{\text{SR by }\mathfrak{g}_{1}^{\ast }} \ar[ll]|-{\text{PR
by }G\circledS\mathfrak{g}_{2}} \ar[rr]|-{\text{PR by
}G\circledS\mathfrak{g}_{1}^{\ast }} \ar[ddll]_{\text{ SR by
}G\circledS\mathfrak{g}_{2}} \ar[ddrr]^{\text{PR by
}G\circledS\mathfrak{g}_{1}^{\ast }} \ar@/_/[uu]_{\text{SR by
}\mathfrak{g}_{2}} \ar@/^/[uu]^{\text{SR by }\mathfrak{g}_{1}^{\ast }}
&&\mathfrak{g}_{2}\circledS\mathfrak{g}_{3}^{\ast}
\ar[uu]_{\txt{\small  Poisson\\ \small  embedding}} \\\\\mathcal{O}_{(\mu,\nu)}
\ar@{_{(}->}[uu]^{\txt{symplectic\\leaf}} &&G\circledS\mathfrak{g}_{3}^{\ast
} \ar[ll]_{\text{SR by }G_{\mu}} \ar[rr]^{\text{SR by }G_{\xi}}
&&\mathcal{O}_{(\mu,\xi)} \ar@{_{(}->}[uu]_{\txt{\small  symplectic\\ \small  leaf}}
\\
&&\txt{\small Hamiltonian reductions of \\ $TT^*Q=(G\circledS\mathfrak{g}_{1}^{\ast
})\circledS(\mathfrak{g}_{2}\circledS\mathfrak{g}_{3}^{\ast })$}
}
\label{TT*Gd}
\end{equation}

\subsection{Lagrangian Dynamics on $TT^*G$}~

As it is a tangent bundle, we can study Lagrangian dynamics on $%
 TT^{\ast}G\simeq\left( G\circledS\mathfrak{g}_{1}^{\ast}\right)
\circledS\left( \mathfrak{g}_{2}\circledS\mathfrak{g}_{3}^{\ast}\right) $.
We define variation of the base element $\left( g,\mu\right) \in G\circledS%
\mathfrak{g}_{1}^{\ast}$ by tangent lift of right translation of the Lie
algebra element $\left( \eta,\lambda\right) \in\mathfrak{g}\circledS%
\mathfrak{g}^{\ast}$, that is
\begin{equation*}
\delta\left( g,\mu\right) =T_{\left( e,0\right) }R_{\left( g,\mu\right)
}\left( \eta,\lambda\right) =\left( T_{e}R_{g}\eta,\lambda+ad_{\eta}^{\ast
}\mu\right) .
\end{equation*}
To obtain the reduced variational principle $\delta\left( \xi,\nu\right) $
on the Lie algebra $\mathfrak{g}_{2}\circledS\mathfrak{g}_{3}^{\ast}$ we
compute
\begin{equation} \label{var}
\begin{split}
\delta\left( \xi,\nu\right) & =\frac{d}{dt}\left( \eta,\lambda\right)
+[\left( \xi,\nu\right) ,\left( \eta,\lambda\right) ]_{\mathfrak{g}\circledS%
\mathfrak{g}^{\ast}}  \\
& =\frac{d}{dt}\left( \eta,\lambda\right) +\left( [\xi,\eta],ad_{\eta
}^{\ast}\nu-ad_{\xi}^{\ast}\lambda\right)    \\
& =\left( \dot{\eta}+[\xi,\eta],\dot{\lambda}+ad_{\eta}^{\ast}\nu-ad_{\xi
}^{\ast}\lambda\right) ,  
\end{split}
\end{equation}
for any $\left( \eta,\lambda\right) \in\mathfrak{g}\circledS\mathfrak{g}%
^{\ast}$. Assuming $\delta\left( \xi,\nu\right) =\left( \delta\xi,\delta
\nu\right) $ and $\delta\left( g,\mu\right) =\left( \delta g,\delta
\mu\right) $, we have the set of variations
\begin{equation}
\delta g=T_{e}R_{g}\eta,\text{ \ \ }\delta\mu=\lambda+ad_{\eta}^{\ast}\mu,%
\text{ \ \ }\delta\xi=\dot{\eta}+[\xi,\eta]\text{, \ \ }\delta\nu =\dot{%
\lambda}+ad_{\eta}^{\ast}\nu-ad_{\xi}^{\ast}\lambda  \label{VarTT*G}
\end{equation}
for an arbitrary choice of $\left( \eta,\lambda\right) \in\mathfrak{g}%
\circledS\mathfrak{g}^{\ast}$. Note that, these variations are the image of
the right invariant vector field $X_{\left( \eta,\lambda,\dot{\eta},\dot{%
\lambda}\right) }^{ TT^{\ast}G}$ generated by $\left( \eta,\lambda,\dot{%
\eta},\dot{\lambda}\right) .$

\begin{proposition}
For a given Lagrangian $E$ on $ TT^{\ast}G$, extremals of action integral
are defined by the trivialized Euler-Lagrange equations%
\begin{equation}\label{UnEPTT*G}
\begin{split}
\frac{d}{dt}\left( \frac{\delta E}{\delta\xi}\right) &
=T_{e}^{\ast}R_{g}\left( \frac{\delta E}{\delta g}\right) -\ad_{\frac{\delta E%
}{\delta\mu }}^{\ast}\mu+\ad_{\xi}^{\ast}\left( \frac{\delta E}{\delta\xi}%
\right) -\ad_{\frac{\delta E}{\delta\nu}}^{\ast}\nu \\
\frac{d}{dt}\left( \frac{\delta E}{\delta\nu}\right) & =\frac{\delta E}{%
\delta\mu}+\ad_{\xi}\frac{\delta E}{\delta\nu} 
\end{split}
\end{equation}
obtained by the variational principles in Eq.(\ref{VarTT*G}).
\end{proposition}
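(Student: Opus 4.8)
The plan is to derive both equations of Eq.(\ref{UnEPTT*G}) from Hamilton's variational principle $\delta\int_{a}^{b}E\,dt=0$, using the constrained (reduced) variations recorded in Eq.(\ref{VarTT*G}). First I would expand the first variation of the action by the chain rule into the four functional derivatives,
\[
\delta\int_{a}^{b}E\,dt=\int_{a}^{b}\left(\left\langle\frac{\delta E}{\delta g},\delta g\right\rangle+\left\langle\frac{\delta E}{\delta\mu},\delta\mu\right\rangle+\left\langle\frac{\delta E}{\delta\xi},\delta\xi\right\rangle+\left\langle\frac{\delta E}{\delta\nu},\delta\nu\right\rangle\right)dt,
\]
and then substitute the expressions for $\delta g$, $\delta\mu$, $\delta\xi$ and $\delta\nu$ from Eq.(\ref{VarTT*G}), which are all parametrized by the single arbitrary curve $(\eta,\lambda)\in\mathfrak{g}\circledS\mathfrak{g}^{\ast}$ vanishing at the endpoints $t=a,b$.

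Next I would transpose every operator so that $\eta$ and $\lambda$ appear bare inside the pairings. The base term becomes $\langle T_{e}^{\ast}R_{g}(\delta E/\delta g),\eta\rangle$; the pointwise semidirect contributions $\ad_{\eta}^{\ast}\mu$, $\ad_{\eta}^{\ast}\nu$ and $\ad_{\xi}^{\ast}\lambda$ are moved across the pairing by means of the identity $\langle\ad_{\eta}^{\ast}\mu,\zeta\rangle=\langle\mu,[\eta,\zeta]\rangle=-\langle\ad_{\zeta}^{\ast}\mu,\eta\rangle$ and its analogues. The two velocity contributions, $\dot{\eta}$ sitting inside $\delta\xi$ and $\dot{\lambda}$ sitting inside $\delta\nu$, are handled by integration by parts; the boundary terms drop because $\eta$ and $\lambda$ vanish at the endpoints, and this is precisely the step that produces the time derivatives $\frac{d}{dt}(\delta E/\delta\xi)$ and $\frac{d}{dt}(\delta E/\delta\nu)$ appearing on the left-hand sides of Eq.(\ref{UnEPTT*G}).

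Finally I would regroup the integrand into the form $\langle A,\eta\rangle+\langle\lambda,B\rangle$, where $A\in\mathfrak{g}^{\ast}$ collects every term paired against $\eta$ and $B\in\mathfrak{g}$ collects every term paired against $\lambda$. Since $(\eta,\lambda)$ is arbitrary, the fundamental lemma of the calculus of variations forces $A=0$ and $B=0$ separately; the vanishing of $A$ is the first equation of Eq.(\ref{UnEPTT*G}) and the vanishing of $B$ is the second. I expect the main obstacle to be the bookkeeping of the mixed semidirect terms: the coadjoint pieces $\ad_{\eta}^{\ast}\mu$, $\ad_{\eta}^{\ast}\nu$ and $\ad_{\xi}^{\ast}\lambda$ originating in $\delta\mu$ and $\delta\nu$ redistribute themselves across both of the final equations, so one must keep scrupulous track of which native pairing ($\mathfrak{g}^{\ast}\times\mathfrak{g}$ versus $\mathfrak{g}\times\mathfrak{g}^{\ast}$) each term occupies and of the sign generated by each transposition. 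This is exactly where the nontrivial coupling between the $\xi$-equation and the $\nu$-equation, characteristic of the semidirect Lie algebra $\mathfrak{g}\circledS\mathfrak{g}^{\ast}$, is produced, and where the delicate relative signs on the $\ad_{\frac{\delta E}{\delta\nu}}^{\ast}\nu$ and $\ad_{\xi}\frac{\delta E}{\delta\nu}$ terms must be pinned down with care.
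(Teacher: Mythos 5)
Your proposal is correct as a strategy, but it follows a genuinely different route from the paper's own proof. You argue directly on $TT^{\ast}G$: expand the first variation of $\int E\,dt$ into the four functional derivatives, substitute the constrained variations \eqref{VarTT*G}, transpose the coadjoint operators across the pairings, integrate the $\dot{\eta}$ and $\dot{\lambda}$ terms by parts, and apply the fundamental lemma to the coefficients of $\eta$ and $\lambda$ separately. The paper never performs this four-variable computation. Instead it first derives the trivialized Euler--Lagrange equation on an abstract tangent group $G\circledS\mathfrak{g}$ (a single-group variational argument), and then specializes the group $G$ to $T^{\ast}G\cong G\circledS\mathfrak{g}^{\ast}$, writing
\[
\frac{d}{dt}\frac{\delta E}{\delta(\xi,\nu)}=T^{\ast}R_{(g,\mu)}\frac{\delta E}{\delta(g,\mu)}+\ad^{\ast}_{(\xi,\nu)}\frac{\delta E}{\delta(\xi,\nu)},
\]
and unpacking the two components of the cotangent lift $T^{\ast}R_{(g,\mu)}$ and of the coadjoint operator of the semidirect sum $\mathfrak{g}\circledS\mathfrak{g}^{\ast}$ with bracket \eqref{LBgg*}. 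The paper's route does the integration by parts only once, on the abstract group, and exhibits the result as an instance of the general tangent-group formula; your route is self-contained, is the one actually announced in the statement (which cites \eqref{VarTT*G}), and keeps each variable's contribution explicit throughout.

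The one issue you flag but leave open --- the relative signs --- must be settled, because it is precisely where the paper is internally inconsistent. Carrying your computation to the end with \eqref{VarTT*G} exactly as printed (in particular $\delta\nu=\dot{\lambda}+\ad^{\ast}_{\eta}\nu-\ad^{\ast}_{\xi}\lambda$), the coefficient of $\eta$ reproduces the first equation of \eqref{UnEPTT*G} verbatim, while the coefficient of $\lambda$ yields
\[
\frac{d}{dt}\left(\frac{\delta E}{\delta\nu}\right)=\frac{\delta E}{\delta\mu}-\ad_{\xi}\frac{\delta E}{\delta\nu},
\]
with a minus sign rather than the plus sign printed in the second equation of \eqref{UnEPTT*G}. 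The minus sign is the one consistent with the rest of the paper (the Introduction and the reduced equations \eqref{EPgg*}, \eqref{Gg-}, \eqref{Ggg*-}, \eqref{g*gg*-}); the printed plus sign traces to the sign mismatch between the bracket used in \eqref{var} and the bracket \eqref{LBgg*}. So your method is sound and, if anything, it resolves the ambiguity: commit to the variations \eqref{VarTT*G} and conclude with the minus sign.
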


\begin{proof}
Let us begin with the observation that for any $(g,\xi)\in G\circledS \G{g}$, the variation $\d(g,\xi) = (\d g, \d \xi)$ at $(\eta,\dot{\eta}) \in \G{g}\circledS \G{g}$ may be given by
\[
(\d g, \d \xi) = X^{TG}_{(\eta,\dot{\eta})}(g,\xi) = (TR_g\eta,\dot{\eta}+[\eta,\xi]).
\]
Accordingly, given a Lagrangian $\G{L}:TG \cong G\circledS\G{g}\to\B{R}$, the action integral 
\begin{align*}
& \d\int_a^b\,\G{L}(g,\xi)\,dt = \int_a^b\,\left(\Big\langle\frac{\d\G{L}}{\d g},\d g\Big\rangle + \Big\langle\frac{\d\G{L}}{\d \xi}, \d \xi\Big\rangle\right)\,dt \\
& = \int_a^b\,\left(\Big\langle\frac{\d\G{L}}{\d g},TR_g\eta\Big\rangle + \Big\langle\frac{\d\G{L}}{\d \xi}, \dot{\eta}\Big\rangle + \Big\langle\frac{\d\G{L}}{\d \xi}, [\eta,\xi]\Big\rangle\right)\,dt  \\
& =\left.\Big\langle\frac{\d\G{L}}{\d \xi}, \eta\Big\rangle\right|_a^b + \int_a^b\,\left(\Big\langle T^\ast R_g\frac{\d\G{L}}{\d g},\eta\Big\rangle + \Big\langle -\frac{d}{dt}\frac{\d\G{L}}{\d \xi}, \eta\Big\rangle + \Big\langle \ad^\ast_{\xi}\frac{\d\G{L}}{\d \xi}, \eta\Big\rangle\right)\,dt
\end{align*}
leads to the (trivialized) Euler-Lagrange equations
\[
\frac{d}{dt}\frac{\d\G{L}}{\d \xi} = T^\ast R_g\frac{\d\G{L}}{\d g} + \ad^\ast_{\xi}\frac{\d\G{L}}{\d \xi}.
\]
Accordingly, the (trivialized) Euler-Lagrange equations on $TT^\ast G$ are given by
\[
\frac{d}{dt}\frac{\d E}{\d (\xi,\nu)} = T^\ast R_{(g,\mu)}\frac{\d E}{\d (g,\mu)} + \ad^\ast_{(\xi,\nu)}\frac{\d E}{\d (\xi,\nu)},
\]
where the first summand on the right hand side is 
\[
T^\ast R_{(g,\mu)}\frac{\d E}{\d (g,\mu)}  = \Big(T^\ast R_g\frac{\d E}{\d g} - \ad^\ast_{\frac{\d E}{\d \mu}}\mu,\frac{\d E}{\d \mu}\Big),
\]
while the second summand being
\[
\ad^\ast_{(\xi,\nu)}\frac{\d E}{\d (\xi,\nu)} = \Big(\ad^\ast_{\xi}\frac{\d E}{\d \xi} - \ad^\ast_{\frac{\d E}{\d \nu}}\nu,\frac{\d E}{\d \nu}+\ad_\xi\frac{\d E}{\d \nu}\Big).
\]
\end{proof}

\begin{proposition}
Given a Lagrangian $E=E(g,\mu,\xi,\nu)$ on $TT^\ast G$, the quantity 
\begin{equation}
\Big\langle \frac{\delta E}{\delta\xi},\xi \Big\rangle
+\Big\langle \frac{\delta E}{\delta\nu},\nu \Big\rangle
-E
\end{equation}
is constant.
\end{proposition}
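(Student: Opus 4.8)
The plan is to recognize the statement as the conservation of the Jacobi (energy) function for a \emph{first order} trivialized Euler--Lagrange system, now carried by the group $K:=G\circledS\G{g}^{\ast}$ in place of $G$ itself. Write $k=(g,\mu)\in K$ for the base variable and $\zeta=(\xi,\nu)\in\G{k}:=\G{g}\circledS\G{g}^{\ast}$ for the fibre (velocity) variable. The two component equations \eqref{UnEPTT*G} are precisely the components of the single $\G{k}^{\ast}$-valued Euler--Lagrange equation
\[
\frac{d}{dt}\frac{\delta E}{\delta\zeta}=T^{\ast}R_{k}\frac{\delta E}{\delta k}+\ad^{\ast}_{\zeta}\frac{\delta E}{\delta\zeta},
\]
with $\ad^{\ast}$ taken for the bracket \eqref{LBgg*} on $\G{k}$, which is exactly the grouped form appearing in the derivation of \eqref{UnEPTT*G}. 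Under the identification $\G{k}^{\ast}=\G{g}^{\ast}\times\G{g}$ the pairing $\langle\delta E/\delta\zeta,\zeta\rangle_{\G{k}}$ unpacks as $\langle\delta E/\delta\xi,\xi\rangle+\langle\delta E/\delta\nu,\nu\rangle$, so the quantity in the statement is exactly $\langle\delta E/\delta\zeta,\zeta\rangle_{\G{k}}-E$. The claim is therefore the group-$K$ analogue of the computation \eqref{calcc}, and I would prove it by repeating that computation verbatim at the level of $K$.

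Concretely, I would differentiate $Q:=\langle\delta E/\delta\zeta,\zeta\rangle_{\G{k}}-E$ along a solution. Expanding $dE/dt$ by the chain rule over the four slots $(g,\mu,\xi,\nu)$ and cancelling the $\langle\delta E/\delta\xi,\dot\xi\rangle$ and $\langle\dot\nu,\delta E/\delta\nu\rangle$ terms against the product rule leaves $dQ/dt=\langle(d/dt)(\delta E/\delta\zeta),\zeta\rangle_{\G{k}}-\langle\delta E/\delta k,\dot k\rangle$. I would then feed in the kinematic reconstruction relation coming from the trivialization \eqref{trTT*G}, namely $\dot g=T_{e}R_{g}\xi$ and $\dot\mu=\nu+\ad^{\ast}_{\xi}\mu$, which in grouped form is simply $\dot k=T_{e}R_{k}\zeta$; this rewrites $\langle\delta E/\delta k,\dot k\rangle$ as $\langle T^{\ast}R_{k}(\delta E/\delta k),\zeta\rangle$. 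Substituting the Euler--Lagrange equation above for $T^{\ast}R_{k}(\delta E/\delta k)$ and using the decisive antisymmetry identity
\[
\Big\langle\ad^{\ast}_{\zeta}\frac{\delta E}{\delta\zeta},\zeta\Big\rangle=\Big\langle\frac{\delta E}{\delta\zeta},[\zeta,\zeta]_{\G{k}}\Big\rangle=0
\]
collapses the computation to $dE/dt=(d/dt)\langle\delta E/\delta\zeta,\zeta\rangle_{\G{k}}$, whence $dQ/dt=0$, exactly as in the last line of \eqref{calcc}.

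The routine part is just the bookkeeping of the coadjoint terms. The one place where care is required is the reorganization in the first step: one must check that the two scalar equations \eqref{UnEPTT*G} genuinely assemble into the single $\G{k}$-intrinsic Euler--Lagrange equation for the bracket \eqref{LBgg*}, and that $\dot g=T_{e}R_{g}\xi$, $\dot\mu=\nu+\ad^{\ast}_{\xi}\mu$ package into $\dot k=T_{e}R_{k}\zeta$. I expect this packaging to be the main obstacle, and it is precisely the reason to argue intrinsically on $K$ rather than component-by-component: at the $\G{k}$ level the bracket contribution dies for the trivial reason $[\zeta,\zeta]_{\G{k}}=0$, independently of the individual signs of the $\ad^{\ast}_{\delta E/\delta\nu}\nu$ and $\ad_{\xi}(\delta E/\delta\nu)$ terms, whereas a direct component pairing (pairing the first equation of \eqref{UnEPTT*G} with $\xi$ and the second with $\nu$) is sensitive to exactly those relative signs and must be handled accordingly.
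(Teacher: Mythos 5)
Your proof is correct, and it takes a genuinely different route from the paper's. The paper argues component-wise: it expands $dE/dt$ over the four slots $(g,\mu,\xi,\nu)$, inserts the kinematic relations $\dot g=T_eR_g\xi$, $\dot\mu=\nu+\ad^\ast_\xi\mu$, substitutes the two equations of \eqref{UnEPTT*G}, and then discards the leftover coadjoint terms $\langle\ad^\ast_{\delta E/\delta\nu}\nu,\xi\rangle-\langle\ad^\ast_\xi(\delta E/\delta\xi),\xi\rangle-\langle\nu,\ad_\xi(\delta E/\delta\nu)\rangle$ as zero. You instead package $k=(g,\mu)\in K=G\circledS\G{g}^\ast$, $\zeta=(\xi,\nu)\in\G{g}\circledS\G{g}^\ast$ and rerun the first-order computation \eqref{calcc} verbatim on $K$; this packaging is legitimate, since it is exactly how the paper itself derives \eqref{UnEPTT*G} (its proof passes through the intrinsic equation $\frac{d}{dt}\frac{\delta E}{\delta\zeta}=T^\ast R_k\frac{\delta E}{\delta k}+\ad^\ast_\zeta\frac{\delta E}{\delta\zeta}$), and $\dot k=T_eR_k\zeta$ is precisely the content of the trivialization \eqref{trTT*G}. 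The difference is not cosmetic. In the component route the middle leftover term vanishes since $[\xi,\xi]=0$, but with the signs exactly as displayed in \eqref{UnEPTT*G} the outer two terms are equal rather than opposite --- each equals $\langle\nu,[\delta E/\delta\nu,\xi]\rangle$ --- so they sum to $2\langle\nu,[\delta E/\delta\nu,\xi]\rangle$ and the asserted cancellation fails as written; unpacking $\ad^\ast_{(\xi,\nu)}$ for the bracket \eqref{LBgg*} shows the first equation of \eqref{UnEPTT*G} should carry $+\ad^\ast_{\delta E/\delta\nu}\nu$, and only with that sign does the component cancellation go through. Your intrinsic argument needs nothing beyond $\langle\frac{\delta E}{\delta\zeta},[\zeta,\zeta]_{\G{k}}\rangle=0$, so it is immune to this relative-sign issue, exactly as your closing paragraph anticipates; it is the more robust of the two proofs and in effect locates a sign typo in the paper's displayed Euler--Lagrange equations.
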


\begin{proof}
Let us begin with
\begin{align*}
 \frac{d E}{dt} &= \Big\langle \frac{\p E}{\p g},\dot{g} \Big\rangle + \Big\langle \frac{\p E}{\p \mu},\dot{\mu} \Big\rangle + \Big\langle \frac{\p E}{\p \xi},\dot{\xi} \Big\rangle + \Big\langle \frac{\p E}{\p \nu},\dot{\nu} \Big\rangle  \\
 & = \Big\langle \frac{\p E}{\p g}, TR_g\xi \Big\rangle + \Big\langle \frac{\p E}{\p \mu},\nu+\ad^\ast_{\xi}\mu \Big\rangle + \Big\langle \frac{\p E}{\p \xi},\dot{\xi} \Big\rangle + \Big\langle \frac{\p E}{\p \nu},\dot{\nu} \Big\rangle \\
 & = \Big\langle T^\ast R_g\left(\frac{\p E}{\p g}\right) - \ad^\ast_{\frac{\p E}{\p \mu}}\mu, \xi \Big\rangle + \Big\langle \frac{\p E}{\p \mu},\nu \Big\rangle + \Big\langle \frac{\p E}{\p \xi},\dot{\xi} \Big\rangle + \Big\langle \frac{\p E}{\p \nu},\dot{\nu} \Big\rangle .
\end{align*}
Next, substituting the (trivialized) Euler-Lagrange equations \eqref{UnEPTT*G}, we obtain
\begin{align*}
 \frac{d E}{dt} & = \Big\langle \frac{d}{dt}\left(\frac{\p E}{\p \xi} \right) - \ad^\ast_{\xi}\frac{\p E}{\p \xi} +  \ad^\ast_{\frac{\p E}{\p \nu}}\nu, \xi \Big\rangle + \Big\langle \nu, \frac{d}{dt}\left(\frac{\p E}{\p \nu}\right) - \ad_{\xi}\frac{\p E}{\p \nu} \Big\rangle + \Big\langle \frac{\p E}{\p \xi},\dot{\xi} \Big\rangle + \Big\langle \frac{\p E}{\p \nu} ,\dot{\nu}\Big\rangle \\
 & = \Big\langle \frac{d}{dt}\left(\frac{\p E}{\p \xi} \right), \xi \Big\rangle + \Big\langle \frac{\p E}{\p \xi},\dot{\xi} \Big\rangle  + \Big\langle \dot{\nu},\frac{\p E}{\p \nu} \Big\rangle + \Big\langle \nu, \frac{d}{dt}\left(\frac{\p E}{\p \nu}\right) \Big\rangle + \\
 & \Big\langle   \ad^\ast_{\frac{\p E}{\p \nu}}\nu, \xi \Big\rangle - \Big\langle  \ad^\ast_{\xi}\frac{\p E}{\p \xi} , \xi \Big\rangle  -\Big\langle \nu, \ad_{\xi}\frac{\p E}{\p \nu} \Big\rangle  \\
 & = \frac{d}{dt}\left(\Big\langle \frac{\p E}{\p \xi} , \xi \Big\rangle +\Big\langle \nu,\frac{\p E}{\p \nu} \Big\rangle\right),
\end{align*}
from which we result follows.
\end{proof}

\subsubsection{Reductions on $TT^*G$} When the Lagrangian density $E$ in the trivialized Euler-Lagrange equations (%
\ref{UnEPTT*G}) is independent of the group variable $g\in G$, we arrive at
Euler-Lagrange equations (\ref{g*gg*-}) on $\mathfrak{g}_{1}^{\ast}\circledS\left(
\mathfrak{g}_{2}\circledS\mathfrak{g}_{3}^{\ast}\right) $. In addition, if the
Lagrangian $E$ depends only on fiber coordinates $E=E\left( \xi,\nu\right) ,$
we have the Euler-Poincaré equations (\ref{EPgg*}).

\begin{proposition}
The Euler-Poincaré equations on the Lie algebra $\mathfrak{g}_{2}\circledS%
\mathfrak{g}_{3}^{\ast}$ are
\begin{equation}
\frac{d}{dt}\left( \frac{\delta E}{\delta\xi}\right) =\ad_{\xi}^{\ast}\left(
\frac{\delta E}{\delta\xi}\right) -\ad_{\frac{\delta E}{\delta\nu}}^{\ast}\nu,%
\text{ \ \ }\frac{d}{dt}\left( \frac{\delta E}{\delta\nu}\right) =-\ad_{\xi}%
\frac{\delta E}{\delta\nu}.  \label{EPgg*}
\end{equation}
\end{proposition}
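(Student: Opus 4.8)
The plan is to read \eqref{EPgg*} off the trivialized Euler--Lagrange equations \eqref{UnEPTT*G} already derived on $TT^{\ast}G$, by imposing the reduction hypothesis. Since $\mathfrak{g}_{2}\circledS\mathfrak{g}_{3}^{\ast}$ is exactly the Lie algebra $\mathfrak{g}\circledS\mathfrak{g}^{\ast}$ of $G\circledS\mathfrak{g}_{1}^{\ast}$, carrying the bracket \eqref{LBgg*}, the statement is the Euler--Poincaré reduction of the Lagrangian dynamics by the full base group $G\circledS\mathfrak{g}_{1}^{\ast}$. Concretely, ``$E$ depends only on the fiber coordinates $(\xi,\nu)$'' means that $E=E(\xi,\nu)$ is independent of both $g\in G$ and $\mu\in\mathfrak{g}_{1}^{\ast}$, so that $\delta E/\delta g=0$ and $\delta E/\delta\mu=0$.

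First I would substitute these two vanishing derivatives into \eqref{UnEPTT*G}. In the first equation the terms $T_{e}^{\ast}R_{g}(\delta E/\delta g)$ and $\ad_{\delta E/\delta\mu}^{\ast}\mu$ both disappear, leaving the first equation of \eqref{EPgg*}; in the second equation the free term $\delta E/\delta\mu$ disappears, and the surviving $\ad_{\xi}$-term is the second equation of \eqref{EPgg*}. This is essentially immediate; the only point needing attention is to interpret the reduction hypothesis as independence from the \emph{whole} base $G\circledS\mathfrak{g}_{1}^{\ast}$, which is what separates this Euler--Poincaré reduction from the intermediate $g$-only reduction producing the Euler--Lagrange equations on $\mathfrak{g}_{1}^{\ast}\circledS(\mathfrak{g}_{2}\circledS\mathfrak{g}_{3}^{\ast})$.

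As a self-contained verification that also pins down the signs, I would rederive \eqref{EPgg*} variationally from the reduced Hamilton principle. Applying $\delta\int_{a}^{b}E(\xi,\nu)\,dt=0$ with the constrained variations $\delta\xi=\dot{\eta}+[\xi,\eta]$ and $\delta\nu=\dot{\lambda}+\ad_{\eta}^{\ast}\nu-\ad_{\xi}^{\ast}\lambda$ taken from \eqref{VarTT*G} for arbitrary $(\eta,\lambda)\in\mathfrak{g}\circledS\mathfrak{g}^{\ast}$, I would integrate the $\dot{\eta}$ and $\dot{\lambda}$ terms by parts and collect the coefficients of the independent variations $\eta$ and $\lambda$. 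The coefficient of $\eta$, after using $\langle\delta E/\delta\xi,[\xi,\eta]\rangle=\langle\ad_{\xi}^{\ast}(\delta E/\delta\xi),\eta\rangle$ and $\langle\delta E/\delta\nu,\ad_{\eta}^{\ast}\nu\rangle=-\langle\ad_{\delta E/\delta\nu}^{\ast}\nu,\eta\rangle$, yields the first equation; the coefficient of $\lambda$ yields the second. Equivalently, these are the abstract Euler--Poincaré equations $\frac{d}{dt}\frac{\delta E}{\delta(\xi,\nu)}=\ad_{(\xi,\nu)}^{\ast}\frac{\delta E}{\delta(\xi,\nu)}$ once the coadjoint action of $\mathfrak{g}\circledS\mathfrak{g}^{\ast}$ on its dual $\mathfrak{g}^{\ast}\times\mathfrak{g}$ is computed from \eqref{LBgg*}. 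The only genuine obstacle in either route is the bookkeeping of signs --- carrying the semidirect-product coadjoint action and the paper's right-adjoint convention correctly through the integration by parts --- rather than any conceptual difficulty.
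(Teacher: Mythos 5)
Your proposal is correct and, in its main line, coincides with the paper's own (implicit) proof: the paper justifies this proposition by exactly the substitution you describe first, namely setting $\delta E/\delta g=0$ and $\delta E/\delta\mu=0$ in the trivialized Euler--Lagrange equations \eqref{UnEPTT*G}, viewing this as Euler--Poincar\'e reduction by the full base group $G\circledS\mathfrak{g}_{1}^{\ast}$ (reduction by stages through \eqref{g*gg*-}).

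Your second, variational argument is worth keeping, because it settles a point the substitution route alone cannot: as printed, the second equation of \eqref{UnEPTT*G} carries $+\ad_{\xi}\frac{\delta E}{\delta\nu}$, so verbatim substitution yields $\frac{d}{dt}\left(\frac{\delta E}{\delta\nu}\right)=+\ad_{\xi}\frac{\delta E}{\delta\nu}$, which conflicts in sign with \eqref{EPgg*}. The $-\ad_{\xi}$ sign of \eqref{EPgg*} agrees with the form of these equations quoted in the Introduction and with the boxed reductions \eqref{Ggg*-} and \eqref{g*gg*-}, so the $+$ in \eqref{UnEPTT*G} is a typographical slip; your independent derivation from the constrained variations \eqref{VarTT*G} --- integrating the $\dot{\eta}$ and $\dot{\lambda}$ terms by parts and collecting the coefficients of the independent $\eta$ and $\lambda$, with the pairing identities $\langle\frac{\delta E}{\delta\xi},[\xi,\eta]\rangle=\langle\ad_{\xi}^{\ast}\frac{\delta E}{\delta\xi},\eta\rangle$ and $\langle\ad_{\eta}^{\ast}\nu,\frac{\delta E}{\delta\nu}\rangle=-\langle\ad_{\frac{\delta E}{\delta\nu}}^{\ast}\nu,\eta\rangle$ --- produces \eqref{EPgg*} exactly as stated, including the $-\ad_{\xi}$ term. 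So your cross-check is not redundant; it is the step that makes the stated signs airtight.
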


 If, moreover, $%
E=E\left( \xi\right) ,$ the Euler-Poincaré equations (\ref{EPEq}) on $%
\mathfrak{g}_{2}$ arise. This procedure is called reduction by stages \cite%
{n2001lagrangian, holm1998euler}.

Alternatively, the Lagrangian density $E$ in trivialized Euler-Lagrange
equations (\ref{UnEPTT*G}) can be independent of $\mu\in\mathfrak{g}%
_{1}^{\ast}$, that is, $E$ can be invariant under the action of $\mathfrak{g}%
_{1}^{\ast}$ on $ TT^{\ast}G$. In this case, we have Euler-Lagrange
equations \eqref{Ggg*-} on $G\circledS\left( \mathfrak{g}_{2}\circledS\mathfrak{g}_{3}^{\ast
}\right) $. When $E=E\left( g,\xi\right) $, we have trivialized
Euler-Lagrange equations \eqref{Gg-} on $G\circledS\mathfrak{g}_{2}$. The following
diagram summarizes this discussion.

\begin{align}
& \xymatrix{ & {\begin{array}{c}(G\circledS \mathfrak{g}_{1}^{\ast
})\circledS (\mathfrak{g}_{2}\circledS \mathfrak{g}_{3}^{\ast })\\\text{EL
in (\ref{UnEPTT*G})}\end{array}} \ar[ddl]|-{\text{L.R. by }G}
\ar[ddr]|-{\text{L.R. by }\mathfrak{g}_{1}^{\ast }} \ar[dd]|-{\text{EPR by
}G\circledS \mathfrak{g}_{1}^{\ast }} \\\\
{\begin{array}{c} \mathfrak{g}_{1}^{\ast }\circledS(\mathfrak{g}_{2}\circledS
\mathfrak{g}_{3}^{\ast })\\\text{EL in
(\ref{g*gg*-})}\end{array}} &
{\begin{array}{c}(\mathfrak{g}_{2}\circledS\mathfrak{g}_{3}^{\ast
})\\\text{EP in (\ref{EPgg*})}\end{array}} & {\begin{array}{c} G\circledS (
\mathfrak{g}_{2}\circledS\mathfrak{g}_{3}^{\ast}) \\ \text{EL in
(\ref{Ggg*-})}\end{array}} \\\\ & {\begin{array}{c}\mathfrak{g}_{2}
\\\text{EP in (\ref{EPEq})}\end{array}} \ar@{^{(}->}[uu]|-{\txt{canonical \\
immersion}} \ar@{^{(}->}[uul]|-{\txt{canonical \\ immersion}}
\ar@{^{(}->}[uur]|-{\txt{canonical \\ immersion}} & {\begin{array}{c}
G\circledS\mathfrak{g}_{2}\\\text{EL in (\ref{Gg-})} \end{array}}
\ar@{^{(}->}[uu]|-{\txt{canonical \\ immersion} }}  \label{TT*Gl} 
\end{align}
\begin{small}
\begin{center}
Lagrangian reductions on $TT^*G$
\end{center}
\end{small}
\begin{align}
& \boxed{ \begin{aligned} &\frac{d}{dt}\left( \frac{\delta E}{\delta \xi
}\right) = T_{e}^{\ast }R_{g}\left( \frac{\delta E}{\delta g}\right)
+\ad_{\xi }^{\ast }\left( \frac{\delta E}{\delta \xi }\right)
-\ad_{\frac{\delta E}{\delta \nu }}^{\ast }\nu \\& \frac{d}{dt}\left(
\frac{\delta E}{\delta \nu }\right) =-\ad_{\xi }\frac{\delta E}{\delta \nu }
\end{aligned}}  \label{Ggg*-} \\
& \boxed{ \begin{aligned} &\frac{d}{dt}\left( \frac{\delta E}{\delta \xi
}\right) =T_{e}^{\ast }R_{g}\left( \frac{\delta E}{\delta g}\right) +\ad_{\xi
}^{\ast }\left( \frac{\delta E}{\delta \xi }\right) \end{aligned}}
\label{Gg-} \\
& \boxed{ \begin{aligned} &\frac{d}{dt}\left( \frac{\delta E}{\delta \xi
}\right) =-\ad_{\frac{\delta E}{\delta \mu }}^{\ast }\mu +\ad_{\xi }^{\ast
}\left( \frac{\delta E}{\delta \xi }\right) -\ad_{\frac{\delta E}{\delta \nu
}}^{\ast }\nu \\& \frac{d}{dt}\left( \frac{\delta E}{\delta \nu }\right)
=\frac{\delta E}{\delta \mu }-\ad_{\xi }\frac{\delta E}{\delta \nu }
\end{aligned}}  \label{g*gg*-}
\end{align}

\section{Summary, Discussions and Prospectives}

We write Hamilton's equations on the cotangent bundles $ T^{\ast}TG$ and $%
 T^{\ast}T^{\ast}G$. Symplectic and Poisson reductions of $ T^{\ast}TG$
are performed under actions of $G,$ $\mathfrak{g}$ and $G\circledS \mathfrak{%
g}$ as shown in diagram (\ref{T*TG}). $ T^{\ast}T^{\ast}G$ is also
reduced by actions of $G,$ $\mathfrak{g}^{\ast}$ and $G\circledS \mathfrak{g}%
^{\ast}$ c.f. diagram (\ref{T*T*G}).

On the Tulczyjew's symplectic space $ TT^{\ast}G=\left( G\circledS
\mathfrak{g}_{1}^{\ast}\right) \circledS\left( \mathfrak{g}_{2}\circledS%
\mathfrak{g}_{3}^{\ast}\right) $, we obtain both Hamilton's and
Euler-Lagrange equations. Hamilton's equations are reduced by symplectic and
Poisson actions of $G,$ $\mathfrak{g}_{1}^{\ast}$, $\mathfrak{g}_{2}$, $%
G\circledS\mathfrak{g}_{2}$ and $G\circledS\mathfrak{g}_{1}^{\ast}$. These
reductions are summarized in diagram (\ref{TT*Gd}). As it is a tangent
bundle, Lagrangian reductions are performed with actions of $G$, $\mathfrak{g%
}_{1}^{\ast}$, $G\circledS\mathfrak{g}_{1}^{\ast}$ and $G\circledS\left(
\mathfrak{g}_{1}^{\ast}\times\mathfrak{g}_{2}\right) $ and, are shown in
diagram (\ref{TT*Gl}).

Hamiltonian reductions of the Tulczyjew's symplectic space $ TT^{\ast}G$
can be generalized to symplectic reduction of tangent bundle of a symplectic
manifold with lifted symplectic structure. This may be a first step towards
reduction of special symplectic structures and reduction of Tulczyjew's
triplet for arbitrary configuration manifold $\mathcal{Q}$.
In order to obtain this more general picture for  trivialization and reduction of Tulczyjew triplet, we plan to pursue a new project where the reduction is applied to Lagrangian dynamics on $TQ$ and Hamiltonian dynamics on $T^*Q$ for an arbitrary manifold $Q$. In this case, the reduced Lagrangian dynamics on the orbit space $TQ/G$ is called Lagrange-Poincar\'{e} equations. If, particularly, $Q=G$ then the Lagrange-Poincar\'{e} equations turns out to be Euler-Poincar\'{e} equations on $\mathfrak{g}$. Similarly, the Hamiltonian dynamics on $T^*Q/G$ is called Hamilton-Poincar\'{e} equations and, reduce to Lie-Poisson equations on $\mathfrak{g}^*$ for the case of $Q=G$. In the first paper \cite{EsKuSu20} of that series, we have already presented the trivialization and reduction of Tulczyjew triplet for an arbitrary manifold under the presence of an Ehresmann connection.

\bibliographystyle{plain}
\bibliography{references}{}

\end{document}